\theoremstyle{plain}
\newtheorem{theorem}{Theorem}[section]
\newtheorem{proposition}[theorem]{Proposition}
\newtheorem{cor}[theorem]{Corollary}
\newtheorem{prop}[theorem]{Proposition}
\newtheorem{lemma}[theorem]{Lemma}
\theoremstyle{definition}
\newtheorem{ex}[theorem]{Example}
\newtheorem{rmk}[theorem]{Remark}
\numberwithin{equation}{section}
\newtheorem*{theoremA*}{Theorem A}
\newtheorem*{theoremB*}{Theorem B}
\newtheorem*{theorem1*}{Theorem A'}
\newtheorem*{theoremC*}{Theorem C}
\newtheorem*{theoremD*}{Theorem D}
\newtheorem*{theoremE*}{Theorem E}
\newtheorem*{theoremF*}{Theorem F}
\newtheorem*{theoremE2*}{Theorem E2}
\newtheorem*{theoremE3*}{Theorem E3}
\newcommand{\bs}{\backslash}
\newcommand{\C}{\mathbb{C}}
\newcommand{\G}{\mathbb{G}}
\newcommand{\A}{\mathcal{A}}
\newcommand{\Lc}{\mathcal{L}}
\newcommand{\E}{\mathcal{E}}
\newcommand{\Hc}{\mathcal{H}}
\newcommand{\Hb}{\mathbb{H}}
\newcommand{\Q}{\mathbb{Q}}
\newcommand{\Lb}{\mathbf{L}}
\newcommand{\Z}{\mathbb{Z}}
\newcommand{\Zc}{\mathcal{Z}}
\newcommand{\R}{\mathbb{R}}
\newcommand{\N}{\mathbb{N}}
\newcommand{\Dens}{\operatorname{Dens}}
\newcommand{\Sl}{\operatorname{SL}}
\newcommand{\SO}{\operatorname{SO}}
\newcommand{\SL}{\operatorname{SL}}
\newcommand{\SU}{\operatorname{SU}}
\newcommand{\tr}{\operatorname{tr}}
\newcommand{\im}{\operatorname{Im}}
\newcommand{\Ad}{\operatorname{Ad}}
\newcommand{\ad}{\operatorname{ad}}
\newcommand{\diag}{\operatorname{diag}}
\newcommand{\pr}{\operatorname{pr}}
\newcommand{\supp}{\operatorname{supp}}
\newcommand{\Span}{\operatorname{span}}
\newcommand{\rank}{\operatorname{rank}}
\newcommand{\re}{\operatorname{Re}}
\def\hat{\widehat}
\def\af{\mathfrak{a}}
\def\bfrak{\mathfrak{b}}
\def\e{\epsilon}
\def\gf{\mathfrak{g}}
\def\hf{\mathfrak{h}}
\def\kf{\mathfrak{k}}
\def\lf{\mathfrak{l}}
\def\mf{\mathfrak{m}}
\def\nf{\mathfrak{n}}
\def\sof{\mathfrak{so}}
\def\spin{\mathfrak{spin}}
\def\glf{\mathfrak{gl}}
\def\pf{\mathfrak{p}}
\def\qf{\mathfrak{q}}
\def\sl{\mathfrak{sl}}
\def\symp{\mathfrak{sp}}
\def\suf{\mathfrak{su}}
\def\uf{\mathfrak{u}}
\def\zf{\mathfrak{z}}
\def\la{\langle}
\def\ra{\rangle}
\def\1{{\bf1}}
\def\U{\mathcal{U}}
\def\D{\mathcal {D}}
\def\G{\mathcal{G}}
\def\sE{\mathsf {E}}
\def\Oc{\mathcal{O}}
\def\P{\mathcal{P}}
\def\M{\mathcal{M}}
\def\oline{\overline}
\def\F{\mathcal{F}}
\def\W{\mathcal{W}}
\def\tilde{\widetilde}
\def\Sphere{\mathbf{S}}
\def\sG{\mathsf{G}}
\title[Tempered spectrum]
{The tempered spectrum of a real spherical space}
\subjclass[2000]{22F30, 22E46, 53C35, 22E40}
\begin{document}
\date{July, 2017}

\begin{abstract}
Let $G/H$ be a unimodular real spherical space which is either absolutely spherical, i.e.~the real form of a complex spherical space, 
or of wave-front type. It is shown that every 
tempered representation of $G/H$ embeds into a 
twisted discrete series of 
a boundary degeneration of $G/H$. If $G/H$ is of wave-front type
it follows that the tempered representation is parabolically induced from a 
discrete series representation of a lower dimensional real spherical space.
\end{abstract}

\author[Knop]{Friedrich Knop}
\email{friedrich.knop@fau.de}
\address{Department Mathematik, Emmy-Noether-Zentrum\\
FAU Erlangen-N\"urnberg, Cauerstr. 11, 91058 Erlangen, Germany} 

\author[Kr\"otz]{Bernhard Kr\"{o}tz}
\email{bkroetz@gmx.de}
\address{Universit\"at Paderborn, Institut f\"ur Mathematik\\Warburger Stra\ss e 100, 
D-33098 Paderborn, Deutschland}
\thanks{The second author was supported by ERC Advanced Investigators Grant HARG 268105}
\author[Schlichtkrull]{Henrik Schlichtkrull}
\email{schlicht@math.ku.dk}
\address{University of Copenhagen, Department of Mathematics\\Universitetsparken 5, 
DK-2100 Copenhagen \O, Denmark}

\maketitle

\section{Introduction}
For a real reductive group $G$ one important step towards the Plan\-cherel theorem is the 
determination of the support of the Plancherel measure, i.e.~the portion of the 
unitary dual of $G$ which is contained weakly in $L^2(G)$. 
It turns out that these representations are the so-called tempered representations, i.e.
representations whose matrix coefficients satisfy a certain moderate growth 
condition. Further, one has the central theorem 
of Langlands: 

\bigskip {\bf Tempered Embedding Theorem} (\cite{Langlands}) 
{\it Every irreducible tempered representation $\pi$ is induced 
from discrete series, i.e.~there is a parabolic subgroup $P<G$ with Langlands 
decomposition $P=MAN$, a discrete series representation $\sigma$ of $M$,
and a unitary 
character $\chi$ of $A$,  such that 
$\pi$ is a subrepresentation of ${\rm Ind}_P^G (\sigma\otimes\chi\times\1)$.}
\bigskip

Thus (up to equivalence)
the description of the tempered spectrum is reduced to the classification of 
discrete series representations. 
A generalization with an analogous formulation 
was obtained by Delorme \cite{D} for symmetric spaces $G/H$.

\par In this article we consider the more general case of real spherical spaces,
that is, homogeneous spaces $Z=G/H$ on which the minimal parabolic subgroups of $G$
admit open orbits.  This case is more complicated and
several standard  techniques from the previous cases cannot be applied.
The result which in its formulation comes closest to the theorem above
is obtained for a particular class of real spherical spaces, said to be of
{\it wave-front} type. The spaces of this type feature a simplified large scale geometry 
\cite{KKSS2} and satisfy the wave-front lemma of Eskin--McMullen 
\cite{EM},\cite{KKSS2}. In particular, they are well suited for lattice counting problems \cite{KSS2}. 
The notion was originally introduced by Sakellaridis and Venkatesh \cite{SV}.  

\par Another large class of real spherical spaces $Z=G/H$ is obtained by taking real forms of 
complex spherical spaces $Z_\C=G_\C/H_\C$.  We call those $Z$ {\it absolutely spherical}. 
A generalization of Langlands' theorem will be obtained also for this class of spaces.
It should be noted that all symmetric spaces are both
absolutely spherical and of wave-front type, but there exist
real spherical spaces without any one or both of these properties.

Complex spherical spaces 
$Z_\C$ with $H_\C$ reductive were classified by Kr\"amer \cite{Kr} (for $G_\C$ simple) and Brion-Mikityuk
\cite{Brat}, \cite{Mik} (for $G_\C$ semi-simple). Recently we obtained a classification for all 
real spherical spaces 
$Z$ for $H$ reductive (see \cite{KKPS}, \cite{KKPS2}). 
In particular, if $G$ is simple, it turns out that  there are only few
cases which are neither absolutely spherical nor wave-front; concretely these are 
$\SL(n,\Hb)/\SL(n\!-\!1,\Hb)$ for $n\ge 3$,
$\SO(4,7)/ \SO(3) \times \mathrm{Spin}(3,4)$, and $\SO(3,6)/\mathsf{G}_2^1\times \SO(2)$.

\par Let now $Z=G/H$ be a unimodular real spherical space.  On the geometric 
level we attach to $Z$  a finite set of 
equal dimensional boundary degenerations $Z_I=G/H_I$. These boundary degenerations are real spherical 
homogeneous spaces parametrized by subsets $I$ of the set of spherical roots $S$ attached to $Z$.  

In the group case $Z=G \times G /\diag(G)\simeq G $ the boundary degenerations are of the form 
$Z_P =  G \times G /  [(N \times N^{\rm opp} ) \diag(M_0A)] $ 
attached to an arbitrary parabolic subgroup $P=MAN<G$. We 
remark that up to conjugation, parabolic subgroups are parametrized by finite subsets of the set of simple restricted roots.

To a Harish-Chandra module $V$ and a continuous $H$-invariant functional $\eta$ (on some completion of $V$) we 
attach a leading exponent $\Lambda_{V,\eta}$, and we provide a necessary and sufficient 
criterion, in terms of this leading exponent, 
for the pair $(V,\eta)$ to belong to a twisted
discrete series of $Z$, induced from a
unitary character of $H$.

\par In \cite{KKSS2} we defined tempered pairs $(V, \eta)$.  Under the assumption that $Z$ is absolutely spherical or wavefront,
the main result of this paper is then that 
for every tempered pair $(V, \eta)$ there exists a 
boundary degeneration $Z_I$ of $Z$ and an equivariant 
embedding  of $V$ into the twisted discrete series of $Z_I$.

\par In the special case of real spherical space of wave-front type our result gives rise to a tempered embedding 
theorem in the more familiar formulation of parabolic induction, see
Corollary \ref{temp-thm-wavefront}.
In the case of $p$-adic wave-front space such an embedding theorem 
was proved by Sakellaridis-Venkatesh \cite{SV}.

\subsection{The composition of the paper} There are two parts. A geometric part in Sections \ref{notation}-\ref{W-f},
and an analytic part in Sections \ref{power series}-\ref{tempspec}. 

\par The geometric part begins with the construction of the boundary 
degenerations $Z_I$ of $Z$. Here we choose an algebraic approach which gives 
the deformations of $\hf:={\rm Lie} (H)$ into $\hf_I:={\rm Lie}(H_I)$ via a simple 
limiting procedure. Then, in Section 3,  we relate the polar decomposition (see \cite{KKSS})
of $Z$ and $Z_I$ via the standard compactifications (see \cite{KK}). 
This is rather delicate, as representatives of open $P_{\rm min}$-orbits on $Z$ (and $Z_I$)  naturally enter the polar 
decompositions and these representatives need to be carefully chosen. 
Section 4 is concerned with real spherical spaces which we call induced: For any parabolic $P<G$ 
with Levi decomposition $P=G_P U_P$ we obtain an {\it induced} real spherical space $Z_P=G_P/H_P$
where $H_P<G_P$ is the projection of $P\cap H$ to $G_P$ along $U_P$.  
We conclude the geometric half with a treatment of wave-front spaces and elaborate 
on their special geometry. 

\par The analytic part starts with power series expansions for generalized matrix coefficients 
on $Z$.  We show that  the generalized matrix coefficients are solutions of a certain holonomic regular singular 
system of differential equations extending the results of  \cite{KS2}, Sect. 5.   

\par For every continuous $\hf$-invariant functional $\eta$ on $V$, we then construct $\hf_I$-invariant functionals $\eta_I$ on $V$
by extracting certain parts of the power series expansion. Passing from $\eta$ to $\eta_I$ 
can be considered as an algebraic version of the Radon transform. 
The technically most difficult part of this paper is then to establish the continuity of $\eta_I$. 
For symmetric spaces this would 
not be an issue in view of the automatic continuity theorem by  van den Ban, Brylinski and Delorme
\cite{BaD}, \cite{BrD}.   
For real spherical 
spaces such a result is currently 
not available.  {Under the assumption that $Z$ is either absolutely spherical or wave-front, we  settle this issue in Theorems \ref{lemma ub}, \ref{lemma ub wf} via quite delicate
optimal upper and lower bounds for the generalized matrix coefficients.
Already in the group case these bounds provide a significant improvement of the standard results
(see  Remark \ref{strong bound remark}).
The proof is based on the comparison theorems from  \cite{BO}, \cite{Brat}, and
\cite{AGKL}.

\smallskip
{\it Acknowledgement:} It is our pleasure to thank Patrick Delorme for
many invaluable comments to earlier versions of this paper.

\section{Real spherical spaces}\label{notation}

A standing convention of this paper is that (real) Lie groups will be denoted 
by upper case Latin letters, e.g $A$, $B$ etc., 
and their Lie algebras by lower case  German  letters, e.g. $\af$, $\mathfrak b$ etc.  
The identity component of a Lie group $G$ will be denoted by $G_e$. 

\par Let $G$ be an algebraic  real reductive group by which we understand 
a union of connected components 
of the real points of a connected complex 
reductive algebraic group $G_\C$. Let $H<G$ be a closed 
connected subgroup such that there is a complex algebraic subgroup 
$H_\C <G_\C$ such that $H=(G\cap H_\C)_e$. Under these assumptions 
we refer to $Z=G/H$ as a real algebraic homogeneous space.  We set $Z_\C=G_\C/H_\C$ 
and note that there is a natural $G$-equivariant morphism 
$$Z\to Z_\C , \ \ gH \mapsto gH_\C\, .$$
Let us denote by $z_0=H$ the standard base point of $Z$.

\par We assume that  $Z$ is {\it real spherical}, i.e.~we assume that 
a minimal parabolic subgroup $P_{\rm min}<G$ admits an open orbit on $Z$.
It is no loss of generality to request that $P_{\rm min} H \subset G$ is open, or equivalently 
$$ \gf  = \pf_{\rm min} + \hf \, .$$

\par If $L$ is a real algebraic group, then we denote by $L_{\rm n}\triangleleft L$ the connected 
normal subgroup generated by all unipotent elements.

\par According to the local structure theorem of \cite{KKS} there exists a unique   
parabolic subgroup $Q\supset P_{\rm min}$ (called
$Z$-adapted),
with a Levi decomposition $Q=LU$ such that:

\begin{itemize}
\item $P_{\rm min}\cdot z_0 = Q\cdot z_0$. 
\item   $L_{\rm n} < Q\cap H < L$. 
\end{itemize}

\par We let $K_L A_L N_L=L$ be an Iwasawa decomposition of $L$ with 
$N_L<P_{\rm min}$, set 
$A:=A_L$ and obtain a Levi-decomposition $P_{\rm min}=MAN= MA \ltimes N$ where $M=Z_{K_L}(A)$
and $N=N_LU$. 
\par We set $A_H:= A \cap H$ and further 
$$ A_Z:= A/A_H\, .$$
The dimension of $A_Z$ is an invariant of the real spherical space, called 
its {\it real rank}; in symbols $\rank_\R(Z)$. 
Observe that it follows from the fact that $\lf_{\rm n}\subset\hf$
that 
$$\af=\af\cap\zf(\lf)+\af\cap\hf$$ 
where $\zf(\lf)$ denotes the center of $\lf$. In particular,
it follows that $\lf\cap\hf$ is $\Ad(A)$-invariant.

\par We extend $K_L$ to a maximal compact subgroup $K$ of $G$ and denote by 
$\theta$ the corresponding 
Cartan involution. Further we put 
$\oline{\nf}:= \theta(\nf)$, $\oline\uf :=\theta(\uf)$. 
Later in this article in will be convenient to
replace $K$ by $K^a:=aKa^{-1}$ for a suitable element $a\in A$.
Then $\theta$ becomes replaced by $\theta^a:=\Ad(a)\circ \theta\circ \Ad(a)^{-1}$,
and for this reason it is important to monitor the dependence
of our definitions on $\theta$. For example, we note that $M$,
$\oline{\nf}$, and $\oline{\uf}$ are unaltered by such a change.

We fix an invariant non-degenerate bilinear form $\kappa$
on $\gf$. Associated with that we have the inner product
$\langle X,Y\rangle :=\kappa(X,\theta Y)$ on $\gf$, called the 
Cartan-Killing form. Note that it depends on the Cartan involution.

\par We write $\Sigma=\Sigma(\gf, \af)\subset \af^*\bs \{0\}$ for the restricted root system attached to the pair 
$(\gf, \af)$.  For $\alpha\in \Sigma$ we denote by $\gf^\alpha$ the corresponding root space and write
$$\gf=\af \oplus\mf \oplus\bigoplus_{\alpha\in \Sigma} \gf^\alpha$$
for the decomposition of $\gf$ into root spaces. Here, as usual, 
$\mf$ is the Lie algebra of $M$.

\subsection{Examples of real spherical spaces}

If $\hf<\gf$ is a subalgebra 
such that there exists a minimal parabolic subalgebra $\pf_{\rm min}$ such that 
$\gf=\hf +\pf_{\rm min}$, then we call $(\gf, \hf)$ a {\it real spherical pair} 
and $\hf$ a  {\it real spherical subalgebra}
of $\gf$. 
\par A subalgebra $\hf<\gf$ is called {\it symmetric} if there exists an involutive automorphism 
$\tau:\gf\to\gf$ with fixed point set $\hf$.  We recall that every symmetric subalgebra is reductive 
and that every symmetric subalgebra is real spherical. Symmetric subalgebras have been classified by 
Cartan and Berger.

\par A pair $(\gf,\hf)$ of a complex Lie algebra and a complex subalgebra
is called {\it complex spherical}  or simply {\it spherical} if it is real spherical
when regarded as a pair of real Lie algebras.
Note that in this case the minimal parabolic subalgebras 
of $\gf$ are precisely the Borel subalgebras.

\begin{lemma} Let $\hf<\gf$ be a subalgebra such that $(\gf_\C, \hf_\C)$ is a complex spherical pair.  
Then $(\gf, \hf)$ is a real spherical pair. 
\end{lemma}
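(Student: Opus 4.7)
The plan is to upgrade complex sphericality to real sphericality by showing that, for some minimal parabolic $\pf_{\rm min}$ of $\gf$, its complexification $\pf_{\rm min,\C}$ satisfies $\pf_{\rm min,\C}+\Ad(g)\hf_\C=\gf_\C$ for a well-chosen real element $g\in G$, and then extracting real parts. First I would fix any minimal parabolic $\pf_{\rm min}<G$. Its complexification $\pf_{\rm min,\C}$ is a parabolic subalgebra of $\gf_\C$, and in particular contains some Borel $\bfrak_0$. Complex sphericality of $(\gf_\C,\hf_\C)$ produces a Borel of $\gf_\C$ with open orbit on $G_\C/H_\C$; since all Borels of $\gf_\C$ are $G_\C$-conjugate, the same is true of $\bfrak_0$, and a fortiori $\pf_{\rm min,\C}$ itself admits an open orbit on $G_\C/H_\C$.

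Consequently the set
\[ U:=\{\,g\in G_\C \,\mid\, \pf_{\rm min,\C}+\Ad(g)\hf_\C=\gf_\C\,\} \]
is nonempty. By upper semicontinuity of the function $g\mapsto\dim(\pf_{\rm min,\C}\cap\Ad(g)\hf_\C)$, the set $U$ is Zariski-open in $G_\C$, and therefore Zariski-dense since $G_\C$ is connected. Next I would invoke the Zariski density of the real points $G$ in the connected complex reductive algebraic group $G_\C$ (a standard fact) to produce $g\in U\cap G$. Because $g$ is real, $\Ad(g)$ preserves the real form $\gf$, so $(\Ad(g)\hf)_\C=\Ad(g)\hf_\C$ and one obtains
\[ \pf_{\rm min,\C}+(\Ad(g)\hf)_\C=\gf_\C. \]

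Finally, for any $X\in\gf$, write $X=P+H$ with $P\in\pf_{\rm min,\C}$ and $H\in(\Ad(g)\hf)_\C$. Taking real parts with respect to the real form $\gf\subset\gf_\C$, and using that $\pf_{\rm min,\C}$ and $(\Ad(g)\hf)_\C$ are the complexifications of the real subalgebras $\pf_{\rm min}$ and $\Ad(g)\hf$, gives $X=\re(X)=\re(P)+\re(H)\in \pf_{\rm min}+\Ad(g)\hf$. Applying $\Ad(g^{-1})$, and noting that $\Ad(g^{-1})\pf_{\rm min}$ is again a minimal parabolic of $\gf$, yields the required real spherical decomposition $\gf=\Ad(g^{-1})\pf_{\rm min}+\hf$. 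The only mildly technical ingredients are the Zariski-openness of $U$ and the Zariski-density of $G$ in $G_\C$; both are standard, and the rest of the argument is essentially linear algebra, so no significant obstacle is expected.
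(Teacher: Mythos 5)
Your proof is correct and follows essentially the same route as the paper: reduce to the Borel/parabolic over $\C$, use that the set of $g$ with $\pf_{\rm min,\C}+\Ad(g)\hf_\C=\gf_\C$ is nonempty Zariski-open and that $G$ is Zariski-dense in $G_\C$ to find a real $g$, then take real parts. The paper phrases the density step in contrapositive form, but the argument is the same.
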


\begin{proof} Let $\bfrak_\C$ be a Borel subalgebra of $\gf_\C$  which is contained in $\pf_{{\rm min}, \C}$. 
We claim 
that there exists a $g\in G$ such that $\hf_\C+\Ad(g)\bfrak_\C =\gf_\C$. 
This follows immediately from the fact that 
$(\gf_\C, \hf_\C)$ is a spherical pair, since the
set of elements $g\in G_\C$, for which 
$\hf_\C+\Ad(g)\bfrak_\C=\gf_\C$, is then non-empty, Zariski open, and defined over $\R$.

Let $g\in G$ be such an element. It follows that $\hf_\C + \Ad(g) \pf_{{\rm min}, \C}=\gf_\C$,
and taking real points this  implies that $\hf +\Ad(g) \pf_{\rm min}=\gf$. 
\end{proof}

We note that the converse of the lemma is not true if $\gf$ is not quasi-split.
For example $(\gf, \nf)$ is a real spherical pair, but the 
complexification $(\gf_\C, \nf_\C)$ is not spherical unless $\gf$ is quasisplit. 
The real spherical pairs $(\gf, \hf)$ obtained from complex spherical pairs $(\gf_\C, \hf_\C)$ are called  {\it absolutely 
spherical} or {\it real forms}. 

\subsubsection{Examples of absolutely spherical pairs with $\hf$ reductive}

Complex spherical pairs $(\gf_\C, \hf_\C)$ with $\hf_\C$ reductive have been classified. For $\gf_\C$ simple this goes back 
to Kr\"amer \cite{Kr}, and it was extended to the semi-simple case  by Brion \cite{Brion1} and Mikityuk \cite{Mik}. 
For convenience we recall the non-symmetric cases of Kr\"amer's list in the table below.

\begin{table}[h]
\centering 
\begin{tabular} {|c | c|}  
\hline
\multicolumn{2}{|c|} {\tt The non-symmetric cases of Kr\"amer's list}\\
\hline
$\gf_\C$ & $\hf_\C$ \\
\hline
\hline
$\sl(n,\C) $ & $\sl(p,\C) \oplus\sl(n-p,\C),\, 2p\neq n$ \\
$\sl(2n+1,\C)$ & $\symp(n,\C)\oplus \C$\\
$\sl(2n+1,\C)$ & $\symp(n,\C)$\\ 
\hline 
$\sof(2n+1,\C)$ & $\glf(n,\C)$\\
$\sof(9,\C)$   & $\spin(7,\C)$  \\
$\sof(7,\C)$   & $G_2$\\
\hline 
$\symp(2n,\C)$ & $\symp(n-1,\C)\oplus\C$\\ 
\hline
$\sof(2n,\C)$ & $\sl(n,\C),\, n$ odd\\
$\sof(10,\C)$ & $\sof(2,\C) \oplus \spin(7,\C)$\\
$\sof(8,\C) $ & $G_2$\\
\hline
$G_2$ & $\sl(3,\C)$ \\ 
$E_6$ &  $\sof(10,\C)$\\ 
\hline
\hline
\end{tabular}
\end{table}

\bigskip

The pairs in the table feature plenty of non-compact real forms, classified in \cite{KKPS} and \cite{KKPS2}.
For example the pairs $(\sl(2n+1,\C), \symp(n,\C))$, 
$(\sof(2n+1,\C), \glf(n,\C))$ and  $(\sof(7,\C), G_2)$
have the following non-compact real forms: 
\begin{eqnarray}
\label{NonWF1}& &(\suf(2p,2q+1), \symp(p,q)) \quad \hbox{and} \quad  (\sl(2n+1,\R), \symp(n,\R))\\
\label{NonWF2}& &(\sof(n,n+1), \glf(n,\R))\\
& &(\sof(3,4), \sG_2^1) \quad\hbox{with $\sG_2^1$ the split real form of $G_2$}.\end{eqnarray}

From the list of irreducible complex spherical pairs $(\gf_\C,\hf_\C)$ with $\gf_\C$ non-simple 
(see \cite{Brion1}, \cite{Mik}) we highlight the Gross-Prasad cases: 
\begin{eqnarray}
\label{GP1}& & (\sl(n+1,\C) \oplus \sl(n,\C), \glf(n,\C)), \\ 
\label{GP2}& & (\sof(n+1,\C) \oplus \sof(n,\C), \sof(n,\C))\end{eqnarray}
which are ubiquitous in automorphic forms  \cite{GP}. 

\subsubsection{Real spherical pairs which are not absolutely spherical}

Prominent examples are constituted by the triple spaces 
$(\gf, \hf)= (\hf \times\hf\times\hf, \diag \hf)$ for $\hf=\sof(1,n)$,
which are real spherical for $n\ge 2$ but not  absolutely spherical when $n\ge 4$
(see \cite{BR}, \cite{DKS}).  This example will be discussed later in the context 
of Levi-induced spaces in Section \ref{triple spaces}. 

Other interesting examples for $\gf$ simple are (see \cite{KKPS}): 

\begin{itemize}
\item $(\sE_7^2, \sE_6^2),(\sE_7^2, \sE_6^3), (\sE_6^4, \sl(3,\Hb))$,
\item $ (\sl(n,\Hb), \sl(n-1,\Hb))$,  $( \sof(2p, 2q), \suf(p,q))$ for $p\neq q$,
\item $(\sof(6,3), \sof(2) \oplus \sG_2^1), (\sof(7,4), \spin(4,3)+\sof(3))$.
\end{itemize}

\subsubsection{Non-reductive examples}
We begin with a general fact (see \cite{Brion1}, Prop. 1.1 for a slightly weaker statement in the complex case). 

\begin{prop} Let $P< G$ be a parabolic subgroup and $H<P$ an algebraic subgroup. Let $P=L_P \ltimes U_P$ 
be a Levi-decomposition of $P$. Then the following statements are equivalent: 
\begin{enumerate}
\item\label{1rst} $Z=G/H$ is real spherical. 
\item\label{2ond} $P/H$ is an $L_P$-spherical variety, i.e.~the action of a  minimal parabolic subgroup of $L_P$ admits 
an open orbit on $P/H$. 
\end{enumerate}
\end{prop}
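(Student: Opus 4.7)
The plan is to exploit the $G$-equivariant fibration $\pi : G/H \to G/P$, which is well-defined because $H \subset P$ and has typical fiber $P/H$, and to match $P_{\min}$-orbits on $G/H$ with orbits of a minimal parabolic of $L_P$ on $P/H$. First, I would choose a minimal parabolic $P_{\min}$ of $G$ contained in $P$, so that $P_{\min} = Q_L \ltimes U_P$ with $Q_L := P_{\min} \cap L_P$ a minimal parabolic of $L_P$. By the Bruhat decomposition, $P_{\min}$ acts on $G/P$ with a unique open orbit $\mathcal{O} = P_{\min}\dot w_0 P / P$, where $\dot w_0 \in N_G(A)$ represents the longest element of the little Weyl group.

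Next, I would identify the stabilizer of the base point $x_0 = \dot w_0 P \in \mathcal{O}$ inside $P_{\min}$. A straightforward dimension count gives $\dim(P_{\min} \cap \dot w_0 P \dot w_0^{-1}) = \dim Q_L$, and using that $\dot w_0$ conjugates $P_{\min}$ to the opposite minimal parabolic together with $P \cap \bar P = L_P$, one verifies that in fact
\[
P_{\min} \cap \dot w_0 P \dot w_0^{-1} \;=\; Q_L.
\]
It follows that the preimage of the open cell is the induced $P_{\min}$-variety
\[
\pi^{-1}(\mathcal{O}) \;\cong\; P_{\min} \times^{Q_L} (\dot w_0 P/H) \;\cong\; P_{\min} \times^{Q_L} (P/H),
\]
where the resulting $Q_L$-action on $P/H$ is the natural left action twisted by $\Ad(\dot w_0^{-1})$, sending $Q_L$ to the opposite minimal parabolic $\bar Q_L$ of $L_P$.

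Finally, any open $P_{\min}$-orbit on $G/H$ projects onto an open $P_{\min}$-invariant subset of $G/P$, which must coincide with $\mathcal{O}$, so every such orbit sits inside $\pi^{-1}(\mathcal{O})$. By the induced-bundle description, open $P_{\min}$-orbits on $\pi^{-1}(\mathcal{O})$ correspond bijectively to open $\bar Q_L$-orbits on $P/H$; since existence of an open orbit is invariant under replacing $\bar Q_L$ by a conjugate minimal parabolic of $L_P$, this is equivalent to $L_P$-sphericity of $P/H$, yielding (1) $\Leftrightarrow$ (2). The main technical point is the identification of $\mathrm{Stab}_{P_{\min}}(x_0)$ as $Q_L$ itself, rather than merely a subgroup of the correct dimension; once this is established, the induced-bundle description and the orbit correspondence are essentially bookkeeping.
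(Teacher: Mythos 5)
Your route is genuinely different from the paper's, and the difference is where the subtlety lands. The paper chooses $P_{\min}$ inside the \emph{opposite} parabolic $P^-$ (so that $P^-\cap P=L_P$): then $P_{\min}P$ is already open in $G$, the stabilizer $S:=P_{\min}\cap P=P_{\min}\cap L_P$ is on the nose a minimal parabolic of $L_P$, and $P_{\min}/S\to G/P$ is an open embedding, so the orbit correspondence over the fiber $P/H$ is immediate — no Weyl element, no twisting. You instead put $P_{\min}$ inside $P$ and reach the open cell via $\dot w_0$. That can be made to work, but the identity you flag as the main technical point is false in general: writing $P=P_F$ for $F\subset\Pi$, one has
$$P_{\min}\cap\dot w_0 P\dot w_0^{-1}\;=\;\dot w_0\bigl(\oline{P}_{\min}\cap P\bigr)\dot w_0^{-1}\;=\;\dot w_0\,\oline{Q}_L\,\dot w_0^{-1}\;=\;P_{\min}\cap \dot w_0 L_P\dot w_0^{-1},$$
a minimal parabolic of the \emph{conjugated} Levi $\dot w_0 L_P\dot w_0^{-1}=L_{-w_0F}$, which coincides with $Q_L=P_{\min}\cap L_P$ only when $-w_0$ preserves $F$. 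For instance, for $G=\SL(3,\R)$ and $P$ of type $(2,1)$ the two candidate stabilizers have different unipotent parts, $\gf^{\alpha_1}$ versus $\gf^{\alpha_2}$. The repair is already contained in the second half of your own sentence: after transporting the fiber $\dot w_0P/H$ to $P/H$ by $\dot w_0^{-1}$, the stabilizer acts as $\oline{Q}_L=\oline{P}_{\min}\cap L_P$, which \emph{is} a minimal parabolic of $L_P$, and the equivalence follows since sphericity of $P/H$ under $L_P$ does not depend on which minimal parabolic of $L_P$ one tests. So your proof is correct once the stabilizer is named correctly; the paper's choice of $P_{\min}$ opposite to $P$ is precisely what makes this bookkeeping disappear.
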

\begin{proof} Let $P^{\rm opp}<G$ be the parabolic subgroup
with $P^{\rm opp}\cap  P= L_P$, and let $P_{\rm min}<P^{\rm opp}$ be a minimal 
parabolic subgroup of $G$. 
By the Bruhat decomposition $P_{\rm min}P$ is the only open
double $P_{\rm min}\times P$-coset in $G$. Moreover, 
$S:=P_{\rm min}\cap P$ is a 
minimal parabolic subgroup of $L_P$. 
 
\par Assume (\ref{1rst}) and 
let $g\in G$ be such that $P_{\rm min}gH$ is open in $G$.
Then $P_{\rm min}gP$ is also open in $G$, and hence 
$g\in P_{\rm min}P$. We may thus assume $g\in P$.
Then $P_{\rm min}gH\cap P=SgH$ is open in $P$, proving
(\ref{2ond}).

Assume (\ref{2ond}) and 
let $p\in P$ be such that $SpH$ is open in $P$.
Then $P_{\rm min}pH=P_{\rm min}SpH$ is open in $P_{\rm min}P$
and in $G$, proving (\ref{1rst}). 
\end{proof}

\begin{ex} We consider the group $G=\SU(p,q)$ with $3\leq p\leq q$. This group has real rank $p$ 
with restricted root system $BC_p$ or $C_p$ (if $p=q$). Let $P=P_{\rm min}=MAN$ be a minimal subgroup. 
Let $N_0<N$ be the subgroup with Lie algebra 
$$\nf_0:= \bigoplus_{\alpha\in \Sigma^+\atop \alpha \neq \e_1-\e_2, \e_2-\e_3}\gf^\alpha\, .$$
Let $Z(M)$ be the center of $M$. Then the proposition shows that $H:=Z(M)AN_0$ is a real spherical subgroup. 
In case  $q-p>1$ it is not absolutely spherical.
\end{ex}

\section{Degenerations of a real spherical subalgebra}

\subsection{The compression cone}

We recall that the local structure theorem (see \cite{KKS})
implies that $\gf=\hf+\qf$ and hence
\begin{equation}\label{lst-decomp}
\gf=
\hf\oplus (\lf\cap\hf)^{\perp_\lf} \oplus\uf.
\end{equation}
Here we use the Cartan-Killing form of $\gf$ restricted 
to $\lf$ to define $\perp_\lf$. Note that since $\lf\cap\hf$
is $\Ad(A)$-invariant, then so is its orthocomplement.
In particular, the decomposition (\ref{lst-decomp})
will not be affected by our later conjugation of the Cartan involution
by an element from $A$.

We define the linear 
operator $T: \oline \uf \to   
(\lf \cap \hf)^{\perp_\lf}\oplus \uf\subset \pf_{\rm min}$
as minus the restriction of the projection
along $\hf$, according to (\ref{lst-decomp}).  Then
\begin{equation}\label{LST} 
\hf = \lf \cap \hf\,\oplus\,\mathcal G(T)=\lf \cap \hf\,\oplus\,\{ \oline X + T(\oline X)\mid \oline X \in \oline\uf\}.
\end{equation}

\par Write $\Sigma_\uf$ for the space of $\af$-weights of the $\af$-module $\uf$. 
Let $\alpha\in \Sigma_\uf$ and let $X_{-\alpha}\in\gf^{-\alpha}\subset
 \oline{\uf}$. 
Then 
\begin{equation}\label{T}  T(X_{-\alpha}) = \sum_{\beta\in \Sigma_\uf\cup \{0\}} X_{\alpha, \beta}\end{equation}
with $X_{\alpha, \beta} \in \gf^{\beta}\subset \uf$ a root vector for $\beta\neq 0$ and the 
convention that $X_{\alpha, 0}\in (\lf \cap \hf)^{\perp_\lf}$. 
Let ${\mathcal M}\subset \N_0[\Sigma_\uf]$ be the monoid (additive semi-group with zero)
generated by 
$$\alpha +\beta \qquad (\alpha\in\Sigma_\uf, \beta\in \Sigma_\uf\cup\{0\}, \exists 
X_{-\alpha}\in\gf^{-\alpha}\ \hbox{such that}\  X_{\alpha, \beta}\neq 0)\, .$$  

Note that elements of ${\mathcal M}$ vanish on  $\af_H$ so that 
${\mathcal M}$ is naturally a subset of $\af_Z^*$. 

\par We define the compression cone of $Z$ to be 
$$\af_Z^-:=\{ X\in \af_Z\mid (\forall \alpha\in {\mathcal M}) \ 
\alpha(X)\leq 0\}$$
which is a closed convex cone in $\af_Z$ with non-empty interior. 
 
\subsubsection{Limits in the Grassmannian}
We recall from \cite{KKSS} Lemma 5.9 the following property of the 
compression cone. 
Let $\af_Z^{--}$ be the interior of $\af_Z^-$ and let
$$\hf_{\rm lim}:= \lf \cap \hf + \oline{\uf}\, .$$
Note that $d:=\dim \hf =\dim \hf_{\rm lim}$ and that $\hf_{\rm lim}$ is a real spherical 
subalgebra. Then $X\in \af_Z^{--}$ if and only if 
\begin{equation} \label{compress} \lim_{t\to \infty} e^{t \ad X} \hf = \hf_{\rm lim} \end{equation} 
holds in the Grassmannian ${\rm Gr}_d(\gf)$ of $d$-dimensional 
subspaces of $\gf$. 

\subsubsection{Limits in a representation}\label{Liar}
We recall  another description of $\af_Z^-$, which was used
as its definition in \cite{KKSS}, Def.~5.1 and Lemma 5.10. 

Consider an irreducible finite
dimensional real representation $(\pi, V)$ with $H$-semi-spherical
vector $0\neq v_H\in V$, that is there is an algebraic character 
$\chi $ of $H$ such that $\pi(h)v_{H} = \chi(h)v_{H}$ for all $h\in H$.
Let $\R^+ v_0$ be a lowest weight ray which is stabilized by $\oline Q$. 
Let $\af_{\pi,\chi}^{--}$ be the open cone in $\af_Z$ defined by the property: 
$X\in \af_{\pi,\chi}^{--}$ if and only if 
\begin{equation} \label{pi limit} \lim_{t\to \infty} [\pi(\exp(tX))\cdot v_H] = [v_0] \end{equation} 
holds in the projective space  ${\mathbb P} (V)$. 
We denote by $\af_{\pi,\chi}^-$ the closure of $\af_{\pi,\chi}^{--}$ and record 
that $\af_Z^- \subset \af_{\pi,\chi}^-$.
Moreover,  we have 
\begin{equation*} 
\af_Z^- =\af_{\pi, \chi}^- \quad \hbox{iff $\pi$ is regular}\, .\end{equation*}
Here $\pi$ is called regular if $\oline Q$ is the stabilizer of $\R^+ v_0$; in particular, 
the lowest weight is strictly anti-dominant with respect to the roots of $\uf$.

\subsection{Spherical roots}

Let $C$ be the convex cone spanned by $\mathcal M$. Then, according to
\cite{KK}, Cor.~12.5 and Cor.~10.9, $C$ is simplicial, that is, there exists a linearly 
independent set $S\subset \af_Z^*$ such that 
\begin{equation} \label{C-cone}C=\bigoplus_{\sigma \in S} \R_{\geq 0} \sigma\,. \end{equation}
In particular, we record 
$$
\mathfrak{a}^-_Z=\{X\in\mathfrak{a}_Z\mid(\forall\sigma\in S)\
\sigma(X)\le0\}\, .
$$
The elements of $S$, suitably normalized (see \cite{vS} for an overview on some commonly used  normalizations), are referred as {\it spherical roots} for $Z$. 
In this paper we are not very specific about the normalization of $S$ and just request that 
\begin{equation}  \label{S normalization} {\mathcal M} \subset \N_0[S]\end{equation}
is satisfied.  We note that the "maximal length" normalization of \cite{KK} (11.4)
fulfills this. We also note that 
(\ref{S normalization}) implies
\begin{equation}\label{S rational}
S\subset \Q_{\ge 0}[\Sigma_\uf].
\end{equation}
To see this let $\sigma\in S$. Then the ray $\R_{\ge 0}\sigma$ is extreme in $C$,
hence spanned by some $\gamma\in\M$. Thus $\sigma=c\gamma$
for some $c>0$. It now follows from (\ref{S normalization}) that $1/c$ is an 
integer and hence (\ref{S rational}) holds.

By slight abuse of common terminology we will henceforth call any set $S$ which satisfies 
(\ref{C-cone}) and (\ref{S normalization}) 
{\it a set of spherical roots for $Z$.} Let us now fix such a choice.

\par Given a closed convex cone $C$ in a finite dimensional real 
vector space, we call $E(C):= C\cap -C$ the edge of $C$; it is the largest 
vector subspace of $V$ which is contained in $C$. 

\par We are now concerned with the edge $\af_{Z,E}:=E(\af_Z^-)$ of $\af_Z^-$. By our 
definition of $\af_Z^-$ we have 
\begin{equation*}
\af_{Z,E} = \{ X\in \af_Z\mid 
(\forall \alpha\in S)\, \alpha(X)=0\}\,.
 \end{equation*}
It is immediate from (\ref{T}) that $\af_{Z, E}$ is contained in $N_\gf(\hf)$, the normalizer 
of $\hf$ in $\gf$. In this context it is good to keep in mind that 
$N_G(\hf)/ H A_{Z, E}$ is a compact  group (see \cite{KKSS}).

Let $e:= \dim \af_{Z,E}$, $r:=\rank_\R(Z)=\dim \af_Z$ and 
$s:=\# S$.  
Then $\mathfrak{a}^-_Z/\mathfrak{a}_{Z,E}$ is a
simplicial cone with 
\begin{equation*} 
s=r-e\, 
\end{equation*}
generators.

\begin{ex} Let $H=\oline{N}$. This is a spherical subgroup. In this 
case $\hf_{\rm lim}=\hf$,  ${\mathcal M}=\{0\}$, $S=\emptyset$, 
and $\af_Z^-=\af_{Z,E}=\af$.
\end{ex}

\subsection{Boundary degenerations}

For each subset $I\subset S$ we choose an element $X=X_I\in\af_Z^-$ with
$\alpha(X)=0$ for all $\alpha\in I$ and $\alpha(X)<0$ for all 
$\alpha\in S\bs I$. Then we define 
\begin{equation}\label{defi h_I}
\hf_I:=\lim_{t\to \infty} e^{\ad tX} \hf
\end{equation}
with the limit taken in the Grassmannian ${\rm Gr}_d(\gf)$ as in 
(\ref{compress}).
In particular, 
$\hf_\emptyset=\hf_{\rm lim}$ and $\hf_S=\hf$.

\par To see that the limit exists we recall the explicit 
description of $\hf$ in (\ref{T}).
Let $\la I\ra\subset \N_0[S]$ be 
the monoid generated by $I$. Within the notation of (\ref{T}) we set 
$X_{\alpha,\beta}^I:=X_{\alpha,\beta}$ if $\alpha+\beta\in \la I\ra$ 
and zero otherwise. 
Let $\uf_I\subset \uf$ be the subspace spanned by all $X_{\alpha,\beta}^I$ and define a linear operator 
$$T_I: \oline \uf \to   (\lf \cap \hf)^{\perp_\lf}\oplus \uf_I$$
by 
\begin{equation} \label{T_I}  
T_I(X_{-\alpha}) = \sum_{\beta\in \Sigma_\uf\cup\{0\}} X_{\alpha,\beta}^I\, .\end{equation}
In particular, $T_\emptyset=0$ and $T_S=T$.
Now observe that 
\begin{equation}\label{t-compr} 
e^{t\ad X} ( X_{-\alpha}+ T(X_{-\alpha}) )
= e^{-t\alpha(X)} ( X_{-\alpha}+ \sum_\beta e^{t (\alpha(X)+\beta(X))}
X_{\alpha, \beta})
\end{equation}
from which we infer that the limit in (\ref{defi h_I}) is given by
\begin{equation}\label{expression h_I}
\hf_I = \lf \cap \hf +\mathcal G(T_I)=\lf \cap \hf + \{ \oline X + T_I(\oline X)\mid \oline X \in \oline\uf\}\, ,
\end{equation} 
and in particular, it is thus independent of the choice of element $X_I$.

Let $H_I<G$ be the connected subgroup of $G$ corresponding to 
$\hf_I$.
We call $Z_I:=G/H_I$ the {\it boundary degeneration} of $Z$ attached 
to $I\subset S$, and summarize its basic properties as follows.

\begin{prop}\label{properties of Z_I} Let 
 $I\subset S$. Then
\begin{enumerate} 
\item\label{eins} $Z_I$ is a real spherical space. 
\item\label{drei} $Q$ is a $Z_I$-adapted parabolic subgroup.  
\item\label{zwei} $\af\cap\hf_I=\af\cap\hf$ and $\rank_\R Z_I =\rank_\R Z$.   
\item\label{vier} $I$ is a set of spherical roots for $Z_I$.
\item\label{funf} $\af_{Z_I}=\af_Z$ and 
$\af_{Z_I}^-=\{ X\in\af_Z\mid (\forall\alpha\in I)\, \alpha(X)\leq 0 \}$.
\end{enumerate}
\end{prop}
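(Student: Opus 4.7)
The plan is to work directly from the explicit formula (\ref{expression h_I}), $\hf_I = \lf\cap\hf + \mathcal G(T_I)$, and observe that $T_I$ still takes values in $(\lf\cap\hf)^{\perp_\lf}\oplus\uf_I\subset\qf$. Since $\gf=\qf\oplus\oline\uf$, any nonzero $\oline X + T_I(\oline X)\in\mathcal G(T_I)$ has a nonzero $\oline\uf$-component, so
\[
\qf\cap\hf_I \;=\; \lf\cap\hf \;=\; \qf\cap\hf.
\]
Intersecting with $\af\subset\lf$ gives $\af\cap\hf_I=\af\cap\hf$, which is (\ref{zwei}) and yields $\af_{Z_I}=\af_Z$ and $\rank_\R Z_I=\rank_\R Z$.

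For (\ref{eins}) and (\ref{drei}) I use $\pf_{\rm min}\subset\qf$ to get $\pf_{\rm min}\cap\hf_I=\pf_{\rm min}\cap\lf\cap\hf=\pf_{\rm min}\cap\hf$. Together with $\dim\hf_I=d=\dim\hf$ this forces
\[
\dim(\pf_{\rm min}+\hf_I)=\dim(\pf_{\rm min}+\hf)=\dim\gf,
\]
so $\pf_{\rm min}+\hf_I=\gf$ and $Z_I$ is real spherical, proving (\ref{eins}). The local structure theorem characterizes $Q$ as the $Z$-adapted parabolic by the two properties $\pf_{\rm min}+(\qf\cap\hf)=\qf$ and $\lf_{\rm n}\subset\qf\cap\hf\subset\lf$; both transfer verbatim to $\hf_I$ in view of $\qf\cap\hf_I=\qf\cap\hf$, and connectedness of $L_{\rm n}$, $H$ and $H_I$ promotes them to the group-level inclusion $L_{\rm n}<Q\cap H_I<L$, giving (\ref{drei}).

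For (\ref{vier}) I first observe that, by uniqueness of the LST decomposition and the equality $\lf\cap\hf_I=\lf\cap\hf$, the LST for $\hf_I$ relative to $Q$ is realized by the operator $T_I$ itself. Hence the monoid $\mathcal M_I$ attached to $Z_I$ is generated by those $\alpha+\beta$ for which $X_{\alpha,\beta}^I\neq 0$, i.e.\ the generators of $\mathcal M$ that already lie in $\la I\ra$; this immediately gives $\mathcal M_I\subset\la I\ra$. Conversely, every $\sigma\in I$ is irreducible in $\mathcal M$ and therefore must itself appear as a generator $\sigma=\alpha+\beta$ with $X_{\alpha,\beta}\neq 0$; since $\sigma\in\la I\ra$ this same pair contributes to the generators of $\mathcal M_I$, so $I\subset\mathcal M_I$ and hence $\la I\ra\subset\mathcal M_I$. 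Therefore $\mathcal M_I=\la I\ra$, whose irreducibles are precisely $I$, proving (\ref{vier}). Statement (\ref{funf}) is then immediate from the definition of the compression cone applied to $Z_I$ and the identifications $\af_{Z_I}=\af_Z$ and $S_{Z_I}=I$.

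I expect the proof to be essentially formal once the identity $\qf\cap\hf_I=\qf\cap\hf$ has been extracted from (\ref{expression h_I}). The one step that requires more than bookkeeping is the monoid identification in (\ref{vier}): one has to combine the fact that each $\sigma\in S$ is itself a generator of $\mathcal M$ (which forces $I\subset\mathcal M_I$) with the very construction of $T_I$ (which makes the generators of $\mathcal M_I$ exactly the generators of $\mathcal M$ lying in $\la I\ra$).
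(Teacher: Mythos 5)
Your proof is correct and follows essentially the same route as the paper, which simply reads everything off the explicit graph description $\hf_I=\lf\cap\hf+\mathcal G(T_I)$ in (\ref{expression h_I}); your key identity $\qf\cap\hf_I=\lf\cap\hf=\qf\cap\hf$ and the monoid computation $\mathcal M_I=\la I\ra$ are exactly the details the paper leaves implicit. The only point you omit is the paper's remark that (\ref{defi h_I}) also shows $\hf_I$ is algebraic, which is part of $Z_I$ being a real spherical space in the sense used here.
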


\begin{proof} It follows from (\ref{defi h_I})
that $\hf_I$ is algebraic, and from (\ref{expression h_I})
that  $\hf_I+\pf_{\min} =\gf$. Hence (\ref{eins}).
The statements (\ref{drei})-(\ref{vier}) all follow easily from 
(\ref{expression h_I}), and (\ref{funf}) is a consequence of (\ref{zwei}) and (\ref{vier}).
\end{proof}

The boundary degeneration $Z_I$ admits  non-trivial
automorphisms when $I\neq S$.
Set 
\begin{equation*} 
\af_I:=\{ X\in \af_Z\mid (\forall \alpha\in I) \alpha(X)=0\}\, ,\end{equation*}
then we see that $A_I$ acts by $G$-automorphisms of $Z_I$
from the right. 

\par In the sequel we realize $\af_Z$ as a subspace of $\af$ via the identification $\af_Z=\af_H^{\perp}$.
Likewise we view $A_Z$ as a subgroup of $A$.

It is then immediate from the definitions that: 
\begin{eqnarray} 
\label{ai1}
&&\af_S=\af_{Z,E}\subset \af_I=\af_{Z_I,E} 
\subset \af_\emptyset=\af_Z\,, \\
\label{ai2}
&&[\af_I+\af_H, \hf_I]\subset \hf_I\,.
\end{eqnarray}

\begin{rmk}\label{Z_I abs sph} If $Z$ is absolutely spherical 
then so are all $Z_I$.  Indeed, let $(\gf,\hf)$ be absolutely spherical with complex spherical 
complexification $(\gf_\C,\hf_\C)$.  Then the compression cones for  $(\gf,\hf)$ and 
$(\gf_\C,\hf_\C)$ are compatible (see \cite{KK}, Prop. 5.5 (ii)) in the obvious sense. 
From that the assertion follows.
\end{rmk}

\begin{ex} \label{symmetric space case}
If $G/H$ is a symmetric space for an involution $\sigma$, 
that is, $H$ is the connected component 
of the fixed point group of an 
involution $\tau$ on $G$ 
(which we may assume commutes with $\theta$), then
$$Z_I=G/(L_I\cap H)_eN_I$$ 
where $P_I=L_IN_I$ is a $\tau\theta$-stable parabolic subgroup
with $\tau$- and $\theta$-stable Levi part $L_I$. 
\end{ex}

\subsection{Polar decomposition}\label{poldecsec}

The compression cone $\af_Z^-$ of $Z$ determines the large scale behaviour of $Z$. 
In \cite{KKSS} we obtained a polar decomposition of a real spherical space. Our concern here 
is to obtain polar decompositions for all spaces $Z_I$ in a uniform way. For that it is 
more convenient to use standard compactifications of $Z$  (see \cite{KK}) rather then the simple
compactifications from \cite{KKSS}. 

\par For a real spherical subalgebra $\hf<\gf$ we set $\hat\hf:= \hf+\af_{Z,E}$. Note that $\hf\triangleleft \hat \hf$ 
is an ideal. 
We denote by $\hat H_{\C,0}$ the connected algebraic 
subgroup of $G_\C$ with Lie algebra $\hat \hf_\C$ and set $\hat H_0 :=\hat H_{\C, 0}\cap G$. 
More generally, let $\hat H_\C$ be some complex algebraic subgroup of $G_\C$ with 
Lie algebra $\hat \hf_\C$, and let  
$\hat H= G \cap \hat H_\C$. Then $\hat H_0$ and $\hat H$ both have Lie algebra $\hat\hf$, and 
$\hat H_0\triangleleft \hat H$ is a normal subgroup.

\par Further we set $\hat \hf_I =\hf_I+\af_I$ for each $I\subset S$ and note that $\hat \hf_S= \hat \hf$
and $\hat\hf_\emptyset=\hf_{\lim}+\af_Z$. 
Recall the element $X_I\in\af_I\cap \af_Z^-$ and set for $s\in \R$ 
$$a_{s, I}:= \exp(sX_I)\in A_I\, .$$

\par Let $\hat Z=G/\hat H$. We first describe the basic structure of a standard compactification $\oline Z$ for 
$\hat Z$.  There exists a finite dimensional  real representation $V$ of $G$ with an $\hat H$-fixed 
vector $v_{\hat H}$ such that 
$$ \hat Z \to {\mathbb P}(V), \ \ g\cdot \hat z_0 \mapsto [ g\cdot v_{\hat H}]$$
is an embedding and $\oline Z$ is the closure of $\hat Z$ in the projective space ${\mathbb P}(V)$. 
Moreover, $\oline Z$ has the following properties: 
{\renewcommand{\theenumi}{\roman{enumi}}%
\begin{enumerate}
\item\label{romani} The limit $ \hat z_{0,I}=\lim_{s\to \infty} a_{s,I} \cdot \hat z_0$ exists for every $I\subset S$,
and the stabilizer $\hat H_I$ of $\hat z_{0,I}$ is an algebraic group with Lie algebra $\hat\hf_I$. 
\item $\oline Z$ contains the unique closed orbit $Y= G\cdot \hat z_{0,\emptyset}$. 
\end{enumerate}}
\par Note that  $\hat H_{I}\supset \hat H_{I,0}$. The inclusion
can be proper, also if we choose $\hat H =\hat H_0$. 
This is the reason why we need to consider more general algebraic subgroups $\hat H$ than $\hat H_0$.

\par In the next step we explain the polar decomposition for $\hat Z$ and derive from that a polar 
decomposition for $Z$. 

In order to do that we recall the description of the open $P_{\rm min} \times \hat H$ double cosets
of 
$G$ from \cite{KKSS}, Section 2.4. We first treat the case of $\hat H_0$.
Every open double coset of  $P_{\rm min} \times \hat H_0$  has a representative
of the form 
\begin{equation} \label{th-form} 
w=th, \qquad (t\in T_Z=\exp(i\af_Z), h\in \hat H_{\C,0})\, .\end{equation}
This presentation is unique in the sense that if 
$t'h'$ is another such representative of the same double coset, 
then 
there exist  $f\in T_Z\cap \hat H_{\C,0}$ and $h''\in \hat H_0$ such that  
$t'=t f$ 
and $h'=f^{-1} h h''$. We let 
$$\F=\{ w_1, \ldots, w_k\}\subset G$$
be a minimal set of representatives of the open $P_{\rm min} \times \hat H_0$-cosets
which are of the form (\ref{th-form}). 

The map $w\mapsto P_{\rm min}w\hat H$ is surjective from $\F$ onto of 
open $P_{\rm min}\times \hat H$ cosets in $G$. We let 
$$\hat \F=\{ \hat w_1, \ldots, \hat w_m\}\subset \F$$
be a minimal set of representatives of these cosets. 
Note that every $\hat w\in\hat \F$ allows a presentation $\hat w=th$ as in (\ref{th-form}),
which is then unique  in the sense that
if $t'h'\in P_{\rm min} \hat w\hat H$ is another such representative, then 
$t'=tf$ and $h'= f^{-1} h h''$
for some  $f\in T_Z\cap \hat H_\C$ and $h''\in \hat H$. 
We observe the following relations
on the $T_Z$-parts:
\begin{equation*} 
\{ \hat t_1, \ldots, \hat t_m\} (T_Z\cap \hat H_\C) 
= \{ t_1, \ldots, t_k\} (T_Z\cap \hat H_\C)\, .
\end{equation*}

With that notation the polar decomposition for $\hat Z= G/\hat H$ is obtained as in 
\cite{KKSS}, Th. 5.13: 
\begin{equation}\label{poldechat}
\hat Z = \Omega A_Z^- \hat \F\cdot \hat{z_0}\, .
\end{equation}
Here $A_Z^-=\exp(\af_Z^-)$ and $\Omega\subset G$ is a compact subset which is of the form 
$\Omega = \F''K $ with $\F''\subset G$ 
a finite set. 

\par For $G/\hat H_0$ the polar decomposition (\ref{poldechat})
can be rephrased as $G= \Omega A_Z^- \F \hat H_0$. 
From the fact that $\hat H_{\C,0}$ is connected we infer 
\begin{equation} \label{H-normalizer} 
\hat H_0< N_G(H)\, .\end{equation}
Let $\F'\subset \hat H_0$ be a minimal set of representatives 
of the finite group $\hat H_0 / HA_{Z,E}$ (observe that $HA_{Z,E}$ is 
the identity component of $\hat H_0$). Note that $\F'$ is in the normalizer of $H$ by (\ref{H-normalizer}).  
We then record the obvious decomposition: 
\begin{equation}\label{Hnot} 
\hat H_0 = A_{Z,E} \F' H \, .\end{equation}

\par Note that since $A_{Z,E}$ is connected 
the open $P_{\rm min} \times H$ double cosets in $G$ are identical to the open 
$P_{\rm min}\times A_{Z,E}H$  double cosets. Hence $\W:= \F \F'\subset G$ is a (not necessarily minimal)
set of representatives for all open $P_{\rm min} \times H$-double cosets in $G$.

The next lemma guarantees that we can slide $A_{Z,E}$ past $\W$. 

\begin{lemma} \label{F-set} Let $w\in \W$. Then there exist for all $a\in A_{Z,E}$ an element $h_a\in H$ such that 
$a^{-1}wa = wh_a$. In particular, $\W A_{Z,E} \subset A_{Z,E} \W H$. 
\end{lemma}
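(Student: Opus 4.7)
Granting the first assertion, $wa=a(a^{-1}wa)\in a\cdot wH\subset A_{Z,E}\W H$, so it suffices to show that $w^{-1}a^{-1}wa\in H$ for every $w\in\W$ and $a\in A_{Z,E}$. Decompose $w=ff'$ with $f\in\F$ and $f'\in\F'$, and further $f=th$ with $t\in T_Z$ and $h\in\hat H_{\C,0}$. The element $a\in A$ commutes with $t\in\exp(i\af)$ inside the complex torus $\exp(\af_\C)$, and $A_{Z,E}$ normalizes $\hat H_{\C,0}$ because $\af_{Z,E}\subset\hat\hf$ normalizes $\hat\hf_\C$. Therefore
\[
a^{-1}wa=t\,a^{-1}(hf')a\in t\cdot\hat H_{\C,0},
\]
and multiplication on the left by $w^{-1}=(f')^{-1}h^{-1}t^{-1}$ shows that $w^{-1}a^{-1}wa\in\hat H_{\C,0}$. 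Since this element also lies in $G$, it belongs to $\hat H_0=G\cap\hat H_{\C,0}$.

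The next step is to reduce modulo the identity component of $\hat H_0$. By (\ref{Hnot}) we have $\hat H_0=A_{Z,E}\F'H$, where $HA_{Z,E}=H\rtimes A_{Z,E}$ is the identity component (the semidirect product structure follows from $\af_{Z,E}\cap\af_H=\{0\}$) and the finite component group is parametrized by $\F'$. The continuous map $a\mapsto w^{-1}a^{-1}wa\cdot HA_{Z,E}$ from the connected group $A_{Z,E}$ into this discrete quotient is constant at the trivial coset, so $w^{-1}a^{-1}wa\in HA_{Z,E}$. Write this element uniquely as $h(a)b(a)$ with $h(a)\in H$ and $b(a)\in A_{Z,E}$; it remains to show that $b(a)=e$.

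A routine expansion, using that $A_{Z,E}$ is abelian and normalizes $H$, shows that $b\colon A_{Z,E}\to A_{Z,E}$ is a group homomorphism. The key structural input governing its derivative is that $\hf$ is an ideal of $\hat\hf$ with abelian quotient $\hat\hf/\hf\simeq\af_{Z,E}$; hence $[\hat\hf,\hat\hf]\subset\hf$, and the connected group $\hat H_{\C,0}$ acts trivially on $\hat\hf_\C/\hf_\C$. Consequently $\Ad(g)X-X\in\hf_\C$ for every $g\in\hat H_{\C,0}$ and $X\in\hat\hf_\C$. Applied with $g=(hf')^{-1}$ and $X\in\af_{Z,E}$, and combined with $\Ad(t^{-1})X=X$, this yields $X-\Ad(w^{-1})X\in\hf$. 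But $X-\Ad(w^{-1})X$ is exactly the value at $X$ of the derivative at $e$ of the curve $a\mapsto w^{-1}a^{-1}wa\in HA_{Z,E}$, so its projection to $\af_{Z,E}$ vanishes. Hence $db_e=0$, and since $A_{Z,E}$ is connected, $b\equiv e$.

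The principal obstacle is this last step. The algebraic computation of Step 1 only places $w^{-1}a^{-1}wa$ in $\hat H_0$, and both the discrete $\F'$-component (eliminated by connectedness) and the continuous $A_{Z,E}$-component (eliminated by the derivative, which in turn rests on the algebraic fact $[\hat\hf,\hat\hf]\subset\hf$) have to be killed separately before one lands in $H$.
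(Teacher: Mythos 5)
Your proof is correct, and it reaches the key conclusion $w^{-1}a^{-1}wa\in H$ by a genuinely different mechanism than the paper. The paper invokes the uniqueness (from \cite{KKSS}) of the presentation $w=th$ as a representative of an open $P_{\rm min}\times\hat H_0$ double coset to get $a^{-1}ha=hh_a$ with $h_a\in\hat H_0$, then decomposes $h=bh_1$ with $b\in A_{Z,E,\C}$, $h_1\in H_\C$, observes $h_a=h_1^{-1}(a^{-1}h_1a)\in H_\C$, and lands in $H=(G\cap H_\C)_e$ by continuity in $a$ and connectedness of $A_{Z,E}$. You bypass the double-coset uniqueness entirely: you place $w^{-1}a^{-1}wa$ in $\hat H_0$ directly from the fact that $A_{Z,E}$ normalizes $\hat H_{\C,0}$ and commutes with $t$, pass to the identity component $HA_{Z,E}$ by connectedness, and then kill the residual $A_{Z,E}$-factor by showing $b$ is a homomorphism with vanishing differential, the latter resting on $[\hat\hf,\hat\hf]\subset\hf$. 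Both arguments ultimately exploit that $\hf\triangleleft\hat\hf$ with abelian quotient; the paper's route is shorter given the quoted uniqueness statement, while yours is more self-contained. (In fact your Step 2 already gives a shortcut: since $H_\C$ is normal in $\hat H_{\C,0}$ with abelian quotient, the commutator $w^{-1}a^{-1}wa=(hf')^{-1}a^{-1}(hf')a$ of two elements of $\hat H_{\C,0}$ lies in $H_\C$ outright, after which only the connectedness step is needed — so your final derivative argument, though valid, can be avoided.)
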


\begin{proof} Let $w=th \in \W$ with $t\in T_Z$ and $h\in \hat H_{\C,0}$.  For 
$a\in A_{Z,E}$ the element $a^{-1} wa$ represents the same open double $P_{\min}\times \hat H_0$ coset 
as $w$. Further note that $a^{-1} wa =  t a^{-1} h a$. We infer from the uniqueness
of the presentation $w=th$ (as a representative of an open $P_{\min} \times \hat H_0$ double coset) 
that there exists an element $h_a\in \hat H_0$ such that 
$a^{-1} ha = h h_a$.  Note that $\hat H_{\C,0} = A_{Z,E,\C}H_\C $ and therefore we 
can decompose 
$ h = bh_1$ with $b\in A_{Z,E, \C}$, $h_1\in H_\C$. It follows
that $a^{-1} h_1 a =h_1 h_a$. Hence $h_a\in \hat H_0 \cap H_\C$ and as $h_a$ varies continuously 
with $a$ we deduce that it belongs to $H$. 
\end{proof} 

Observe that $A_{Z,E} \subset A_Z^-$.  Thus putting (\ref{poldechat}), (\ref{Hnot}) and Lemma \ref{F-set} together 
we arrive at the polar decomposition for $Z$:
\begin{equation}\label{poldec} Z = \Omega A_Z^- \W\cdot z_0\, .\end{equation} 

\begin{rmk} \label{symmsp} 
Consider the case where $Z=G/H$ is a symmetric space as in
Example \ref{symmetric space case}. 
We choose $\af$ such that it is $\tau$-stable and such that the 
$-1$-eigenspace $\af_{pq}$ of $\tau$ on $\af$ is maximal. 
Then $\af_Z=\af_H^\perp=\af_{pq}$
and  the set of $P_{\rm min} \times H$ open double cosets
is naturally identified with a quotient of Weyl-groups: $W_{pq} / W_{H\cap K}$
where $W_{pq}= N_K(\af_{pq}) / Z_K(\af_{pq})$ and 
$W_{H\cap K} = N_{H\cap K} (\af_{pq}) / Z_{H\cap K}(\af_{pq})$
(see \cite{Ross}, Cor.~17).
Moreover, in this case (\ref{poldec}) is valid with $\Omega=K$
(see \cite{Mogens}, Thm.~4.1).
\end{rmk} 

For $g\in G $ we set $\hf_g:= \Ad(g)\hf$ and $H_g=gHg^{-1}$, and note that
if $P_{\rm min} gH$ is open then 
$Z_g=G/H_g$ is a real spherical space. In particular this applies when
$g\in\W$.

\begin{lemma}\label{hf} Let $w=th\in \W$ with $t\in T_Z$ and $h\in\hat H_{\C,0}$. Then
$$\hf_w = \lf\cap \hf  + {\mathcal G}(T_w),$$ where $T_w: \oline \uf \to \uf + (\lf \cap \hf)^\perp$ is a linear map with 
\begin{equation} \label{Tw} 
T_w(X_{-\alpha}) = \sum_\beta \e_{\alpha, \beta}(w)  X_{\alpha, \beta}\end{equation} 
in the notation from (\ref{T}), and
where $\e_{\alpha, \beta} (w)= t^{\alpha+\beta}\in \{-1, 1\}$. 
\end{lemma}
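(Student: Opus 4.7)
The plan is first to reduce the computation of $\hf_w=\Ad(w)\hf$ to the torus twist $\Ad(t)\hf_\C$, then to read off $T_w$ weight-by-weight from the local structure theorem (\ref{LST})--(\ref{T}), and finally to extract the real form using that $t\in T_Z$ lies in a compact torus.

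I would first observe that $\hf$ is an ideal in $\hat\hf=\hf+\af_{Z,E}$, since $\af_{Z,E}$ normalizes $\hf$ and is abelian. Consequently $\Ad(h)\hf_\C=\hf_\C$ for every $h\in\hat H_{\C,0}$, and so
\[
\hf_{w,\C}=\Ad(t)\Ad(h)\hf_\C=\Ad(t)\hf_\C.
\]
Because $t=\exp(iX_0)$ with $X_0\in\af_Z$ lies in $A_\C\subset L_\C$ and acts diagonally on $\af$-weight spaces, and because $(\lf\cap\hf)_\C$ is $\af$-stable (a feature of the $Z$-adapted Levi from \cite{KKS}), $\Ad(t)$ fixes $(\lf\cap\hf)_\C$. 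For the graph summand, acting weight-by-weight gives
\[
\Ad(t)\bigl(X_{-\alpha}+T(X_{-\alpha})\bigr)
= t^{-\alpha}X_{-\alpha}+\sum_\beta t^\beta X_{\alpha,\beta}
= t^{-\alpha}\Bigl(X_{-\alpha}+\sum_\beta t^{\alpha+\beta}X_{\alpha,\beta}\Bigr).
\]
Rescaling by the nonzero scalar $t^{-\alpha}$ and letting $X_{-\alpha}$ range over $\gf^{-\alpha}_\C$ identifies $\Ad(t)\mathcal{G}_\C(T)$ with the graph $\mathcal{G}_\C(T'_w)$ of the complex-linear map $T'_w(X_{-\alpha})=\sum_\beta t^{\alpha+\beta}X_{\alpha,\beta}$. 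Combining, $\hf_{w,\C}=(\lf\cap\hf)_\C+\mathcal{G}_\C(T'_w)$.

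The last step is to take real points. The Levi summand automatically contributes $\lf\cap\hf$. For the graph, an element $X_{-\alpha}+\sum_\beta t^{\alpha+\beta}X_{\alpha,\beta}$ with $X_{-\alpha}\in\gf^{-\alpha}$ and the $X_{\alpha,\beta}\in\gf^\beta$ (or in $(\lf\cap\hf)^{\perp_\lf}$ when $\beta=0$) real lies in $\gf$ if and only if each scalar $t^{\alpha+\beta}$ with $X_{\alpha,\beta}\neq 0$ is real, since the distinct root-space components are $\R$-linearly independent. Since $w\in G$, one has $\dim_\R\hf_w=\dim_\R\hf$, which forces these reality conditions to be satisfied, so setting $\epsilon_{\alpha,\beta}(w)=t^{\alpha+\beta}$ produces a real linear map $T_w$ with $\hf_w=\lf\cap\hf+\mathcal{G}(T_w)$. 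Finally, as $t\in T_Z=\exp(i\af_Z)$ lies in a compact torus, $|t^{\alpha+\beta}|=1$, so each real scalar $\epsilon_{\alpha,\beta}(w)$ must be $\pm1$, yielding the claimed formula. The main obstacle is the reality conclusion $t^{\alpha+\beta}\in\R$; an alternative derivation exploits $\sigma(w)=w$ (with $\sigma$ complex conjugation), giving $t^2=\sigma(h)h^{-1}\in A_\C\cap\hat H_{\C,0}$, combined with the vanishing of each $\alpha+\beta\in\mathcal M$ on $\af_H+\af_{Z,E}$ — modulo a delicate component-group argument for $A_\C\cap\hat H_{\C,0}$.
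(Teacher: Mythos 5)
Your proof is correct and follows essentially the same route as the paper: reduce to $\hf_w=\Ad(t)\hf_\C\cap\gf$ via the fact that $\hat H_{\C,0}$ normalizes $\hf_\C$, apply $\Ad(t)$ weight-by-weight to the local structure decomposition, and use $|t^{\alpha+\beta}|=1$ to get $\pm1$. Your explicit dimension count $\dim_\R\hf_w=\dim_\R\hf$ to force the reality of all the scalars $t^{\alpha+\beta}$ is a welcome elaboration of a step the paper leaves implicit; the alternative derivation you sketch at the end is not needed.
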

\begin{proof} We first observe that 
\begin{equation}\label{hf-eq}
\hf_w =\Ad(t)\hf_\C\cap\gf.
\end{equation}
Now by (\ref{LST}) an arbitrary element $X\in \hf_\C$ can be uniquely written as 
$$X=X_0 + \sum_{\alpha\in \Sigma_\uf} c_\alpha (X_{-\alpha} + \sum_{\beta\in\Sigma_\uf\cup\{0\}} X_{\alpha, \beta})$$
with $X_0\in (\lf\cap \hf)_\C$ and with coefficients $c_\alpha\in \C$.
Hence 
$$\Ad(t) X= X_0 + \sum_{\alpha \in \Sigma_\uf}  t^{-\alpha} c_\alpha (X_{-\alpha} + 
\sum_{\beta\in\Sigma_\uf\cup\{0\}} t^{\alpha+\beta} X_{\alpha, \beta})\, .$$
We conclude that $\Ad(t)X\in \gf$ if and only if $X_0\in \lf\cap \hf$, $c_\alpha t^{-\alpha}\in \R$  
for all $\alpha$
and $t^{\alpha+\beta}\in \R$ for all $\alpha,\beta$, that is $t^{\alpha+\beta}\in \{-1, 1\}$. 
\end{proof}

\begin{cor}\label{Zw}
Let $w\in\W$. Then
\begin{enumerate} 
\item $Q$ is the $Z_w$-adapted parabolic subgroup,  
\item $\af_Z^-$ is the compression cone for $Z_w$. 
\end{enumerate}  
\end{cor}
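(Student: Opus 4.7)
The plan is to pivot everything on the explicit description of $\hf_w$ given by Lemma \ref{hf}, namely $\hf_w=\lf\cap\hf+\mathcal{G}(T_w)$ with $T_w(X_{-\alpha})=\sum_\beta\e_{\alpha,\beta}(w)X_{\alpha,\beta}$ and $\e_{\alpha,\beta}(w)\in\{-1,1\}$. From this I would first read off three stable algebraic features: (i) $\lf\cap\hf_w=\lf\cap\hf$, because $\mathcal{G}(T_w)$ is the graph of a map out of $\oline\uf$ and therefore meets $\lf$ only in $0$; (ii) $\af\cap\hf_w=\af\cap\hf=\af_H$, hence $\af_{Z_w}:=\af_H^\perp=\af_Z$; and (iii) $\qf+\hf_w=\gf$, since $\oline\uf\subset\mathcal{G}(T_w)+\uf+(\lf\cap\hf)^{\perp_\lf}\subset\mathcal{G}(T_w)+\qf$.

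For (1), the Lie algebra conditions of the local structure theorem relative to the base point $wz_0$ of $Z_w$ are then all in hand. Real sphericality of $Z_w$ (i.e.\ $\pf_{\min}+\hf_w=\gf$) is automatic because $w\in\W$ is chosen to represent an open $P_{\min}$-orbit on $Z$. The containments $\lf_n\subset\lf\cap\hf_w\subset\lf$ follow from (i) together with the original LST for $Z$, and at the group level I would pass to identity components using the connectedness of $H_w=wHw^{-1}$. The remaining orbit equality $P_{\min}\cdot wz_0=Q\cdot wz_0$ I would not verify directly; instead I would appeal to the uniqueness clause of the local structure theorem of \cite{KKS}: the $Z_w$-adapted parabolic containing $P_{\min}$ is characterized by the Levi/Lie-algebra data, and since (ii) gives $\af_{Z_w}=\af_Z$ that Levi must coincide with $L$, forcing the adapted parabolic to be $Q$.

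For (2), I would argue that the defining monoid $\mathcal{M}_w$ of $Z_w$ is literally equal to $\mathcal{M}$. Indeed, $\mathcal{M}_w$ is generated by those sums $\alpha+\beta$ for which the corresponding coefficient of $T_w(X_{-\alpha})$ is nonzero, and replacing $X_{\alpha,\beta}$ by $\e_{\alpha,\beta}(w)X_{\alpha,\beta}$ with $\e_{\alpha,\beta}(w)\in\{\pm1\}$ preserves the support of the expansion \eqref{Tw}. Combined with $\af_{Z_w}=\af_Z$ from step (ii), this yields $\af_{Z_w}^-=\af_Z^-$.

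The only genuinely delicate point is the orbit equality in part (1): the Lie algebra inclusions and the openness of $P_{\min}\cdot wz_0$ do not by themselves imply $P_{\min}\cdot wz_0=Q\cdot wz_0$ at the group level when $G$ is disconnected. My plan is to dodge this by invoking uniqueness of the adapted parabolic rather than matching orbits by hand; everything else, including (2), is a direct computation once the sign-invariance of the support of $T$ is recorded.
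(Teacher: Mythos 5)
Your proposal is correct and follows exactly the route the paper takes: its proof of Corollary \ref{Zw} is simply ``Immediate from Lemma \ref{hf},'' i.e.\ the observation that $\hf_w=\lf\cap\hf+\mathcal{G}(T_w)$ has the same structural shape as $\hf$, with the signs $\e_{\alpha,\beta}(w)\in\{\pm1\}$ preserving both the Levi data determining the adapted parabolic and the support of the expansion determining the monoid $\mathcal{M}$. You merely spell out the details that the paper leaves implicit.
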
 

\begin{proof} Immediate from Lemma \ref{hf}.\end{proof}

\subsubsection{The sets $\F$ and $\W$ for the boundary degenerations}\label{FI-sub}
Let  $I\subset S$. We define $\hat H_I$ as in (\ref{romani}),
with the assumption
$\hat H=\hat H_0$, and set
$$\hat Z_I:= G/\hat H_I= G \cdot \hat z_{0,I}.$$ 
We wish to construct a set $\F_I$ of representatives of open 
$P_{\rm min} \times \hat H_I$
double cosets in $G$, analogous to the previous set $\F$ for $P_{\rm min} \times \hat H_0$.
Recall that possibly $\hat H_{I,0}\subsetneq \hat H_I$.

Notice that the $\hat Z_I$-adapted parabolic subgroup
is $Q$ and that $L_{\rm n} \subset L\cap \hat H_I$. The local structure theorem 
for $\hat Z_I$ then implies that $Q\times_L (L/L\cap \hat H_I) \to \hat Z_I$ is an open immersion 
onto the $P_{\rm min} $-orbit $P_{\rm min} \cdot \hat z_{0,I}$. Moreover $A_{Z_I}= A_Z/A_I$. 
Realize $\af_{Z,I}\subset \af_Z$ via $\af_I^{\perp_{\af_Z}}$ and set $T_{Z_I}=\exp(i\af_{Z_I})$. Let 
$\hat \F_I$ be a minimal set of representatives 
of the open $P_{\min}\times \hat H_I$ double cosets
which are of the form $\hat w_I= \hat t_I \hat h_I \in \hat \F_I$, $\hat t_I \in T_{Z_I}$, $\hat h_I \in \hat H_{I,\C}$. 
As before we also have a minimal set $\F_I$ of representatives for the open cosets for the smaller
group $P_{\min}\times \hat H_{I,0}$, such that
$$\hat \F_I\subset \F_I= \{ w_{1,I}, \ldots, w_{k_I,I}\}$$
where $w_{j,I} = t_{j,I} h_{j,I}$ with $t_{j,I} \in T_{Z_I}$ and $h_{j,I}\in \hat H_{I,\C, 0}$. 

\par Finally in analogy to $\F'$ we choose $\F_I'$
as a minimal set of representatives for $\hat H_{I,0}/H_IA_I$,
and set $\W_I:=\F_I \F_I'$.  
Then the polar decomposition of 
$Z_I$ is given by 
\begin{equation*} 
Z_I = \Omega A_{Z_I}^- \W_I \cdot z_{0,I}\, .\end{equation*} 
with $\Omega\subset G$  a compact subset of the form $\F_I''K$ for a finite set $\F_I''\subset G$.

\subsection{Relating $\W_I$ to $\W$}

We start with a general lemma: 

\begin{lemma} \label{doublecoset}Let $Z=G/H$ be a real spherical space and 
$g\in G$ be such that $P_{\min}gH_I$ is open in $G$. Then there exists 
$s_0>0$ such that $P_{\min}g a_{s,I}H$ is open and equal to
$P_{\min}ga_{s_0,I}H$ for all $s\ge s_0$.
\end{lemma}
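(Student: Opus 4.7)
The plan is a two-step argument: first establish openness of $P_{\min}ga_{s,I}H$ for all large $s$, then use a connectedness/finiteness argument to pin down the double coset.

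For the openness step, I would use the Grassmannian description of $\hf_I$ from (\ref{defi h_I}). By construction $X_I\in\af_Z^{--}$ in the sense needed there, so $\Ad(a_{s,I})\hf = e^{s\ad X_I}\hf \to \hf_I$ in $\Gr_d(\gf)$ as $s\to\infty$. Applying the homeomorphism $\Ad(g)$ of $\Gr_d(\gf)$ gives $\Ad(ga_{s,I})\hf \to \Ad(g)\hf_I$. The subset
\[
\{V\in\Gr_d(\gf):\pf_{\min}+V=\gf\}
\]
is open in $\Gr_d(\gf)$, and the hypothesis that $P_{\min}gH_I$ is open means exactly that $\Ad(g)\hf_I$ lies in this set. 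Hence there exists $s_0>0$ such that $\pf_{\min}+\Ad(ga_{s,I})\hf=\gf$ for every $s\geq s_0$, which is the statement that $P_{\min}ga_{s,I}H$ is open in $G$.

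For the constancy step, I would invoke the general fact that a real spherical space $Z=G/H$ carries only finitely many $P_{\min}$-orbits, so $G$ has only finitely many open $P_{\min}\times H$ double cosets, say $P_{\min}w_1H,\dots,P_{\min}w_NH$. Each of these is open in $G$, hence the continuous map $\varphi:\R\to G$, $s\mapsto ga_{s,I}$, pulls each back to an open subset $U_j:=\varphi^{-1}(P_{\min}w_jH)\subset\R$. The $U_j$ are pairwise disjoint, and by the previous step $[s_0,\infty)\subset\bigcup_j U_j$. Since $[s_0,\infty)$ is connected, exactly one of the sets $U_j\cap[s_0,\infty)$ is non-empty, so $P_{\min}ga_{s,I}H=P_{\min}ga_{s_0,I}H$ for every $s\geq s_0$.

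The argument is essentially topological, and I do not expect a serious obstacle; the only point requiring care is ensuring that $\hf$ and all $\Ad(a_{s,I})\hf$ lie in the same component $\Gr_d(\gf)$, which is immediate since $\dim\hf_I=\dim\hf=d$ (as noted just before (\ref{compress})). The rest is the openness of the transversality condition in the Grassmannian together with the finiteness of open $P_{\min}\times H$ double cosets on $G$.
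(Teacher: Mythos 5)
Your proof is correct and follows essentially the same route as the paper: openness for large $s$ via the Grassmannian limit $\Ad(a_{s,I})\hf\to\hf_I$ together with the openness of the transversality condition, and constancy of the double coset by continuity/connectedness of $s\mapsto ga_{s,I}$. The only cosmetic difference is that the paper phrases the openness step as a proof by contradiction and compresses the last step to ``by continuity,'' which you have usefully spelled out via the disjointness of the open $P_{\min}\times H$ double cosets.
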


\begin{proof} 
If there were a sequence $s_n>0$ tending to infinity such that $\pf + \Ad(ga_{s_n,I})\hf \subsetneq \gf$,
then $\lim_{n\to \infty} (\pf +\Ad(g)\Ad(a_{s_n,I})\hf) = \pf +\Ad(g)\hf_I$ would be a subspace 
of $\gf$ with positive co-dimension, which contradicts the assumption on $g$.
Hence $P_{\min}g a_{s,I}H$ is open for all $s\ge s_0$, for some $s_0$.
By continuity this implies the sets are equal.
\end{proof}

Fix an element $w_I\in \W_I$ and observe that $P_{\min}w_I H_I$ is open in $G$. 
Lemma  \ref{doublecoset} then gives an element $w\in \W$ and an $s_0>0$ 
such that $P_{\min}w_I a_{s,I} H = P_{\min}wH$ for all $s\geq s_0$. 
We say that $w$ {\it corresponds} to $w_I$ but note that $w$ is not necessarily unique. 

\par  

\begin{lemma} \label{wwI} Let $w_I\in\W_I$ and let
$w\in \W$ correspond to $w_I$. With $s_0>0$ as above there exist for each $s\ge s_0$
elements $u_s\in U$, $b_s\in A_Z$, $m_s \in M$ and $h_s\in H$,
each depending continuously on $s\ge s_0$, such that 
\begin{enumerate}
\item\label{first} $w_I a_{s,I} = u_s b_s m_s w h_s$.
\end{enumerate}
Moreover,
\begin{enumerate}
\setcounter{enumi}{1}
\item \label{first2} the elements $u_s$ and $b_s$ are unique and depend analytically on $s$, 
\item\label{second} $\lim_{s\to \infty}  (a_{s,I} b_s^{-1})=\1$, 
\item\label{third} $\lim_{s\to \infty} u_s =\1$,
\item\label{fourth} $m_s$ can be chosen such that $\lim_{s\to \infty} m_s $ exists in $M$.  
\end{enumerate}
\end{lemma}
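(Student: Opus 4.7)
The plan is to apply the local structure theorem to the real spherical space $Z_w = G/H_w$ — where, by Corollary \ref{Zw}, the parabolic $Q = LU$ remains $Z_w$-adapted and $\af_Z^-$ is still the compression cone — and then to extract the asymptotics of the factors from the deformation $\Ad(a_{s,I})\hf \to \hf_I$ described in \eqref{defi h_I}.

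For existence and uniqueness of $(u_s, b_s)$, observe that by the construction of $w$ via Lemma \ref{doublecoset} we have $w_I a_{s,I} \in P_{\rm min}wH$, so $w_I a_{s,I} = p_s w h_s$ for some $p_s \in P_{\rm min}$ and $h_s \in H$. The local structure theorem applied to $Z_w$ identifies the open $P_{\rm min}$-orbit through $wz_0$ with $U\times L/(L\cap H_w)$; since $L_{\rm n}\subset L\cap H_w$ and $A_H = A\cap H_w$ (the latter from Lemma \ref{hf}), the product $U\cdot A_Z\cdot M$ surjects onto a Zariski open neighborhood of the identity in $Q/(Q\cap H_w)$, with the $U$- and $A_Z$-coordinates uniquely determined. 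This yields the factorization \eqref{first} and the uniqueness in \eqref{first2}; analytic dependence on $s$ is automatic because the decomposition map is algebraic, and the residual freedom in $m_s$ lies in the compact group $M\cap L\cap H_w$.

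For the limits, rewrite \eqref{first} as
\[ \Ad(w_I)\,\Ad(a_{s,I})\hf \;=\; \Ad(u_s b_s m_s)\,\hf_w. \]
By \eqref{defi h_I} the left-hand side converges to $\Ad(w_I)\hf_I$ in $\Gr_d(\gf)$. Using the explicit description \eqref{expression h_I} of $\hf_I$ and Lemma \ref{hf} for $\hf_w$, and decomposing both sides weight-by-weight along $\Sigma_\uf$, one reads off the asymptotic behavior of each factor. Matching the $A_Z$-grading shifts on both sides forces $a_{s,I}b_s^{-1}\to\1$ in $A_Z$, giving \eqref{second}. Because $\Ad(u)$ for $u\in U$ shifts $\af$-weights strictly upward in $\uf$, while the limit $\Ad(w_I)\hf_I$ is still the graph over $\oline\uf$ of the operator $T_I$ from \eqref{T_I}, comparison of the $\oline\uf\to\uf$ components forces $u_s\to\1$, yielding \eqref{third}. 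With $u_s$ and $b_s$ controlled, one exploits the residual freedom inside the compact group $M\cap L\cap H_w$ to select a continuous branch of $m_s$ with convergent limit, giving \eqref{fourth}.

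The main obstacle is the precise tracking of $b_s$ in \eqref{second}: $w_I$ and $w$ are only defined up to multiplication by elements of $T_Z\cap\hat H_{I,\C}$ and $T_Z\cap\hat H_\C$ respectively, so one must verify — using the compatibility between the representative sets $\F$ and $\F_I$ constructed in Section \ref{FI-sub}, together with the sign data $\e_{\alpha,\beta}(w)$ appearing in Lemma \ref{hf} — that these choices can be coordinated so that $a_{s,I}b_s^{-1}$ converges to $\1$ rather than merely remains bounded. Once this normalization is in place, the remaining conclusions follow by the Lie-algebraic comparison above.
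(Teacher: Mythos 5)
Your treatment of the factorization and of the uniqueness/analyticity of $(u_s,b_s)$ is exactly the paper's: apply the local structure theorem to $Z_w$ via Corollary \ref{Zw}. The gap is in your passage to the limits via the Grassmannian. The claim that matching the two sides of $\Ad(w_I)\Ad(a_{s,I})\hf=\Ad(u_sb_sm_s)\hf_w$ ``forces $a_{s,I}b_s^{-1}\to\1$'' cannot work, because the limit subalgebra $\hf_I$ is normalized by all of $\af_I$ (see (\ref{ai2})): if $\Ad(a_{s,I})\hf\to\hf_I$ then also $\Ad(a_{s+c_s,I})\hf\to\hf_I$ for any slowly divergent $c_s$. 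The point in $\Gr_d(\gf)$ is therefore blind to the $\af_I$-component of $b_s$, which is precisely what the assertion $a_{s,I}b_s^{-1}\to\1$ is about; the Grassmannian data does not even give boundedness of $a_{s,I}b_s^{-1}$. The paper avoids this by working with finite-dimensional $H$-spherical representations $(\pi,V)$ (after reducing to the quasi-affine case by the affine cone construction): the renormalized vectors $a_{s,I}^{\lambda_\pi}\pi(a_{s,I})v_H\to v_{H,I}$ retain the scale information that the Grassmannian forgets, and pairing the resulting identity with a highest weight vector of $V^*$ yields $t_I^{-\lambda_\pi}=t^{-\lambda_\pi}\lim_s(a_{s,I}b_s^{-1})^{\lambda_\pi}$; since the left quotient is unimodular and the limit is a positive real, it equals $1$, and quasi-affineness makes the $\lambda_\pi$ span $\af_Z^*$. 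Incidentally, this also shows that the ``main obstacle'' you identify (coordinating the representatives $w$, $w_I$ modulo $T_Z\cap\hat H_\C$) is not an obstacle at all for the fixed representatives in the statement.

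The same defect undermines your argument for $u_s\to\1$. Conjugation by $b_s$ rescales the graph coefficients $X_{\alpha,\beta}$ of $\hf_w$ by $b_s^{\alpha+\beta}$, and these tend to $0$ exactly for $\alpha+\beta\notin\la I\ra$; an a priori unbounded $u_s$ acting on these collapsing directions could still produce a convergent subspace, so ``comparison of the $\oline\uf\to\uf$ components'' does not yield boundedness of $u_s$. This boundedness is the genuinely hard step, and the paper proves it by a separate induction over an ordering of $\Sigma_\uf$, comparing the weight-$(-\lambda+\alpha_j)$ components of both sides of the representation-theoretic identity for a regular $\pi$; only then does a compactness argument combined with the local structure theorem for $\hat Z_I$ give $u_s\to\1$, and the convergence of $m_s$ modulo $M\cap\hat H_I$ (plus the finite-index observation for $(M\cap H_w)_e$) gives the last assertion. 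As written, your proof establishes the first two items but not the three limit statements.
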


\begin{proof}  By Corollary \ref{Zw}, the map 
$$ U \times A_Z \times M/ M\cap H_w \to P_{\rm min} w\cdot z_0, \ \ (u, a, m)\mapsto uamw\cdot z_0$$
is a diffeomorphism (local structure theorem for $Z_w$). 
As $w_Ia_{s,I} \in P_{\rm min} w\cdot z_0$ for $s\geq s_0$, 
this gives (\ref{first}) and (\ref{first2}).

\par After enlarging $G$ to $G\times\R^\times$ we can, via the affine cone construction 
(see \cite{KKS}, Cor. 3.8),  assume that $Z=G/H$ is quasi-affine. 

Let us denote by $\Gamma$ the set of (equivalence classes) of finite dimensional 
irreducible $H$-spherical and $K$-spherical representations. To begin with we recall 
a few facts from Section \ref{Liar} and from \cite{KKSS}. 

\par For $(\pi,V) \in \Gamma$ we denote 
by $\lambda_\pi\in \af^*$ its highest weight. 
Let $(\pi, V) \in \Gamma$ and $0\neq v_H \in V$ an $H$-fixed vector which we expand 
into $\af$-eigenvectors: 
$$ v_H = \sum_{\nu\in \Lambda_\pi} v_{-\lambda_\pi +\nu}\, .$$
Here $\Lambda_\pi \subset \N_0\Sigma_u$ is such that $-\lambda_\pi  +\Lambda_\pi$ is 
the $\af$-weight spectrum of $v_H$. Note that $v_{-\lambda_\pi}$ is a lowest weight vector 
which is fixed by $M$. 

\par As $\Lambda_\pi|_{\af_Z^-}\leq 0$ (see \cite{KKSS}, Lemma 5.3) we deduce that the limit 
$$ v_{H,I}:= \lim_{s\to \infty}  a_{s,I}^{\lambda_\pi} \pi (a_{s,I}) v_H$$
exists. Moreover with 
$\Lambda_{\pi, I}:=\{ \nu\in \Lambda_\pi\mid \nu (X_I)=0\}$
we obtain that 
$v_{H,I}= \sum_{\nu\in \Lambda_{\pi, I}} v_{-\lambda_\pi+\nu}$. 
Note that $v_{H,I}$ is $H_I$-fixed. 

Let $w=th$ and $w_I=t_Ih_I$ with our previous notation. From (\ref{first}) 
we obtain 
\begin{equation} 
a_{s,I}^{\lambda_\pi} \pi (t_I h_Ia_{s,I}) v_H = a_{s,I}^{\lambda_\pi} \pi (u_s m_s b_s t)v_H\, ,\end{equation} 
and hence, by passing to the limit  $s\to \infty$,
\begin{equation} \label{ww-ident3} 
\pi(t_I) v_{H,I}  = \lim_{s\to \infty} a_{s,I}^{\lambda_\pi} \pi (u_s m_s b_s t)v_H\, .\end{equation}

Let $v^*\in V^*$ be a highest weight vector in the dual representation
and apply it to (\ref{ww-ident3}). 
Since  $v^*(v_H)=v^*(v_{H,I})=v^*(v_{-\lambda_\pi})\neq 0$ 
we get 
$$ t_I^{-\lambda_\pi}=     t^{-\lambda_\pi} \lim_{s\to \infty} (a_{s,I} b_s^{-1})^{\lambda_\pi}$$
and therefore $\lim_{s\to \infty} (a_{s,I} b_s^{-1})^{\lambda_\pi}=1$. Since $Z$ is quasi-affine, it follows 
that $\{ \lambda_\pi\mid \pi \in \Gamma\}$ spans $\af_Z^*$ (see \cite{KKS}, Lemma 3.4), hence (\ref{second}). 

\par We move on to the fourth assertion. We first show that $(u_s)_s$ is bounded in $U$
when $s\to\infty$. For that, let $X_1, \ldots, X_n$ be a basis 
for $\uf$ consisting of root vectors $X_j$ with associated roots $\alpha_j$.
The map 
$$\R^n \to U, \ (x_1, \ldots, x_n)\mapsto \exp(x_n X_n)\cdot  \ldots \cdot \exp(x_1 X_1)$$
is a diffeomorphism. Let $(x_1(s), \ldots, x_n(s))$ be the coordinate vector of $u_s\in U$,
which we claim is bounded.

We fix an ordering of $\Sigma_\uf$ with the property
that if a root $\alpha$ can be expressed as a sum
of other roots $\beta$, then only roots $\beta\le\alpha$ will occur.
It suffices to show, for any given index $j$, that if $x_i(s)$ is bounded for all $i$ with
$\alpha_i<\alpha_j$, then so is $x_i(s)$ for each $i$ with $\alpha_i=\alpha_j$.

We now fix $\pi$ such that it is regular, 
that is, the highest weight $\lambda=\lambda_\pi$ satisfies
$\lambda(\alpha^\vee)>0$ for all $\alpha\in \Sigma_\uf$.
Then the map $X\mapsto d\pi(X) v_{-\lambda}$ is injective
from $\uf$ into $V$.

We compare weights vectors of weight $-\lambda+\alpha_j$
on both sides of (\ref{ww-ident3}). 
On the left side we have $t_I^{-\lambda_\pi +\alpha_j}v_{-\lambda+\alpha_j}$
if $\alpha_j\in\Lambda_{\pi,I}$, and otherwise $0$. 
By applying the Taylor expansion of $\exp$ 
we find on the other side 
$$  \lim_{s\to \infty}  a_{s,I}^{\lambda}\left(
\sum_{m,\nu} 
(b_st)^{-\lambda+\nu}  \frac {x(s)^{m}}{m!} 
d\pi( X_n)^{m_n}\cdots d\pi( X_1)^{m_1} \pi(m_s)v_{-\lambda+\nu}
\right), $$ 
where the sum extends over all multi-indices $m=(m_1,\dots,m_n)$ and all $\nu\in\Lambda_\pi$
for which $\alpha_j=\sum m_i\alpha_i+\nu$. 

Notice that by (\ref{second}) the product 
$a_{s,I}^{\lambda_\pi}(b_s)^{-\lambda+\nu}=(a_{s,I} b_s^{-1})^{\lambda-\nu}a_{s,I}^\nu$
remains bounded when $s\to\infty$. Likewise, by our assumption on the index $j$
all the terms with $m_i\neq 0$ for some $i$ with $\alpha_i\neq\alpha_j$
(and hence $m_i=0$ for all $i$ with $\alpha_i=\alpha_j$), are bounded. 
The remaining terms are those of the form 
$$a_{s,I}^{\lambda}(b_st)^{-\lambda} x_i(s)
d\pi( X_i)v_{-\lambda}$$
where $\alpha_i=\alpha_j$.
It follows by linear independence 
that $x_i(s)$ is bounded for each of these $i$ as claimed, i.e.~$(u_s)_s$ is bounded. 

Finally we show that $u_s$ converges to $\1$. Otherwise there exists $u\neq \1$ and a
sequence $s_k$ of positive numbers tending to infinity such that $u_k:=u_{s_k}\to u$. 
We may assume in addition that $m_k:=m_{s_k}$ is convergent with a limit $m$. 
We apply (\ref{first}) to $\hat z_0\in \hat Z$
$$ w_I a_{s_k,I} \cdot \hat z_0 = u_k m_k b_k t \cdot \hat z_0$$
and take the limit: 
\begin{equation} \label{stab} t_I \cdot \hat z_{0,I} =  um t \cdot \hat z_{0,I}\, .\end{equation} 
The local structure theorem for $\hat Z_I= G/\hat H_I$, then implies $u=\1$ which completes the proof 
of (\ref{third}).  

\par The proof of (\ref{third}) shows as well that 
the limit $m$ of every converging subsequence of $m_s$ satisfies $t_I\cdot \hat z_{0,I}=mt\cdot \hat z_{0,I}$,
and hence determines a unique element in  $M/ M\cap \hat H_I$. Thus $\lim_{s\to \infty}m_s(M\cap \hat H_I)$ exists.
\par Notice that $(M\cap H_w)_e$ has finite index in $M\cap H_I$ as the Lie algebras of both groups 
coincide. By continuity with respect to $s$ it follows that
$m_s (M\cap H_w)_e$ converges in $M/(M\cap H_w)_e$. Now (\ref{fourth}) follows
by trivializing this bundle in a neighborhood of the limit point.
\end{proof}

\begin{rmk} With the assumption and notation of the preceding lemma
let $m:= \lim_{s\to \infty}  m_s$. Then (\ref{stab}) implies the relation 
\begin{equation*}
(\hat\hf_I)_{w_I} = \Ad (m) \widehat{\hf_w}_I\,  \end{equation*} 
where $\widehat{\hf_w}_I:=(\hf_w)_I+\af_I$. To see this, first note that
$\af_I\subset(\hat\hf_I)_{w_I}$ by Lemma \ref{F-set}, hence
(by dimension) it suffices to show 
\begin{equation}\label{mhwI}
\Ad (m) (\hf_w)_I\subset (\hat\hf_I)_{w_I} .
\end{equation}
Let $X\in(\hf_w)_I$ and choose
$X(s)\in \Ad(a_{s,I})\hf_w$ with $X(s)\to X$
for $s\to\infty$. The fundamental vector field on $\hat Z_w$
corresponding to $\Ad(m)X(s)$ has a zero at $ma_{s,I}w\cdot \hat z_0$,
and from 
$$t \cdot \hat z_{0,I} =t\cdot \lim_{s\to \infty} a_{s,I} \cdot \hat z_0=
\lim_{s\to \infty} a_{s,I} w \cdot \hat z_0$$
we deduce that the fundamental vector field 
corresponding to $\Ad(m)X$ then has a zero at 
$m t \cdot \hat z_{0,I}$.
Now (\ref{mhwI}) follows from (\ref{stab}) and the fact
that $(\hat H_I)_{w_I}$ is the stabilizer of $t_I\cdot \hat z_{0,I}
=w_I \cdot\hat z_{0,I}$.
\end{rmk}

\subsection{Unimodularity}

For a moment let $G$ be a an arbitrary Lie group and $H<G$ a closed subgroup. 
We call the homogeneous space $Z=G/H$ {\it unimodular} provided that $Z$ carries a
$G$-invariant positive Borel measure, and recall that this is the case if and only if
the attached modular character 
\begin{equation}\label{defi Delta_Z}
\Delta_Z: H \to \R, \ \ h\mapsto \frac{|\det \Ad_\hf (h)|}{|\det\Ad_\gf(h)|} =|\det \Ad_{\gf/\hf}(h)|^{-1}
\end{equation}
is trivial.

After these preliminaries we return to our initial set-up of a real spherical space 
$Z=G/H$ and its boundary degenerations. In this context we record:

\begin{lemma}\label{Z_I is unimodular}  
Let $Z$ be a  real spherical space which is unimodular. Then all boundary degenerations
$Z_I$ are unimodular.
\end{lemma}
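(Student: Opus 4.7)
The plan is to show that the modular character $\Delta_{Z_I}:H_I\to\R_{>0}$ is trivial by verifying that its derivative at the identity vanishes. Since $H_I$ is connected by construction, this suffices. From the formula (\ref{defi Delta_Z}), the derivative at $Y\in\hf_I$ equals $\tr\ad_{\hf_I}(Y)-\tr\ad_\gf(Y)$, so I need to establish
$$
\tr\ad_{\hf_I}(Y)=\tr\ad_\gf(Y)\qquad(Y\in\hf_I).
$$
My strategy is to deform $\hf_I$ back to $\hf$ through a continuous family of \emph{conjugate} subalgebras and to transfer the corresponding identity for $\hf$, which holds by unimodularity of $Z$, to $\hf_I$ by a limit argument in the Grassmannian.

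Concretely, I would pick $X_I\in\af_Z^-$ with $\alpha(X_I)=0$ for $\alpha\in I$ and $\alpha(X_I)<0$ for $\alpha\in S\setminus I$, and set $a_{t,I}:=\exp(tX_I)\in A_I$. Then $\hf^{(t)}:=\Ad(a_{t,I})\hf$ converges to $\hf_I$ in $\Gr_d(\gf)$ as $t\to\infty$ by (\ref{compress})--(\ref{defi h_I}). For each finite $t$ the subgroup $a_{t,I}Ha_{t,I}^{-1}$ is conjugate to $H$, so $G/a_{t,I}Ha_{t,I}^{-1}$ is unimodular. Writing this out via $\Ad(a_{t,I})$-invariance of the trace yields
$$
\tr\ad_{\hf^{(t)}}(Y_t)=\tr\ad_\gf(Y_t)\qquad(Y_t\in\hf^{(t)}),
$$
for every finite $t\ge 0$.

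Given an arbitrary $Y\in\hf_I$, I would then select a continuous family $t\mapsto Y_t\in\hf^{(t)}$ with $Y_t\to Y$ as $t\to\infty$; this is possible because $\hf^{(t)}\to\hf_I$ in the Grassmannian. Choosing a continuous family of orthonormal bases $e_1(t),\dots,e_d(t)$ of $\hf^{(t)}$ (with respect to any fixed inner product on $\gf$) which extends continuously to $t=\infty$, one has
$$
\tr\ad_{\hf^{(t)}}(Y_t)=\sum_{j=1}^d\langle[Y_t,e_j(t)],e_j(t)\rangle,
$$
which converges to $\tr\ad_{\hf_I}(Y)$. Since $\tr\ad_\gf(Y_t)\to\tr\ad_\gf(Y)$ trivially, the displayed identity passes to the limit and produces the desired vanishing.

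The argument is essentially soft, and I do not anticipate any genuine obstacle. The only technical point is the continuity of the restricted trace as a function of both the $d$-dimensional subspace (varying in $\Gr_d(\gf)$) and the element $Y_t$ lying in it, which is elementary once a continuous family of bases is chosen. The substantive input is the Grassmannian description of the degeneration $\hf_I$ recorded in (\ref{compress})--(\ref{defi h_I}), together with the fact that $H_I$ is connected so that vanishing of the derivative of $\Delta_{Z_I}$ at the identity implies triviality of the character.
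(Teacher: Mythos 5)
Your proof is correct and is essentially the paper's own argument written out in full: the paper simply observes that triviality of $X\mapsto\tr(\ad X)$ on a $d$-dimensional Lie subalgebra is a closed condition in $\Gr_d(\gf)$ and then invokes the limit (\ref{defi h_I}), which is exactly your conjugation-and-pass-to-the-limit scheme. The only cosmetic difference is that, since $\gf$ is reductive, $\tr\ad_\gf\equiv 0$, so the condition reduces to $\tr\ad_{\hf_I}\equiv 0$; your version avoids even that simplification and is otherwise identical.
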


\begin{proof} 
The fact that $X \mapsto \tr (\ad X)$ is trivial in $\hf^*$, 
is a closed condition on $d$-dimensional Lie subalgebras in $\gf$. 
Now apply (\ref{defi h_I}).
\end{proof}

\section {Levi-induced spherical spaces}\label{section induced}

Let $Z=G/H$ be a real spherical space. Let $P<G$ be a parabolic subgroup 
and $P=G_P U_P$ a Levi-decomposition. Then $G_P\simeq P/U_P$. We write 
$$\pr_P : P \to G_P $$ for 
the projection homomorphism. Define $H_P:= \pr_P(H\cap P)$
and set 
$$Z_P:= G_P/ H_P\, .$$
Note that $H_P<G_P$ is an algebraic subgroup.  

\begin{prop}\label{propind} The space $Z_P$ is real spherical. 
\end{prop}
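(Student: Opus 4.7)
The plan is to produce an open orbit of some minimal parabolic subgroup $S$ of $G_P$ on $Z_P = G_P/H_P$. I would fix a minimal parabolic subgroup $P_{\min} < G$ contained in $P$, so that $S := \pr_P(P_{\min}) = P_{\min}/U_P$ is a minimal parabolic of $G_P = L_P$.

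The main tool would be the natural map
\[
\phi \colon PH/H \;\longrightarrow\; L_P/H_P, \qquad phH \;\longmapsto\; \pr_P(p)\, H_P,
\]
defined on the $P$-orbit $PH/H$ of the base point in $Z$. It is well-defined because $H_P = \pr_P(H\cap P)$; surjective since any $l\in L_P\subset P$ satisfies $\phi(lH) = lH_P$; and $P$-equivariant when $P$ acts on the target through $\pr_P$. In particular $\phi$ intertwines the $P_{\min}$-action on the source with the $S$-action on the target.

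Next I would invoke real sphericity of $G/H$ to assert that $P_{\min}$ has only finitely many orbits on $Z$, a standard theorem used implicitly in this paper whenever open $P_{\min}$-orbit representatives are enumerated. Restricting to $PH/H\subset Z$ still gives only finitely many $P_{\min}$-orbits, and because $\phi$ is surjective and $P_{\min}$-equivariant each $S$-orbit on $L_P/H_P$ arises as the image of one of them; hence $S$ has only finitely many orbits on $L_P/H_P$. Since $L_P/H_P$ is a connected smooth manifold covered by finitely many locally closed $S$-orbits, the orbit of maximal dimension has to equal $\dim L_P/H_P$ and is thus open, providing the required open $S$-orbit.

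The step I expect to be the main obstacle is the finiteness of $P_{\min}$-orbits on a real spherical space: this is the only nontrivial input, and I would cite it rather than reprove it. A backup route would apply the already-proved Proposition for $H < P$ to the subgroup $\tilde H := (H\cap P)U_P < P$, for which $P/\tilde H \cong L_P/H_P$ as $L_P$-spaces, reducing the statement to $G/\tilde H$ being real spherical; this in turn could be approached by recognizing $\tilde{\hf}$ as a Grassmannian limit $\lim_{t\to\infty}\Ad(\exp tX)\hf$ for $X$ in the center of $\lf_P$ with $\alpha(X)>0$ on roots of $\uf_P$, and then transferring real sphericity along this limit.
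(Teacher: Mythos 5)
Your main argument is correct and is essentially the paper's own proof: both rely on the finiteness of the number of $P_{\min}$-orbits on a real spherical space (the paper cites \cite{KS1} for this), restrict to the open subset $P\cdot z_0\simeq P/P\cap H$ of $Z$, and push orbits forward along $\pr_P$ to conclude that the minimal parabolic $\pr_P(P_{\min})$ of $G_P$ has finitely many, hence at least one open, orbit on $Z_P$. The concluding step (finitely many locally closed orbits covering a manifold forces an open one) and the equivariant surjection $\phi$ are exactly the implicit content of the paper's two-line argument, so no further comparison is needed.
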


\begin{proof} Let $Q_{\rm min}<P$ be a minimal parabolic subgroup of $G$. According to 
\cite{KS1}, the number of $Q_{\rm min}$-orbits in $Z$ is finite. In particular the number of 
$Q_{\rm min}$-orbits in $P/P\cap H \subset Z$ is finite. 
Observe that $Q_{\rm min, P}:= \pr_P(Q_{\rm min})$ is a minimal parabolic subgroup of $G_P$. 
It follows that the number of $Q_{\rm min, P}$-orbits in $Z_P$ is finite. In particular 
there exists open orbits, i.e.~$Z_P$ is real spherical. 
\end{proof}
 
We call $Z_P$ the {\it Levi-induced real spherical space} attached to $P$.

\subsection{Induced parabolics with respect to open $P$-orbits}
In the sequel we are only interested in parabolic subgroups 
containing the fixed minimal parabolic 
subgroup $P_{\min}$.  
We recall the parametrization of these.
Recall that $\Sigma=\Sigma(\gf, \af)\subset \af^*$ is the root system attached to 
the pair $(\gf, \af)$.  Let $\Sigma^+\subset \Sigma$ be the positive system attached to $N$ and 
$\Pi \subset \Sigma^+$ the associated set of simple roots.
The parabolic subgroups $P\supset P_{\rm min}$ are in 
one-one correspondence with subsets $F\subset \Pi$.
The parabolic subgroup $P_F$ attached to $F\subset \Pi$ has  Levi-decomposition $P_F = G_F U_F$ where 
$G_F = Z_G(\af_F)$ with 
$$\af_F:=\{ X\in \af\mid  (\forall \alpha \in F) \ \alpha(X)=0 \}$$
and 
$$\uf_F:=\bigoplus_{ \alpha \in \Sigma^+\bs\la F\ra} \gf^\alpha\, .$$
In these formulas $\gf^\alpha\subset \gf$ is the root space attached to $\alpha\in \Sigma$ and 
$\la F \ra\subset \Sigma$  denotes the root system generated  by  $F$. 

The space $\af$ decomposes orthogonally 
as $\af = \af_F \oplus \af^F$ with 
$$ \af^F :=\Span\{ \alpha^\vee \mid \alpha \in \Pi\bs F\}\, $$
where $\alpha^\vee \in \af$ is the co-root associated to $\alpha$. Observe that 
$A^F$ is a maximal split torus of the semi-simple commutator group $[G_F, G_F]$,
and that $P_{\rm min, F}:= P_{\rm min} \cap G_F$ is a minimal parabolic of $G_F$
with unipotent radical $U^F$ where 
$$\uf^F:= \bigoplus_{\alpha\in \la F\ra^+} \gf^\alpha\, .$$

Denote $\pr_F=\pr_{P_F}$, $H_F=H_{P_F}$, and 
$$Z_F:= Z_{P_F}=G_F/H_F,$$
the Levi-induced homogeneous space attached to $P_F$.

\par We write $F_Q\subset \Pi$ for the set which corresponds to $Q$. 
In the sequel our interest is particularly
with those parabolic subgroups 
$P_F$ which contain $Q$, that is for which $F\supset F_Q$. 
For later reference we note that
\begin{equation} \label{FQ}
\gf^\alpha\subset \hf , \quad \alpha\in\la F_Q\ra.
\end{equation}

\subsection{Examples of induced spaces}

\subsubsection{Symmetric spaces}

Assume as in Example \ref{symmetric space case}
that $Z$ is a symmetric space.
The $Z$-adapted parabolic subgroup $Q$ is 
$\tau\theta$-stable, 
as are also
all parabolic subgroups $P_F \supset Q$. In particular, 
we have 
$$ P_F \cap H = G_F \cap H = H_F$$ 
and $Z_F=G_F/H_F$ is a symmetric space, which
embeds into $Z$.
 
\subsubsection{Triple spaces}\label{triple spaces}
 
For a general real spherical space  it is an unfortunate fact that basic properties of $Z$ 
typically do not inherit to $Z_F$. For example if $Z$ is affine/unimodular/has trivial automorphism group, then one  cannot expect the same for induced spaces $Z_F$. 
This is all well illustrated in the basic example of triple spaces. 
Let $\G:= \SO_e(1, n)$ for $n\geq 2$ and set 
$$ G := \G \times \G \times \G\, .$$
Then 
$$ H:= \Delta_3(\G):= \{ (g, g, g)\mid g\in \G\}$$ 
is a real spherical subgroup of $G$. Let 
$$ P_{\rm min} := \P_1 \times \P_2 \times \P_3$$ 
be a minimal parabolic subgroup of $G$, that is each $\P_i<\G$ is a minimal 
(and maximal) parabolic subgroup of $\G$.  The condition that $HP_{\rm min}\subset G$ 
is open means that all $\P_i$ are pairwise different (see \cite{DKS}). Note that $Q=P_{\rm min}$ in this case.  Note that $\Sigma=A_1 \times A_1 \times A_1$ and thus $\Pi=\{ \alpha_1, \alpha_2, \alpha_3\}$. 
There are six proper parabolic subgroups $P_F$ containing $Q$. For example  if $|F|=1$, say $F=\{\alpha_3\}$ one has 
$$ P_{\{\alpha_3\}}= \P_1 \times \P_2\times \G, $$
whereas for $|F|=2$, say $F=\{ \alpha_2, \alpha_3\}$ one has 
$$P_{\{ \alpha_2, \alpha_3\}}:=\P_1 \times \G\times \G\, .$$
Let $A<P_{\rm min}$ be a maximal split torus. Then 
$A= {\mathcal A}_1\times {\mathcal A}_2\times {\mathcal A}_3$. Further we let 
$\mathcal{M}_i<\P_i$ a maximal compact subgroup which commutes with ${\mathcal A}_i$. 
Denote by $p_i: \P_i \to \mathcal{M}_i {\mathcal A}_i$
the projection along $\mathcal{N}_i$. 
The real spherical subgroups $H_F$ for our above choices of $F$ are given by: 
\begin{align*} H_{\{\alpha_3\}} & =\{(p_1(g), p_2(g), g)\mid g\in \P_1\cap \P_2\} \simeq \P_1\cap \P_2\\
H_{\{\alpha_2, \alpha_3\}}& = \{ (m_1 a_1, m_1 a_1 n_1, m_1 a_1 n_1)\mid m_1 a_1 n_1 \in \mathcal{M}_1 \A_1\mathcal {N}_1\}\\
&=\Delta_3(\mathcal{M}_1 \A_1) \Delta_2(\mathcal{N}_1) \simeq \P_1\, .\end{align*} 

Of special interest is the case $\G=\SO_e(1,2) \simeq \mathrm{PSL}(2,\R)$.  Here in the three cases 
with $|F|=1$ one has that $H_F$ is reductive (a split torus) while this is not the case for $|F|=2$. Even more, for 
$|F|=2$ the spaces $Z_F$ are not even unimodular and have non-trivial automorphism groups.
We remark that the fine polar geometry of this example is described in \cite{DKS}, and that
trilinear functionals related to $Z$ were studied by Bernstein and Reznikov \cite{BR}.

One might think that there is always a Levi-decomposition $P_F = G_F' U_F$ for which one has 
$P_F \cap H < G_F'$. The triple cases with $|F|=2$ show that this is not the case in general.
Hence, unlike to the symmetric situation, 
we cannot expect to have embeddings $Z_F \hookrightarrow Z$ in general.

\subsection{Induced adapted parabolics}

For $F\supset F_Q$ we let 
$$Q_F= Q\cap G_F=\pr_F(Q),$$ 
which is a parabolic subgroup of $G_F$. It has the Levi decomposition
$Q_F=L_FU_{Q,F}$ where $L_F=L$ and $U_{Q,F}=U\cap G_F$.

\begin{lemma}\label{Qind} The following assertions hold: 
\begin{enumerate} 
\item $Q_FH_F=P_{\rm min, F}H_F$ is open in $G_F$.
\item $\lf\cap \hf= \qf_F \cap \hf_F$. 
\item $Q_F$ is the $Z_F$-adapted parabolic subgroup of $G_F$ containing $P_{\min,F}$.
\end{enumerate}
\end{lemma}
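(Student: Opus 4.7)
I begin by unpacking the structure of $Q_F$ under the hypothesis $F\supset F_Q$. Since $\af_F\subset\af_{F_Q}$ one has $L=G_{F_Q}\subset G_F$, the group $L$ is the Levi of $Q_F=Q\cap G_F$ with unipotent radical $U_{Q,F}=U\cap G_F$, and $P_{\min}\subset Q\subset P_F$. Combining an Iwasawa decomposition $L=K_LAN_L$ with the fact that $K_L\subset L$ normalizes $U$ and therefore also $\uf_{Q,F}=\uf\cap\gf_F$, I obtain the factorizations $Q_F=K_L P_{\min,F}=P_{\min,F}K_L$, using $P_{\min,F}=MAN_LU_{Q,F}$. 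These drive all three parts.

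For (1), the key identity is $P_{\min}H\cap P_F=P_{\min}(H\cap P_F)$, immediate from $P_{\min}\subset P_F$: if $g=ph\in P_F$ with $p\in P_{\min}$, $h\in H$, then $h=p^{-1}g\in P_F$. Since $P_{\min}H$ is open in $G$ by hypothesis, applying the submersion $\pr_F$ shows $P_{\min,F}H_F$ is open in $G_F$. To upgrade to $Q_FH_F=P_{\min,F}H_F$, the factorization $Q_F=P_{\min,F}K_L$ reduces the task to $K_L\subset P_{\min,F}H_F$. The $Z$-adapted property gives $Q\subset P_{\min}H$, so each $k\in K_L\subset Q$ can be written $k=ph$ with $p\in P_{\min}$, $h\in H$. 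Then $h=p^{-1}k\in P_{\min}K_L\subset Q\subset P_F$, hence $h\in H\cap P_F$, and consequently $p=kh^{-1}\in P_F\cap P_{\min}=P_{\min,F}$. Applying $\pr_F$, which restricts to the identity on $G_F\ni k$, yields $k=p\cdot\pr_F(h)\in P_{\min,F}H_F$.

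Part (2) is a weight-space calculation. The inclusion $\lf\cap\hf\subset\qf_F\cap\hf_F$ is clear since $\lf\subset\gf_F\cap\qf_F$ and $\pr_F$ is the identity on $\lf$. For the converse, take $Y\in\hf\cap\pf_F$ and expand via the local structure theorem (\ref{LST}) as $Y=X_0+\oline X+T(\oline X)$ with $X_0\in\lf\cap\hf$ and $\oline X\in\oline{\uf}$. Using the root-space decomposition $\oline{\uf}=\oline{\uf}_F\oplus\oline{\uf}_{Q,F}$ and $\pf_F=\gf_F\oplus\uf_F$, the hypothesis $Y\in\pf_F$ forces the $\oline{\uf}_F$-component of $\oline X$ to vanish, so $\oline X\in\oline{\uf}_{Q,F}\subset\gf_F$. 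Splitting $\uf=\uf_{Q,F}\oplus\uf_F$ and writing $T(\oline X)=T_0(\oline X)+T_1(\oline X)$ accordingly (absorbing the $(\lf\cap\hf)^{\perp_\lf}$ part into $T_0$), one finds $\pr_F(Y)=X_0+\oline X+T_0(\oline X)$ with $T_0(\oline X)\in\qf_F$. Since $\gf_F=\lf\oplus\uf_{Q,F}\oplus\oline{\uf}_{Q,F}$ is a root-space direct sum, $\qf_F\cap\oline{\uf}_{Q,F}=\{0\}$; demanding $\pr_F(Y)\in\qf_F$ therefore forces $\oline X=0$, giving $\pr_F(Y)=X_0\in\lf\cap\hf$.

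For (3), I verify the three defining conditions of a $Z_F$-adapted parabolic containing $P_{\min,F}$: the containment $Q_F\supset P_{\min,F}$ is built into the definition, the orbit equality is (1), and the sandwich $L_{\rm n}=L_{F,\rm n}\subset Q_F\cap H_F\subset L_F=L$ remains. The lower inclusion follows from $L_{\rm n}\subset L\cap H\subset H\cap P_F$ together with the fact that $\pr_F$ is the identity on $G_F\supset L_{\rm n}$, so $L_{\rm n}\subset H_F$ and hence $L_{\rm n}\subset L\cap H_F=Q_F\cap H_F$. The upper inclusion is (2) at the Lie algebra level, and lifts to the identity components. Uniqueness of the $Z_F$-adapted parabolic containing $P_{\min,F}$ then identifies it with $Q_F$. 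The main technical obstacle lies in part (2): tracking the projection $\pr_F$ along $\uf_F$, the opposite nilpotent $\oline{\uf}_{Q,F}\subset\gf_F$, and the operator $T$ simultaneously to isolate the $\oline{\uf}_{Q,F}$-component and force it to vanish via the root-space direct sum.
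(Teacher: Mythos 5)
Your proof is correct and, in parts (2) and (3), essentially identical to the paper's: the paper also expands $\hf\cap\pf_F$ via the local structure theorem (\ref{LST}) as $\lf\cap\hf$ plus the graph of $T$ restricted to $\oline\uf\cap\gf_F$, applies $\pr_F$, and compares with $\qf_F=\lf+(\uf\cap\gf_F)$ using the root-space decomposition to force the $\oline\uf$-component to vanish; part (3) is then deduced from $L_{\rm n}\subset H$ exactly as you do. For part (1) the paper is more direct: since $Q\subset P_F$, one has $Q_FH_F=\pr_F\bigl((QH)\cap P_F\bigr)=\pr_F\bigl((P_{\min}H)\cap P_F\bigr)=P_{\min,F}H_F$, using the adaptedness identity $QH=P_{\min}H$ in one stroke, with openness coming from $\pr_F$ being an open map. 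Your detour through the factorization $Q_F=P_{\min,F}K_L$ and the claim $K_L\subset P_{\min,F}H_F$ reaches the same conclusion from the same input ($Q\subset P_{\min}H$), just less economically. One slip to fix: the identity $P_F\cap P_{\min}=P_{\min,F}$ is false — since $P_{\min}\subset P_F$ the intersection is all of $P_{\min}$, whereas $P_{\min,F}=P_{\min}\cap G_F$. This is harmless for your argument, because after applying $\pr_F$ you land in $\pr_F(P_{\min})=P_{\min,F}$ anyway; just replace $p$ by $\pr_F(p)$ in the final display of that step.
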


\begin{proof} As $Q\subset P_F$ and $\pr_F: P_F \to G_F$ is a homomorphism
we obtain:  
\begin{equation*}Q_F H_F=
\pr_F( (QH)\cap P_F)=\pr_F ( (P_{\rm min} H)\cap P_F)=
P_{\rm min, F} H_F 
\, .\end{equation*}
This is an open set since $\pr_F: P_F\to G_F$ is an open map.
Further we note that the Lie algebra of $Q_F$ is given by 
\begin{equation} \label{Q1} \qf_F=\qf \cap \gf_F = \lf + \uf^F\, .\end{equation} 
We recall (\ref{LST}). It follows that
$$\hf\cap \pf_F= \lf \cap \hf +\{ \oline X + T(\oline X)\mid \oline X \in \oline\uf^F\}\, .$$
This in turn gives that 
\begin{equation} \label{Q2} 
\hf_F = \pr_F(\hf\cap \pf_F) =\lf \cap \hf +\{ \oline X + (\pr_F\circ T)(\oline X)\mid \oline X \in \oline\uf^F\}\, .
\end{equation}
The combination of (\ref{Q1}) and (\ref{Q2}) results in
the second assertion. 
The last assertion now follows, as
$L_{F,n}=L_n\subset H$ (see Section \ref{notation}).
\end{proof}

Observe that the lemma implies that  $\af\cap\hf=\af\cap\hf_F$ and hence that
there is an equality of real ranks
\begin{equation} \label{rankequality} \rank_\R(Z) = \rank_\R(Z_F)\, . \end{equation}
Furthermore, $A_{Z_F}=A_Z$.

\subsection{Induced compression  cones}

We are interested in the behaviour of the compression cone under induction. 
Note that there is a natural action of $A$ on $A_Z=A/A_H$. 

\begin{prop}\label{prop-cvc}
Let $F\supset F_Q$. Then 
\begin{equation*} 
A_F\cdot A_Z^-= A_F \cdot A_{Z_F}^-
\end{equation*}
for the induced 
spherical space $Z_F=G_F/H_F$.
\end{prop}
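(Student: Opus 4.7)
The plan is to reduce the group-theoretic identity to the Lie-algebra equality
$$\af_F + \af_Z^- = \af_F + \af_{Z_F}^- \qquad \text{inside } \af_Z,$$
where $\af_F$ is identified with its image in $\af_Z = \af_{Z_F}$ (the real ranks coincide by (\ref{rankequality})); exponentiation then delivers the claim. The strategy is to compare the monoids $\mathcal{M}$ and $\mathcal{M}_F$ that define $\af_Z^-$ and $\af_{Z_F}^-$ respectively, identify the spherical roots $S_F$ of $Z_F$ as a subset of $S$, and then shift elements of $\af_{Z_F}^-$ into $\af_Z^-$ by an appropriately chosen element of $\af_F$.

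\emph{Step 1 (monoid comparison).} Using (\ref{Q2}) and Lemma \ref{Qind}(3), extract from $\pr_F\circ T$ the operator $T_F$ appearing in the local structure theorem for $Z_F$. Since $\pr_F$ is the identity on $\lf$ and on root spaces $\gf^\beta$ with $\beta\in\langle F\rangle$, while killing the other root spaces, the generators of $\mathcal{M}_F$ are precisely those generators $\alpha+\beta$ of $\mathcal{M}$ for which $\alpha\in\Sigma_\uf\cap\langle F\rangle$ and $\beta\in(\Sigma_\uf\cap\langle F\rangle)\cup\{0\}$. Because positive roots decompose with non-negative simple-root coefficients, $\alpha+\beta\in\langle F\rangle$ forces $\alpha,\beta\in\langle F\rangle$. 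Hence
$$\mathcal{M}_F = \mathcal{M}\cap \N_0[F] = \N_0[S\cap\langle F\rangle],$$
and the spherical roots of $Z_F$ are $S_F = S\cap\langle F\rangle$. In particular $\mathcal{M}_F\subset\mathcal{M}$ gives $\af_Z^-\subset\af_{Z_F}^-$, so the inclusion $A_F\cdot A_Z^-\subset A_F\cdot A_{Z_F}^-$ is immediate.

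\emph{Step 2 (reverse inclusion).} Choose $X_F\in\af_F$ with $\alpha(X_F)=-1$ for every $\alpha\in\Pi\setminus F$; such $X_F$ exists because the restrictions of $\Pi\setminus F$ to the semisimple part of $\af_F$ form a basis of its dual. Any $\sigma\in S$ is a non-negative combination $\sigma=\sum_{\alpha\in\Pi} c_\alpha\alpha$, giving $\sigma(X_F)=-\sum_{\alpha\notin F} c_\alpha$, which vanishes precisely when $\sigma\in S_F$ and is strictly negative otherwise. For $X\in\af_{Z_F}^-$, choose $t>0$ large enough that $\sigma(X)+t\sigma(X_F)\leq 0$ for each of the finitely many $\sigma\in S\setminus S_F$. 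Then $\sigma(X+tX_F)\leq 0$ for every $\sigma\in S$, so $X+tX_F\in\af_Z^-$, and thus $X=-tX_F+(X+tX_F)\in\af_F+\af_Z^-$, proving $A_F\cdot A_{Z_F}^-\subset A_F\cdot A_Z^-$.

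The main obstacle is Step 1: one must reconcile the algebraic description (\ref{Q2}) of $\hf_F$, obtained by projecting $\hf\cap\pf_F$ along $\uf_F$, with the operator $T_F$ produced by the local structure theorem applied directly to $Z_F$ with its adapted parabolic $Q_F$. The decisive inputs are that the Levi $L_F=L$ is unchanged, so $\lf\cap\hf_F=\lf\cap\hf$, and that (\ref{FQ}) places $\gf^\alpha\subset\hf$ for $\alpha\in\langle F_Q\rangle$, which makes the difference between $\oline{\uf^F}$ and $\oline{U_{Q,F}}$ invisible to $T_F$ and secures the identification $\mathcal{M}_F=\mathcal{M}\cap\N_0[F]$.
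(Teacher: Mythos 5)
Your proof is correct, but it takes a genuinely different route from the paper. The paper works with the representation-theoretic description of the compression cone from Section \ref{Liar}: starting from a regular irreducible $H$-semi-spherical representation $(\pi,V)$, it forms the $G_F$-module $Y=V/\uf_F V$, checks that the images of $v_0$ and $v_H$ survive as a regular lowest weight vector and a nonzero $H_F$-semi-spherical vector, and reads off $\af_Z^-\subset\af_{Z_F}^-$ from the limit criterion (\ref{pi limit}); for the reverse inclusion it re-realizes $Y$ as the subspace $\U(\gf_F)v_0\subset V$ and perturbs $X\in\af_{Z_F}^{--}$ by an element $X'\in\af_F$ making all $\uf_F$-roots negative. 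You instead work directly with the defining monoid $\mathcal{M}$: using (\ref{Q2}) and the uniqueness of the graph presentation in the local structure theorem you identify the operator for $Z_F$ with $\pr_F\circ T$ restricted to $\oline\uf\cap\gf_F$, deduce $\mathcal{M}_F=\mathcal{M}\cap\N_0[F]$ (the non-negativity of simple-root coefficients being the key point), and then shift by the element $X_F=-\sum_{\alpha\in\Pi\setminus F}\omega'_\alpha\in\af_F$ exactly as the paper shifts by $X'$. Both arguments have the same skeleton, namely the chain of inclusions $\af_Z^-\subset\af_{Z_F}^-\subset\af_F+\af_Z^-$. What your version buys is that it stays entirely within the combinatorial definition of $\af_Z^-$ and additionally identifies the spherical root data of the induced space ($S_F$ consists of the $\sigma\in S$ supported on $F$), a fact the paper does not record here; what the paper's version buys is that it sidesteps the reconciliation of the two local structure theorems (your ``main obstacle''), which you handle correctly but which requires the care you indicate about $\lf\cap\hf_F=\lf\cap\hf$ and about $\pr_F$ acting as the identity on $\lf$ and on $\gf^\beta$ for $\beta\in\la F\ra$.
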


\begin{proof} We shall prove that
\begin{equation}\label{string of inclusions}
A_Z^-\subset A_{Z_F}^-\subset A_F \cdot A_Z^-.
\end{equation}
Let $(\pi, V)$ be a regular irreducible real $H$-semi-spherical representation 
as considered in (\ref{pi limit}).  This induces a natural $H_F$-semi-spherical representation 
\def\fdsp{Y}
$(\pi_\fdsp, \fdsp)$ 
of $G_F$ as follows. Set $\fdsp:= V/ \uf_F V$. Clearly $\fdsp$ is a 
$G_F$-module. 
Note that $w_0:= v_0 + \uf_F V \in \fdsp$ is a lowest weight vector and hence generates an irreducible 
submodule, say $\fdsp_0$ of $\fdsp$. As 
$$V= \U(\uf) v_0=\U(\uf^F) \U(\uf_F)v_0 
\subset \U(\uf^F)(\fdsp_0 + \uf_F V)$$
we conclude that $\fdsp_0=\fdsp$
is irreducible. 
As $\pi$ is regular with respect to $\oline{Q}$, we infer that $w_0$
is  regular with respect to $\oline{Q}_F$. 
Likewise $w_H:= v_H +\uf_F V$ 
is an $H_F$-semi-spherical vector in $\fdsp$. 
As 
$$V= {\mathcal U}(\gf)v_H={\mathcal U}(\qf)v_H$$
we conclude that $v_H\notin \uf V$. Since $\uf_F\subset\uf$ it follows that $w_H\neq 0$.

\par Now if $X\in \af_\pi^{--}$, then by (\ref{pi limit})
$$ \lim_{t\to \infty} [\pi_\fdsp (\exp(tX)) w_H] = [w_0]\, .$$
This shows the first inclusion in (\ref{string of inclusions}). 

\par In the construction from above 
we realized $\fdsp$ in a quotient of $V$, but it is also possible to 
realize it as a subspace.
Set $\tilde \fdsp:= 
{\mathcal U}(\gf_F)v_0$. Then $\tilde \fdsp$ is an irreducible 
lowest weight module for $G_F$ with 
lowest weight $v_0$, hence $\tilde \fdsp\simeq \fdsp$. 
Let us describe an explicit isomorphism. Write 
$p: V\to \fdsp$ for  
the $G_F$-equivariant projection.
Then the restriction of $\tilde p:=p|_{\tilde\fdsp}$ 
establishes an isomorphism of $\tilde p: \tilde \fdsp \to \fdsp$.  
Then $\tilde w_H :=\tilde p^{-1}(w_H)\in \tilde\fdsp$ is a 
non-zero $H_F$-semi-spherical vector in $\tilde \fdsp$.  
Then $v_H= \tilde w_H + \tilde w_H^\perp$ with $\tilde w_H^\perp \in \ker p =
\uf_F V$.
Let now $X\in \af_{Z_F}^{--}$. 
By adding a suitable element 
$X'\in \af_F$ to $X$ we obtain that $\alpha(X+X')<0$ for all roots
$\alpha$ of $\uf_F$.  Hence 
$$\lim_{t\to \infty}[\pi(\exp(t(X+X'))) v_H]= 
\lim_{t\to \infty}[\pi(\exp(t(X+X'))) \tilde w_H]=[v_0]$$
and the second inclusion in (\ref{string of inclusions}) is established.   
\end{proof}

\subsection{Unimodularity issues under induction}\label{usi}

Let $P<G$ be a parabolic subgroup for which $PH$ is open. 

\begin{lemma} \label{lemuni} If $Z$ is unimodular then so is
$P/P\cap H$. 
\end{lemma}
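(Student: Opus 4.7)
The plan is to directly compare modular characters using the openness hypothesis $PH \subset G$. By definition,
$$\Delta_{P/P\cap H}(h) = |\det \Ad_{\pf/(\pf \cap \hf)}(h)|^{-1}, \quad h \in P\cap H,$$
and I must show this equals $1$, given that $\Delta_Z(h) = |\det \Ad_{\gf/\hf}(h)|^{-1} = 1$ for all $h \in H$.

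First, I would invoke the openness of $PH$ in $G$, which by standard arguments forces $\gf = \pf + \hf$ at the level of Lie algebras. The second isomorphism theorem for vector spaces then yields a canonical linear isomorphism
$$\iota : \pf/(\pf\cap\hf) \xrightarrow{\ \sim\ } \gf/\hf$$
induced by the inclusion $\pf \hookrightarrow \gf$.

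Next, fix $h \in P \cap H$. Since $h \in P$ we have $\Ad(h)\pf = \pf$, and since $h \in H$ we have $\Ad(h)\hf = \hf$; consequently $\Ad(h)$ preserves $\pf \cap \hf$ and descends to operators on both quotient spaces $\pf/(\pf\cap\hf)$ and $\gf/\hf$. The map $\iota$ is tautologically equivariant for these induced actions, because both are restrictions/quotients of the single operator $\Ad(h)$ on $\gf$. Hence
$$\det \Ad_{\pf/(\pf\cap\hf)}(h) = \det \Ad_{\gf/\hf}(h).$$
Taking absolute values and using unimodularity of $Z$ gives
$$\Delta_{P/P\cap H}(h)^{-1} = |\det \Ad_{\gf/\hf}(h)| = \Delta_Z(h)^{-1} = 1,$$
so $P/P\cap H$ is unimodular.

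There is no real obstacle here; the argument is essentially just tracking that the transversality $\gf = \pf + \hf$ transports the modular character of $G/H$ to that of $P/P\cap H$ under the natural identification of their tangent spaces at the base point. The only point to be careful about is verifying that both $\pf$ and $\hf$ are preserved by $\Ad(h)$ for $h \in P \cap H$, which is immediate.
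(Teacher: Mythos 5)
Your proof is correct, but it takes a different route from the paper. The paper's argument is purely measure-theoretic and two lines long: since $PH$ is open, $P/P\cap H$ embeds as an open $P$-stable subset of $Z$, and the restriction of the $G$-invariant measure on $Z$ to this open subset is a $P$-invariant measure on $P/P\cap H$. You instead work infinitesimally with the modular character from (\ref{defi Delta_Z}): openness of $PH$ gives $\gf=\pf+\hf$, hence an $\Ad(h)$-equivariant isomorphism $\pf/(\pf\cap\hf)\simeq\gf/\hf$ for $h\in P\cap H$, so the modular character of $P/P\cap H$ is the restriction of $\Delta_Z$ and is therefore trivial. Both arguments are complete; the paper's is shorter and avoids any determinant bookkeeping, while yours makes the transfer of the modular character explicit (you literally get $\Delta_{P/P\cap H}=\Delta_Z|_{P\cap H}$, which is marginally more information and is in fact the identity the subsequent lemma in Section \ref{usi} exploits when it computes $\Delta_{Z_P}$ via the exact sequence of $P\cap H$-modules).
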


\begin{proof} As  $PH$ is open
we can identify $P/P\cap H$ as an open subset of $Z$. The $G$-invariant measure 
on $Z$ then induces a $P$-invariant measure on $P/P\cap H$. 
\end{proof}

Next we observe the basic isomorphism 
\begin{equation*} 
Z_P= G_P/H_P\simeq P/(P\cap H) U_P\,.
\end{equation*}
which together with Lemma \ref{lemuni}
allows us to compute the associated modular character 
$\Delta_P=\Delta_{Z_P}$ 
(see (\ref{defi Delta_Z})):  

\begin{lemma} Suppose that $Z$ is unimodular. 
Then 
\begin{enumerate} 
\item The modular function for $Z_P=G_P/H_P$ is given by
$$\Delta_{P}(hu)=|\det \Ad_{\uf_P/\uf_P\cap\hf}(h)|,
\quad (h\in P\cap H, u\in U_P).$$
\item In particular if $U_P\cap H=\{1\}$ then 
$$\Delta_{P}(h)=|\det \Ad_{\uf_P}(h)|,
\quad (h\in H_P).$$
\end{enumerate}
\end{lemma}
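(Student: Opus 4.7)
The plan is to exploit the identification $Z_P = G_P/H_P \simeq P/(P \cap H)U_P$ displayed just before the lemma, compute the modular character for $P$ acting on the right-hand side, and then simplify using the unimodularity hypothesis already packaged in Lemma~\ref{lemuni}. By definition (\ref{defi Delta_Z}) applied to the pair $(P, (P \cap H)U_P)$,
\begin{equation*}
\Delta_P(p) = \frac{|\det \Ad_{(\pf \cap \hf) + \uf_P}(p)|}{|\det \Ad_{\pf}(p)|} \qquad \bigl(p \in (P \cap H)U_P\bigr),
\end{equation*}
where $(\pf \cap \hf) + \uf_P$ is the Lie algebra of $(P \cap H)U_P$.

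The next step is to strip the $u$-factor. Since $u \in U_P$ is unipotent, $\Ad(u)$ acts unipotently on every $\Ad(u)$-stable subspace, so its determinant has absolute value $1$ on both $\pf$ and $(\pf \cap \hf) + \uf_P$. Writing $p = hu$ I therefore get $\Delta_P(hu) = \Delta_P(h)$, and it remains to evaluate the ratio at $h \in P \cap H$. At this point I invoke Lemma~\ref{lemuni}: the unimodularity of $Z$ forces $P/(P \cap H)$ to be unimodular, which by the very same formula reads $|\det \Ad_{\pf}(h)| = |\det \Ad_{\pf \cap \hf}(h)|$ for all $h \in P \cap H$.

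The remaining ingredient is the linear-algebraic observation $(\pf \cap \hf) \cap \uf_P = \hf \cap \uf_P$, which produces a short exact sequence of $\Ad(h)$-modules
\begin{equation*}
0 \to \pf \cap \hf \to (\pf \cap \hf) + \uf_P \to \uf_P/(\uf_P \cap \hf) \to 0.
\end{equation*}
Multiplicativity of determinants in exact sequences gives $|\det \Ad_{(\pf \cap \hf) + \uf_P}(h)| = |\det \Ad_{\pf \cap \hf}(h)|\cdot|\det \Ad_{\uf_P/(\uf_P \cap \hf)}(h)|$, and combining with the previous display cancels the $|\det \Ad_{\pf \cap \hf}(h)|$ factor, yielding the formula in~(1). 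Part~(2) is then immediate: $U_P \cap H = \{1\}$ forces $\uf_P \cap \hf = 0$, and the same hypothesis makes $\pr_P\colon P \cap H \to H_P$ an isomorphism, so the formula transfers verbatim to a function of $h \in H_P$.

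I do not anticipate a genuine obstacle here; the one point to handle with mild care is the verification that $\Delta_P(hu)$ is well defined as the pullback of the modular character on $H_P$, which is exactly the content of the unipotent-determinant argument and of $H_P \simeq (P \cap H)/(H \cap U_P)$.
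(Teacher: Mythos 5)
Your proof is correct and takes essentially the same route as the paper's: both rest on the identification $Z_P\simeq P/(P\cap H)U_P$, on Lemma~\ref{lemuni} (unimodularity of $P/(P\cap H)$, i.e.\ $|\det\Ad_{\pf}(h)|=|\det\Ad_{\pf\cap\hf}(h)|$), on the triviality of unipotent determinants to strip the $U_P$-factor, and on multiplicativity of $|\det|$ along a short exact sequence of $P\cap H$-modules. The paper writes that sequence as $0\to \uf_P/\uf_P\cap\hf \to \pf/\pf\cap\hf \to \pf/(\pf\cap\hf+\uf_P) \to 0$, which is just the quotient-module form of the sequence $0\to\pf\cap\hf\to(\pf\cap\hf)+\uf_P\to\uf_P/(\uf_P\cap\hf)\to0$ you use, so the two arguments are equivalent.
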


\begin{proof} Since the modular function is trivial
on the nilpotent group $U_P$, this follows from 
the exact sequence
$$0\to \uf_P/\uf_P\cap\hf \to \pf/\pf\cap\hf \to
\pf/(\pf\cap\hf+\uf_P) \to 0$$
of $P\cap H$ modules.
\end{proof}

\section{Wave-front spaces and interlaced spherical subgroups}\label{W-f}

The following notation will be used throughout.
Recall the set 
of spherical roots $S:=\{ \sigma_1, \ldots, \sigma_s\} \subset 
\af_Z^*$ 
such that 
$$\af_Z^-=\{ X\in \af_Z\mid \sigma_j(X)\leq 0 \quad (1\leq j\leq s)\}\, .$$
We let $\omega_1, \ldots, \omega_s\in \af_Z$ be the dual basis to 
$S$, 
i.e.~we request 
\begin{itemize}
\item $\sigma_i(\omega_j) =  \delta_{ij} \qquad (1\leq i, j\leq s)$. 
\item $\omega_i \perp \af_{Z,E}$. 
\end{itemize}
This gives us the following coordinates of $\af_Z$: 
\begin{equation}\label{I-coordinates}
\R^s \times \af_{Z,E}\to \af_Z, (t, X)\mapsto \sum_{j=1}^s t_j \omega_j +X 
\end{equation}
with $\af_Z^-$ corresponding to pairs $(t, X)$ with $t_j\leq 0$.
Moreover,
\begin{equation}\label{aI basis}
\af_I=\Span\{\omega_j\mid j\notin I\}+\af_{Z,E},\qquad (I\subset S).
\end{equation}

\subsection{Wave-front spaces}\label{sub wf}
Let 
$$ \af^-:=\{ X\in \af\mid (\forall \alpha \in \Pi) \alpha(X)\leq 0\}$$
be the closure of the negative Weyl chamber. 
Then the projection to $\af_Z$
along $\af_H$ maps $\af^-$ into $\af_Z^-$.
We recall that $Z$ is called {\it wave-front} provided the projection is onto,
that is, provided 
$\af_Z^-= (\af^- +\af_H)/\af_H$. 

\begin{ex} (a) All symmetric spaces are wave-front. 
\par\noindent (b) The Gross-Prasad spaces  (\ref{GP1}) and (\ref{GP2}) are wave-front. 
\par\noindent (c) The triple space (see Section \ref{triple spaces}) 
$$G/H=\Sl(2,\R) \times \Sl(2,\R) \times \Sl(2,\R)/\Sl(2,\R)$$ 
is wave-front. 
\par\noindent (d) $(\sof(3,4), G_2(\R))$ is wave-front. 
\par\noindent (e) The series (\ref{NonWF1}) and (\ref{NonWF2}) are {\it not} wave-front.  
\end{ex}

A simple, but important feature of wave-front spaces is the  fact
which we record in the next lemma. 

\begin{lemma} If $G/H$ is wave-front then so is $G/H_IA_I$ for all $I\subset S$. 
\end{lemma}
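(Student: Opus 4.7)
The plan is to transfer the wave-front property from $Z=G/H$ to $Y:=G/H_IA_I$ via a coordinate computation in $\af_Z$.

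\emph{Step 1: Identify the Cartan of $Y$.} The Lie algebra of $H_IA_I$ is $\hf_I+\af_I$, and by Proposition \ref{properties of Z_I}(\ref{zwei}) together with $\af_I\subset\af_Z$,
$$\af\cap(\hf_I+\af_I)=(\af\cap\hf_I)+\af_I=\af_H+\af_I.$$
Hence $\af_Y:=\af/(\af_H+\af_I)=\af_Z/\af_I$. Let $q:\af_Z\to\af_Y$ denote the quotient.

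\emph{Step 2: Identify the compression cone $\af_Y^-$.} By the definition of $\af_I$ one has $\sigma_i|_{\af_I}=0$ for all $i\in I$, so the cone $\af_{Z_I}^-=\{X\in\af_Z\mid\sigma_i(X)\le 0,\ i\in I\}$ from Proposition \ref{properties of Z_I}(\ref{funf}) is invariant under translation by $\af_I$ and descends to a closed convex cone $q(\af_{Z_I}^-)$ in $\af_Y$. Since $A_I$ acts by $G$-automorphisms on $Z_I$ from the right, we have $Y=Z_I/A_I$, so $q(\af_{Z_I}^-)$ is precisely the compression cone $\af_Y^-$ of $Y$. Using (\ref{I-coordinates}) and (\ref{aI basis}), this reads
$$\af_Y^-=\{\bar\xi\in\af_Y\mid\sigma_i(\bar\xi)\le 0,\ i\in I\}.$$

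\emph{Step 3: Lift to $\af^-$.} Given $\bar\xi\in\af_Y^-$, choose $t_i\le 0$ for $i\in I$ with $\bar\xi=q\!\left(\sum_{i\in I}t_i\omega_i\right)$, and set $\xi:=\sum_{i\in I}t_i\omega_i\in\af_Z$. Since $\sigma_j(\omega_i)=\delta_{ji}$, we have $\sigma_j(\xi)\le 0$ for every $j\in S$, so $\xi\in\af_Z^-$ and $q(\xi)=\bar\xi$. The wave-front hypothesis for $Z$ produces $X\in\af^-$ with $X+\af_H=\xi$, and then $X+(\af_H+\af_I)=\bar\xi$ in $\af_Y$. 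Thus $\af_Y^-\subset(\af^-+\af_H+\af_I)/(\af_H+\af_I)$, proving wave-frontness for $Y$.

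The main obstacle is the identification in Step 2 of $\af_Y^-$ with $q(\af_{Z_I}^-)$. Once this is in hand, Step 3 is a one-line coordinate manipulation using the dual basis $\omega_1,\dots,\omega_s$ to the spherical roots.
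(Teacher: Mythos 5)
Your argument is correct and follows essentially the same route as the paper, which compresses the whole thing into the single identity $\af_Z^-+\af_I+\af_H=\af^-+\af_I+\af_H$ obtained from Proposition \ref{properties of Z_I} and (\ref{aI basis}); your Steps 2--3 just unfold that identity. The only point stated rather than argued is the identification of the compression cone of $G/H_IA_I$ with $q(\af_{Z_I}^-)$, but this is exactly what the paper also takes for granted, so no gap.
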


\begin{proof}
It follows from Prop.~\ref{properties of Z_I}(\ref{zwei}),(\ref{funf})
and equation (\ref{aI basis}) that the wave-front-ness of $G/H_IA_I$ amounts to 
$\af_Z^-+\af_I+\af_H=\af^-+\af_I+\af_H$.
\end{proof}

We continue by collecting a few but important facts of wave-front spaces. 
Recall from (\ref{S rational}) that $S\subset \Q_{\geq 0}[\Pi]$
and thus every $\sigma\in S$ has a unique 
presentation $\sigma=\sum_{\alpha\in \Pi} n_\alpha \alpha$ with $n_\alpha\in \Q_{\geq 0}$. 
Accordingly we define the support of $\sigma$ by 
$$\supp(\sigma):=\{ \alpha\in \Pi \mid n_\alpha>0\}\, .$$
We denote by 
$\{ \omega_\beta'\}_{\beta\in \Pi}$ be the basis of $\af$ dual to $\Pi$, i.e.
$\alpha(\omega_\beta') =  \delta_{\alpha,\beta}$. 
For every $\sigma\in S$ we let $\Pi_\sigma\subset \Pi$ be the set of 
$\alpha$ for which 
$\R^+ \omega_\alpha' +\af_H = \R^+ \omega_\sigma$.
The following is then a reformulation of the definition of wave-front.

\begin{lemma} \label{wf-easy}For a real spherical space $Z$  the following conditions
are equivalent:
\begin{enumerate}   
\item\label{wf1} $Z$ is wave-front. 
\item\label{wf2} $\Pi_\sigma\neq\emptyset $ for all $\sigma\in S$. 
\end{enumerate}
Furthermore, for each $\sigma\in S$,
\begin{equation} \label{Pisigma} 
\Pi_\sigma = 
\supp(\sigma) \bs \bigcup_{\sigma'\neq \sigma} \supp(\sigma').
\end{equation} 
\end{lemma}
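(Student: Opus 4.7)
The plan is to translate everything into the coordinates (\ref{I-coordinates}) and reduce to a bookkeeping argument on coefficients. Write $\pi:\af\to\af_Z=\af/\af_H$ for the quotient map. For each simple root $\alpha\in\Pi$, I would first establish the key formula
\[
\pi(\omega_\alpha')=\sum_{j=1}^{s}n^j_\alpha\,\omega_j+X_\alpha,
\]
where $n^j_\alpha\in\N_0$ is the coefficient of $\alpha$ in the expansion $\sigma_j=\sum_{\beta\in\Pi}n^j_\beta\,\beta$ and $X_\alpha\in\af_{Z,E}$. This is immediate from $\sigma_j(\omega_\alpha')=n^j_\alpha$ together with the defining properties $\sigma_i(\omega_j)=\delta_{ij}$ and $\omega_j\perp\af_{Z,E}$. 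Formula (\ref{Pisigma}) is then essentially bookkeeping: $\pi(\omega_\alpha')$ lies on the ray $\R^+\omega_\sigma$ if and only if $n^{\sigma'}_\alpha=0$ for every $\sigma'\neq\sigma$ while $n^\sigma_\alpha>0$, which is exactly the description of $\supp(\sigma)\setminus\bigcup_{\sigma'\neq\sigma}\supp(\sigma')$.

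For (\ref{wf1})$\Rightarrow$(\ref{wf2}): The wave-front hypothesis gives $\pi(\af^-)=\af_Z^-$, so each extremal ray $-\omega_j$ of $\af_Z^-/\af_{Z,E}$ lifts to an element $X=\sum_\alpha c_\alpha(-\omega_\alpha')+Y\in\af^-$ with $c_\alpha\geq 0$ and $Y$ in the lineality of $\af^-$. Pairing $\pi(X)$ with $\sigma_k$ for $k\neq j$ gives $\sum_\alpha c_\alpha n^k_\alpha=0$, and non-negativity forces $c_\alpha n^k_\alpha=0$ term-wise. Pairing with $\sigma_j$ gives a positive sum, so some contributing $\alpha$ has $c_\alpha>0$ with $n^j_\alpha>0$; for that $\alpha$ we have $n^k_\alpha=0$ for all $k\neq j$, and (\ref{Pisigma}) places $\alpha$ in $\Pi_{\sigma_j}$.

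For (\ref{wf2})$\Rightarrow$(\ref{wf1}): Fix $\alpha_j\in\Pi_{\sigma_j}$ for each $j$. For $Y=\sum_j t_j\omega_j+Z\in\af_Z^-$ with $t_j\leq 0$ and $Z\in\af_{Z,E}$, set
\[
X_0=\sum_{j=1}^s \frac{-t_j}{n^j_{\alpha_j}}(-\omega_{\alpha_j}')\in\af^-.
\]
By (\ref{Pisigma}) the projection $\pi(X_0)$ agrees with $Y$ modulo $\af_{Z,E}$. I would then close the remaining edge gap by adding an element of $\af$ that projects to the residual $\af_{Z,E}$-component, using the lineality of $\af^-$ together with shifts by elements of $\af_H$, which together span the kernel of the composite $\af\to\af_Z\to\af_Z/\af_{Z,E}$ and leave the negativity of $\alpha(\cdot)$ on all simple roots unaffected in a neighborhood.

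The hard part is this last absorption: $\af_{Z,E}$ is in general not exhausted by the image of the lineality of $\af^-$ alone, so one has to combine lineality directions with $\af_H$-shifts and verify that the resulting lift still satisfies $\alpha(X)\leq 0$ for every $\alpha\in\Pi$. The coefficient analysis in (\ref{Pisigma}) is tailored precisely so that this adjustment only needs to be performed on roots $\alpha\notin\{\alpha_j\}_j$, where either $\alpha(X_0)=0$ (if $\alpha$ lies in no $\supp(\sigma_j)$) or else the slack is absorbed by the construction.
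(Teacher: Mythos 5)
Your handling of (\ref{Pisigma}) and of the implication (\ref{wf1})$\Rightarrow$(\ref{wf2}) is correct and is essentially the paper's own argument made explicit: the paper likewise reduces (\ref{Pisigma}) to the observation that $\sigma'(\omega'_\alpha)$ equals the coefficient of $\alpha$ in $\sigma'$, and gets (\ref{wf1})$\Rightarrow$(\ref{wf2}) from the fact that under the wave-front hypothesis the extremal rays of $\af_Z^-$ come from extremal rays of $\af^-$; your coefficient computation with $X=\sum_\alpha c_\alpha(-\omega'_\alpha)+Y$ is a clean concrete version of that. (Both you and the paper pass silently over the requirement, in the literal definition of $\Pi_\sigma$, that the image of $\omega'_\alpha$ in $\af_Z$ have vanishing $\af_{Z,E}$-component; you are no worse off than the source on that point.)

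The genuine gap is exactly where you flagged it, in (\ref{wf2})$\Rightarrow$(\ref{wf1}), and it cannot be closed by the device you propose. Your assertion that the lineality of $\af^-$ together with $\af_H$ spans the kernel of $\af\to\af_Z/\af_{Z,E}$ is false in general: that kernel has dimension $\dim\af_H+\dim\af_{Z,E}$, while the lineality of $\af^-$ is only the split center of $\gf$, which can vanish even when $\af_{Z,E}\neq 0$. The extreme case is $H=\oline N$ with $\gf$ semisimple: there $S=\emptyset$, so (\ref{wf2}) holds vacuously and your $X_0=0$, yet $\af_{Z,E}=\af_Z=\af$ while $(\af^-+\af_H)/\af_H=\af^-\subsetneq\af$; no lift works, and indeed $Z=G/\oline N$ is not wave-front (the paper says so in the remark after Corollary \ref{sandwich2}). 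So the "edge absorption" is not merely hard, it is impossible without an extra hypothesis such as $\af_{Z,E}\subset(\af^-+\af_H)/\af_H$ (automatic, e.g., when $\af_{Z,E}=0$). To be fair, the paper's own proof of this direction is a one-line assertion that is equally silent about the edge, and the only implication used later is (\ref{wf1})$\Rightarrow$(\ref{wf2}), which you prove correctly; but as a self-contained proof of the stated equivalence your argument, like the paper's, does not go through at this step.
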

\begin{proof} 
It follows from the assumption of wave-front 
that the extremal rays of the compression cone $\af_Z^-$ 
are generated by extremal rays of $\af^-$. Hence
$\Pi_\sigma\neq \emptyset$. Together with $(\af^-+\af_H)/\af_H  \subset \af_Z^-$ we thus 
obtain the equivalence of (\ref{wf1}) and (\ref{wf2}). 
\par Let $\sigma\in S$. It follows from the definition that
$\alpha\in \Pi_\sigma$ if and only if $\sigma(\omega'_\alpha)>0$
and $\sigma'(\omega'_\alpha)=0$ for all $\sigma'\neq\sigma$. 
Since $\alpha\in\supp(\sigma')$ if and only if  
$\sigma'(\omega'_\alpha)>0$ we obtain (\ref{Pisigma}).
\end{proof}

\begin{lemma} \label{wfq}Suppose that $Z$ is wave-front. Then $\Pi_\sigma\not\subset F_Q$
for all $\sigma\in S$.
\end{lemma}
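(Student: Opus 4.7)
The plan is to argue by contradiction: suppose $\Pi_\sigma \subset F_Q$ for some $\sigma \in S$. By the wave-front hypothesis and Lemma \ref{wf-easy}, $\Pi_\sigma$ is nonempty, so we can pick $\alpha \in \Pi_\sigma \cap F_Q$. By the formula (\ref{Pisigma}) we have $\alpha \in \supp(\sigma)$ and $\alpha \notin \supp(\sigma')$ for every $\sigma' \in S \setminus \{\sigma\}$.

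The starting observation is that $\alpha \in F_Q$ forces $\alpha^\vee \in \af_H$. Indeed $\alpha \in F_Q$ means $\gf^{\pm\alpha} \subset \lf_n$, and the local structure theorem assumption $L_n \subset Q\cap H$ implies $\gf^{\pm\alpha} \subset \hf$, whence $\alpha^\vee \in \hf\cap\af = \af_H$ for a suitable normalization of root vectors. Consequently every spherical root $\rho \in S \subset \af_Z^* = \af_H^\perp$ satisfies $\rho(\alpha^\vee) = 0$.

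Next I would extract combinatorial information from the relations $\rho(\alpha^\vee) = 0$. Writing $\rho = \sum_{\gamma \in \Pi} n^\rho_\gamma \gamma$, this becomes $\sum_\gamma n^\rho_\gamma a_{\gamma\alpha} = 0$, with Cartan integers $a_{\gamma\alpha} = \gamma(\alpha^\vee)$. For $\rho = \sigma' \neq \sigma$ we have $n^{\sigma'}_\alpha = 0$, and since $a_{\gamma\alpha} \leq 0$ for simple $\gamma \neq \alpha$ while $n^{\sigma'}_\gamma \geq 0$, each term must vanish; hence no Dynkin-neighbor of $\alpha$ lies in $\supp(\sigma')$ for any $\sigma' \neq \sigma$. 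Applying the same relation with $\rho = \sigma$, where $a_{\alpha\alpha} = 2$ and $n^\sigma_\alpha > 0$, forces the existence of at least one simple root $\gamma \in \supp(\sigma)$ that is Dynkin-adjacent to $\alpha$; by the previous observation $\gamma \in \Pi_\sigma$, hence $\gamma \in F_Q$ by our standing assumption. Iterating the argument with $\gamma$ in place of $\alpha$, induction shows that the entire Dynkin-connected component of $\alpha$ inside $\supp(\sigma)$ is contained in $\Pi_\sigma \cap F_Q$.

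Finally, since $\sigma \in S$ is irreducible in $\mathcal M$, it must equal a single generator $\sigma = \alpha_0 + \beta_0$ with $\alpha_0 \in \Sigma_\uf = \Sigma^+ \setminus \langle F_Q \rangle$ and $\beta_0 \in \Sigma_\uf \cup \{0\}$; in particular $\supp(\sigma) \not\subset F_Q$. Combining this with the iterative conclusion above yields the desired contradiction, provided $\supp(\sigma)$ is Dynkin-connected. The connectedness of the support of a spherical root is a standard feature of Luna's classification (for roots of the form $\alpha_0$ it is automatic from connectedness of supports of roots, and the two-term case requires inspecting the list of spherical roots); verifying it in the present real setting is the main technical obstacle to completing this argument cleanly.
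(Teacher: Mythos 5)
There is a genuine gap, and it is the one you flag yourself. Your first half is sound and coincides with the paper's: from $\alpha\in\Pi_\sigma\subset F_Q$ one gets $\gf^{\pm\alpha}\subset\hf$ by (\ref{FQ}), hence $\alpha^\vee\in\af_H$ and $\sigma'(\alpha^\vee)=0$ for all $\sigma'\in S$; the sign argument with the Cartan integers then shows $\alpha\perp\supp(\sigma')$ for $\sigma'\neq\sigma$, which is exactly relation (\ref{ortho}) in the paper. But your closing move requires $\supp(\sigma)$ to be Dynkin-connected, and this is false in general: spherical roots of type $A_1\times A_1$, i.e.\ $\sigma=\alpha_0+\beta_0$ with $\alpha_0\perp\beta_0$, have disconnected support (they occur already for products such as the group case $G\times G/\diag G$ and the Gross--Prasad pairs). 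So the propagation along the Dynkin diagram only yields that the connected component $C$ of $\alpha$ inside $\supp(\sigma)$ lies in $\Pi_\sigma\cap F_Q$, and the comparison with $\supp(\sigma)\not\subset F_Q$ does not produce a contradiction. The argument as written cannot be completed.

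The fix does not need connectedness, and it is what the paper does: write $\sigma=\sigma_1+\sigma_2$ with $\sigma_1$ the part of $\sigma$ supported on $\Pi_\sigma$ (or on your component $C$). By (\ref{Pisigma}) every simple root in $\supp(\sigma)\setminus\Pi_\sigma$ lies in some $\supp(\sigma')$ with $\sigma'\neq\sigma$, so (\ref{ortho}) gives $\la\alpha,\sigma_2\ra=0$ for all $\alpha\in\Pi_\sigma$; combined with $\la\alpha,\sigma\ra=0$ this forces $\la\alpha,\sigma_1\ra=0$ for all $\alpha\in\Pi_\sigma$, hence $\la\sigma_1,\sigma_1\ra=0$, i.e.\ $\sigma_1=0$, contradicting $\Pi_\sigma\neq\emptyset$ (Lemma \ref{wf-easy}). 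Note that this also makes your detour through the presentation $\sigma=\alpha_0+\beta_0$ and the claim $\supp(\sigma)\not\subset F_Q$ unnecessary; the only input needed from the wave-front hypothesis is $\Pi_\sigma\neq\emptyset$.
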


\begin{proof}  (cf. \cite{SV}, proof of Lemma 2.7.2). 
We argue by contradiction and assume there exists a $\sigma\in S$ with $\Pi_\sigma\subset F_Q$.  
Let $\alpha\in\Pi_\sigma$. Then $\alpha\in F_Q$ and hence 
$\alpha^\vee \in \af_H$ by (\ref{FQ}). Thus $\la \alpha, \sigma'\ra =0$ for all 
$\sigma'\in S$. Moreover for $\sigma'\neq \sigma$ and all $\beta\in \supp(\sigma')$ 
we have $\beta\neq \alpha$ by (\ref{Pisigma}) and hence 
$\la \alpha, \beta\ra \leq 0$. Hence 
\begin{equation} \label{ortho} \alpha\perp \supp(\sigma') 
\qquad (\alpha\in \Pi_\sigma, \sigma'\in S\bs\{\sigma\}) \, .\end{equation} 
 
\par Let $\sigma= \sum_{\alpha\in \Pi} n_\alpha \alpha=\sigma_1+\sigma_2$ where
$$\sigma_1:= \sum_{\alpha\in \Pi_\sigma} n_\alpha \alpha,\quad
\sigma_2:= \sum_{\alpha\in \supp(\sigma)\bs \Pi_\sigma} n_\alpha \alpha.$$ 
With (\ref{Pisigma}) and (\ref{ortho}) 
we conclude now $\la \alpha, \sigma_2\ra=0$
for all $\alpha\in \Pi_\sigma$. As $\la \alpha, \sigma\ra =0$ we thus have $\la\alpha, \sigma_1\ra =0$ for all 
$\alpha\in \Pi_\sigma$. Hence $\la \sigma_1,\sigma_1\ra=0$ which
contradicts that $\Pi_\sigma\neq\emptyset$. 
This proves the lemma. 
\end{proof}

\subsection{Interlaced subgroups}
Let $P<G$ be a parabolic subgroup. We say that $H$ is {\it interlaced} by $P$ if
$$ U_P \subset H \subset P$$
where $U_P$ is the unipotent radical of $P$. Note that 
with $\oline{P}=\theta(P)$ it follows that $PH$ is open
if $H$ is interlaced by $\oline{P}$. 

Observe also that if $H$ is interlaced by $P$, then 
$H_P=G_P\cap H$ and the Levi induced
spherical space is $Z_P=G_P/H_P\simeq P/H$.

\par We now show that $H_I$ is non-trivially  interlaced for all $I\subsetneq S$ 
in case $Z$ is wave-front. With notation from above we observe that we can use 
\begin{equation}\label{X_I}
X_I=-\sum_{j\notin I} \omega_j
\end{equation}
for the element $X$ in (\ref{defi h_I}).

Assume that $Z$ is wave-front. 
To $I\subset S$ we now attach a minimal set $F=F_I\subset \Pi$ such that $F_Q\subset F$ and
$H_I$ is interlaced by $\oline{P_F}$.  We define $F_I$ to be the complement
of $J_I\subset \Pi$ where 
$$J_I:= \bigcup_{\sigma\not \in I} (\Pi_\sigma\bs F_Q)\, .$$

\begin{prop}  \label{sandwich1}
Assume that $Z$ is wave-front. Let $I\subset S$ and $F=F_I$ as defined above. 
Then there exists an element $Y_I\in \af_F$ such that $Y_I+\af_H = X_I$
and $\alpha(Y_I) <0$ for all $\alpha \in \Sigma_\uf \bs \la F\ra $.  In particular, 
$$\oline{\pf_F} = \mf +\af +
\bigoplus_{\genfrac{}{}{0pt}{}{\alpha\in \Sigma}{\alpha(Y_I)\leq 0}}
 \gf^\alpha
$$
and $G_F= Z_G(Y_I)$. 
\end{prop}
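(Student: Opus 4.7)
The plan is to construct $Y_I$ explicitly as a negative combination of the elements $\omega'_\alpha$ dual to the simple roots $\Pi$, verify the three defining properties directly, and then read off the ``in particular'' claims from the root-vanishing pattern of $Y_I$.

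First I would invoke Lemma~\ref{wfq}: the wave-front assumption guarantees that, for each $\sigma\in S\setminus I$, the set $\Pi_\sigma\setminus F_Q$ is non-empty. By the very definition of $\Pi_\sigma$, for every $\beta\in\Pi_\sigma$ the image of $\omega'_\beta$ in $\af_Z=\af/\af_H$ equals $c_{\sigma,\beta}\omega_\sigma$ for some scalar $c_{\sigma,\beta}>0$. I would then set
\[
Y_I \;:=\; -\sum_{\sigma\notin I}\ \sum_{\beta\in\Pi_\sigma\setminus F_Q} d_{\sigma,\beta}\,\omega'_\beta,
\]
with positive coefficients $d_{\sigma,\beta}$ normalized so that $\sum_{\beta}d_{\sigma,\beta}c_{\sigma,\beta}=1$ for each $\sigma\notin I$.

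Three properties then require verification. (i) Each $\beta$ appearing in the sum lies in $J_I=\Pi\setminus F$, and $\alpha(\omega'_\beta)=0$ for all $\alpha\in F$, so $Y_I\in\af_F$. (ii) The normalization gives directly $Y_I+\af_H=-\sum_{\sigma\notin I}\omega_\sigma=X_I$. (iii) Since $J_I\cap F_Q=\emptyset$ by construction, we have $F_Q\subset F$ and hence $\Sigma_\uf\setminus\langle F\rangle=\Sigma^+\setminus\langle F\rangle$. If $\alpha=\sum_\gamma n_\gamma\gamma\in\Sigma^+\setminus\langle F\rangle$, then $\supp(\alpha)\not\subset F$, so $\supp(\alpha)$ meets $J_I$ in at least one simple root $\beta$. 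The expression
$\alpha(Y_I)=-\sum_{\sigma,\beta}d_{\sigma,\beta}n_\beta$
is a sum of non-positive terms with at least one strictly negative contribution coming from this $\beta$, giving $\alpha(Y_I)<0$.

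For the ``in particular'' statements, observe that $Y_I\in\af^-$, so $\alpha(Y_I)\le 0$ for every $\alpha\in\Sigma^+$, with strict inequality by (iii) unless $\alpha\in\langle F\rangle$. Hence the roots on which $Y_I$ vanishes are exactly $\langle F\rangle$, making the Lie algebra of $Z_G(Y_I)$ equal to $\mf+\af+\bigoplus_{\alpha\in\langle F\rangle}\gf^\alpha=\gf_F$; as $Z_G(Y_I)\supset A$ and is a standard Levi subgroup determined by its Lie algebra together with $A$, this forces $Z_G(Y_I)=G_F$. The root-space decomposition of the parabolic then follows immediately by collecting the root spaces $\gf^\alpha$ according to the sign of $\alpha(Y_I)$.

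The principal obstacle is a tension that needs to be resolved simultaneously: the lift $Y_I\in\af^-$ afforded by wave-front must be chosen inside the linear subspace $\af_F$ while remaining strictly negative on all of $\Sigma^+\setminus\langle F\rangle$. What resolves this tension is precisely Lemma~\ref{wfq}: it produces a simple root $\beta\in\Pi_\sigma\setminus F_Q\subset J_I$ for each $\sigma\notin I$, whose dual $\omega'_\beta$ both belongs to $\af_F$ (since $\beta\notin F$) and projects onto a positive multiple of $\omega_\sigma$. Without this input, the construction would collapse --- a simple root contributing to some $\alpha\in\Sigma^+\setminus\langle F\rangle$ could be forced to sit in $F$, making $\alpha(Y_I)=0$ and violating (iii).
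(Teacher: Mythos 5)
Your construction is exactly the paper's: Lemma~\ref{wfq} supplies, for each $\sigma\notin I$, a nonempty set $\Pi_\sigma\setminus F_Q\subset J_I$ whose dual vectors $\omega'_\beta$ project onto positive multiples of $\omega_\sigma$, and $Y_I$ is taken as a suitably normalized negative combination of these $\omega'_\beta$ over $J_I$. The paper states this in two sentences and leaves the verifications implicit; your write-up supplies the same argument with the checks (membership in $\af_F$, the projection to $X_I$, and strict negativity on $\Sigma^+\setminus\la F\ra$ via supports) made explicit, and is correct.
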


\begin{proof}  It follows from Lemma \ref{wfq} and 
the definition of $\Pi_\sigma$ that 
$$\sum_{\alpha\in \Pi_\sigma\bs F_Q} \omega'_\alpha +\af_H$$
is a positive multiple of $\omega_\sigma$ for each $\sigma\notin I$. 
Thus for suitable constants $c_\alpha>0$, the element
$Y_I = -\sum_{\alpha\in J_I} c_\alpha \omega_\alpha'$
has the desired property. 
\end{proof}

\begin{cor}  \label{sandwich2}
Let $F=F_I\subset\Pi$ be as above. Then 
$\la I\ra = \Q_{\geq 0}[F]\cap \la S \ra$,
$\af_I=\af_F+\af_H$
and 
\begin{equation}\label{interlaced} (G_F\cap H)_e \,\oline {U_F} \subset H_I\subset \oline{P_F}\, .\end{equation}
In particular, $H_I$ is interlaced by $\oline {P_F}$.
\end{cor}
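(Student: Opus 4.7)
The plan is to verify the three assertions separately, relying on Proposition \ref{sandwich1} and the explicit formula (\ref{expression h_I}) for $\hf_I$.

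The monoid identity $\la I\ra=\la F\ra\cap\N_0[S]$ is combinatorial. For $\la I\ra\subset\la F\ra$: each $\sigma\in I$ has $\supp(\sigma)\cap\Pi_{\sigma'}=\emptyset$ for every $\sigma'\neq\sigma$ by (\ref{Pisigma}), and taking the union over $\sigma'\notin I$ gives $\supp(\sigma)\cap J_I=\emptyset$, so $\sigma\in\la F\ra$. Conversely, given $\tau=\sum_\sigma m_\sigma\sigma\in\la F\ra\cap\N_0[S]$ and $\sigma\notin I$, Lemma \ref{wfq} furnishes $\alpha\in\Pi_\sigma\setminus F_Q\subset J_I$, so $\alpha\notin F$ and the $\alpha$-coefficient of $\tau$ must vanish; by (\ref{Pisigma}) this coefficient is $m_\sigma$ times a positive constant, forcing $m_\sigma=0$.

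For $\af_I=\af_F+\af_H$ (as subspaces of $\af$, with $\af_I\subset\af_Z=\af_H^\perp$): the inclusion $\af_F+\af_H\subset\af_I+\af_H$ is immediate from $I\subset\la F\ra$, which makes every $\sigma\in I$ vanish on $\af_F$. For the reverse, the construction in the proof of Proposition \ref{sandwich1}, specialised to one $\sigma\notin I$ at a time, provides $\sum_{\alpha\in\Pi_\sigma\setminus F_Q}\omega'_\alpha\in\af_F$ which projects to a positive multiple of $\omega_\sigma$ in $\af_Z$; combined with (\ref{aI basis}) it remains to prove $\af_{Z,E}\subset\pi(\af_F)$, where $\pi:\af\to\af_Z$ is the projection along $\af_H$. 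I would reduce this via annihilator duality to the inclusion $\Span_\R(F)\cap\af_Z^*\subset\Span_\R(I)$, an $\R$-linear upgrade of the coefficient-extraction above. I expect this annihilator step to be the main obstacle: it is here that wave-frontness (via Lemma \ref{wfq}) enters essentially, to rule out functionals with nontrivial component on $\af_{Z,E}$.

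For the interlacing, Proposition \ref{sandwich1} supplies $Y_I\in\af_F$ with $Y_I+\af_H=X_I$ and $\alpha(Y_I)<0$ for $\alpha\in\Sigma_\uf\setminus\la F\ra$. Since $\af_H\subset\hf$ we have $\hf_I=\lim_{t\to\infty}e^{t\ad Y_I}\hf$. Applying (\ref{expression h_I}) and (\ref{T_I}): for $\alpha\in\Sigma_\uf\cap\la F\ra$ both $X_{-\alpha}$ and each $X^I_{\alpha,\beta}$ lie in $\gf_F$ (since $\alpha+\beta\in\la I\ra\subset\la F\ra$ and $\alpha\in\la F\ra$ force $\beta\in\la F\ra\cup\{0\}$); for $\alpha\in\Sigma_\uf\setminus\la F\ra$ one has $X_{-\alpha}\in\oline{\uf_F}$ and $T_I(X_{-\alpha})=0$ (no $\beta\ge 0$ can satisfy $\alpha+\beta\in\la F\ra$ when $\alpha\notin\la F\ra$). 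Together with $\lf\cap\hf\subset\lf\subset\gf_F$ (using $F_Q\subset F$), this yields simultaneously $\hf_I\subset\oline{\pf_F}$ and $\oline{\uf_F}\subset\hf_I$. Finally, since $\af_F\subset Z(\gf_F)$ the flow $e^{t\ad Y_I}$ fixes $\gf_F$ pointwise, so $\gf_F\cap\hf\subset e^{t\ad Y_I}\hf$ for every $t$; by closedness of this condition in the Grassmannian ${\rm Gr}_d(\gf)$, $\gf_F\cap\hf\subset\hf_I$, and since $(G_F\cap H)_e$ is the connected subgroup with this Lie algebra we obtain $(G_F\cap H)_e\subset H_I$.
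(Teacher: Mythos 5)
Your arguments for the monoid identity and for the interlacing (\ref{interlaced}) are correct and complete. For the interlacing your route differs slightly from the paper's: the paper first extracts from the monoid identity the equivalence $\alpha+\beta\in \la I\ra \Leftrightarrow \alpha,\beta\in\la F\ra$ (for $\alpha\in\Sigma_\uf$, $\beta\in\Sigma_\uf\cup\{0\}$ with $\alpha+\beta\in\N_0[S]$) and reads off $\gf_F\cap\hf+\oline{\uf_F}\subset\hf_I\subset\oline{\pf_F}$ from the formulas for $T$ and $T_I$; your limit argument for $\gf_F\cap\hf\subset\hf_I$ (the flow $e^{t\ad Y_I}$ fixes $\gf_F$ pointwise, and containment of a fixed vector is a closed condition in $\Gr_d(\gf)$) is a clean substitute for matching $T$ with $T_I$ on $\gf_F\cap\hf$.

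The gap you flag in $\af_I=\af_F+\af_H$ is genuine, but your guess at what closes it is not quite right. After your $\Pi_\sigma$-argument and (\ref{aI basis}), what remains is exactly $\af_{Z,E}\subset\af_F+\af_H$, equivalently that the edge of $\af_Z^-=\pi(\af^-)$ equals $\pi(\af_\Pi)$, where $\pi:\af\to\af_Z$ is the projection along $\af_H$. This does \emph{not} follow from wave-frontness plus the combinatorics of $F$ and $I$: for $G=\PSl(2,\R)^2$ and $\hf=\diag(\af)\oplus(\oline\nf\times\oline\nf)$ one checks $T=0$, $S=\emptyset$, the space is wave-front, yet $\af_{Z,E}=\af_Z\neq 0$ while $\af_{\Pi}=0$, so the inclusion fails for $I=\emptyset$, $F=\Pi$. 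What rescues the statement is the standing unimodularity assumption, which that example violates. Unimodularity forces $\rho_Q|_{\af_H}=0$: for $W\in\af_H$ one has $\tr\ad_\gf(W)=0$, while by (\ref{LST}) the trace of $\ad(W)$ on $\hf$ is the (vanishing) trace on $\lf\cap\hf$ plus $-\sum_{\alpha\in\Sigma_\uf}\dim\gf^\alpha\,\alpha(W)=-2\rho_Q(W)$, so $\tr\ad_{\gf/\hf}(W)=2\rho_Q(W)$ must vanish. Now if $Y,Y'\in\af^-$ with $Y+Y'\in\af_H$, then $\rho_Q(Y)+\rho_Q(Y')=0$ with both summands $\le 0$, hence $\alpha(Y)=0$ for all $\alpha\in\Sigma_\uf$; since $\Sigma_\uf$ spans the root space of every simple factor not contained in $\lf$, while the simple factors contained in $\lf$ lie in $\lf_{\rm n}\subset\hf$ and so contribute only to $\af_H$, this gives $Y\in\af_\Pi+\af_H$. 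Therefore the edge of $\pi(\af^-)$ is $\pi(\af_\Pi)\subset\pi(\af_F)$, which is the missing inclusion. (The paper declares the first two statements ``immediate from Proposition \ref{sandwich1}'', so you are right that this point is not spelled out there.)
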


\begin{proof} 
The first two statements follow 
immediately from Proposition \ref{sandwich1}. 
It also follows that
\begin{equation}\label{<F>}
\alpha+\beta\in \la I\ra
\Leftrightarrow 
\alpha,\beta\in \la F\ra
\end{equation}
for all $\alpha\in\Sigma_\uf$ and $\beta\in \Sigma_\uf\cup\{0\}$ 
with $\alpha+\beta\in \N_0 [S]$.

As $F_Q\subset F$ we have $\lf\cap\hf\subset \gf_F$.
It then follows from (\ref{<F>}) and
the descriptions of $\hf$ and $\hf_I$
by means of the maps $T$ and $T_I$ that
\begin{equation*}
\gf_F\cap\hf + \oline{\uf_F}\subset \hf_I\subset\oline{\pf_F}.
\end{equation*}
This implies (\ref{interlaced}). 
\end{proof}

\begin{rmk} The property of being interlaced may hold
also in cases where $Z$ is not wave-front. For example,
the spherical subgroup $\oline N$ is 
interlaced by $P_{\rm min}$, but $Z=G/\oline N$ is not wave-front.
\end{rmk}

\section{Power series expansions of generalized 
matrix coefficients on 
the compression cone}\label{power series}

\par Given a Harish-Chandra module $V$ for the pair $(\gf, K)$ we write 
$V^{\infty}$ for its unique smooth moderate growth Fr\'echet completion
and $V^{-\infty}$ for the strong dual
of $V^\infty$. Both $V^\infty$ and $V^{-\infty}$ are 
$G$-modules. Attached to a pair $(v, \eta)\in 
V^{\infty}\times V^{-\infty}$ we associate the generalized 
matrix coefficient
$$m_{v, \eta}(g):= \eta(g^{-1} \cdot v),\qquad (g\in G),$$
which is a smooth function on $G$.

\par Let $Z=G/H$ be a real spherical space.  We say that $V$ is {\it $H$-spherical}  
provided that $(V^{-\infty})^H\neq \{0\}$. We recall (see \cite{KS2}, Thm.~3.2) that 
\begin{equation*}
\dim(V^{-\infty})^H\le \dim(V/(\lf\cap\hf+\bar\uf)V)<\infty .
\end{equation*}
The group $A_{Z,E}$ naturally acts on the vector space 
$(V^{-\infty})^H$.  Note however, that the action need not be semi-simple. 
In the sequel we only consider $\eta\in (V^{-\infty})^H$ which are 
eigenvectors of $A_{Z,E}$. 

In this case we call $(V,\eta)$ an $H$-{\it spherical pair}. We then
regard $m_{v, \eta}$ as an element of $C^\infty(G/H)$.

\begin{rmk} For our objective, namely to understand tempered representations, the assumption
that $\eta$ is an $A_{Z,E}$-eigenvector is no loss of generality.  In order to justify that we recall the abstract Plancherel theorem for the 
left regular action $L$ of $G$ on $L^2(Z)$: 
\begin{equation}\label{absPlanch}
L^2(Z) \simeq \int_{\hat G}^\oplus \M_\pi \otimes  \Hc_\pi  \ d\mu(\pi)
\end{equation}  
with $\M_\pi\subset (\Hc_\pi^{-\infty})^H$, the multiplicity space
equipped with some inner product 
(see \cite{KKSS2}, Sect. 5.1 for the notation).
Our interest lies in those linear forms $\eta$ which belong to 
$\M_\pi$ and the corresponding matrix coefficients 
$m_{v,\eta}\in L^2(Z)$ obtained from (\ref{absPlanch}) with
$v\in \Hc_\pi^\infty$.

\par If $\mu_Z$ denotes the Haar-measure on $Z$, then there exists a positive character $\psi$ of $A_{Z,E}$, 
the restriction of the modular function $\hat\Delta$ of (\ref{defi Delta_Z}) for $\hat Z=G/HA_{Z,E}$,
such that 
$\mu_Z (Ea) =\psi(a) \mu_Z(E)$ for all measurable sets $E\subset Z$ and $a\in A_{Z,E}$. 
Hence we obtain
a unitary representation $R$ of $A_{Z,E}$ by 
$$ (R(a) f)(z) = \sqrt{\psi(a)} f( za) \qquad (f\in L^2(Z), z\in Z, a\in A_{Z,E})\, .$$   
As the representations $R$ and $L$ commute
we obtain from (\ref{absPlanch}) for 
$\mu$-almost all $\pi$ that the natural action of $A_{Z,E}$
on $(V^{-\infty})^H$ restricts to 
a unitary action on $\M_\pi$.  
In particular, 
this action on $\M_\pi$ is semi-simple. Moreover if $\eta\in\M_\pi$ is an
eigenvector with eigenvalue $a^\chi$ then $f=m_{v,\eta}$ 
is an eigenvector for $R(a)$ with eigenvalue $\psi(a)^{1/2}a^\chi$.
Hence 
\begin{equation}
 \label{sqrt psi}
e^{\re\chi} = \hat\Delta(a)^{-1/2}
\end{equation}
for all 
$\chi\in \af_{Z,E,\C}^*$ appearing in the spectrum of the $\af_{Z,E}$-module $\M_\pi$.
\end{rmk}

\par Recall the set $\W=\F\F'$ with $\F'\subset \hat H_0$ a set of representatives of the 
component group $\hat H_0/ HA_{Z,E}$. For $w\in \W$ we set $\eta_w:= w\cdot \eta= \eta(w^{-1}\cdot)$ and observe that
$\eta_w$ is $H_w$-fixed.  We thus have an isomorphism
$$ (V^{-\infty})^H \to (V^{-\infty})^{H_w}, \ \ \eta\mapsto \eta_w\, .$$
We recall that $L\cap H<H_w$ for all $w\in \W$. 
Observe the obvious identity 
\begin{equation}\label{mvetaw}
m_{v, \eta} (gw\cdot z_0)= 
m_{v, \eta_w}(g\cdot z_w) \qquad (g\in G)\, .
\end{equation}
where $z_w$ denotes the origin of the homogeneous space $Z_w=G/H_w$.

\begin{lemma} \label{w-character} Let $\eta\in (V^{-\infty})^H$ be an $A_{Z,E}$-eigenvector
corresponding to a character $\chi\in \af_{Z,E,\C}^*$, i.e.~$a\cdot \eta= a^\chi  \eta$ for $a\in A_{Z,E}$.
Then for all $w\in \W$: 
$$a\cdot  \eta_w = a^{\chi}\eta_w \qquad (a\in A_{Z,E})\, .$$ 
\end{lemma}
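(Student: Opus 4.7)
The plan is to reduce the claim to a direct unwinding of definitions, with the main geometric input being Lemma \ref{F-set}. By the definition of the $G$-action on $V^{-\infty}$, for any $v \in V^\infty$ and $a \in A_{Z,E}$ we have
\begin{equation*}
(a \cdot \eta_w)(v) = \eta_w(a^{-1} v) = \eta(w^{-1} a^{-1} v),
\end{equation*}
so the task is to show the right-hand side equals $a^\chi \eta(w^{-1} v)$.

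This is exactly where Lemma \ref{F-set} enters. That lemma provides, for each $a \in A_{Z,E}$, an element $h_a \in H$ such that $a^{-1} w a = w h_a$, or equivalently $w^{-1} a^{-1} = h_a a^{-1} w^{-1}$ in $G$. Substituting this identity yields
\begin{equation*}
\eta(w^{-1} a^{-1} v) = \eta(h_a a^{-1} w^{-1} v).
\end{equation*}
Since $\eta$ is $H$-invariant and $h_a \in H$, the factor $h_a$ can be absorbed: with $u := w^{-1} v$ one has $\eta(h_a a^{-1} u) = \eta(a^{-1} u)$. Finally, the $A_{Z,E}$-eigenvector hypothesis gives $\eta(a^{-1} u) = (a \cdot \eta)(u) = a^\chi \eta(u) = a^\chi \eta_w(v)$, which is the required identity.

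I do not expect any real obstacle here: all of the geometric content, namely that conjugation by $A_{Z,E}$ preserves the open $P_{\min}$-$H$ double coset represented by $w$ modulo right multiplication by $H$, is already encoded in Lemma \ref{F-set}. Once that is granted, the character $\chi$ survives unchanged because the $h_a$-part is killed by $H$-invariance while $a^{-1}$ is merely transported past $w^{-1}$ without any spectral shift. The only point requiring care is consistency of the contragredient conventions $(g \cdot \eta)(v) = \eta(g^{-1} v)$ throughout the calculation.
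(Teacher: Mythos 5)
Your proof is correct and follows the paper's own argument, which is exactly the one-line observation that Lemma \ref{F-set} gives $w^{-1}a^{-1}=h_a a^{-1}w^{-1}$ with $h_a\in H$, after which $H$-invariance and the eigenvector property of $\eta$ finish the computation. You have merely written out the details the paper leaves implicit.
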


\begin{proof} Recall from Lemma \ref{F-set} that $aw= wah $ for some $h\in H$, 
hence the assertion. 
\end{proof}

{}From this lemma we obtain the identity 
\begin{equation}\label{chi on m}
m_{v,\eta}(abw \cdot z_0)= a^{\chi}  m_{v, \eta}(bw\cdot z_0) 
\end{equation}
for all $a\in A_{Z,E}, b\in A_Z$ and all $ w\in\W.$

In the sequel we realize $\af_Z$ inside of $\af$ as $\af_H^\perp$.
Let $a_0\in A$ be such that $a_0 A_H$ lies in the interior of $A_Z^-$.
It follows from Lemma 5.1 in \cite{KS2} that 
we can choose this element $a_0$ such that in addition there is a
relatively compact open neighbourhood $U_A$ of $\1$ in $A_Z$ with 
\begin{equation} \label{p-id} 
\kf + \Ad(a) \hf_w +\af_Z =\gf\end{equation} 
for all $a\in a_0 U_A\cdot A_Z^-$ and 
$w\in\W$.
What we have fixed so far are the tori $A$ and $A_Z$. These are invariant
under conjugation by $a_0$, and hence 
we are free to replace $K$ by $\Ad(a_0^{-1}) K$. We 
may thus assume that (\ref{p-id}) holds for all 
$a\in  U_A\cdot A_Z^-$. We note that such a change
leaves the neighborhood $U_A$ unchanged. Moreover, we 
still have the freedom of making further replacements
of this type, but now only by elements from $A_Z^-$. 

What we actually use is the conjugated form 
\begin{equation}\label{p-id2}\Ad(a)^{-1} \kf +\af_Z +\hf_w=\gf\end{equation}
and we express elements from the Lie algebra (and via Poincar\'e-Birkhoff-Witt also elements from 
$\U(\gf)$) via this decomposition. We then let $\U(\gf)$ act on smooth functions on 
$G$ via right differentiation.  Functions on $Z_w=G/H_w$ will be considered as 
functions on $G$ which are right $H_w$-invariant.  The functions of our concern are 
the smooth matrix coefficients $g\mapsto m_{v,\eta_w}(g)$ when restricted 
to functions on $U_A \cdot A_Z^-$. 

\subsection{Holomorphic decompositions of the universal enveloping algebra}

The fact that matrix coefficients $m_{v,\eta}$ admit power series expansions when restricted 
to $A^-A_H/ A_H\subset A_Z^-$ rests on several decomposition theorems for the universal algebra $\U(\gf)$ of $\gf$. 
These results were in provided in \cite{KS2}, Section 5, and 
are parallel to the special case where $Z$ is a group (see \cite{Kn}, Ch. VIII, \S 7, 8).

We develop here the framework for a theory, which extends the results from \cite{KS2}
by allowing the expansions to take place on the full
compression cone $A_Z^-$.

\par Recall the coordinates (\ref{I-coordinates}) of $\af_Z$ and fix a basis 
$\nu_1, \ldots, \nu_e$ of $\af_{Z,E}^*$. 
We write $D$ for the open unit disc in $\C$ and write $D^\times =D\bs\{0\}$. Let $\D=D^s \times (\C^*)^e$ 
and $\D^\times =(D^\times)^r\times (\C^*)^e$. For $r>1$ we realize $A_Z^-$ in $r\D^\times$ via the map 
$$ A_Z^-\to r\D^\times, \ \ a\mapsto (a^{\sigma_1}, \ldots, a^{\sigma_s}, a^{\nu_1}, \ldots, a^{\nu_e})\, .$$
It is no loss of generality to assume that $U_A$ and $r$ are chosen
such that $r\D\cap (\R_{>0})^n =  U_A \cdot A_Z^-$.

We write $\Oc(\D)$ for the ring of holomorphic functions on $\D$ and $\Oc_1(\D)$ 
for the subring 
of functions which are independent of the variable from $(\C^*)^e$.

\par Let $(W_l)_l$ be a basis for $\hf_{\lim}= \lf\cap\hf+\oline\uf$ 
such that either $W_l\in \lf\cap \hf$  or 
$W_l=X_{-\alpha}\in \gf^{-\alpha}$ for some $\alpha\in \Sigma_\uf$. 
Further let $(V_k)_k$ be a basis for $\af_Z$ and 
$(U_j)_j$ a linear independent set in $\kf$ which complements 
$(W_l, V_k)_{l,k}$ to a basis for $\gf$.

For $a\in A_Z$ we set $W_l(a):= W_l$ if $W_l\in \hf\cap\lf$, and with the notation of (\ref{T}),
$$W_l(a) := X_{-\alpha} + \sum_{\beta} a^{\alpha+\beta} X_{\alpha, \beta} 
=a^\alpha \Ad(a) (X_{-\alpha}+T(X_{-\alpha}))$$
if $W_l=X_{-\alpha}$.
Then $(W_l(a))_l$ is a basis of $\Ad(a)\hf$ for all $a\in A_Z$, and 
the transition between $(W_l(ab))_l$ and $(\Ad(b)W_l(a))_l$
is by a diagonal matrix depending only on $b\in A_Z$.
In particular, $W_l(a) =W_l(ab)$ for $b\in A_{Z,E}$. 

Now note the following, as a consequence of our normalization (\ref{S normalization}) of $S$: 
The set of vectors $(W_l(a), V_k, U_j)_{l,k,j}$ is expressable
in terms of the basis $(W_l, V_k, U_j)_{l,k,j}$ through a transition matrix of the form 
$\1+P(a)$, where each entry of $P(a)$ extends to a polynomial in $\Oc_1(r\D)$ 
without constant term.  It follows that there exists $a_0\in A_Z^-$ such that 
this matrix is invertible for all $a\in a_0 U_A A_Z^-$, and such that
the entries of the inverse matrix again belong to $\Oc_1(r\D)$
as functions of $a_0^{-1}a$. 
 
\begin{lemma}\label{u-deco} 
There exists $a_0\in A_Z^-$ with the following property.
For every $X\in \gf$ there exist functions $f_j, g_k, h_l\in \Oc_1(r\D)$ 
such that 
$$ X= \sum_{j} f_j (a) U_j + \sum_{k} g_k(a) V_k + \sum_l h_l (a) W_l(a_0a) 
$$
for all $a \in U_A \cdot A_Z^-$. Moreover, the values at zero of these functions are given by
$$ X= \sum_{j} f_j (0)  U_j + \sum_{k} g_k(0) V_k + \sum_l h_l (0) W_l.$$
In particular, $g_k(0)=0$ for all $k$ if $X\in\uf$.
\end{lemma}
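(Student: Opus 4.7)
The plan is to read off the decomposition from the inverse of the transition matrix between the fixed basis $(W_l, V_k, U_j)$ and the shifted tuple $(W_l(a_0a), V_k, U_j)$, which was essentially constructed in the paragraph preceding the lemma. Given $X\in\gf$, I would first expand it in the fixed basis as a constant column $(e_l, d_k, c_j)$, apply $(\1+P(a_0 a))^{-1}$ to this column, and declare the resulting entries to be $h_l(a), g_k(a), f_j(a)$. By the discussion preceding the lemma, the entries of $P(a_0 a)$ are polynomials in the spherical-root coordinates $a^{\sigma_i}$ without constant term, since $W_l(a)-W_l=\sum_\beta a^{\alpha+\beta}X_{\alpha,\beta}$ with $\alpha+\beta\in\N_0[S]$. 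Therefore, once $a_0\in A_Z^-$ is chosen deep enough, the inverse exists on $U_A\cdot A_Z^-$ with entries in $\Oc_1(r\D)$, so $f_j,g_k,h_l\in\Oc_1(r\D)$ and the identity
$$X=\sum_j f_j(a)U_j+\sum_k g_k(a)V_k+\sum_l h_l(a)W_l(a_0a)$$
holds on $U_A\cdot A_Z^-$ by construction.

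The evaluation at $a=0$ is immediate: the matrix $P(a_0 a)$ vanishes at the origin of the polydisc $rD^s$, so $(\1+P(a_0 a))^{-1}$ reduces to the identity there, and the shifted-basis coefficients agree at zero with the fixed-basis coordinates of $X$. This yields the second displayed identity in the lemma.

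It remains to verify $g_k(0)=0$ when $X\in\uf$. By the previous step this is the claim that every $X\in\uf$ has vanishing $\af_Z$-coefficient in the decomposition $\gf=\hf_{\lim}\oplus\af_Z\oplus\Span(U_j)$. To prove it I would use the identity
$$X_\alpha=(X_\alpha+\theta X_\alpha)-\theta X_\alpha,\qquad \alpha\in\Sigma_\uf,$$
in which $\theta X_\alpha\in\gf^{-\alpha}\subset\oline\uf\subset\hf_{\lim}$ and $X_\alpha+\theta X_\alpha\in\kf$. This shows $\uf\subset\hf_{\lim}+\kf$, and by taking $(U_j)$ to include the vectors $X_\alpha+\theta X_\alpha$ attached to a basis of $\uf$ one obtains the refinement $\uf\subset\hf_{\lim}+\Span(U_j)$, from which $g_k(0)=0$ follows.

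The real obstacle lies in the material preceding the lemma rather than in the lemma itself: one must (i) put the transition matrix into the form $\1+P(a)$ with entries depending polynomially only on the $a^{\sigma_i}$-coordinates, so that they extend holomorphically across the boundary $a^{\sigma_i}=0$, and (ii) choose $a_0$ deep enough in $A_Z^-$ that $P(a_0 a)$ is uniformly small on $U_A\cdot A_Z^-$ and the geometric series for $(\1+P(a_0 a))^{-1}$ converges with coefficients in $\Oc_1(r\D)$. With these two ingredients in hand, the proof of the lemma amounts to a formal matrix inversion together with the brief linear-algebra step above.
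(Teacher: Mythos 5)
Your proposal is correct and follows essentially the same route as the paper, whose proof of this lemma is simply ``clear by the preceding remarks'' (the transition matrix $\1+P(a)$ with entries in $\Oc_1(r\D)$ without constant term, inverted after shifting by a sufficiently deep $a_0$), together with the observation that $\uf\subset\kf+\hf_{\lim}=\operatorname{span}\{U_j,W_l\}$ for the final claim. Your explicit construction of the $U_j$'s via $X_\alpha+\theta X_\alpha$ just makes that last containment concrete.
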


\begin{proof} Clear by the preceeding remarks. The statement for $X\in \uf$
follows from the fact that $\uf\subset \kf+\hf_{\lim}= \operatorname{span}\{U_j,W_l\}$.
\end{proof} 

Using induction on the degree, an  immediate consequence of that is (see \cite{KS2}, Lemma 5.3):

\begin{cor}\label{cor u-deco} For every $u\in \U(\gf)$ there exist
${\bf U}_j\in \U(\kf),  {\bf V}_j \in \U(\af_Z)$,  ${\bf W}_j(a) \in \U(\Ad(a)\hf)$,  
and $f_j\in \Oc_1(r\D)$ such that 
\begin{equation}\label{sta-deco} u =\sum_{j=1}^p f_j (a_0^{-1}a) {\bf U}_j 
{\bf V}_j {\bf W}_j(a)  
\qquad (a \in  a_0U_A\cdot A_Z^-)\, .\end{equation}
Moreover each $\mathbf W_j$ is a product of $W_i$'s, as a function of $a$, and 
$$ \deg ({\bf U}_j) + \deg ({\bf V}_j)+ \deg ({\bf W}_j(a))\leq \deg (u)\, .$$
\end{cor}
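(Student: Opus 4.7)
The argument proceeds by strong induction on $n := \deg u$, with the base cases $n=0$ (trivial) and $n=1$ (supplied directly by Lemma \ref{u-deco}) being immediate, and with the degree bound visibly satisfied. For the inductive step, suppose the statement holds for all elements of $\U(\gf)$ of degree strictly less than $n$. Since $\U_n(\gf)$ is spanned by products $X_1\cdots X_k$ with $k\le n$ and products of length $k<n$ are covered by the inductive hypothesis, it suffices to treat $u = X u'$ with $X\in\gf$ and $u'\in \U_{n-1}(\gf)$. Write $u' = \sum_i g_i(a_0^{-1}a)\,\mathbf U_i\mathbf V_i\mathbf W_i(a)$ in standard form by induction, and apply Lemma \ref{u-deco} to $X$ to obtain $X = \sum_k c_k(a_0^{-1}a) Y_k(a)$ with each $Y_k(a)$ one of $U_j\in\kf$, $V_k\in\af_Z$, or $W_l(a)\in\Ad(a)\hf$. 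The task reduces to expressing each term $Y(a)\,\mathbf U_i\mathbf V_i\mathbf W_i(a)$ in standard form of total degree $\le n$.

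A three-case analysis handles this. If $Y\in\kf$ then $Y\mathbf U_i\in\U(\kf)$ of degree $\deg \mathbf U_i+1$, and $Y\mathbf U_i\mathbf V_i\mathbf W_i(a)$ is already in standard form of degree $\le n$. If $Y\in\af_Z$, write $Y\mathbf U_i = \mathbf U_i Y + [Y,\mathbf U_i]$ with $\deg [Y,\mathbf U_i]\le \deg \mathbf U_i$; the main piece $\mathbf U_i(Y\mathbf V_i)\mathbf W_i(a)$ is in standard form, while the commutator contribution $[Y,\mathbf U_i]\mathbf V_i\mathbf W_i(a)$ has total degree $\le n-1$. If $Y=W_l(a)$, commute past $\mathbf U_i\mathbf V_i$ to produce the main term $\mathbf U_i\mathbf V_i\bigl(W_l(a)\mathbf W_i(a)\bigr)$, which is in standard form since $W_l(a)\mathbf W_i(a)$ is again a product of $W$'s evaluated at $a$, plus two correction terms $\mathbf U_i[W_l(a),\mathbf V_i]\mathbf W_i(a)$ and $[W_l(a),\mathbf U_i]\mathbf V_i\mathbf W_i(a)$ of total degree $\le n-1$.

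It remains to put the correction terms, each of degree $\le n-1$, in standard form. The crucial point is that the $a$-dependence in $W_l(a) = X_{-\alpha_l} + \sum_\beta a^{\alpha_l+\beta}X_{\alpha_l,\beta}$ occurs through characters $a^{\alpha_l+\beta}$ whose exponents lie in the monoid $\mathcal M = \N_0[S]$ by construction, hence are monomials in the coordinates $a^{\sigma_1},\dots,a^{\sigma_s}$ and so extend to elements of $\Oc_1(r\D)$. Each correction therefore decomposes as a finite sum $\sum_\ell p_\ell(a_0^{-1}a)\,\tilde u_\ell$ with $p_\ell\in\Oc_1(r\D)$ and $\tilde u_\ell\in\U(\gf)$ fixed of degree $\le n-1$, and applying the inductive hypothesis to each $\tilde u_\ell$ completes the reduction. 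The main obstacle is precisely this last verification: keeping track that commutators involving the $a$-dependent generator $W_l(a)$ do not escape the coefficient ring $\Oc_1(r\D)$; this is secured by the monoid structure of $\mathcal M \subset \af_Z^*$ inherited from the local structure theorem.
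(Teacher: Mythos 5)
Your proof is correct and takes essentially the same route as the paper, which dismisses the corollary as an immediate consequence of Lemma \ref{u-deco} by induction on the degree (citing the analogous Lemma 5.3 of \cite{KS2}). You have simply carried out that induction explicitly, and your key verification—that commutators involving the $a$-dependent generators $W_l(a)$ stay within the coefficient ring $\Oc_1(r\D)$ because the exponents $\alpha+\beta$ lie in $\N_0[S]$ and hence are monomials in $a^{\sigma_1},\dots,a^{\sigma_s}$—is exactly the point the paper leaves implicit.
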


\par For every $n\in\N_0$ we denote by $\U(\gf)_n$ the subspace of $\U(\gf)$ consisting of elements of degree
at most $n$. 

We use $\Zc(\gf)$,  the center of $\U(\gf)$, to replace the ${\bf V_j}$ in the decomposition (\ref{sta-deco}).  
The formal statement is as follows:
 
\begin{lemma}\label{udeco2}  There exists a finite dimensional subspace 
$\mathcal{ Y}\subset \U(\af_Z)$ with the following property. For all $n\in\N_0$ there exists 
an element $a_n\in  A^-_Z$ such that 
for every $u\in \U(\gf)_n$ there exist ${\bf U}_j\in \U(\kf),  {\bf Y}_j \in \mathcal{Y}, 
z_j\in \Zc(\gf)$, ${\bf W}_j(a) \in \U(\Ad(a)\hf)$,
and $f_j\in \Oc_1(r\D)$ such that 
\begin{equation} \label{udeco2a} u =\sum_{j=1}^p f_j (a_n^{-1}a) {\bf U}_j {\bf Y}_j 
{\bf W}_j(a)  z_j \qquad (a \in a_nU_A\cdot A_Z^-)\, .
\end{equation}
Moreover each $\mathbf W_j$ is a product of $W_i$'s, as a function of $a$.
\end{lemma}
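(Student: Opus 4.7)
The plan is to refine Corollary~\ref{cor u-deco} by replacing each $\U(\af_Z)$-factor $\mathbf V_j$ with a product $\mathbf Y_j\,z_j$, where $\mathbf Y_j$ lies in a fixed finite-dimensional subspace $\mathcal Y \subset \U(\af_Z)$ (independent of $n$) and $z_j \in \Zc(\gf)$. The driving force is a ``spherical'' Harish-Chandra projection together with a Chevalley-type finiteness statement.

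First I would define a linear projection $\gamma \colon \U(\gf) \to \U(\af_Z)$ via the PBW decomposition associated with the (vector space) direct sum $\gf = \kf \oplus \af_Z \oplus \hf_{\lim}$. Ordering PBW monomials as $\kf \cdot \af_Z \cdot \hf_{\lim}$, one obtains
\[
\U(\gf) \;=\; \U(\af_Z) \;\oplus\; \bigl(\kf\,\U(\gf) + \U(\gf)\,\hf_{\lim}\bigr),
\]
and $\gamma$ is the projection onto the first summand. Its restriction to $\Zc(\gf)$ is the spherical analogue of the Harish-Chandra homomorphism. Adapting the classical Harish-Chandra--Chevalley finiteness (that $\U(\af)$ is free of rank $|W|$ over $\U(\af)^W$) to the spherical setting as in \cite{KS2}, Sect.~5, one obtains that $\U(\af_Z)$ is a finitely generated module over $\gamma(\Zc(\gf))$. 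Fix once and for all a finite-dimensional subspace $\mathcal Y \subset \U(\af_Z)$ which generates $\U(\af_Z)$ as such a module; then $\mathcal Y$ depends only on $Z$ and not on $u$ or $n$.

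The remaining argument is an induction on $n = \deg u$. Start from the decomposition $u = \sum_j f_j(a_0^{-1}a)\,\mathbf U_j\,\mathbf V_j\,\mathbf W_j(a)$ given by Corollary~\ref{cor u-deco}, and expand each $\mathbf V_j = \sum_i \mathbf Y_{ij}\,\gamma(z_{ij})$ with $\mathbf Y_{ij} \in \mathcal Y$ and $z_{ij} \in \Zc(\gf)$. Writing $\gamma(z_{ij}) = z_{ij} - r_{ij}$ with $r_{ij} \in \kf\,\U(\gf) + \U(\gf)\,\hf_{\lim}$, and exploiting that $z_{ij}$ is central in $\U(\gf)$ (so it passes freely through $\mathbf W_j(a)$), one obtains a principal term $\sum f_j\,\mathbf U_j\,\mathbf Y_{ij}\,\mathbf W_j(a)\,z_{ij}$ of the desired form, plus an error $-\sum f_j\,\mathbf U_j\,\mathbf Y_{ij}\,r_{ij}\,\mathbf W_j(a)$. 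In the error, the $\kf$-part of $r_{ij}$ is commuted to the far left and absorbed into $\U(\kf)$ via PBW, while the $\hf_{\lim}$-part is re-expressed in the moving basis $\Ad(a_0a)\hf$ through the transition matrix $\1+P(a)$ of the paragraph preceding Lemma~\ref{u-deco}, producing a $\mathbf W$-type factor that can be absorbed into $\mathbf W_j(a)$, together with correction terms from $\kf + \af_Z$. Both procedures strictly decrease the total degree, so the inductive hypothesis applies to the residuals and delivers the decomposition on a domain $a_n U_A A_Z^-$, provided $a_n$ is taken deep enough in $A_Z^-$ to keep all transition matrices invertible with inverses in $\Oc_1(r\D)$.

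The main obstacle lies in this last step: one must simultaneously ensure that (i) every rewriting of the error $r_{ij}$ yields residuals of strictly smaller total degree, so the induction actually terminates; (ii) the auxiliary coefficients remain holomorphic on a shifted compression domain $a_n U_A A_Z^-$ after inversion of $\1+P(a)$; and (iii) the \emph{same} fixed subspace $\mathcal Y$ works for every $n$. The last point is the crux and hinges on $\mathcal Y$ being a genuine module-generating set for $\U(\af_Z)$ over $\gamma(\Zc(\gf))$, rather than merely a vector-space complement of some filtered piece.
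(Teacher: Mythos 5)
Your overall architecture is sound and close in spirit to what the paper does via \cite{KS2}, Lemma 5.5: the finite-dimensional space $\mathcal Y$ does indeed originate from a Harish-Chandra type projection $\U(\gf)\to\U(\af_Z)$ along $\kf\U(\gf)+\U(\gf)\hf_{\lim}$ together with a Chevalley-type finiteness of $\U(\af_Z)$ over the image of $\Zc(\gf)$, and post-processing the $\mathbf V_j$-factors of Corollary \ref{cor u-deco} is a legitimate way to organize the induction. (The paper instead runs a single induction on $\deg u$, reducing to $u=vX$ with $X\in\uf$ and decomposing $X$ by Lemma \ref{u-deco}; the two organisations are interchangeable.)

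There is, however, a genuine gap at exactly the point you flag as obstacle (i), and it is not a formality: the assertion that ``both procedures strictly decrease the total degree'' is false. When you rewrite the $\U(\gf)\hf_{\lim}$-component of $r_{ij}$ in the moving frame via Lemma \ref{u-deco}, each $W_l\in\hf_{\lim}$ contributes terms $g_k(a)(\cdots)V_k(\cdots)$ with $V_k\in\af_Z$, and these have the \emph{same} total degree $n$ as $u$; the induction hypothesis does not apply to them, and iterating the rewriting reproduces degree-$n$ terms indefinitely. The mechanism that closes the argument --- and the reason the paper's proof singles out the statement $g_k(0)=0$ of Lemma \ref{u-deco} and appeals to ``the final lines of the proof of \cite{KS2}, Lemma 5.5'' --- is that all of these same-degree terms carry coefficients in $\Oc_1(r\D)$ vanishing at $0$ (i.e.\ at infinity of the compression cone), because $\uf$ and $\hf_{\lim}$ both lie in $\kf+\hf_{\lim}=\Span\{U_j,W_l\}$. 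One then collects the finitely many degree-$n$ unknowns into a linear system $(\1-G(a))\vec u=\vec b(a)$ with $G$ holomorphic, $G(0)=0$, and $\vec b$ consisting of terms already of the desired form by the induction hypothesis; choosing $a_n$ deep enough in $A_Z^-$ makes $\1-G(a)$ invertible with inverse entries in $\Oc_1(r\D)$ on $a_nU_A\cdot A_Z^-$. This, and not the invertibility of the transition matrix $\1+P(a)$ (which is secured once and for all before Lemma \ref{u-deco}), is the reason the base point $a_n$ must depend on $n$. Without this step your induction does not terminate.
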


\begin{proof} The proof, which goes by induction on the index $n$, is analogous 
to \cite{KS2}, Lemma 5.5.  As explained there we can reduce to the case where $u=Xv$
with $X\in\uf$ and $v\in \U(\gf)_{n-1}$ (note that, by equation (5.9) of \cite{KS2},
we have $T_{m''}\in\hf\cap\lf$ for the element denoted like that,
and hence this element is a fixed linear combination of the $W_l(a)$ for all $a$).
In fact, it is more convenient to
consider $u=vX$, which is equivalent as the induction hypothesis applies to $[X,v]$.
We use Lemma \ref{u-deco} and obtain for $a\in a_0U_A \cdot A_Z^-$
$$u=\sum_j f_j(a_0^{-1}a)\, vU_j +\sum_k g_k(a_0^{-1}a)\, vV_k + \sum_l h_l(a_0^{-1}a)\,vW_l(a).$$
The terms with $W_l$ obtain the desired form (for any $a_n\in a_{n-1}A_Z^-$)
immediately from the induction hypothesis
applied to $v.$  By replacing $vU_j$ with $U_jv$, which we are allowed to do by
the induction hypothesis, we can say the same for the terms with $U_j$. Finally, since
$X\in\uf$ we have from Lemma \ref{u-deco} that $g_k(0)=0$ for all $k$. This allows
us to use the argument in the final lines of the proof of \cite{KS2}, Lemma 5.5,
and obtain (\ref{udeco2a}) with an appropriate $a_n$.
\end{proof}

The main conclusion then is: 

\begin{prop}\label{prop-punch} Let $u\in \U(\af_Z)_n$ and let $a_n\in A^-_Z$ be as above. 
Then there exist
${\bf U}_j\in \U(\Ad(a_n)^{-1}\kf)$, $ {\bf Y}_j \in \mathcal{Y}$, 
$z_j\in \Zc(\gf)$ and $f_j\in \Oc_1(r\D)$ such that 
\begin{equation}\label{expansion of u}
u =\sum_{j=1}^p f_j (a) (\Ad(a)^{-1} {\bf U}_j)  {\bf Y}_j   z_j \mod \U(\gf)\hf \qquad (a \in U_A\cdot A_Z^-)\, .
\end{equation}
\end{prop}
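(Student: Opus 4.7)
The plan is to deduce Proposition~\ref{prop-punch} from Lemma~\ref{udeco2} by a change of variable combined with an $\Ad$-conjugation, exploiting that $u$, each $\mathbf{Y}_j$ and each $z_j$ is $\Ad(A)$-invariant while, by the explicit formula for $W_l(a)$, conjugation by $\Ad(a)^{-1}$ pulls every factor of $\mathbf{W}_j(a)$ back into $\hf$.

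First, starting from (\ref{udeco2a}) I would rename the variable by the substitution $a\mapsto a_n a$, so that the new variable ranges over $U_A\cdot A_Z^-$. The argument of each $f_j$ then simplifies from $a_n^{-1}(a_n a)$ to $a$, giving
\[ u \;=\; \sum_{j} f_j(a)\, \mathbf{U}_j\, \mathbf{Y}_j\, \mathbf{W}_j(a_n a)\, z_j \qquad (a\in U_A\cdot A_Z^-). \]
Next I would apply $\Ad((a_n a)^{-1})$ to both sides. The LHS and the $\mathbf{Y}_j, z_j$ factors are unchanged because $u, \mathbf{Y}_j\in\U(\af_Z)$ and $z_j\in\Zc(\gf)$ are all fixed by $\Ad(A)$. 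The first factor becomes $\Ad((a_n a)^{-1})\mathbf{U}_j=\Ad(a)^{-1}\mathbf{U}_j'$ where $\mathbf{U}_j':=\Ad(a_n)^{-1}\mathbf{U}_j\in\U(\Ad(a_n)^{-1}\kf)$, which is exactly the form demanded by the proposition.

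The last step is to show that the conjugated $\mathbf{W}_j$-factor is absorbed into $\U(\gf)\hf$ whenever $\mathbf{W}_j \neq 1$. From the explicit formula $W_l(b)=b^{\alpha_l}\Ad(b)(X_{-\alpha_l}+T(X_{-\alpha_l}))$ together with $X_{-\alpha_l}+T(X_{-\alpha_l})\in\hf$ (by~(\ref{LST})), and from the trivial case $W_l\in\lf\cap\hf$ (which is $\Ad(A)$-fixed, since $A$ centralizes $L$), each factor of $\Ad((a_n a)^{-1})\mathbf{W}_j(a_n a)$ lies in $\hf$. Hence a non-empty product has vanishing $\U(\hf)$-augmentation and lies in $\U(\hf)\cdot\hf\subset\U(\gf)\cdot\hf$. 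Commuting $z_j$ past this factor by centrality places the $\hf$-ending piece on the right, so the whole summand is in $\U(\gf)\hf$. Only the contributions with $\mathbf{W}_j=1$ survive, and these produce (\ref{expansion of u}). No step is a serious obstacle, since the substantive decomposition work has already been carried out in Lemma~\ref{udeco2}; the only real verification needed is the identity $\Ad(b)^{-1}W_l(b)\in\hf$, which is read off directly from the explicit formula for $W_l(b)$.
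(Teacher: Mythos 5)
Your proposal is correct and follows essentially the paper's own (very terse) argument: substitute $a\mapsto a_na$ in Lemma~\ref{udeco2}, apply $\Ad(a_na)^{-1}$, use that $u$, $\mathbf{Y}_j$, $z_j$ are $\Ad(A)$-fixed, and discard the terms with nontrivial $\mathbf{W}_j$ modulo $\U(\gf)\hf$. One small slip: $A=A_L$ does \emph{not} centralize $L$ in general; the fact you need for $W_l\in\lf\cap\hf$ is that $\Ad(b)^{-1}W_l\in\lf\cap\hf$, which holds because $[\af,\lf]\subset\lf_{\rm n}\subset\lf\cap\hf$ (or simply because Lemma~\ref{udeco2} already records $\mathbf{W}_j(a)\in\U(\Ad(a)\hf)$).
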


\begin{proof}  Let $a\in  U_A\cdot A_Z^-$ then
$u =\sum_{j=1}^p f_j (a) {\bf U}_j {\bf Y}_j 
{\bf W}_j(a_na)  z_j  $
by (\ref{udeco2a}). Apply $\Ad(a_na)^{-1}$ 
and observe that $\Ad(a_na)^{-1}u =u$. 
\end{proof}

\begin{rmk}\label{p-rem} It is clear that Proposition \ref{prop-punch} holds as well if $\hf$ is replaced by
$\hf_w$ for $w\in \W$. 
\end{rmk}

By a hypersurface in $A_Z$ we understand a level set of a non-zero real analytic  function on $A_Z$. 
Notice that a hypersurface is a closed subset of measure zero.  

Then there is 
the following counterpart of Proposition \ref{prop-punch} which does not depend on the filtration degree $n$
but carries no information on the coefficients: 

\begin{lemma} \label{Hyperplane avoidance}  There exists a countable union of hypersurfaces ${\mathcal S}\subset A_Z$ 
such that 
\begin{equation}  \U(\gf) = \U (\Ad(a)^{-1}\kf) {\mathcal Y}\Zc(\gf) \U (\hf)     \qquad (a \in A_Z \bs {\mathcal S})\, . \end{equation}
\end{lemma}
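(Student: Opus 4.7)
My plan is to combine a slight strengthening of Proposition \ref{prop-punch}, a real-analyticity argument, and the Poincar\'e--Birkhoff--Witt theorem, then take a countable union of hypersurfaces over filtration degrees. First I extract from the proof of Proposition \ref{prop-punch} that for $u \in \U(\af_Z)_n$ and $a$ in a deep cone region $\Omega_n\subset A_Z^-$ one has the identity
$$u=\sum_j f_j(a)(\Ad(a)^{-1}\mathbf{U}_j)\mathbf{Y}_j(\Ad(a)^{-1}\mathbf{W}_j(a))z_j,$$
with $\Ad(a)^{-1}\mathbf{W}_j(a)\in \U(\hf)$ (since $\mathbf{W}_j(a)\in \U(\Ad(a)\hf)$); the reduction modulo $\U(\gf)\hf$ in the stated proposition amounts to discarding this last factor. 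Commuting the central $z_j$ through, this gives $\U(\af_Z)_n\subset \U(\Ad(a)^{-1}\kf)\mathcal{Y}\Zc(\gf)\U(\hf)$ throughout $\Omega_n$.

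Next, for each fixed $n$, I would propagate this inclusion from $\Omega_n$ to the complement of a hypersurface. Fix a basis of $\U(\af_Z)_n$ and at some $a_*\in \Omega_n$ extract via the above decomposition a finite list of elements $E_\ell(a)=(\Ad(a)^{-1}\mathbf{U}_\ell)\mathbf{Y}_\ell z_\ell \mathbf{W}_\ell$, with $\mathbf{U}_\ell, \mathbf{Y}_\ell, z_\ell, \mathbf{W}_\ell$ now independent of $a$, whose span at $a_*$ contains $\U(\af_Z)_n$. Writing each $E_\ell(a)$ in a fixed basis of $\U(\gf)_N$ for $N$ large enough, the coefficient matrix depends real-analytically---indeed Laurent-polynomially in the characters of $A_Z$---on $a$. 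The requirement that the column span contain $\U(\af_Z)_n$ is a rank condition, equivalent to the non-vanishing of some specific minor of this matrix. This minor is a real-analytic function on $A_Z$, nonzero at $a_*$, so its zero set $S_n$ is a hypersurface, and $\U(\af_Z)_n\subset \U(\Ad(a)^{-1}\kf)\mathcal{Y}\Zc(\gf)\U(\hf)$ on all of $A_Z\setminus S_n$.

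Finally, Poincar\'e--Birkhoff--Witt closes the gap from $\U(\af_Z)_n$ to $\U(\gf)_n$. By (\ref{p-id2}) with $w=1$, the decomposition $\gf=\Ad(a)^{-1}\kf+\af_Z+\hf$ holds on $U_A\cdot A_Z^-$, and its failure locus is cut out by a nonzero real-analytic determinant, hence contained in a hypersurface $S_0\subset A_Z$. On $A_Z\setminus S_0$ the induced filtered surjection $\U(\Ad(a)^{-1}\kf)\otimes\U(\af_Z)\otimes\U(\hf)\twoheadrightarrow \U(\gf)$ writes every $u\in \U(\gf)_n$ as $\sum k_iy_ih_i$ with $y_i\in\U(\af_Z)_n$; combining with the previous step, for $a\in A_Z\setminus(S_0\cup S_n)$ each $y_i$---and hence $u$ itself, after left-multiplication by $k_i$ and right-multiplication by $h_i$---lies in $\U(\Ad(a)^{-1}\kf)\mathcal{Y}\Zc(\gf)\U(\hf)$. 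Setting $\mathcal{S}=S_0\cup\bigcup_{n\ge 1} S_n$ yields the required countable union of hypersurfaces.

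The step I expect to be the main obstacle is the analytic continuation in the second paragraph: namely, confirming that the specific minor detecting the inclusion is a genuinely nonzero real-analytic function on $A_Z$. The witness at $a_*$ supplied by the strengthened Proposition \ref{prop-punch} guarantees this non-vanishing, provided the family $\{E_\ell\}$ and the ambient level $N$ are coordinated so that rank-maximality of the coefficient matrix at $a_*$ detects precisely the inclusion $\U(\af_Z)_n\subset \mathrm{span}\{E_\ell(a_*)\}$.
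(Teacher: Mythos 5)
Your overall strategy --- reduce to $\U(\af_Z)_n$ via PBW off a hypersurface, propagate the decomposition of Proposition \ref{prop-punch} from the deep cone region to the complement of an analytic hypersurface, and take a countable union over the degree $n$ --- is essentially the route the paper takes: its proof simply invokes the proof of \cite{KS2}, Lemma 5.5, which for each $n$ yields a finite union of hypersurfaces $\mathcal{S}_n$ outside of which $\U(\gf)_n\subset \U(\Ad(a)^{-1}\kf)\mathcal{Y}\Zc(\gf)\U(\hf)$, and then sets $\mathcal{S}=\bigcup_n\mathcal{S}_n$. Your first and third paragraphs are sound (in the first, note that $\Ad(a)^{-1}\mathbf{W}_j(a)$ is a fixed element of $\U(\hf)$ times a character of $a$, which is what lets you take the $\mathbf{W}_\ell$ independent of $a$ in the second step).

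The gap is in your second paragraph, exactly where you suspected. The containment $\U(\af_Z)_n\subset\operatorname{span}\{E_\ell(a)\}$ is the rank \emph{equality} $\rank M(a)=\rank\,[M(a)\mid u_1,\dots,u_m]$ (with $(u_i)$ a basis of $\U(\af_Z)_n$), not the non-vanishing of a single minor, and a witness at the single point $a_*$ does not propagate: take $V=\R^2$, $W=\R e_1$ and the single column $E_1(a)=e_1+(a-a_*)e_2$; the only nonzero $1\times 1$ minor at $a_*$ is identically $1$, yet $W\subset\operatorname{span}\{E_1(a)\}$ holds only at $a=a_*$. The repair uses what Proposition \ref{prop-punch} actually provides, namely that the containment holds with the same finite family $\{E_\ell\}$ for \emph{every} $a$ in the open region $U_A\cdot A_Z^-$ (suitably translated by $a_n$). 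Let $r$ be the generic rank of $M(a)$. Every $(r+1)\times(r+1)$ minor of the augmented matrix $[M(a)\mid u_i]$ then vanishes on a nonempty open subset of the connected group $A_Z$, hence identically, because the entries are Laurent polynomials in the characters of $A_Z$; consequently $\rank\,[M(a)\mid u_i]\le r$ everywhere, and off the zero set of one non-identically-vanishing $r\times r$ minor of $M(a)$ the two ranks agree and each system $M(a)x=u_i$ is solvable. With this adjustment your argument goes through and recovers the lemma.
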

\begin{proof} The proof of  \cite{KS2}, Lemma 5.5, shows that for every $n\in\N$ there exists 
a finite union of 
hypersurfaces ${\mathcal S}_n\subset A_Z $ such that  
$$ \U(\gf)_n\subset  \U (\Ad(a)^{-1}\kf) {\mathcal Y}\Zc(\gf) \U (\hf)     
\qquad (a \in A_Z \bs {\mathcal S}_n)\,.\qedhere$$
\end{proof} 

We let now $n_0:=  1+ \max \{ \deg Y \mid Y \in  {\mathcal Y}\}$. 
As described in (\ref{p-id}), 
we are free to replace $K$ by $\Ad(a_0^{-1})K$ without disturbing
other choices. We do this with the above 
element $a_{n_0}$. Then we obtain that (\ref{expansion of u})
holds for all $u \in \U(\af)_{n_0}$ with 
${\bf U}_j\in \U(\kf)$.
Further Lemma \ref{Hyperplane avoidance}  allows us to request in addition that 
\begin{equation} \label{Cas-Osb}  \U(\gf) = \U (\kf) {\mathcal Y}\Zc(\gf) \U ((\hf_w)_I) \, . \end{equation}
for all $I\subset S$ and $w \in \W$.

We recapitulate that we have shown there exists a
Cartan involution for which (\ref{p-id}) holds
for all $a\in U_A\cdot A_Z^-$, for which
(\ref{expansion of u}) is valid for $u \in \U(\af)_{n_0}$ with 
${\bf U}_j\in \U(\kf)$, and for which 
(\ref{Cas-Osb}) holds for all $I\subset S$ and $w \in \W$.
We shall refer to the corresponding maximal compact subgroup $K$ 
as {\it regular}.

\par We recall that (\ref{Cas-Osb}) reduces to the Casselman-Osborne Lemma 
(\cite{W} Prop. 3.7.1)
in the case of $\hf=\nf$
and implies the following finiteness result (see \cite{AGKL}, Prop. 4.2): 

\begin{lemma} \label{finite generation} With a regular choice of $K$, every Harish-Chandra module $V$ is finitely generated 
as $\U((\hf_w)_I)$-module, for all $w\in \W$ and $I\subset S$. 
In particular, $H_0(V, (\hf_w)_I)= V/(\hf_w)_I V$ is finite dimensional. 
\end{lemma}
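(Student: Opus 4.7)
The plan is to bootstrap finite generation from the $\U(\gf)$-level down to the $\U((\hf_w)_I)$-level by combining the decomposition (\ref{Cas-Osb}) with two standard finiteness properties of a Harish--Chandra module $V$: $K$-finiteness of each vector, and $\Zc(\gf)$-finiteness of $V$ as a whole.

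First I would produce a mirror version of (\ref{Cas-Osb}) in which $\U((\hf_w)_I)$ stands on the \emph{left}. For this, apply the principal antiautomorphism $\sigma$ of $\U(\gf)$ defined by $\sigma(X)=-X$ for $X\in\gf$. This antiautomorphism reverses products, preserves the subalgebras $\U(\kf)$, $\U((\hf_w)_I)$ and $\Zc(\gf)$ as sets, and carries the finite-dimensional space $\mathcal{Y}\subset\U(\af_Z)$ to a finite-dimensional $\tilde{\mathcal{Y}}\subset\U(\af_Z)$ (since $\sigma$ acts on the commutative algebra $\U(\af_Z)$ as $(-1)^{\deg}$ on graded pieces). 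Applying $\sigma$ to (\ref{Cas-Osb}) therefore yields
\[
\U(\gf) \;=\; \U((\hf_w)_I)\,\Zc(\gf)\,\tilde{\mathcal{Y}}\,\U(\kf).
\]

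Second, starting from a finite $\U(\gf)$-generating set $v_1,\dots,v_m$ of $V$, I would inflate it through the three outer factors, checking finite dimensionality at each stage. The set $F_0:=\U(\kf)\{v_1,\dots,v_m\}$ is finite-dimensional by $K$-finiteness (each $v_j$ lies in a finite-dimensional $K$-invariant subspace). The space $F_1:=\tilde{\mathcal{Y}}F_0$ is then finite-dimensional since $\tilde{\mathcal{Y}}$ is. Finally $F_2:=\Zc(\gf)F_1$ is finite-dimensional because a finitely generated $\Zc(\gf)$-finite module is annihilated by an ideal of finite codimension in $\Zc(\gf)$. The mirror decomposition then gives
\[
V \;=\; \U(\gf)\{v_j\} \;=\; \U((\hf_w)_I)\,\Zc(\gf)\,\tilde{\mathcal{Y}}\,\U(\kf)\{v_j\} \;\subset\; \U((\hf_w)_I)\,F_2,
\]
so any basis $u_1,\dots,u_N$ of $F_2$ generates $V$ as a $\U((\hf_w)_I)$-module. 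For the ``in particular'' statement, writing an arbitrary $v\in V$ as $v=\sum_i X_iu_i$ with $X_i\in\U((\hf_w)_I)$ and splitting each $X_i$ into its scalar part and an element of $\U((\hf_w)_I)(\hf_w)_I$ shows that $v\equiv\sum_i X_i(0)u_i\pmod{(\hf_w)_IV}$; hence the images of $u_1,\dots,u_N$ span $V/(\hf_w)_IV$, which is therefore finite-dimensional.

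The only genuinely delicate point is the verification that $\sigma$ preserves each of the four factors in (\ref{Cas-Osb}); once this is in hand, everything else is a tidy bookkeeping of finite-dimensionalities propagating through three successive multiplications.
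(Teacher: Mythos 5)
Your proof is correct and follows exactly the route the paper intends: the paper gives no independent argument but deduces the lemma from the decomposition (\ref{Cas-Osb}) by citing \cite{AGKL}, Prop.~4.2, and your argument is precisely that deduction, with the order-reversal via the principal antiautomorphism supplying the one detail the citation hides. All the finiteness inputs you invoke ($K$-finiteness of each vector, $\Zc(\gf)$-finiteness via a cofinite annihilator, and finite generation over $\U(\gf)$) are part of the standing definition of a Harish-Chandra module here, so nothing is missing.
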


We now fix once and for all a Cartan involution for which
$K$ is regular.

\subsection{Power series expansions for $K$-finite vectors}

Following \cite{CM} and \cite{vdB} we developed in
\cite{KS2}, Section 5, a theory of power series expansions
for $K$-finite matrix coefficients $m_{v,\eta_w}$, which we briefly summarize. 
Attached to an $H$-spherical pair $(V,\eta)$ 
there exists a number 
$d\in \N$ and for each $v\in V$  a finite set 
$$ \mathcal{E}=
\{ \Lambda_1, \ldots, \Lambda_{l}\} \subset \af_{Z, \C}^*$$
such that for every $w\in \W$ 
one has an expansion
\begin{equation} \label{asymp'}
m_{v,\eta} (aw\cdot z_0) =  \sum_{j=1}^{l} \sum_{\alpha\in \N_0[S]} 
a^{\Lambda_j+\alpha} q_{\alpha,j,w}(\log a)\end{equation}
with absolute convergence for all $a\in U_A A^-_Z$. 
The expansion (\ref{asymp'}) is derived
in analogy to \cite{KS2}, Cor.~5.7, when we use Proposition \ref{prop-punch} 
(with the regular choice of $\kf$) and Remark \ref{p-rem} 
instead of \cite{KS2}, Lemma 5.5.
Here we also used that $\eta$ is an $A_{Z,E}$-eigenvector  to separate off the $A_{Z,E}$-behavior of $m_{v, \eta_w}$ in 
advance (see 
(\ref{chi on m})), i.e.   
\begin{equation} \label{leadchi}\Lambda_j|_{\af_{Z,E}} = \chi\qquad (1\leq j\leq l)\, .\end{equation}

\par We can arrange that no mutual difference between two elements 
from $\mathcal E$ belongs to $\Z[S]$,
so that the individual terms in (\ref{asymp'}) are unique.

The  set of exponents of $v$ along $A_Z^-$ is then the set 
$$\Xi_{v}=\{ \xi\in\af_{Z,\C}^* \mid (\exists \alpha,j,w)\,\,
\xi=\Lambda_j+\alpha,\, q_{\alpha,j,w}\neq0\}$$
and the set of leading exponents is
\begin{equation*}
\mathcal{E}_{{\rm lead},v}=\{ \xi\in\Xi_{v}\mid \forall \alpha\in\N_0[S]\bs\{0\}:\, \xi-\alpha\notin\Xi_{v} \}.
\end{equation*}

Let $\af\oplus\bfrak\subset\gf$ be a full Cartan subalgebra,
and let  $W_\C$ denote the corresponding Weyl group for the root
system of $(\af\oplus\bfrak)_\C$ in $\gf_\C$.
As in \cite{Kn}, Thm.~8.33 (see also \cite{vdB} Thm 2.4 and Prop.~4.1) 
we find, in case $V$ admits an infinitesimal character,  that 
\begin{equation}\label{big Weyl}
\mathcal{E}_{{\rm lead},v}\subset \{u\Lambda|_{\af_Z}\mid u\in W_\C\}+\rho_P.
\end{equation}
where $\Lambda\in (\af\oplus\bfrak)_\C^*$ is the infinitesimal character of $V$.
As every Harish-Chandra module has a finite composition series we conclude 
that $\mathcal{E}_{{\rm lead},v}$ is finite.

Notice that the set on the right side of the inclusion (\ref{big Weyl})
is independent of $v$. We define
\begin{equation} \label{defXi} \Xi:=\cup_{v}\, \Xi_{v}\end{equation}
and 
\begin{equation}\label{defElead}
\mathcal{E}_{\rm lead}:=\{ 
 \xi\in\Xi\mid \forall \alpha\in\N_0[S]\bs\{0\}:\, \xi-\alpha\notin\Xi \}. 
\end{equation}
It follows that
$$\mathcal{E}_{\rm lead}
 \subset \{u\Lambda|_{\af_Z}\mid u\in W_\C\}+\rho_P,$$
in particular, it is a finite set.
Now
$$\Xi\subset \mathcal{E}_{\rm lead}+\N_0[S],$$
and
the expansion (\ref{asymp'}) reads
\begin{equation} \label{asympXi}
m_{v,\eta} (aw\cdot z_0) = \sum_{\xi\in \Xi} 
a^{\xi} q_{\eta,v,\xi,w}(\log a),
\end{equation}
with polynomials $q_{\eta,v,\xi,w}$ of degree $\le d$ which depend linearly
on $\eta$ and $v$. To simplify notation we shall 
write $q_{\xi,w}$ instead of
$q_{\eta,v,\xi,w}$, whenever it is clear from 
the context that $\eta$ and $v$ have been fixed. 
{}From (\ref{mvetaw}) we see that
\begin{equation} \label{qetaw}
q_{\eta_w,v,\xi,\1}=q_{\eta,v,\xi,w}
\end{equation}
for all $w\in \W$.
Note that all $\sigma\in S$ vanish on $\af_{Z,E}$, hence by (\ref{leadchi}) 
\begin{equation} \label{xionedge}
\xi|_{\af_{Z,E}}= \chi  \qquad (\xi\in\Xi)\, .\end{equation}
To the pair $(V,\eta)$  we  attach now an element $\Lambda_{V, \eta}\in \af_Z^*$ as follows.
Recalling (\ref{I-coordinates}) we let
\begin{equation} \label{Lambdadef} \Lambda_{V,\eta} (\omega_j):=\min_{\xi\in \Xi}
\re \xi(\omega_j)= \min_{\lambda\in \mathcal{E}_{\rm lead}}
\re \lambda(\omega_j)\end{equation}
and define
\begin{equation} \label{Lambda on edge}
\Lambda_{V, \eta}|_{\af_{Z,E}}= \re \chi.
\end{equation}
\begin{rmk} The exponent $\Lambda_V$ defined in \cite{KSS}, Th. 5.8, does not depend on the particular $\eta$. 
The exponent $\Lambda_{V,\eta}$ defined here is an invariant of the pair $(V,\eta)$. 
\end{rmk} 

\subsection{Upper and lower bounds on matrix coefficients}\label{optimal bound}
We regard $\Lambda_{V,\eta}$ as an element in $\af^*$ trivial on $\af_H$.
Hence (\ref{asympXi}) yields, after possibly shrinking $U_A$, for all $v\in V$ a 
constant $C_v>0$ such that 
\begin{equation}\label {ub} |m_{v,\eta}(aw\cdot z_0)| 
\leq C_v (1 +\|\log a\|)^d a^{\Lambda_{V,\eta}}
\qquad (a\in U_A\cdot A_Z^-)\end{equation} 
for all $w\in\W$.

A sharper upper bound is possible along rays in $A_Z^-$.
By  (\ref{I-coordinates}) and (\ref{xionedge})-(\ref{Lambda on edge})
we find for $a\in A_Z^-$ that
\begin{equation}\label{weta}
\mathbf{w}_\eta(a):= \max_{\xi \in \Xi} a^{\re \xi}\le a^{\Lambda_{V,\eta}},\qquad (a\in A_Z^-).
\end{equation}
Let  $\Sphere(\af_Z^-)$ be the intersection of the unit 
sphere with $\af_Z^-$, then
$${\bf w}_\eta(\exp(tX))= e^{t\lambda_X} \qquad (t\geq 0)\,
$$
where
$$\lambda_X:= \max_{\xi \in \Xi} \re \xi(X), \qquad X\in \Sphere(\af_Z^-).$$
Observe also that ${\bf w}_\eta$ is continuous and a weight on $A_Z^-$, that is
for all compact subsets $C_A\subset A_Z^-$ there exists a constant $c>0$ such that 
$$\frac1c {\bf w}_\eta (a) \leq {\bf w}_\eta (ba) \leq c {\bf w}_\eta(a) \qquad 
(a\in A_Z^-, b\in C_A)\, .$$

Furthermore, let 
$$\Xi_X=\{ \xi \in \Xi\mid \re \xi(X) 
=\lambda_X\}$$  and
define
$$d_X =\max_{v\in V,\xi\in\Xi_X,w\in\W} 
\deg(t\mapsto q_{v,\xi, w}(tX))$$
and 
$${\bf d}_\eta(\exp (tX)):= t^{d_X}   \qquad (t\geq 0)\, . $$

\begin{lemma}\label{radial bound}
Let $X\in \Sphere(\af_Z^-)$. There exists for each $v\in V$ 
a constant $C=C_{v,X}>0$ such that
\begin{equation} \label{ub-exact}  
|m_{v,\eta}(\exp(tX) w\cdot z_0)| 
\leq C {\bf w}_\eta(\exp(tX)) {\bf d}_\eta(\exp(tX))
\end{equation}
for all $t\ge 1$ and $w\in \W$. 
\end{lemma}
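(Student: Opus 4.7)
The plan is to insert $a=\exp(tX)$ into the power series expansion (\ref{asympXi}), split the sum according to whether $\xi\in\Xi_X$ (dominant) or $\xi\in\Xi\setminus\Xi_X$ (subdominant), and bound each part. Since $\Xi\subset\mathcal{E}_{\rm lead}+\N_0[S]$ with $\re\lambda(X)\le\lambda_X$ for every $\lambda\in\mathcal{E}_{\rm lead}$ and $\alpha(X)\le 0$ for every $\alpha\in\N_0[S]$, the equality $\re(\lambda+\alpha)(X)=\lambda_X$ forces simultaneously $\re\lambda(X)=\lambda_X$ and $\alpha(X)=0$. Hence
$$\Xi_X=\mathcal{E}_X+\mathcal{N}_X,\qquad \mathcal{E}_X:=\{\lambda\in\mathcal{E}_{\rm lead}\mid \re\lambda(X)=\lambda_X\},\qquad \mathcal{N}_X:=\{\alpha\in\N_0[S]\mid \alpha(X)=0\},$$
where $\mathcal{E}_X$ is finite but $\mathcal{N}_X$ may be infinite.

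For the dominant contribution I would show, for each $\lambda\in\mathcal{E}_X$, that
$$G_{\lambda,w}(t):=\sum_{\alpha\in\mathcal{N}_X}q_{\eta,v,\lambda+\alpha,w}(tX)$$
is a polynomial in $t$ of degree at most $d_X$. Expanding $q_{\eta,v,\lambda+\alpha,w}(tX)=\sum_{k=0}^{d_X}c_{k,\alpha}\,t^k$, Vandermonde inversion expresses each $c_{k,\alpha}$ as a fixed finite linear combination of the values $q_{\eta,v,\lambda+\alpha,w}(jX)$ for $j=1,\dots,d_X+1$, with coefficients independent of $\alpha$. Absolute convergence of (\ref{asympXi}) at $a=\exp(jX)\in A_Z^-\subset U_A\cdot A_Z^-$, restricted to $\alpha\in\mathcal{N}_X$ where the exponential weight reduces to the constant $e^{j\lambda_X}$, gives $\sum_{\alpha\in\mathcal{N}_X}|q_{\eta,v,\lambda+\alpha,w}(jX)|<\infty$, hence $\sum_{\alpha}|c_{k,\alpha}|<\infty$ for each $k$. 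Summing over the finite set $\mathcal{E}_X$ and using $|e^{t\lambda(X)}|=e^{t\lambda_X}=\mathbf{w}_\eta(\exp(tX))$ yields a dominant bound of the desired form $C\,\mathbf{w}_\eta(\exp(tX))\mathbf{d}_\eta(\exp(tX))$ for $t\ge 1$.

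For the subdominant tail I would exploit a uniform spectral gap: finiteness of $\mathcal{E}_{\rm lead}$ together with the bound $\alpha(X)\le-\min\{|\sigma_j(X)|\mid \sigma_j(X)<0\}$ for $\alpha\in\N_0[S]\setminus\mathcal{N}_X$ produces $\delta>0$ with $\re\xi(X)\le\lambda_X-\delta$ for every $\xi\in\Xi\setminus\Xi_X$. Lagrange interpolation at $t=1,\dots,d+1$ provides a constant $C_d$, independent of $\xi$, with $|q_{\eta,v,\xi,w}(tX)|\le C_d\,t^d\max_{j}|q_{\eta,v,\xi,w}(jX)|$ for $t\ge 1$. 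Splitting $e^{t\re\xi(X)}=e^{(t-t_0)\re\xi(X)}\,e^{t_0\re\xi(X)}$ with $t_0:=d+1$, using the gap to estimate $e^{(t-t_0)\re\xi(X)}\le e^{(t-t_0)(\lambda_X-\delta)}$, and summing the second factor against $|q_{\eta,v,\xi,w}(jX)|$ via absolute convergence of (\ref{asympXi}) at each $\exp(jX)$, I arrive at a bound $C't^d e^{t(\lambda_X-\delta)}$ for the subdominant contribution; since exponential decay beats polynomial growth, this is dominated by $e^{t\lambda_X}t^{d_X}$ for $t$ sufficiently large, and on the remaining compact interval $t\in[1,T_0]$ the inequality is trivial. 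Uniformity in $w\in\W$ is automatic since $\W$ is finite.

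The main obstacle is the dominant sum when $X$ is non-generic and $\mathcal{N}_X$ is infinite, producing an a priori infinite sum of polynomials of degree $\le d_X$; the Vandermonde interpolation combined with absolute convergence of the power series is the key device that collapses this to a single polynomial of degree $\le d_X$ and prevents a loss in the exponent of $t$.
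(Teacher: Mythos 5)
Your proof is correct and follows essentially the same route as the paper's: split the expansion (\ref{asympXi}) at $a=\exp(tX)$ into the sum over $\Xi_X$ (polynomials of degree $\le d_X$ times exponentials of modulus $e^{t\lambda_X}$) and the sum over $\Xi\setminus\Xi_X$ (killed by a uniform spectral gap $\re\xi(X)\le\lambda_X-\delta$). The interpolation argument you use to convert absolute convergence of the series at finitely many points into summability of the polynomial coefficients is exactly the detail the paper leaves implicit, and it correctly handles the case where $\Xi_X$ is infinite.
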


\begin{proof}
With (\ref{asympXi}) we write
\begin{equation*} 
m_{v,\eta} (a_tw\cdot z_0) = \sum_{\xi\in \Xi_X} 
a_t^{\xi} q_{v,\xi,w}(tX)+\sum_{\xi\in \Xi\bs\Xi_X} 
a_t^{\xi} q_{v,\xi,w}(tX)
\end{equation*}
where $a_t=\exp(tX)$. 
The polynomials $q_{v,\xi,w}$ in the first sum have degree at most $d_X$,
and hence the asserted bound follows for this part. In the second sum
the polynomials may have higher degrees (up to $d$), but as the exponentials are
bounded by $e^{\lambda_X't}$ for some
$\lambda'_X<\lambda_X$,
this term is in fact of the form $o(e^{t\lambda_X})$ for $t\to\infty$.
\end{proof}

The bound in (\ref{ub-exact}) is essentially optimal. The formal statement will be given in Proposition \ref{exact lb}.
However, we first need a lemma.

\begin{lemma}\label{degree along ray} 
Let $X\in \Sphere(\af_Z^-)$ and let $\lambda_X$
and $d_X$ be defined as above. Then the set
$$\Lambda=\{ \xi(X)\mid \xi \in \Xi_X\}\subset \lambda_X+i\R.$$
is finite. Then for each $Y_0\in\af_Z$ we have 
\begin{equation}\label{deg1} 
\deg(t\mapsto q_{v,\xi, w}(Y_0+tX))\leq d_X,
\end{equation}
for all $v\in V$, $\xi\in\Xi_X$, and $w\in\W$,
and there exist 
$\lambda\in\Lambda$, $v\in V$, and $w\in\W$
such that
\begin{equation}\label{deg2} 
\deg(t\mapsto 
\sum_{\genfrac{}{}{0pt}{}{\xi\in\Xi}{\xi(X)=\lambda}}
e^{\xi(Y_0)}q_{v,\xi,w}(Y_0+tX))=d_X.
\end{equation}
\end{lemma}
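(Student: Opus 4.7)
The proof splits into the three claims.

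\emph{(1) Finiteness of $\Lambda$:} Use $\Xi \subset \mathcal{E}_{\rm lead} + \N_0[S]$ with $\mathcal{E}_{\rm lead}$ finite. Given $\xi = \lambda + \alpha \in \Xi_X$ with $\lambda \in \mathcal{E}_{\rm lead}$ and $\alpha \in \N_0[S]$, the equality $\re\xi(X) = \lambda_X$ combined with $\alpha(X) \le 0$ (since $X \in \af_Z^-$) and $\re\lambda(X) \le \lambda_X$ forces $\re\lambda(X) = \lambda_X$ and $\alpha(X) = 0$. Hence $\xi(X) = \lambda(X)$ lies in the finite set $\{\lambda(X) : \lambda \in \mathcal{E}_{\rm lead},\ \re\lambda(X) = \lambda_X\}$.

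\emph{(2) Degree bound \eqref{deg1}:} For any polynomial $q$ on $\af_Z$ with homogeneous decomposition $q = \sum_i q^{(i)}$, multilinearity of the symmetric form underlying $q^{(i)}$ shows that the coefficient of the top power $t^{i^*}$ in $q(Y_0 + tX)$ equals $q^{(i^*)}(X)$ for the largest $i^*$ with $q^{(i^*)}(X) \neq 0$. Thus $\deg_t q(Y_0+tX) = \deg_t q(tX)$, so \eqref{deg1} follows from the definition of $d_X$.

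\emph{(3) Sharp existence \eqref{deg2}:} First I would reduce to $Y_0 = 0$. Since $\af_Z$ is abelian, $\exp(Y_0 + tX) = \exp(Y_0)\exp(tX)$ in $G$, whence
$$m_{v,\eta}(\exp(Y_0 + tX)\, w \cdot z_0) = m_{\pi(\exp(-Y_0)) v,\,\eta}(\exp(tX)\, w \cdot z_0).$$
Expanding both sides via \eqref{asympXi}, grouping summands by the value of $\xi(X)$, and invoking uniqueness of the resulting expansion in $t$, one obtains
$$\sum_{\xi(X)=\lambda} e^{\xi(Y_0)}\, q_{v,\xi,w}(Y_0 + tX) \;=\; \sum_{\xi(X)=\lambda} q_{v',\xi,w}(tX), \qquad v' := \pi(\exp(-Y_0)) v \in V^{\infty}.$$
Since $V$ is dense in $V^\infty$, $\pi(\exp(-Y_0))$ is an automorphism of $V^\infty$, and polynomial degree is lower semi-continuous in $v$, it suffices to find $\lambda \in \Lambda$, $v \in V$, $w \in \W$ with $\deg_t \sum_{\xi(X)=\lambda} q_{v,\xi,w}(tX) = d_X$.

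By the definition of $d_X$, there exist $v^* \in V$, $\xi^* \in \Xi_X$, $w^* \in \W$ with $c := [q_{v^*,\xi^*,w^*}]^{(d_X)}(X) \neq 0$; set $\mu^* := \xi^*(X) \in \Lambda$. The $\xi^*$-summand contributes $c\, t^{d_X}$ to $\sum_{\xi(X) = \mu^*} q_{v^*,\xi,w^*}(tX)$, but other $\xi$'s with $\xi(X) = \mu^*$ may contribute $-c\, t^{d_X}$ and cancel the leading coefficient. The \emph{main obstacle} is ruling out such cancellation simultaneously for every choice of $v, w, \lambda$. I would argue by contradiction: if every leading coefficient vanished, the linear functional $v \mapsto [t^{d_X}]\sum_{\xi(X)=\lambda} q_{v,\xi,w}(tX)$ on $V$ would be zero for all $\lambda \in \Lambda$, $w \in \W$. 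Re-introducing $Y_0$ via the identity above turns this into $\sum_{\xi(X)=\lambda} e^{\xi(Y_0)}\, [t^{d_X}] q_{v,\xi,w}(Y_0+tX) \equiv 0$ as a function of $Y_0$. Applying linear independence of the characters $Y_0 \mapsto e^{\xi(Y_0)}$ for distinct $\xi \in \Xi_X$, valid on an open cone of absolute convergence, would then force $[t^{d_X}] q_{v^*,\xi^*,w^*}(Y_0+tX) \equiv 0$ as a polynomial in $Y_0$, contradicting its value $c \neq 0$ at $Y_0 = 0$.
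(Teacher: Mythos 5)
Your step (1), the finiteness of $\Lambda$, is correct and supplies a detail the paper leaves implicit. Step (2), however, rests on a false claim about polynomials: it is not true that $\deg_t q(Y_0+tX)=\deg_t q(tX)$ for a single polynomial $q$ on $\af_Z$. Take $q(x,y)=xy$, $X=(1,0)$, $Y_0=(0,1)$: then $q(tX)\equiv 0$ while $q(Y_0+tX)=t$. The coefficient of $t^{j}$ in $q(Y_0+tX)$ for $j<\deg q$ contains polarization terms of the higher homogeneous components evaluated on mixed arguments $X,\dots,X,Y_0,\dots,Y_0$, and these need not vanish even when those components vanish at $X$. Since $d_X$ only controls degrees along the ray through the origin, \eqref{deg1} cannot be proved one polynomial at a time. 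The missing input is the relation \eqref{c-relations}: every $Y$-derivative at $Y=0$ of the analytic map $Y\mapsto e^{\xi(Y)}q_{v,\xi,w}(Y+tX)$ is, up to sign, of the same form with $v$ replaced by $uv$ for $u\in\U(\af_Z)$, hence again a polynomial in $t$ of degree $\le d_X$ because $uv\in V$ and $d_X$ is a bound uniform over all of $V$. Analyticity then forces degree $\le d_X$ for every $Y$. The $\af_Z$-stability of $V$ is essential here and your argument never uses it.

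For \eqref{deg2} your overall strategy (argue by contradiction, then use linear independence of the characters $e^{\xi(\cdot)}$ to isolate the individual $q_{v,\xi,w}$ and contradict the definition of $d_X$ at $Y=0$) is exactly the paper's. But linear independence requires the hypothesized vanishing of the $t^{d_X}$-coefficient to hold for $Y$ in an open set, whereas the contradiction hypothesis is only at the single point $Y_0$, and your mechanism for propagating it is broken: $\pi(\exp(-Y_0))v$ lies only in $V^\infty$, not in $V$, and returning to $V$ by density would need the coefficient functionals $v\mapsto q_{v,\xi,w}$ to be continuous on $V^\infty$ --- precisely the continuity that is unavailable at this point and is only established later (Lemma \ref{continuous and nonzero}) under extra hypotheses on $Z$; ``lower semicontinuity of the degree'' does not substitute for it. The repair is again infinitesimal rather than global translation: by \eqref{c-relations} the $Y$-derivatives at $Y_0$ of the relevant exponential polynomial are obtained by replacing $v$ with $Yv\in V$, so the contradiction hypothesis annihilates all derivatives at $Y_0$, analyticity gives vanishing for all $Y$, and only then does linear independence of the exponentials apply to force each $q_{v,\xi,w}(Y+tX)$ to have degree $<d_X$ at $Y=0$, the desired contradiction.
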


\begin{proof}
We start with a general remark.  For $Y\in \af_Z$ and $f$ a differentiable function 
on $\af_Z$ we let $Yf$ be the derivative of $f$ with respect to $Y$. 
Then the coefficients of $m_{v, \eta}$ and $m_{Yv,\eta}$ are related as follows:
\begin{equation}\label{c-relations} 
e^{\xi}q_{Yv,\xi,w}=-Y[e^{\xi}q_{v,\xi,w}]\, .
\end{equation}

Let  $\xi\in\Xi_X$ and $w\in \W$. We consider 
for each $v\in V$
\begin{equation*}%
 e^{\xi(Y)} q_{v,\xi,w}(Y+tX)
\end{equation*}
as an analytic function of $Y\in \af_Z$
into the space of polynomials in $t$ of degree $\leq d$.
The value of this function at $Y=0$ belongs to the subspace of
polynomials of degree $\leq d_X$, and by (\ref{c-relations})
the same is valid for all its derivatives at $0$.
It follows that it has degree $\leq d_X$ for all $Y$. This proves (\ref{deg1}).

Likewise, for $\lambda\in\Lambda$ and $w\in\W$ we consider for each $v\in V$
\begin{equation}\label{exppol}
e^{-t\lambda(X)}
\sum_{\genfrac{}{}{0pt}{}{\xi\in\Xi}{\xi(X)=\lambda}}
 e^{\xi(Y+tX)} q_{v,\xi,w}(Y+tX)
\end{equation}
as an analytic function of $Y\in \af_Z$
into the space of polynomials in $t$ of degree $\leq d_X$.
If (\ref{deg2}) fails to hold, then
(\ref{exppol}) belongs to the polynomials of degree $< d_X$ 
for every $\lambda$, $v$ and $w$ and for $Y=Y_0$.

Then again by (\ref{c-relations}) this will be the case for the
derivatives at $Y=Y_0$, and hence it will have degree
$< d_X$ at every $Y$. By the linear independence of the involved
exponential polynomials of $Y$ this implies that the individual
terms $q_{v,\xi,w}(Y+tX)$ also must have degree $< d_X$ at every $Y$,
hence in particular at $Y=0$, and thus a contradiction with the 
definition of $d_X$ will be 
reached.
\end{proof}

\begin{proposition}\label{exact lb} Assume $\eta\neq0$.
Let $Y_0\in \af_Z^-$ and $X\in \Sphere(\af_Z^-)$. Then there exist $w\in \W$, $v\in V$, 
a constant $C=C_{Y_0,X}>0$, an $\e>0$ and a sequence $t_1<t_2< \ldots $ with $t_n\to \infty$ such that 
\begin{equation} \label{lb-exact}  |m_{v,\eta_w}(\exp(Y_0+tX)\cdot z_w)| 
\geq C  {\bf w}_\eta(\exp(tX))\, {\bf d}_\eta(\exp(tX))\end{equation} 
for all  $t\in (t_n-\e, t_n+\e)$ and all $n\in \N$.  
\end{proposition}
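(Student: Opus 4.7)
The plan is to choose specific $v\in V$ and $w\in\W$ so that the matrix coefficient $f(t):=m_{v,\eta_w}(\exp(Y_0+tX)\cdot z_w)$ has a genuine leading behaviour of order $e^{t\lambda_X}t^{d_X}$, and then use a Kronecker-type recurrence argument to find intervals on which the almost-periodic factor is uniformly bounded below. The two non-trivial inputs are Lemma \ref{degree along ray}, part (\ref{deg2}), and the standard fact that a non-zero Bohr almost-periodic function attains values close to its supremum on a relatively dense set in $\R$.

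Apply Lemma \ref{degree along ray}(\ref{deg2}) to $Y_0$ and $X$: this yields $\lambda^{\ast}\in\Lambda$, $v\in V$, and $w\in\W$ such that
$$P_{\lambda^{\ast}}(t):=\sum_{\xi\in\Xi,\,\xi(X)=\lambda^{\ast}} e^{\xi(Y_0)}q_{v,\xi,w}(Y_0+tX)$$
has degree exactly $d_X$ in $t$. Substitute the expansion (\ref{asympXi}) into $f(t)$, split the sum over $\Xi$ into $\Xi_X$ and $\Xi\setminus\Xi_X$, and group the dominant part by the finite set of values of $\xi(X)\in\Lambda$, to obtain
$$f(t)=e^{t\lambda_X}G(t)+R(t),\qquad G(t):=\sum_{\lambda\in\Lambda}e^{it\,\im\lambda}P_\lambda(t),$$
where each $P_\lambda$ has degree $\le d_X$ by (\ref{deg1}). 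To estimate $R(t)$, note that every $\xi=\mu+\sum_j n_j\sigma_j\in\Xi\setminus\Xi_X$ (with $\mu\in\mathcal{E}_{\rm lead}$, $n_j\in\N_0$) satisfies $\re\xi(X)\le\lambda_X-\e_0$ for a uniform $\e_0>0$; positivity and uniformity of $\e_0$ follow from the finiteness of $\mathcal{E}_{\rm lead}$ together with the fact that only finitely many $\sigma_j$ satisfy $\sigma_j(X)<0$. Combining this gap with the absolute convergence of (\ref{asympXi}) at an interior point of $U_A\cdot A_Z^-$ and polynomial bounds on $q_{v,\xi,w}(Y_0+tX)$ yields
$$|R(t)|\le C_1(1+t)^d e^{t(\lambda_X-\e_0)},\qquad (t\ge 0).$$

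The leading $t^{d_X}$-coefficient of the finite exponential polynomial $G(t)$ is the non-zero trigonometric polynomial $c(t):=\sum_{\lambda\in\Lambda}c_\lambda e^{it\,\im\lambda}$ with $c_{\lambda^{\ast}}\neq 0$. It is Bohr almost-periodic and satisfies $\sup_t|c(t)|\ge|c_{\lambda^{\ast}}|>0$ (the Bohr mean of $|c|^2$ equals $\sum|c_\lambda|^2\ge|c_{\lambda^{\ast}}|^2$). By almost-periodic recurrence, the set $\{t:|c(t)|\ge|c_{\lambda^{\ast}}|/2\}$ is relatively dense in $\R$, and uniform continuity of $c$ promotes it to a union of intervals $(t_n-\e,t_n+\e)$ with $t_n\to\infty$ and a fixed $\e>0$. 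On such intervals, for $t$ sufficiently large, $|G(t)|\ge\tfrac14|c_{\lambda^{\ast}}|t^{d_X}$ while $|R(t)|e^{-t\lambda_X}=o(t^{d_X})$, so (\ref{lb-exact}) follows with $C$ a small multiple of $|c_{\lambda^{\ast}}|$ after absorbing lower-order terms. The main obstacle is the uniform bound on $R(t)$: since $\Xi\setminus\Xi_X$ is infinite and the polynomial factors $q_{v,\xi,w}(Y_0+tX)$ themselves grow with $t$, one must translate absolute convergence of the power series expansion into simultaneous polynomial-growth control on infinitely many summands, uniformly along the whole ray.
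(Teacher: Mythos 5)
Your proposal is correct and follows essentially the same route as the paper: both select $v$, $w$ and a nonzero top coefficient via Lemma \ref{degree along ray}(\ref{deg2}), split off the terms with $\xi\in\Xi_X$ from an $o(e^{t\lambda_X})$ remainder, and then bound the resulting trigonometric polynomial $\sum_\lambda c_\lambda e^{it\,\im\lambda}$ away from zero on a union of intervals going to infinity. Your Bohr-mean/relative-denseness argument is the same oscillation input the paper packages as Lemma \ref{Wallach extended} combined with the boundedness of the derivative of the trigonometric polynomial, and the remainder estimate you flag as the delicate point is handled in the paper exactly as you indicate, via the absolute convergence of the expansion and the uniform exponential gap $\re\xi(X)\le\lambda_X-\e_0$ for $\xi\notin\Xi_X$.
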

\begin{proof} 
Consider the function  
$$F_{v,w} (t):=
\sum_{\xi\in\Xi_X} q_{v,\xi,w}(Y_0+tX) e^{\xi(Y_0+tX)  }\, .$$
As in the proof of Lemma \ref{radial bound} we have
\begin{equation}\label{O}
m_{v,\eta}(\exp(Y_0+tX)w\cdot z_0)=F_{v,w}(t) + o(e^{t\lambda_X})
\end{equation}
as  $t\to\infty$ 

Let $\Lambda\subset \lambda_X+i\R$ be as in Lemma \ref{degree along ray}.
Then
$$F_{v,w}(t)=e^{t\lambda_X}\sum_{\lambda\in\Lambda} e^{i t \im\lambda }
\sum_{\genfrac{}{}{0pt}{}{\xi\in\Xi}{\xi(X)=\lambda}}
e^{\xi(Y_0)} q_{v,\xi,w}(Y_0+tX). $$
For a one-variable polynomial 
$p(t)=\sum_{j=0}^N a_j t^j$ we denote by $p(t)_j=a_j$ the $j$-th coefficient.  With $v$ and $w$ as in the final conclusion of 
Lemma \ref{degree along ray} we set
$$
c_\lambda:=\sum_{\genfrac{}{}{0pt}{}{\xi\in\Xi}{\xi(X)=\lambda}}
e^{\xi(Y_0)}q_{v,\xi,w}(Y_0+tX)_{d_X}$$
for $\lambda\in\Lambda$ and note that then $c_\lambda\neq 0$ for 
some $\lambda$.
Let 
$$ F_{v, w}^{\rm top}(t): = e^{t\lambda_X} t^{d_X}\sum_{\lambda\in \Lambda } 
c_\lambda e^{it \im \lambda}, 
$$
then it follows from Lemma \ref{Wallach extended} below
that 
$$
\limsup_{t\to\infty} |e^{-t\lambda_X } t^{-d_X} F^{\rm top}_{v,w}(t)|^2\neq 0.$$
Moreover, since the derivative of 
$\sum_{\lambda\in \Lambda } 
c_\lambda e^{it \im \lambda}$
is bounded above we deduce the existence of positive constants $C$ and
$\e$ and a sequence $t_1<t_2< \ldots $ with $t_n\to \infty$, such that 
$$|F_{v,w}^{\rm top}(t)|\ge C e^{t\lambda_X} t^{d_X}$$
for all $|t-t_n|<\epsilon$.
Now (\ref{lb-exact}) 
follows from (\ref{O}). 
\end{proof}

The following is shown in \cite{W}, Ch.~4, Appendix 4.A.1.2 (1):

\begin{lemma}\label{Wallach extended} Let $u_1, \ldots, u_m \in\R$ be distinct real numbers
and  let $c_1,\dots,c_m\in\C$.
Then
$$\sum_{j=1}^m |c_j|^2 \le \limsup_{s\to \infty} | \sum_{j=1}^m c_je^{i u_j s}|^2.$$ 
\end{lemma}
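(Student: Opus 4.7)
The plan is to exploit a Cesàro time-average (Bohr mean) argument, comparing the mean square of the exponential polynomial with its pointwise limsup. Set
$$f(s):=\sum_{j=1}^m c_j e^{iu_j s},$$
and expand
$$|f(s)|^2=\sum_{j,k} c_j\overline{c_k}\, e^{i(u_j-u_k)s}.$$

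First I would compute the time average $\frac{1}{T}\int_0^T |f(s)|^2\,ds$. Since the $u_j$ are pairwise distinct, for each off-diagonal pair $j\neq k$ we have
$$\frac{1}{T}\int_0^T e^{i(u_j-u_k)s}\,ds=\frac{e^{i(u_j-u_k)T}-1}{i(u_j-u_k)T}\longrightarrow 0\qquad(T\to\infty),$$
while each diagonal term contributes $|c_j|^2$. Therefore
$$\lim_{T\to\infty}\frac{1}{T}\int_0^T |f(s)|^2\,ds=\sum_{j=1}^m |c_j|^2.$$

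Next I would bound this Cesàro limit by the pointwise limsup. Set $L:=\limsup_{s\to\infty}|f(s)|^2$; for each $\varepsilon>0$ there is $s_0$ with $|f(s)|^2\le L+\varepsilon$ whenever $s\ge s_0$. Since $|f|^2$ is continuous and bounded on $[0,s_0]$, the contribution of that interval to the average vanishes as $T\to\infty$, and hence
$$\sum_{j=1}^m|c_j|^2=\lim_{T\to\infty}\frac{1}{T}\int_0^T |f(s)|^2\,ds\le L+\varepsilon.$$
Letting $\varepsilon\to0$ delivers the stated inequality.

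The argument is short and essentially classical (an instance of Parseval's formula for Bohr almost periodic functions), so there is no serious obstacle; the only point requiring mild care is the justification that the Cesàro mean is dominated by the limsup, which reduces to the elementary observation above about discarding the contribution of the compact initial interval $[0,s_0]$.
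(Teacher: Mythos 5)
Your proof is correct: the Ces\`aro (Bohr) mean computation $\frac1T\int_0^T|f|^2\to\sum_j|c_j|^2$, combined with the elementary bound of the mean by the limsup, gives exactly the stated inequality, and the finiteness of the limsup is automatic since $|f|$ is bounded. The paper does not prove the lemma but simply cites Wallach's appendix, where the argument is this same classical mean-square computation, so there is nothing to reconcile.
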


\section{Quantitative upper bounds for generalized matrix coefficients}\label{quantitative bounds}

The goal of this section is to  achieve more quantitative versions of (\ref{ub})
and (\ref{ub-exact}) in which the $v$-dependent constant 
$C$ is replaced by a continuous norm on $V$.  For this we shall need to
impose on $G/H$ that it is either absolutely spherical or wave-front.

\par A norm $p$ on the Harish-Chandra module  $V$ is called $G$-continuous, provided that the Banach-completion $V_p$ of $V$ 
defines a Banach
representation of $G$ with 
$V_p^{\rm K-fin}\simeq _{\gf, K} V$ (see \cite{BK}, Sect. 2.2).
It is a consequence of the 
Casselman embedding theorem that $G$-continuous norms exist. 

\par Fix a basis $X_1, \ldots, X_n$ of $\gf$.  Given a $G$-continuous norm $p$ on $V$ we define 
the $k$-th Sobolev norm $p_k$ of $p$ by 
$$p_k(v):= \sum_{m_1+\ldots+m_n\leq k} p(X_1^{m_1}\cdot \ldots \cdot X_n^{m_n} v) \qquad (v\in V)$$
and note that $p_k$ is $G$-continuous as well. 
The following two statements are variants of the Casselman-Wallach globalization theorem (see \cite{BK}): 
\begin{itemize}
\item For any two $G$-continuous norms $p$ and $q$ one has $V_p^\infty \simeq  V_q^\infty$. 
\item For any two $G$-continuous norms $p$ and $q$ there exist $k\in \N$ and $C>0$ such that 
$p\leq C q_k $. 
\end{itemize}
We use the notation $V^\infty :=V_p^\infty$, for any $G$-continuous norm $p$, and call $V^\infty$ the Casselman-Wallach 
globalization of $V$.

\subsection{The absolutely spherical case}

In the following lemma we assume that the orbit set
$P_{min,\C}\bs G_\C/ H_\C$ is finite. The condition that
$P_{\min}\bs G/ H$ is finite is equivalent to $G/H$ being real spherical (see \cite{KS1}),
but finiteness of $P_{min,\C}\bs G_\C/ H_\C$ is known to be  a stronger condition
(see \cite{Mat2}, Remark 7). It is fulfilled for absolutely spherical spaces, since then
$B_\C \bs G_\C/ H_\C$ is finite for every Borel subgroup $B_\C$.

\begin{lemma} \label{comparison smooth} Suppose that $P_{min,\C}\bs G_\C/ H_\C$ is finite. 
Then the space $\hf V^\infty$ is closed in $V^\infty$
for every Harish-Chandra module $V$. In particular, if $Z$ is absolutely spherical, 
then $\hf_I V^\infty$ is 
closed in $V^\infty$ for each boundary degeneration $\hf_I$ of $\hf$. 
\end{lemma}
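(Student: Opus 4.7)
The plan is to reduce both assertions to the comparison theorems of \cite{BO}, \cite{Brat}, and \cite{AGKL}, whose combined content ensures that $\hf V^\infty$ is closed in $V^\infty$ precisely when $P_{\min,\C}\bs G_\C/H_\C$ is finite. The heavy analytic content is imported as a black box; what remains is to check that the hypothesis transfers to each boundary degeneration in the absolutely spherical case.

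For the first assertion, the strategy is the following. Lemma~\ref{finite generation} (or equivalently the earlier cited $\dim V/(\lf\cap\hf+\bar\uf)V<\infty$) supplies the algebraic input that $V/\hf V$ is finite-dimensional. The cited comparison theorems provide the matching analytic input: under the hypothesis that $P_{\min,\C}\bs G_\C/H_\C$ is finite, the natural restriction map $(V^{-\infty})^{\hf}\to V^{*\hf}$ is an isomorphism of finite-dimensional vector spaces. Combining density of $V$ in $V^\infty$ with a finite-dimensional complement $W\subset V$ to $\hf V$ then yields $V^\infty=W+\overline{\hf V^\infty}$, with $W\cap\overline{\hf V^\infty}=\{0\}$ because the separating family of continuous $\hf$-invariants already separates points of $W$. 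The remaining — and genuinely hard — step, namely the identification $\overline{\hf V^\infty}=\hf V^\infty$, is exactly the Hausdorffness conclusion of \cite{AGKL}, which is used as a black box.

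For the second assertion, the task is to verify that the hypothesis $P_{\min,\C}\bs G_\C/H_\C$ finite transfers from $\hf$ to each boundary degeneration $\hf_I$. By Remark~\ref{Z_I abs sph}, if $Z$ is absolutely spherical then so is each $Z_I$, i.e.\ $G_\C/(H_I)_\C$ is a complex spherical variety. Hence $B_\C\bs G_\C/(H_I)_\C$ is finite for any Borel subgroup $B_\C\subset P_{\min,\C}$, and a fortiori $P_{\min,\C}\bs G_\C/(H_I)_\C$ is finite. The first assertion, applied with $H$ replaced by $H_I$, then delivers the closedness of $\hf_I V^\infty$.

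The essential obstacle lies entirely within the first assertion, in the comparison theorem itself; it is a genuinely nontrivial piece of smooth representation theory that I would not attempt to reprove here. Given that input, the passage to $\hf_I$ in the absolutely spherical case is a routine bookkeeping argument exploiting Remark~\ref{Z_I abs sph} and the elementary observation that Borel-orbit finiteness is inherited by parabolic-orbit finiteness.
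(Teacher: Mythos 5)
Your proposal is correct and follows essentially the same route as the paper: the first assertion is deferred entirely to the Hausdorffness theorem of \cite{AGKL} (your intermediate decomposition $V^\infty=W+\overline{\hf V^\infty}$ adds nothing, since the step you defer — $\overline{\hf V^\infty}=\hf V^\infty$ — is the whole content of the claim), and the second follows from Remark~\ref{Z_I abs sph} together with the observation, already made in the paragraph preceding the lemma, that absolute sphericality gives finiteness of $B_\C\bs G_\C/(H_I)_\C$ and hence of $P_{\min,\C}\bs G_\C/(H_I)_\C$.
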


\begin{proof} The first statement is a result from \cite{AGKL}.  
For the second statement, we observe that $Z_I$ is 
absolutely spherical whenever $Z$ is 
(see Remark \ref{Z_I abs sph}).
\end{proof}

\begin{theorem}\label{lemma ub} 
Assume that $Z$ is absolutely spherical. 
Let $(V,\eta)$ be a spherical pair and  $\Omega\subset G$ a compact subset. 

\begin{enumerate} 
\item\label{ub-one} For every $X\in \Sphere(\af_Z^-)$ there exists a $G$-continuous norm $q$
such that 
\begin{equation*}  |m_{v,\eta}(\omega \exp(tX) w\cdot z_0)|\leq q(v) 
e^{t\lambda_ X} t^{d_X}  \qquad (t\geq 1)\end{equation*}
for every $v\in V^\infty$, $\omega\in \Omega$ and $w\in \W$. 
\item\label{ub-two}  Let $\mu:=\sum_{\sigma\in S} \sigma\in \af_Z^*$. Then for all $\e>0$ there 
exists a $G$-continuous norm $q$ such that 
\begin{equation*} |m_{v,\eta}(\omega aw\cdot z_0)|\leq  q(v)a^{\Lambda_{V,\eta} - \e\mu} 
\qquad (a\in A^-_Z)\end{equation*}
for every $v\in V^\infty$, $\omega\in \Omega$ and $w\in \W$. 
\end{enumerate}
\end{theorem}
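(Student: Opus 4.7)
The idea is to reduce both statements to the closed graph theorem, applied to a linear map from the Fr\'echet space $V^\infty$ into a Banach space $\mathcal{B}$ of continuous functions with a prescribed weighted sup norm. For (\ref{ub-two}), take
$$\mathcal{B}_\e := \Bigl\{f\in C(A_Z^-) : \|f\|_{\mathcal B_\e}:=\sup_{a\in A_Z^-}a^{-\Lambda_{V,\eta}+\e\mu}|f(a)|<\infty\Bigr\}$$
and, for fixed $w\in\W$, consider $\Phi_w(v)(a):=m_{v,\eta}(aw\cdot z_0)$; for (\ref{ub-one}) substitute $\mathcal{B}_X:=\{f\in C([1,\infty)):\sup_{t\ge 1} e^{-t\lambda_X}t^{-d_X}|f(t)|<\infty\}$ and $\Phi_{X,w}(v)(t):=m_{v,\eta}(\exp(tX)w\cdot z_0)$. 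Once $\Phi$ is known to take values in $\mathcal{B}$, the graph is automatically closed, because norm convergence in $\mathcal{B}$ and convergence in $V^\infty$ each imply pointwise convergence of matrix coefficients (the former trivially, the latter by continuity of $\pi^\infty(g^{-1})$ and of $\eta\in(V^{-\infty})^H$), so any graph limit must equal $\Phi(v)$.

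For $K$-finite $v\in V$, the well-definedness of $\Phi_w$ is read off from the expansion (\ref{asympXi}) together with (\ref{Lambdadef}): every exponent $\xi\in\Xi$ satisfies $\re\xi-\Lambda_{V,\eta}\in\N_0[S]$ modulo $\af_{Z,E}^*$, so $a^{\re\xi-\Lambda_{V,\eta}}\le 1$ on $A_Z^-$, while the polynomial $q_{\xi,w}(\log a)$ (which, thanks to the $A_{Z,E}$-eigenvector property via (\ref{chi on m}), depends only on the spherical-root coordinates) is absorbed by $a^{-\e\mu}$ because $\sigma(\log a)\to-\infty$ along every spherical root direction. The radial version in part (\ref{ub-one}) is obtained analogously from Lemma \ref{radial bound}. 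The main obstacle, and the place where absolute sphericality is used in an essential way, is extending this pointwise bound from $V$ to all of $V^\infty$, since the K-finite constants $C_v$ need not remain bounded under smooth approximation. Here Lemma \ref{comparison smooth} enters: the closedness of $\hf V^\infty$ makes $V^\infty/\hf V^\infty$ a Fr\'echet space through which $\eta$ factors, and $m_{v,\eta}|_{A_Z^-}$ for $v\in V^\infty$ still solves the holonomic regular-singular system from Section \ref{power series}, cut out by the $\Zc(\gf)$-eigenvalues and the vanishing of $\eta$ on $\hf V^\infty$. The general theory of such systems then guarantees that every smooth solution admits an asymptotic expansion with exponents in the same set $\Xi$, hence satisfies the same a priori bound. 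With this in hand, the closed graph theorem furnishes a continuous seminorm $q_w$ on $V^\infty$ with $\|\Phi_w(v)\|_{\mathcal B_\e}\le q_w(v)$.

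Uniformity in $w\in\W$ is immediate because $\W$ is finite, so one may take $\tilde q(v):=\max_{w\in\W}q_w(v)$. Uniformity over $\omega\in\Omega$ follows from the identity $m_{v,\eta}(\omega a w\cdot z_0)=m_{\pi^\infty(\omega^{-1})v,\eta}(aw\cdot z_0)$ together with the fact that $\omega\mapsto\pi^\infty(\omega^{-1})v$ is a continuous map $\Omega\to V^\infty$ with relatively compact image. By Banach--Steinhaus the family $\{\pi^\infty(\omega^{-1}):\omega\in\Omega\}$ is equicontinuous on $V^\infty$, so $q(v):=\sup_{\omega\in\Omega}\tilde q(\pi^\infty(\omega^{-1})v)$ is a continuous seminorm on $V^\infty$, and it yields the required bound in both (\ref{ub-one}) and (\ref{ub-two}).
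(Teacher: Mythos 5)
Your closed-graph strategy has a genuine gap at its central step: to apply the closed graph theorem to $\Phi_w:V^\infty\to\mathcal B_\e$ (or $\mathcal B_X$) you must first know that $\Phi_w(v)$ lies in the weighted Banach space for \emph{every} $v\in V^\infty$, and that pointwise membership is essentially the content of the theorem. Your justification --- that $m_{v,\eta}|_{A_Z^-}$ for smooth $v$ "still solves the holonomic regular singular system" and hence has an expansion with exponents in $\Xi$ --- does not hold. The system of Section \ref{power series} is finite-dimensional (holonomic) only because, for a $K$-finite and $\Zc(\gf)$-finite vector $v$, the set $\U(\kf)\mathcal Y\Zc(\gf)v$ spans a finite-dimensional space modulo $\U(\gf)\hf$; for a general $v\in V^\infty$ the space $\U(\kf)v$ is infinite dimensional and no such finite ODE system (hence no regular-singular expansion with controlled exponents and polynomial degrees) is available. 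This is exactly why the theorem is nontrivial: one must pass from the $K$-finite a priori bounds, whose constants $C_v$ are uncontrolled, to a bound by a continuous norm, and one cannot assume the conclusion pointwise on $V^\infty$ to start the closed-graph argument.

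The paper's mechanism is quite different and worth comparing. Absolute sphericality enters through Lemma \ref{comparison smooth}: $\hf_I V^\infty$ is \emph{closed}, so $U=V^\infty/\hf_IV^\infty$ is a finite-dimensional $\af_I$-module and the open mapping theorem gives presentations $v=\sum Y_{-\alpha}u_\alpha+\sum Y_ju_j$ with $\sum p(u_\alpha)\le q(v)$. Starting from a crude bound $|m_{v,\eta}(\exp(tX)\cdot z_0)|\le e^{\mu t}p(v)$, one improves the exponent by a fixed $c<0$ for $v\in\hf_IV^\infty$, then sets up a finite-dimensional inhomogeneous linear ODE in $t$ on the Jordan blocks of $X$ acting on $U$, whose solution formula bootstraps the crude bound down to the optimal $e^{t\lambda_X}t^{d_X}$ with constants that are $G$-continuous norms throughout; the identification of the resulting exponent and degree with $\lambda_X$ and $d_X$ uses the lower bound of Proposition \ref{exact lb}. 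Part (\ref{ub-two}) then requires an induction over the faces $\af_I$ of the compression cone with constants uniform on the sets $\Sphere_\delta(\af_Z^-\cap\af_I)$, and a decomposition $X=X_1+X_2$ near the walls; your single weighted space $\mathcal B_\e$ over all of $A_Z^-$ skips this uniformity issue entirely. Your reductions to $\omega=\1$ and $w=\1$ are fine, but the core analytic content is missing.
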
 
\begin{proof}  For a $G$-continuous norm and a compact subset $\Omega\subset G$, there exists 
a constant $C>0$ such that $p(\omega v) \leq Cp(v)$ holds for all 
$\omega\in \Omega$ and all $v\in V^\infty$. In view of $m_{gv, \eta}(z) = m_{v,\eta}(g^{-1} z)$ for 
$g\in G$, this reduces the statements in 
(\ref{ub-one}) and (\ref{ub-two})  to $\omega=\1$. 
Likewise, with (\ref{qetaw}) and Cor.~\ref{Zw}
we can reduce to $w=\1$ by applying
that case
to $\eta_w$. 
 
\par (\ref{ub-one}) Let $X\in \Sphere(\af_Z^-)$ be given and
let $I:=\{ \sigma\in S\mid \sigma(X) =0\}$. 
Recall that then $X\in \af_I$ and that
$\af_I$ normalizes $\hf_I$. 
\par We infer from Lemma \ref{comparison smooth}  that 
$\hf_I V ^\infty \subset V^\infty$ is closed. 
In particular the dual of $U:= V^\infty/ \hf_IV^\infty$ is $(V^{-\infty})^{H_I}$ and hence finite 
dimensional. Thus $U$ is a finite dimensional $\af_I$-module. 

\par To simplify the exposition we pretend that all root spaces $\gf^\alpha$ 
are one dimensional. Fix a basis of root vectors $(X_{-\alpha})_\alpha $ of $\oline \uf$, 
and set $Y_{-\alpha}:= X_{-\alpha} + T_I(X_{-\alpha}) \in \hf_I$. Further let 
$Y_j$ be a basis for $\lf\cap \hf$. Then the $Y_{-\alpha}$ together with the 
$Y_j$ form a basis for the  real spherical subalgebra $\hf_I$.  Note that 
\begin{equation} \label{AdaY} 
\Ad(a) Y_{-\alpha} = a^{-\alpha} Y_{-\alpha}\qquad (a\in A_I)\end{equation}  

\par From the fact that $\hf_I V^\infty $ is closed in $V^\infty$ we also infer the following. 
Let $p$ be
a $G$-continuous norm on $V^\infty$. Then by the open mapping theorem
there exists a $G$-continuous norm $q$ such that every $v\in \hf_I V^\infty$ admits a
presentation $v=\sum_{\alpha} Y_{-\alpha} u_\alpha + \sum_j Y_j u_j$ where $u_j, u_\alpha \in V^\infty$ and
\begin{equation} \label{p to q} \sum_\alpha p(u_\alpha) +\sum_j p(u_j) \leq q (v)\, .\end{equation}

\par We now start with the proof proper. As in the beginning of the proof of Th. 3.2 in \cite{KSS}
we begin with a crude estimate. There exist a $G$-continuous norm $p$ and a constant $\mu\in \R$
such that
\begin{equation}\label{crude}
|m_{v, \eta} (\exp(tX) \cdot z_0) |\leq e^{\mu t} p(v)\qquad (v\in V^\infty, t\geq 0).
\end{equation}
Let $\lambda_0=\lambda_0(X)$ be the infimum of the set of all $\mu$
for which such an estimate is valid for some
$G$-continuous norm $p$. 

In the first step we consider vectors $v\in \hf_IV^\infty$ and  let $v=
\sum_{\alpha} Y_{-\alpha} u_\alpha + \sum Y_j u_j$
be a presentation as above.   Then 
we compute  with (\ref{AdaY}) for all $a\in A_I$
$$ m_{v,\eta} (a\cdot z_0) =  
\sum_{\alpha} \eta(a^{-1}Y_{-\alpha} u_\alpha)
  = \sum_\alpha a^\alpha \eta (Y_{-\alpha} a^{-1}u_\alpha).$$
Note that $Y_{-\alpha} + \sum_{\alpha+\beta \not \in \la I \ra} 
X_{\alpha,\beta}\in \hf$. Hence
\begin{align} m_{v,\eta} (a\cdot z_0) &=  
- \sum_\alpha\sum_{\alpha+\beta\not \in \la I\ra} a^\alpha \eta (X_{\alpha, \beta} a^{-1} u_\alpha)\cr
  &= -\sum_\alpha\sum_{\alpha+\beta\not \in \la I\ra} a^{\alpha+\beta} \eta( a^{-1} X_{\alpha, \beta} u_\alpha)
  \, .\label{m(a)}\end{align}
Notice that $a^{\alpha+\beta}\leq 1$ if $a\in A_Z^-$.
We now specialize to $a=\exp(tX)$ with $t\ge0$. Let 
\begin{equation}\label{little c} c=c(X):= \max_{\alpha+\beta \not \in \la I\ra} 
(\alpha +\beta)(X)\end{equation}
and note that that $c<0$ as $X$ determines $I$. 

Let $p_1$ be a first order Sobolev norm of $p$, and let $q_1$ be related to $p_1$ as in
(\ref{p to q}). Then by (\ref{crude}) and (\ref{m(a)})
\begin{align*} |m_{v,\eta}(\exp(tX) \cdot z_0)|&\leq e^{t(c+\mu)} \sum_{\alpha,\beta} p(X_{\alpha,\beta} u_\alpha)\cr
& \leq C e^{t(\mu+c)} \sum_\alpha p_1(u_\alpha) \leq C e^{t(\mu +c)} q_1(v) \end{align*}
with $C>0$ a constant determined from the coordinates of $X_{\alpha,\beta}$
in terms of the basis for $\gf$ used in the Sobolev norm.

This brings us to the following improved estimate for elements from
$\hf_IV^\infty$. There exists for each $p$ and $\mu$ satisfying (\ref{crude})
a $G$-continuous norm $p'$ such that 
\begin{equation}\label{mu2}  |m_{v,\eta}(\exp(tX))| \leq e^{t(\mu+c) } p'(v) \qquad (t\geq 0, 
v\in \hf_IV^\infty)\, .
\end{equation}

Let $\E=\E_X$
be the spectrum of $X$ on $U$.
We write $U=\bigoplus_{\lambda\in \E}  U[\lambda]$ for the decomposition 
into generalized $X$-eigenspaces, and correspondingly $u=\sum u_\lambda$ for $u\in U$. 
For $v\in V^\infty$ let $[v]=v+\hf_I V^\infty$ 
be its equivalence class in $U$. 
Let $$V^\infty[\lambda]=\{v\in V^\infty\mid [v]\in U[\lambda]\}$$
for $\lambda\in\E$.

\par Let $\lambda\in\E$ and let $v\in V^\infty[\lambda]$.
Define $$v_0:=v,\qquad v_i:=(X-\lambda)v_{i-1}, \quad i=1,2,\dots.$$ 
Let $d$ be the last value of $i$ for which
$[v_i]\neq 0$.
Then $X$ is represented by a
lower triangular Jordan matrix
on the invariant subspace $\C^{d+1}\simeq Span_\C\{[v_0], \ldots, [v_d]\}\subset U[\lambda]$.
We denote by $B$ the transpose of that matrix. 
{}It follows form the definition of $v_i$ that 
\begin{equation}\label{mu4}  
p(v_{i}) \leq C p_{i}(v)  \quad 
(i=0,1,\dots)
\end{equation} for all $v\in V^\infty$, with a constant $C>0$
which depends on the coordinates of $X-\lambda$ 
in terms of the basis for $\gf$ used in the Sobolev norm $p_i$.

Now consider the $\C^{d+1}$-valued functions  
$$F(t)= (m_{v_0, \eta}(\exp (tX)), \ldots, m_{v_d, \eta}(\exp (tX))$$ and 
$$R(t)=-(0,\ldots,0,m_{w, \eta}(\exp(tX))$$
where $w=v_{d+1}\in\hf_IV^\infty$. Then $F$ satisfies  the ordinary 
differential equation 
$$ F'(t) = - B F(t) + R(t)\, .$$
The general solution formula then gives 
\begin{equation} \label{solution} 
F(t)=  e^{-tB} F(0)  + e^{-tB} \int_0^t e^{sB } R(s)\  ds\, .\end{equation} 
{}From (\ref{mu2}) and (\ref{mu4}) we infer that  for each $\mu>\lambda_0$
there exists a $G$-continuous norm $p$, independent of $v$, such that 
\begin{equation*}
|F(t)|\leq  p(v) \max \{ (1+t)^{d} e^{-\re\lambda t}, e^{(\mu + \frac c2)t}\}\, . 
\end{equation*}
In particular, this estimate applies to  $m_{v,\eta}$.

We conclude that for each pair of $p$ and $\mu$ satisfying (\ref{crude}),
there exists a $G$-continuous norm $p''$ such that
\begin{equation} \label{mu5} |m_{v,\eta} (\exp(tX))|\leq  \max \{
e^{-t \re \lambda } (1+t)^{d},
 e^{(\mu + \frac c2)t}\}
p''(v)\quad (t\ge 0)
\, .\end{equation}
for all $\lambda\in\E_X$ and all $v\in V^\infty[\lambda]$.
Here $d+1$ is determined as the maximal possible length of a Jordan block of $X$ on $U[\lambda]$. 

\par Before we move on we recall the following standard  fact from functional analysis. 
Let $E$ be a Fr\'echet space and $F\subset E$ be a closed subspace, then $E/F$ is a Fr\'echet space. 
Moreover, if $(p^n)_n$ is a sequence of semi-norms which define the topology on $E$, then 
$[p^n](v+F):=\inf_{w\in F} p^n(v+w)$ is a family of semi-norms which defines the topology on $E/F$.   

Our previous discussion shows 
that for every
$G$-continuous norm $p$ on $V$ there is a $G$-continuous norm $q$
such that 
for all $v\in V^\infty$ we find 
$v_\lambda \in V^\infty[\lambda]$  with 
$[v_\lambda]=[v]_\lambda$ and
\begin{equation}\label{mu3}  p(v_\lambda) \leq q(v)  \qquad (\lambda\in \E)\, .\end{equation}
The continuity expressed by this will be used to combine estimates for
$v\in V^\infty[\lambda]$ with different $\lambda\in\E$ into an estimate for all $v\in V^\infty$.

We shall now split $\E$ into the disjoint union 
$$\E=\E_-\cup \E_0\cup \E_+$$
of elements $\lambda$ with $-\re\lambda<\lambda_0$, $-\re\lambda=\lambda_0$, and $-\re\lambda>\lambda_0$,
respectively.

For $\lambda\in\E_-$ we obtain from (\ref{mu5}) that 
there exists a $G$-continuous norm $p$ such that
\begin{equation} \label{mu7}
|m_{v,\eta} (\exp(tX))|\leq  
 e^{\nu t}
p(v)\quad (t\ge 0, v\in V^\infty[\lambda])
\, \end{equation}
for some $\nu<\lambda_0$.

Let
$\lambda\in \E_+\cup\E_0$ and let $\mu+\frac c2<\lambda_0<\mu$. 
Then 
it follows from (\ref{mu2}) that the integral 
$\int_0^\infty e^{sB} R(s) \ ds$ is absolutely convergent. 
Hence 
$$c^\infty=c^\infty(v):= \lim_{t\to \infty} e^{tB} F(t)=F(0)+\int_0^\infty e^{sB} R(s) \ ds \in \C^{d+1}$$
exists and satisfies
$|c^\infty(v)|\leq p(v)$ for a $G$-continuous norm $p$.
The solution formula (\ref{solution}) can then be rewritten as 
\begin{equation} \label{solution2}  F(t)=  e^{-tB}c^\infty -e^{-tB}\int_t^\infty e^{sB } R(s)\  ds\, .\end{equation} 
Moreover, we also obtain
from (\ref{mu2}) that
\begin{equation}\label{qwerty} 
\big|e^{-tB}\int_t^\infty e^{sB } R(s)\  ds\big| 
=
\big|\int_0^\infty e^{sB } R(s+t)\  ds\big| 
\le Cp(v) e^{(\mu+\frac c2)t}
\end{equation}
for a constant $C>0$.

In particular, the estimate (\ref{qwerty}) applies to $|m_{v,\eta}(\exp(tX)|$
when $c^\infty=0$ and yields 
\begin{equation} 
\label{mu8} 
|m_{v,\eta} (\exp(tX))|\leq e^{(\mu+\frac c2)t} p(v)\qquad (t\geq 0).
\end{equation}
It follows that $c^\infty$ cannot be zero
for all $\lambda\in\E_0\cup \E_+$ and all $v\in V^\infty[\lambda]$
as in that case
we would
reach from (\ref{mu7}) and (\ref{mu8}) a contradiction with the definition of $\lambda_0$.

Let now $v\in V^\infty[\lambda]$ be such that
$c^\infty\neq 0$, 
say $c^\infty= (c_0^{\infty},\ldots, c_k^{\infty},0, \ldots, 0)$ with 
$c_k^\infty\neq 0$. Then
$$|e^{-tB}c^{\infty}|\sim c _k^\infty e^{-t \re \lambda} 
\frac{t^{k}}{ k!} $$ for $t$ large.
Hence with (\ref{solution2}) and (\ref{qwerty})
we obtain for all $v\in V^\infty[\lambda]$
\begin{equation} 
\label{mu6} 
|m_{v,\eta} (\exp(tX))|\leq e^{-t \re \lambda } (1+t)^{d_\lambda}p(v)\qquad (t\geq 0).
\end{equation}
Here $d_\lambda$ is the maximal possible value of $k$
among all $v\in V^\infty[\lambda]$.
In addition, we have for some $v\in V^\infty[\lambda]$ a non-trivial lower bound 
\begin{equation} \label{mu10} 
|m_{v,\eta}(\exp(tX))| \geq C_0 t^{d_\lambda} e^{-t\re \lambda}
\end{equation}
for $C_0>0$ and  $t$ sufficiently large. 
It follows that
$-\re\lambda\le \lambda_0$ and hence $\lambda\in\E_0$.
In particular, we conclude that (\ref{mu8}) applies to all $\lambda\in\E_+$. 

By combining (\ref{mu7}) for $\lambda\in\E_-$, (\ref{mu8}) or (\ref{mu6})
for $\lambda\in\E_0$, and (\ref{mu8}) for $\lambda\in\E_+$, and using
(\ref{mu3}) we see that
\begin{equation} 
\label{mu9} 
|m_{v,\eta} (\exp(tX))|\leq e^{t\lambda_0} (1+t)^{d_0}p(v)\qquad (t\geq 0).
\end{equation}
for all $v\in V^\infty$, where
$d_0$ is the maximal value of $d_\lambda$ for $\lambda\in\E_0$. 
Moreover, by (\ref{mu10})
\begin{equation*} 
|m_{v,\eta}(\exp(tX))| \geq C_0 t^{d_0} e^{t\lambda_0}
\end{equation*}
for some $v\in V^\infty$.
We conclude from
(\ref{ub-exact}) that $C_0t^{d_0} e^{t\lambda_0}\le Ct^{d_X} e^{t\lambda_X}$.
and thus
(\ref{mu9}) implies the statement in (\ref{ub-one}).
Note that then $\lambda_0=\lambda_X$ by Proposition \ref{exact lb}. 

\par (\ref{ub-two})  Recall that $|m_{v,\eta}(abw\cdot z_0)|= |m_{v,\eta}(aw\cdot z_0)| b^{\Lambda_{V,\eta}}$ for all $b\in A_{Z,E}$, $a\in A_Z$ and $w\in \W$. 
Hence we may assume that $\af_{Z,E}=0$ for our purpose. 

\par For $\delta>0$ and $I\subsetneq S$ we set 
$$\Sphere_\delta(\af_Z^-\cap \af_I):=\{ X\in \Sphere(\af_Z^-)\cap \af_I\mid (\forall \alpha \in S\bs I)  - \alpha(X) \geq \delta\}\, .$$ 
Then for each $I\neq S$ and each $\delta>0$ 
we obtain  an estimate as in (\ref{ub-one}) 
with a $G$-continuous norm $q=q_\delta$ which is uniform for all $X\in \Sphere_\delta(\af_Z^-\cap\af_I)$. 
This is because in the proof of (\ref{ub-one}) only the constant $c$ from (\ref{little c}) enters 
in the quantification of $q$. Indeed, $q$ was constructed from the initial estimate (\ref{crude}), 
which is valid for all $X$, in a number of steps each of which lowered the  exponent $\mu$ by $c/2$.
In each step a new norm $p'$ was constructed from a previous one $p$,
and this construction can be done independently of $X$, as long as it remains in the bounded set  $S(\af_Z^-)$.
Since  the constant $\mu$ in the initial estimate (\ref{crude}) can be chosen independently of $X$,
and since $\lambda_0(X)=\lambda_X$ is uniformly bounded below by continuity,
the maximal number of steps depends only on $c$.

Further, for $|S\bs I|=1$, i.e.~$\af_I$ is one-dimensional, 
we note that $\Sphere_\delta(\af_Z^-\cap\af_I)= \Sphere(\af_Z^-)\cap\af_I$
if $\delta $ is small enough. Hence in this case we
have a priori estimates which are uniform for all $X\in \Sphere(\af_Z^-)\cap \af_I$.
The proof then proceeds by induction on
$\dim\af_I$. We explain the proof for the final step where 
$X\in \Sphere(\af_Z^-)\cap \af_I$  for $\af_I=\af_Z$. 
By the induction hypothesis we then have uniform estimates for all
$X\in \Sphere(\af_Z^-)\cap \af_I$
with $|I|\ge 1$. 

Let $\delta>0$ be fixed. It will be described at the end of the proof how it is chosen from the given
$\epsilon$. As explained above we have a uniform estimate for 
$X\in \Sphere_\delta(\af_Z^-)$. For the remaining $X\in \Sphere(\af_Z^-)$ 
we have $-\delta< \alpha(X)\le 0$ for 
at least one $\alpha\in S$ and hence  
we can write $X= X_1 + X_2$ 
with $X_1\in S(\af_Z^-)\cap\af_I$ for some $I\subset S$
with $|I|\ge 1$ and $X_2\in \af_Z$ small relative to $\delta$, 
i.e.~$\|X_2\|< C \delta$ with a constant $C$ independent of $\delta$. 

Observe that
$m_{v,\eta} (\exp(tX) \cdot z_0)= m_{\exp(tX_2) v,\eta}(\exp(tX_1)\cdot z_0)$
and thus we obtain from (\ref{ub-one}) that  for $t\ge1$
$$|m_{v,\eta} (\exp(tX) \cdot z_0)|\leq q(\exp(tX_2)v) e^{t \lambda_{X_1}} t^{d_{X_1}}\, . $$ 
It follows from (\ref{weta}) that
$\lambda_{X_1}\leq \Lambda_{V,\eta}(X_1)$.
Since $q$ is $G$-continuous we have 
$q(\exp(tX_2) v) \leq e^{t \|X_2\| c_q} q(v)$ with $c_q>0$ a constant depending on $q$. 
Hence (\ref{ub-two}) follows if
$$|\Lambda_{V,\eta}(X_2)|+c_q\|X_2\| < -\epsilon\mu(X),$$
and this can be attained uniformly with a proper choice of $\delta$ since
$-\mu$ is bounded below on $S(\af_Z^-)$.
\end{proof}

\begin{rmk}\label{comparison} The proof of (\ref{ub-one}) shows that $\lambda_X$
is minus the real part of an 
eigenvalue of $X$ on the finite dimensional $\af_I$-module $U=V^\infty/ (\hf_I)_w V^\infty$
for some
$w\in\W$.
\end{rmk}

\begin{rmk}\label{strong bound remark} 
In the group case $Z=G\times G/ G\simeq G$ the
existence of the norm $q$ in Theorem \ref{lemma ub}(\ref{ub-two}) makes the statement considerably 
stronger than the corresponding result in \cite{W}, Thm.~4.3.5, because it implies
a bound for matrix coefficients $m_{\tilde v,v}$ on $G$, which is simultaneously
uniform with respect to the two smooth vectors $v$ and $\tilde v$. We refer to \cite{KSS} 
Thm.~5.8, where the generalization is obtained for symmetric spaces
and subsequently explicated for the group case.
\end{rmk}

\subsection{The wave-front case} 

For this case we replace the smooth globalization $V^\infty$
by the analytic globalization $V^\omega\subset V^\infty$.
We briefly recall its construction (see \cite{KKSS2} Remark 6.6).  
Let $p$ be any $G$-continuous norm on $V$.  Then 
there is a family of analytic norms (not necessarily $G$-continuous) $(p_\e)_{\e >0}$ on $V$, such that 
$V^\omega=\varinjlim V^\e$ with $V^\e$ the completion of $V$ with respect to $p_\e$. 
These norms feature the following properties: 
\begin{itemize}  
\item[(a)] For $\e\leq \e'$ one has $p_\e\leq p_{\e'}$ and continuous  inclusion $V^{\e'}\to V^\e$. 
\item[(b)] For each compact set $\Omega\subset G$ and $\e>0$ there exist $0<\e'\leq \e$ and a constant 
$C$ such that 
$p_{\e'}(g \cdot v)\leq  C p_\e(v)$ for all $v\in V$ and $g\in \Omega$.
\item[(c)] For each $G$-continuous norm $q$ and $\e>0$, there exists a constant $C$ such that 
$q\leq Cp_\e $. 
\end{itemize}

Let $F\subset \Pi$ and let
$P_F= G_F  U_F$ be the associated parabolic subgroup
as described in Section \ref{section induced}.
For any Harish-Chandra module $V$ we recall that $V/\oline{\uf}_F V$ is a Harish-Chandra module 
for the pair $(\gf_F, K_F)$ and as such has an analytic globalization $(V/ \oline{\uf}_FV)^\omega$ as 
$G_F$-representation.

\begin{lemma} \label{comparison analytic} Let $V$ be a Harish-Chandra module. 
Then $\oline{\uf}_F V^\omega$ is closed in $V^\omega$. 
In particular,  $V^\omega/ \oline{\uf}_F V^\omega = (V/ \oline{\uf}_FV)^\omega$. 
\end{lemma}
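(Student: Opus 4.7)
The plan is to deduce this from the corresponding closedness in the smooth globalization. From the Casselman--Wallach theory of Jacquet functors, $\oline{\uf}_F V^\infty$ is closed in $V^\infty$, and the quotient is canonically identified with the smooth globalization of the Jacquet module,
\[ V^\infty/\oline{\uf}_F V^\infty \;\cong\; (V/\oline{\uf}_F V)^\infty \]
as $G_F$-modules. Here the Jacquet module $V/\oline{\uf}_F V$ is a Harish-Chandra module for $(\gf_F, K_F)$ of finite length, so its analytic globalization $(V/\oline{\uf}_F V)^\omega$ is well-defined.

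Composing $V^\omega \hookrightarrow V^\infty$ with the quotient $V^\infty \to (V/\oline{\uf}_F V)^\infty$ yields a continuous $G_F$-equivariant map $\pi \colon V^\omega \to (V/\oline{\uf}_F V)^\infty$. Its image lies in the $G_F$-analytic vectors: for $v \in V^\omega$ the orbit map $g \mapsto g \cdot v$ is real-analytic from $G$ to $V^\infty$, hence its composition with the smooth quotient is real-analytic into $(V/\oline{\uf}_F V)^\infty$; restricting the group variable to $G_F \subset G$ shows that the image of $v$ is $G_F$-analytic. Consequently $\pi$ factors through $(V/\oline{\uf}_F V)^\omega$. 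The lemma is then equivalent to showing that $\pi \colon V^\omega \to (V/\oline{\uf}_F V)^\omega$ is surjective and that $\ker\pi = \oline{\uf}_F V^\omega$, or equivalently that $V^\omega \cap \oline{\uf}_F V^\infty = \oline{\uf}_F V^\omega$ as subspaces of $V^\infty$.

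Each of these is a lifting statement for analytic vectors through the unipotent quotient and constitutes the technical heart of the argument: any $G$-analytic vector lying in $\oline{\uf}_F V^\infty$ must admit a presentation $v = \sum_i X_i v_i$ with $X_i \in \oline{\uf}_F$ and $v_i \in V^\omega$ (not merely in $V^\infty$), and any $G_F$-analytic vector in the Jacquet quotient must admit an analytic lift to $V^\omega$. These form the analytic analog of a Dixmier--Malliavin-type decomposition and can be obtained by adapting the comparison theorem of \cite{AGKL} to the analytic category: the key input from that paper --- a distributional restriction argument along the open Bruhat cell $\oline{U}_F G_F U_F \subset G$ combined with integration against compactly supported test functions on $\oline{U}_F$ --- carries over with real-analytic test functions in place of smooth ones, using property (b) of the analytic norms $p_\epsilon$ to absorb the orbit-map regularity into the analytic estimates. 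The main obstacle is therefore this adaptation. Once surjectivity and the kernel description are established, closedness of $\oline{\uf}_F V^\omega = \ker\pi$ in $V^\omega$ is automatic, and $\pi$ descends to a continuous $G_F$-equivariant bijection $V^\omega/\oline{\uf}_F V^\omega \to (V/\oline{\uf}_F V)^\omega$, which is a topological isomorphism; the second assertion of the lemma follows.
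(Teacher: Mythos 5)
Your reduction of the lemma to the two lifting statements (that $V^\omega\cap\oline{\uf}_F V^\infty=\oline{\uf}_F V^\omega$ and that every $G_F$-analytic vector of the Jacquet quotient lifts to $V^\omega$) is a correct reformulation, but the step you yourself flag as the ``technical heart'' is exactly where the argument breaks down. Your proposed mechanism --- rerunning the test-function/Dixmier--Malliavin-type argument of \cite{AGKL} ``with real-analytic test functions in place of smooth ones'' --- cannot work: a real-analytic function on $\oline{U}_F$ with compact support vanishes identically, so there are no analytic cut-offs, no analytic partitions of unity, and no way to localize to the open Bruhat cell in the analytic category. Property (b) of the norms $p_\e$ controls the action of compact subsets of $G$ on analytic vectors; it does not substitute for the missing test functions. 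So the adaptation you invoke is not a routine transfer but is precisely the content of a hard theorem.

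That theorem is the Casselman comparison theorem for \emph{minimal} globalizations, and it is how the paper proves the lemma: one observes that $V^\omega$ coincides with the Kashiwara--Schmid minimal globalization of $V$, and then quotes \cite{Brat}, Thm.~1, or \cite{BO}, Thm.~1.3, which assert that the natural map $H_\ast(\oline{\uf}_F,V)\to H_\ast(\oline{\uf}_F,V^\omega)$ is an isomorphism of topological $G_F$-modules; in degree zero this gives both the closedness of $\oline{\uf}_F V^\omega$ and the identification $V^\omega/\oline{\uf}_F V^\omega=(V/\oline{\uf}_F V)^\omega$. The proofs of those results rest on the geometric/localization realization of minimal globalizations (Hecht--Taylor, Bratten) or on spectral-sequence arguments (Bunke--Olbrich), not on a smooth-to-analytic transfer of the Schwartz-space argument of \cite{AGKL}. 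Unless you intend to reprove that comparison theorem, the correct fix is to cite it directly; your detour through the smooth globalization is then unnecessary.
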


\begin{proof} Since the analytic globalization $V^\omega$ coincides with the minimal 
globalization of Kashiwara-Schmid, 
this follows from the version 
of the Casselman comparison theorem established in \cite{Brat} Thm.~1 or \cite{BO} Thm.~1.3. 
\end{proof}

\begin{theorem}\label{lemma ub wf} Assume that $Z$ is wave-front. 
Let $V$ be a Harish-Chandra module, $(V,\eta)$ a spherical pair,
and $p$ a $G$-continuous norm on $V$.
Then there exists a constant $d\in\N$ such that the following holds
for each compact subset $\Omega \subset G$ and each $X\in \Sphere(\af_Z^-)$.

For every $\e>0$ 
there exists a constant $C>0$ such that
\begin{equation*} |m_{v,\eta}(\omega a\exp (tX) w\cdot z_0)|\leq  C p_\e(v) 
e^{t \lambda_X} t^{d_X} a^{\Lambda_{V,\eta} }  (1 +\|\log a \|)^d \end{equation*}
for all $v\in V$, $t\geq 1$, $a\in A_Z^-$, $\omega\in \Omega$ and $w\in \W$. 
\end{theorem}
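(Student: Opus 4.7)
The plan is to adapt the proof of Theorem \ref{lemma ub} by replacing the smooth globalization $V^\infty$ with the analytic globalization $V^\omega$ and the family of norms $(p_\e)_{\e>0}$, and by using Lemma \ref{comparison analytic} (Casselman's comparison theorem for analytic globalizations, from \cite{BO} and \cite{Brat}) in place of Lemma \ref{comparison smooth}. The wave-front hypothesis enters through Corollary \ref{sandwich2}: for every $I \subsetneq S$, setting $F = F_I$ one has $\oline{U_F} \subset H_I \subset \oline{P_F}$, so in particular $\oline{\uf}_F \subset \hf_I$, and by Proposition \ref{sandwich1} there is a lift $Y \in \af_F \cap \af^-$ of the chosen direction with $\alpha(Y) < 0$ strictly for all $\alpha \in \Sigma_\uf \setminus \la F\ra$.

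First I would make the standard reductions to $\omega = \1$, $w = \1$, $\af_{Z,E} = 0$, using property (b) of the analytic norms to absorb the compact family $\Omega$ by a shift $\e \mapsto \e'$, together with (\ref{qetaw}), Corollary \ref{Zw} and (\ref{Lambda on edge}). A $G$-continuity argument then yields a crude bound $|m_{v,\eta}(\exp(tX)\cdot z_0)| \le C e^{t\mu_0} p_{\e_0}(v)$ analogous to (\ref{crude}). Fix $X \in \Sphere(\af_Z^-)$, set $I = I(X)$, $F = F_I$, and choose $Y \in \af_F$ as above; since $\exp(tX)\cdot z_0 = \exp(tY)\cdot z_0$ in $Z$ one may work with $Y$. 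By Lemma \ref{comparison analytic}, $\oline{\uf}_F V^\omega$ is closed in $V^\omega$, and the open mapping theorem applied to $\bigoplus_{\alpha \in \Sigma_\uf \setminus \la F\ra} V^\omega \to \oline{\uf}_F V^\omega$ yields, for each $p_\e$, a norm $p_{\e'}$ such that any $v \in \oline{\uf}_F V^\omega$ admits a decomposition $v = \sum_\alpha \oline{X}_\alpha u_\alpha$ with $\sum p_\e(u_\alpha) \le p_{\e'}(v)$. Since $\alpha \notin \la F\ra$ forces, via (\ref{<F>}), that $T_I(\oline{X}_\alpha) = 0$, the vector $\oline{X}_\alpha$ belongs to $\hf_I$ itself, and the $\hf$-invariance identity $\eta(\oline{X}_\alpha u) = -\sum_\beta \eta(X_{\alpha,\beta} u)$ used in (\ref{m(a)}) carries over verbatim. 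Integration against $a = \exp(tY)$ and the strict negativity $c := \max_{\alpha,\beta:\, X_{\alpha,\beta}\neq 0}(\alpha+\beta)(Y) < 0$ yield the key improved bound $|m_{v,\eta}(\exp(tY)\cdot z_0)| \le C e^{t(\mu_0+c)} p_{\e''}(v)$ on $\oline{\uf}_F V^\omega$, which iterates to arbitrary exponential decay.

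With the improvement in place I would run the Jordan/ODE argument of Theorem \ref{lemma ub}(\ref{ub-one}): decompose $V^\omega$ along the generalized $Y$-eigenspaces modulo $\oline{\uf}_F V^\omega$ (via continuous lifts, ensured by the closedness), set $v_i = (Y-\lambda)^i v$ so that $v_{d+1} \in \oline{\uf}_F V^\omega$ for some uniform $d$, and solve the first-order linear ODE $F'(t) = -BF(t) + R(t)$ with $B$ a Jordan block and $R$ estimated by the previous step. This gives $|m_{v,\eta}(\exp(tY)\cdot z_0)| \le C(1+t)^{d_X} e^{t\lambda_X} p_\e(v)$. To incorporate the full $a$-dependence and uniformity in $X$, I would run the induction on $\dim\af_I$ used in the proof of Theorem \ref{lemma ub}(\ref{ub-two}): on interior slabs $\Sphere_\delta(\af_Z^- \cap \af_I)$ the bound is uniform, and near boundary slabs one writes $a\exp(tX) = \exp(Y_0)\exp(tX_1)$ with $X_1$ in an interior slab of smaller $I$, absorbing $\exp(Y_0)$ into the analytic norm via property (b) at the cost of a polynomial factor $(1+\|Y_0\|)^d \le (1+\|\log a\|)^d$.

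The main obstacle, relative to the absolutely spherical case, is that $V^\omega/\oline{\uf}_F V^\omega$ is infinite-dimensional, so the Jordan decomposition cannot be carried out on the whole quotient. The remedy is that the algebraic Jacquet module $V_F = V/\oline{\uf}_F V$ is a finite-length Harish-Chandra module for $(\gf_F, K_F)$, so $Y \in \af_F$ acts on $V_F$ with only finitely many generalized eigenvalues and uniformly bounded Jordan block sizes (consistent with the finite leading-exponent set from (\ref{big Weyl})); the ODE argument is then performed on each of these eigenspaces in turn, with continuous projection across $\oline{\uf}_F V^\omega$ provided by the Fr\'echet open mapping theorem.
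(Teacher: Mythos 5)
Your core argument is the same as the paper's: reduce to $\omega=\1$, $w=\1$; use the wave-front structure (Proposition \ref{sandwich1}, Corollary \ref{sandwich2}) to replace $\hf_I V$ by $\oline{\uf}_F V$ with $F=F_I$; invoke Lemma \ref{comparison analytic} and the open mapping theorem to get the analytic analogue of (\ref{p to q}); exploit that every $\alpha\in\Sigma_\uf\setminus\la F\ra$ gives $(\alpha+\beta)(X)\le c<0$ to obtain the improved estimate on $\oline{\uf}_F V$; and run the Jordan/ODE argument on $V/\oline{\uf}_F V$, which is legitimate precisely because this Jacquet module is a Harish-Chandra module for $(\gf_F,K_F)$ and hence $\af_F$-finite. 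All of this matches the paper, and your observation that $T_I(X_{-\alpha})=0$ for $\alpha\notin\la F\ra$ correctly explains why the invariance identity behind (\ref{m(a)}) carries over.

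The gap is in your final step, the uniformity in $a\in A_Z^-$ with the exact exponent $\Lambda_{V,\eta}$. You propose to write $a\exp(tX)=\exp(Y_0)\exp(tX_1)$ and to ``absorb $\exp(Y_0)$ into the analytic norm via property (b) at the cost of a polynomial factor $(1+\|Y_0\|)^d$.'' Property (b) only controls $p_{\e'}(g\cdot v)$ for $g$ in a \emph{compact} set; for $Y_0$ ranging over the unbounded set $\log A_Z^-$ the cost of moving $\exp(Y_0)$ onto the vector is exponential in $\|Y_0\|$ (this is exactly the step $q(\exp(tX_2)v)\le e^{t\|X_2\|c_q}q(v)$ in the proof of Theorem \ref{lemma ub}(\ref{ub-two}), which is why that theorem loses $\e\mu$ in the exponent). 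Run as you describe, the slab induction would only yield $a^{\Lambda_{V,\eta}-\e\mu}$, not the stated $a^{\Lambda_{V,\eta}}(1+\|\log a\|)^d$. The paper avoids this by never moving $a$ onto the vector: the crude estimate (\ref{new crude}), the decomposition $v=\sum_\alpha X_{-\alpha}u_\alpha$, the identity (\ref{mc wf}) with $a^{\alpha+\beta}\le 1$, and the whole ODE argument are all carried out with $a\in A_Z^-$ as a uniform parameter attached to a fixed weight $a^\lambda$; only at the very end is the ray argument iterated over the coordinate directions $X=-\omega_1,\dots,-\omega_s$ of (\ref{I-coordinates}) to optimize $\lambda$ to $\Lambda_{V,\eta}$, and it is this iteration that produces the harmless factor $(1+\|\log a\|)^d$.
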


\begin{proof} By using the property (b) for the norms $p_\e$ we reduce to $\omega=\1$ as in 
the proof of Theorem 
\ref{lemma ub}. Likewise we can assume that $w=1$. 
 
\par Each element $X\in \Sphere(\af_Z^-)$ determines a set
$I:=\{ \sigma\in S\mid \sigma(X) =0\}$. As we assume that $Z$ is wave-front 
we obtain a parabolic $\pf_{F}=\pf_{F_I}$
from Proposition \ref{sandwich1}.  In particular, there exists an element $Y\in \af^-$ such that 
$Y+\af_H =X$ and $F=\{\alpha\in \Pi \mid \alpha(Y)=0\}$. Moreover, for every $\sigma=\alpha+\beta\in {\mathcal M}$ with 
$\alpha\in \Sigma_\uf \bs \la F\ra $ we record
\begin{equation}\label{negative}  (\alpha +\beta)(X) = (\alpha+\beta)(Y)<0\, .\end{equation}

The proof is quite analogous to the one for 
Theorem \ref{lemma ub}(\ref{ub-one}).
We confine ourselves to the steps where it differs. The main difference 
consists of replacing $U=V^\infty/\hf_I V^\infty$
by $U:=V^\omega/\oline{\uf}_F V^\omega$, and applying
Lemma \ref{comparison analytic} in place of 
Lemma \ref{comparison smooth}. 
We observe that as $U$ is a Harish-Chandra module 
for $\gf_F$, it admits a finite decomposition into generalized 
eigenspaces for $\af_F$.

\par As before we pretend that all 
root spaces $\gf^\alpha$ 
are one dimensional and  fix a basis of root vectors 
$(X_{-\alpha})_\alpha $ of $\oline {\uf_F}$. 
The expression (\ref{AdaY}) 
is just replaced by the corresponding
expression for $\Ad(a)X_{-\alpha}$ where $a\in A$.

\par 
The inequality (\ref{p to q}) 
has the following analytic counterpart.
For all $\e>0$ there exist $\e'>0$ and a constant $C_\e>0$ 
such that every $v\in \oline{\uf}_F V$ admits
a presentation $v = \sum_\alpha X_{-\alpha} u_\alpha$  with $u_\alpha\in V$ and
\begin{equation} \label{p to q 2} 
\sum_{\alpha} p_{\e'}(u_\alpha) \leq C_\e p_{\e} (v)\, .\end{equation}
This follows from Lemma \ref{comparison analytic} by the argument of
\cite{KKSS2} Remark 6.6
(note that
$\e$ and $\e'$ need to be interchanged in (6.5) of that remark).

We replace the initial estimate (\ref{crude}) by the following, which is
derived from it with property (c) for the norms $p_\e$.
For some $\lambda\in\af_Z^*$, some $\mu\in\R$, and
all $\e'>0$ there exist a constant $C$ such that
\begin{equation}\label{new crude}
|m_{v,\eta} (a \exp(tX)\cdot z_0)|\leq C e^{t \mu} a^\lambda p_{\e'}(v)  \qquad (t\geq 0, a\in A_Z^-).
\end{equation}

Let $v\in {\oline {\uf}_F} V$.
With (\ref{T}) and
the above presentation of $v$ 
we obtain as in (\ref{m(a)}) 
\begin{equation}
m_{v,\eta} (a\cdot z_0) =-\sum_\alpha\sum_{\beta} a^{\alpha+\beta} \eta( a^{-1} X_{\alpha, \beta} u_\alpha)
  \, .\label{mc wf}
  \end{equation}
for all $a\in A_Z$. Here $\alpha+\beta\in\mathcal{M}$ and 
hence $a^{\alpha+\beta}\leq 1$ if $a\in A_Z^-$.
In addition $\alpha\notin\la F\ra$, and hence by (\ref{negative}) there exists $c<0$ such that
$(\alpha +\beta)(X)\le c$
for all terms in (\ref{mc wf}). 

{}From the crude estimate (\ref{new crude}) we then obtain 
$$ 
|m_{v,\eta}(a\exp(tX)\cdot z_0)| \leq C a^\lambda e^{t(\mu+c) } 
\sum_{\alpha,\beta}p_{\e'}( X_{\alpha, \beta} u_\alpha) \quad (t\geq 0, a\in A^-_Z). 
$$
Since $p_{\e'}$ is analytic there exists $C'>0$ such that
$p_{\e'}( X_{\alpha, \beta} u) \le C'
p_{\e'}( u) $ for all $u\in V$, 
and from (\ref{p to q 2}) we finally
derive 
the following improved estimate, analoguous to (\ref{mu2}). 

There exists for each $\mu$ satisfying (\ref{new crude}) and for each $\e>0$
a constant $C$ such that 
\begin{equation}\label{mu2 wf}  |m_{v,\eta}(a\exp(tX)\cdot z_0)| \leq C a^\lambda e^{t(\mu+c) } p_\e (v) \quad (t\geq 0, a\in A^-_Z)
\end{equation}  
for all $v\in \oline{\uf}_F V$.

Following the previous proof, with 
$\E=\E_X$ now denoting the spectrum of $X$ on the $\af_F$-finite module $U=V/\oline{\uf}_F V$, we arrive at the following
bound
\begin{equation*}  |m_{v,\eta}(a\exp(tX) \cdot z_0)|\leq  
C p_\e(v) 
a^\lambda e^{t\lambda_ X} t^{d_X}  \end{equation*}
for all $t\geq 1$, $\omega\in \Omega$, $ w\in \W$ and $a\in A_Z^-$.
Using the coordinates from (\ref{I-coordinates}) for $\af_Z^-$ we 
can now iterate with $X=-\omega_1,\dots,-\omega_s$ and 
optimize $\lambda$ to $\Lambda_{V,\eta}$ at the cost of a possible logarithmic term.
\end{proof}

\section{Discrete series}

All  $\eta\in (V^{-\infty})^H$ to be considered 
in the sequel are requested to be $A_{Z,E}$-eigenvectors, 
say $a\cdot\eta=a^\chi\eta$.
Recall from
(\ref{Lambda on edge}) that then
$\Lambda_{V,\eta}|_{\af_{Z,E}}=\re\chi$.

\par For $Z$ unimodular and an irreducible Harish-Chandra module $V$ we introduced in \cite{KKSS2} 
Definition 5.3 the notion of a {\it $Z$-tempered  pair}
$(V, \eta)$
and showed in Proposition 5.5 that 
this condition is satisfied by almost all pairs that contribute
to the Plancherel decomposition of $Z$. 
With \cite{KKSS2} Remark 5.4
$(V,\eta)$ is $Z$-tempered if and only if 
for all $v\in V$ there exists a constant $C_v>0$ and an $m \in \N$  such that 
\begin{equation} \label{tempinq} | m_{v, \eta} (\omega a w \cdot z_0)|\leq C_v a^{\rho_Q}  (1 +\|\log a\|)^m \end{equation}
for all $\omega\in \Omega$, $w \in \W$ 
and $a\in A_Z^-$.  Here 
$\rho_Q:=\rho_\uf \in \af_Z^*$ is defined by 
$2\rho_Q(X)=\tr \ad_\uf (X)$, and $\Omega$ denotes a compact set
for which (\ref{poldechat}) is valid. 

\begin{rmk} In (\ref{tempinq}) we can replace the constant $C_v$ by $q(v)$ with $q$ a  $G$-continuous
norm (see \cite{KKSS2}, Definition 5.3). Hence the notion of a tempered pair $(V,\eta)$ only depends on $V^\infty$ and 
not on the particular choice 
of the maximal compact subgroup $K$.\end{rmk}

We now give a criterion for temperedness in terms of $\Lambda_{V,\eta}$.
Let us first note that by (\ref{chi on m}) and
(\ref{Lambda on edge}) the temperedness (\ref{tempinq})
implies 
\begin{equation} \label{temp''}  (\Lambda_{V,\eta} - \rho_Q)|_{\af_{Z,E}}=0\,.
\end{equation}

\begin{theorem}\label{Lambda condition for tempered} 
Let $Z$ be a unimodular real spherical space
and $(V,\eta)$ an $H$-spherical pair.  Then {\rm (1)} implies {\rm(2)}
for the following assertions: 

\begin{enumerate}
\item\label{tempered} $(V,\eta)$ is tempered.
\item\label{Lambda inequality} 
$(\Lambda_{V,\eta} - \rho_Q)\big|_{\af_Z^-}\leq 0$.
\end{enumerate} 
Moreover, if $Z$ is wave-front, {\rm (1)} and  {\rm(2)}
are equivalent. 
\renewcommand{\labelenumi}{\theenumi}

\end{theorem}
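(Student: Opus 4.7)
The plan is to compare the asymptotic growth rate of matrix coefficients, as encoded by $\Lambda_{V,\eta}$, with the bound $a^{\rho_Q}$ imposed by temperedness. Both directions will then follow by matching the sharp two-sided asymptotics developed in Sections \ref{power series} and \ref{quantitative bounds}.

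For the implication \emph{tempered $\Rightarrow$ (\ref{Lambda inequality})}, I would first observe that restricting (\ref{tempinq}) to rays in $A_{Z,E}$ together with (\ref{chi on m}) immediately yields $(\Lambda_{V,\eta}-\rho_Q)|_{\af_{Z,E}}=0$, as already noted in (\ref{temp''}). Since $\af_Z^-$ is generated as a closed convex cone by the subspace $\af_{Z,E}$ together with the rays $\R_{\ge0}\cdot(-\omega_j)$ for $j=1,\dots,s$, it suffices to establish $\Lambda_{V,\eta}(-\omega_j)\le\rho_Q(-\omega_j)$ for each $j$. For each such $j$, I would apply Proposition \ref{exact lb} with $Y_0=0$ and $X_j=-\omega_j/\|\omega_j\|\in\Sphere(\af_Z^-)$, obtaining $w\in\W$, $v\in V$, constants $C,\e>0$, and a sequence $t_n\to\infty$ along which
\begin{equation*}
|m_{v,\eta}(\exp(tX_j)w\cdot z_0)|\ge C\,e^{t\lambda_{X_j}}\,t^{d_{X_j}}.
\end{equation*}
Definition (\ref{Lambdadef}) gives $\lambda_{X_j}=\Lambda_{V,\eta}(X_j)$ for this choice of $X_j$. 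Combining the lower bound with the temperedness upper bound (\ref{tempinq}) along the same ray and letting $t_n\to\infty$ forces the exponential rates to satisfy $\Lambda_{V,\eta}(X_j)\le\rho_Q(X_j)$, which after rescaling gives $\Lambda_{V,\eta}(-\omega_j)\le\rho_Q(-\omega_j)$.

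For the converse in the wave-front case, I would invoke Theorem \ref{lemma ub wf}. Given $b\in A_Z^-$ with $\|\log b\|\ge 1$, set $X=\log b/\|\log b\|\in\Sphere(\af_Z^-)$ and $t=\|\log b\|$; applying the theorem with $a=\1$ yields
\begin{equation*}
|m_{v,\eta}(\omega b w\cdot z_0)|\le C\,p_\e(v)\,e^{t\lambda_X}\,t^{d_X}
\end{equation*}
uniformly in $\omega\in\Omega$ and $w\in\W$. By (\ref{weta}) combined with (\ref{Lambda inequality}), $\lambda_X\le\Lambda_{V,\eta}(X)\le\rho_Q(X)$, so the right-hand side is dominated by $C\,p_\e(v)\,b^{\rho_Q}(1+\|\log b\|)^{d}$. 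The complementary region $\{b\in A_Z^-:\|\log b\|<1\}$ is relatively compact and handled by continuity of the matrix coefficient. Setting $C_v:=Cp_\e(v)$ then yields (\ref{tempinq}). A mild subtlety here is that one needs the constant in Theorem \ref{lemma ub wf} uniform in $X$ ranging over the compact sphere $\Sphere(\af_Z^-)$; this is ensured by the proof, since only the finitely many parabolics $\oline{P_{F_I}}$ $(I\subsetneq S)$ provided by Corollary \ref{sandwich2} actually enter.

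The main obstacle is not in these reductions but in the sharp bounds they rely upon: Proposition \ref{exact lb}, whose non-cancellation property hinges on the Wallach-type Lemma \ref{Wallach extended}, and Theorem \ref{lemma ub wf}, which through Lemma \ref{comparison analytic} leverages the Kashiwara--Schmid minimal globalization together with the Casselman comparison of \cite{BO} and \cite{Brat}. The wave-front hypothesis enters precisely because it delivers the interlacing $\oline{U_{F_I}}\subset H_I\subset\oline{P_{F_I}}$ of Corollary \ref{sandwich2}, which is what powers the analytic globalization comparison and hence the uniform-in-$v$ upper bound of Theorem \ref{lemma ub wf}; this explains why only one implication is available in the general real spherical setting, while both hold under the wave-front assumption.
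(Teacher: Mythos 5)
Your proposal is correct and follows essentially the same route as the paper: the direction (1)$\Rightarrow$(2) is obtained from Proposition \ref{exact lb} with $Y_0=0$ and $X=-\omega_j$, together with (\ref{temp''}) and the identity $\lambda_{-\omega_j}=\Lambda_{V,\eta}(-\omega_j)$, while (2)$\Rightarrow$(1) in the wave-front case is read off from Theorem \ref{lemma ub wf}. The paper's own proof is a three-line citation of exactly these ingredients; you have merely written out the reduction to the generators $-\omega_j$ of $\af_Z^-/\af_{Z,E}$ and the uniformity in $X$ explicitly.
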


\begin{proof}  It is  immediate from 
Theorem \ref{lemma ub wf} that 
(\ref{Lambda inequality}) implies (\ref{tempered})
when $Z$ is wave-front.
For the converse we use 
the lower bound in 
Proposition \ref{exact lb} 
with $Y_0=0$ and $X=-\omega_j$, together with (\ref{temp''}) and
\begin{equation}\label{Lambdaj}
\lambda_{-\omega_j}= \Lambda_{V,\eta}(-\omega_j), \quad
(1\leq j\leq s).\qedhere\end{equation}
\end{proof} 

\par Recall the coordinates (\ref{I-coordinates}) of $\af_Z$. In the context of
Theorem \ref{Lambda condition for tempered} we shall say that the pair $(V,\eta)$
satisfies the 
{\it strong inequality}  if
\begin{equation} \label{disc}  (\Lambda_{V,\eta} - \rho_Q)(\omega_j) > 0
\qquad (1\leq j\leq s) \end{equation} 
holds, together with (\ref{temp''}). 
This terminology is motivated by the relation of (\ref{disc}) to
square integrability, which will be shown in Theorem  \ref{CDS}.
For that we need to recall some 
results on half densities.

 \begin{rmk}  The strong inequality implies temperedness  for spaces which are absolutely  spherical 
or wave-front. In the latter case 
this is part of Theorem \ref {Lambda condition for tempered}.  Using Theorem \ref{lemma ub} instead of Theorem \ref{lemma ub wf}
in the above proof, the assertion follows in the absolutely spherical case as well.
\end{rmk}

\subsection{Densities on a homogeneous space}\label{densit}

In this section we consider a closed subgroup $H$ of a Lie group $G$ and 
the associated homogeneous space $Z=G/H$. In (\ref{defi Delta_Z})
we attached to $Z$ the modular 
character $\Delta_Z =|\det \Ad_{\gf/\hf}|^{-1}$ on $H$.

Let $p=\dim Z$
and fix $0\neq \Omega\in \bigwedge^p (\gf/ \hf)^*$ and observe that 
$$ \Ad^*(h) |\Omega| = \Delta_Z(h) |\Omega|\, .$$
We view $\bigwedge^p (\gf/ \hf)^*$ as a subspace of $\bigwedge^p \gf^*$ 
and extend $\Omega$ to a $G$-invariant differential $p$-form on $G$ via
$$ \Omega (g)  (d\lambda_g(\1)X_1, \ldots, d\lambda_g (\1) X_p)= \Omega(X_1, \ldots, X_p)$$
for $X_1, \ldots, X_p\in \gf$ and with $\lambda_g(x) =gx$ for $x\in G$, the left displacement by $g$ on $G$. 

Given a character $\xi$ of $H$ we set 
$$C^\infty(G; \xi):= \{ f\in C^\infty(G)\mid f(\cdot h) = \xi(h)^{-1} f(\cdot ) \ \hbox{for}\ h\in H\}\, .$$
We write $C_c^\infty(G;\xi)$ for the subspace of $C^\infty(G; \xi)$ which consists 
of functions which are compactly supported modulo $H$.

Now observe if $f\in C^\infty(G;\Delta_Z)$ 
then the density   
$$ |\Omega|_f(g)(d\lambda_g(\1)X_1, \ldots, d\lambda_g (\1) X_p)= f(g) |\Omega|(X_1, \ldots, X_p)$$
factors to a smooth density on $Z$ and every smooth density on $Z$ is of this type. 
In other words there is a natural identification 
\begin{equation*} 
C^\infty(G; \Delta_Z) \to \Dens^\infty(Z), \ \  f \mapsto 
|\Omega|_f,\end{equation*}
with the space $\Dens^\infty(Z)$ of
smooth densities of $Z$. This identification is observed to be $G$-equivariant.

\par In a moment we shall need certain spaces of 
smooth half-densities, which are defined as follows. 
A character $\xi: H \to \C^*$ for which
\begin{equation}\label{chi=sqrt}
|\xi|=\Delta_Z^{1/2},
\end{equation}
will be called a {\it normalized unitary} 
character. For such characters we define
on $C_c^\infty(G; \xi)$ 
by 
$$\la f, h\ra:= \int_Z |\Omega|_{f\oline{h}}\, $$ 
an inner product, which is preserved by the left regular action 
of $G$. 
We denote by $L^2(Z; \xi)$ 
the Hilbert completion of $C_c^\infty(G;\xi)$ with 
respect to this inner product,
and note that it is a unitary $G$-module for the left regular representation.
With the terminology of \cite{Mackey} it is the representation induced by
the unitary character $\Delta_Z^{-1/2}\xi$ of $H$.

With that, we can define the $\xi$-{\it twisted discrete series} for $G/H$ as the set
of (equivalence classes of) 
irreducible representations of $G$ which embed into $L^2(Z; \xi)$. 

\subsection{A criterion for discrete series}

Let us return to the set-up from before Section \ref{densit}
with $Z=G/H$ real spherical and unimodular. By a slight change 
from previous notation we denote in this section by $\hat H$ the 
{\it connected} Lie subgroup of $G$
with Lie algebra $\hat \hf$. Likewise we set $\hat Z= G/\hat H$. 
Unlike $Z$ the space $\hat Z$ is not necessarily unimodular. 
We write $\hat \Delta $ for the modular function of $\hat {Z}$. 

\begin{lemma} \label{lemdelta}
Let $ha \in \hat{H} $ with $a\in A_{Z,E}$ and $h\in H$. Then 
$$\hat\Delta(ha) = a^{-2\rho_Q}\, .$$
\end{lemma}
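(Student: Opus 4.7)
The plan is to exploit that $\hat\Delta$ is a character of $\hat H=HA_{Z,E}$ and factor $\hat\Delta(ha)=\hat\Delta(h)\hat\Delta(a)$, and then verify separately that $\hat\Delta(h)=1$ for $h\in H$ and $\hat\Delta(a)=a^{-2\rho_Q}$ for $a\in A_{Z,E}$. Applied to $\hat Z$, formula (\ref{defi Delta_Z}) reads $\hat\Delta(x)=|\det\Ad_{\hat\hf}(x)|/|\det\Ad_\gf(x)|$. Since $\af_{Z,E}\subset\af_Z=\af_H^\perp$ meets $\hf$ trivially, one has $\hat\hf/\hf\cong\af_{Z,E}$ as vector spaces, and the inclusion $\af_{Z,E}\subset N_\gf(\hf)$ implies that the connected subgroups $H$ and $A_{Z,E}$ act trivially on this abelian quotient via $\Ad$. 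Hence $|\det\Ad_{\hat\hf}(x)|=|\det\Ad_\hf(x)|$ for every $x\in\hat H$.

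For $x=h\in H$, unimodularity of $Z$ gives $|\det\Ad_\hf(h)|=|\det\Ad_\gf(h)|$, so $\hat\Delta(h)=1$. For $x=a\in A_{Z,E}\subset A$, the roots of $(\gf,\af)$ pair as $\pm\alpha$ with equal multiplicity, forcing $|\det\Ad_\gf(a)|=1$; the task is thus reduced to showing $|\det\Ad_\hf(a)|=a^{-2\rho_Q}$. I would split this via the local structure theorem decomposition $\hf=(\lf\cap\hf)\oplus\mathcal G(T)$ from (\ref{LST}). On $\lf\cap\hf$, the inclusion $L_{\mathrm n}\subset L\cap H$ gives $[\lf,\lf]\subset\lf\cap\hf$, so every root space $\gf^\beta$ of $\lf$ lies in $\lf\cap\hf$; the remaining directions are in $\af+\mf$, where $\Ad(A)$ is trivial. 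Pairing of roots $\pm\beta$ then gives $|\det\Ad_{\lf\cap\hf}(a)|=1$.

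The decisive step is the calculation on $\mathcal G(T)$. From (\ref{T}),
\[
\Ad(a)\bigl(X_{-\alpha}+T(X_{-\alpha})\bigr)=a^{-\alpha}X_{-\alpha}+\sum_{\beta\in\Sigma_\uf\cup\{0\}}a^{\beta}X_{\alpha,\beta},
\]
where the $\beta=0$ term $X_{\alpha,0}\in(\lf\cap\hf)^{\perp_\lf}$ is fixed by $\Ad(a)$ since $(\lf\cap\hf)^{\perp_\lf}\subset\af+\mf$ (by the same inclusion used above). Whenever $X_{\alpha,\beta}\neq 0$ one has $\alpha+\beta\in\mathcal M=\N_0[S]$, and every element of $\N_0[S]$ vanishes on $\af_{Z,E}$ by the very definition of the edge. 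Thus $a^{\alpha+\beta}=1$ for $a\in A_{Z,E}$, and the display collapses to $a^{-\alpha}(X_{-\alpha}+T(X_{-\alpha}))$; consequently $\mathcal G(T)$ splits into $\Ad(a)$-eigenlines of eigenvalue $a^{-\alpha}$ with multiplicity $\dim\gf^{-\alpha}=\dim\gf^\alpha$, giving $|\det\Ad_{\mathcal G(T)}(a)|=a^{-\sum_{\alpha\in\Sigma_\uf}\alpha\,\dim\gf^\alpha}=a^{-2\rho_Q}$. The main obstacle is precisely this eigenvector verification on $\mathcal G(T)$: it hinges on the fact that the edge $\af_{Z,E}$ is by definition the vanishing locus of the spherical roots (and hence of $\mathcal M$), which is exactly what makes the off-diagonal terms of $T$ drop out and reduces the determinant to the trace identity $2\rho_Q=\sum_{\alpha\in\Sigma_\uf}\alpha\,\dim\gf^\alpha$ on $\uf$.
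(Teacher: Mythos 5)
Your proof is correct and takes essentially the same route as the paper's, which is just a very terse version of your computation: $H\subset\ker\hat\Delta$ from unimodularity of $Z$ and normality of $H$ in $\hat H$, followed by the observation that $\hf$ is isomorphic to $(\lf\cap\hf)\oplus\oline\uf$ as an $A_{Z,E}$-module --- precisely your eigenvector computation on $\mathcal{G}(T)$ using that ${\mathcal M}=\N_0[S]$ vanishes on $\af_{Z,E}$. One small imprecision: $L_{\rm n}\subset L\cap H$ does not yield $[\lf,\lf]\subset\lf\cap\hf$ when $[\lf,\lf]$ has compact simple factors (these contain no unipotent elements and need not lie in $L_{\rm n}$), but what you actually use --- that every restricted root space of $\lf$ consists of nilpotent elements and hence lies in $\lf_{\rm n}\subset\lf\cap\hf$, so that the remaining eigenvalues of $\Ad(a)$ on $\lf\cap\hf$ are $1$ and $(\lf\cap\hf)^{\perp_\lf}\subset\mf+\af$ --- is correct.
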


\begin{proof} As $Z$ is unimodular and $H$ is a normal subgroup of $\hat{H}$ we 
deduce that $H\subset \ker \hat \Delta$. 
Hence 
$$\hat \Delta(ha)= |\det \Ad_{\gf/ \hat{\hf} } (a)|^{-1}\, .$$
Finally observe that $\hf$ is isomorphic to $(\lf \cap \hf) +\oline{\uf}$ as an 
$A_{Z,E}$-module. The assertion follows. 
\end{proof}

 We extend the character $e^{-\chi}$ of $A_{Z,E}$ to
a character $\xi$ of $\hat H$, which is trivial on $H$, and 
note that it then follows from $a\cdot\eta=e^{\chi}\eta$ that
$m_{v,\eta}\in C^\infty(\hat Z;\xi)$. 
It follows from Lemma \ref{lemdelta} that $\xi$ is normalized unitary 
if and only if $\re\chi=\rho_Q$ (compare also (\ref{sqrt psi})).

\begin{theorem}\label{CDS} Let $Z$ be unimodular, $V$ a Harish-Chandra module, and let
$\eta\in (V^{-\infty})^H$ be an element 
with eigencharacter $e^\chi$ on $A_{Z,E}$, and let $\xi=e^{-\chi}$.
Then \ref{2nd} $\Rightarrow$ \ref{1st}
for the assertions
\renewcommand{\labelenumi}{\theenumi}
\begin{enumerate}
\renewcommand{\theenumi}{(\roman{enumi})}
\item\label{2nd} $\xi$ is normalized unitary
and $m_{v,\eta}\in L^2(\hat Z; \xi)$ 
for all $v\in V$. 
\item\label{1st} $(V,\eta)$ satisfies the strong inequality (see (\ref{disc})),
\end{enumerate}
and if $Z$ is absolutely spherical 
or wave-front, then \ref{1st} $\Rightarrow$ \ref{2nd}.
\end{theorem}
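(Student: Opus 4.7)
The plan is to convert the $L^2$-condition on $\hat Z$ into an integrability condition on $A_Z^-/A_{Z,E}$ via the polar decomposition (\ref{poldechat}), and then match it against the pointwise estimates already developed: the quantitative upper bounds of Theorems \ref{lemma ub} and \ref{lemma ub wf} handle the direction under the extra hypothesis, and the asymptotic lower bound of Proposition \ref{exact lb} handles the unconditional converse. The underlying observation I would use throughout is that the natural density on $\hat Z$ associated to a normalized unitary character $\xi$, when pulled back to $A_Z^-/A_{Z,E}$ along the polar decomposition, has Jacobian $J(a)$ asymptotically comparable to $a^{-2\rho_Q}$; this is a consequence of the polar integration theory of real spherical spaces developed in \cite{KKSS}.

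For \ref{1st}$\Rightarrow$\ref{2nd} under the absolutely spherical or wave-front assumption, I would first verify that $\xi$ is normalized unitary: $\xi(a)=a^\chi$ (read off from the computation around (\ref{chi on m})) combined with Lemma \ref{lemdelta} and (\ref{Lambda on edge}) reduces $|\xi|^2=\hat\Delta$ precisely to the edge condition $(\Lambda_{V,\eta}-\rho_Q)|_{\af_{Z,E}}=0$, which is built into the strong inequality. Next I would apply Theorem \ref{lemma ub}(\ref{ub-two}) in the absolutely spherical case, or Theorem \ref{lemma ub wf} in the wave-front case, to obtain for every $\epsilon>0$ a continuous seminorm $q$ (on $V^\infty$ resp.\ $V^\omega$) such that
\begin{equation*}
|m_{v,\eta}(\omega a w\cdot z_0)|^2 \leq q(v)^2\, a^{2(\Lambda_{V,\eta}-\epsilon\mu)}(1+\|\log a\|)^{2d}
\end{equation*}
uniformly in $\omega\in\Omega$, $w\in\W$, $a\in A_Z^-$. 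Feeding this into the polar integral on $\hat Z$ and writing $a$ in the coordinates (\ref{I-coordinates}), the resulting upper bound for $\|m_{v,\eta}\|_{L^2}^2$ factors over $j=1,\dots,s$ as a product of one-variable integrals of the form $\int_{-\infty}^0 e^{2t_j((\Lambda_{V,\eta}-\rho_Q)(\omega_j)-\epsilon)}(\text{polylog})\,dt_j$, using $\mu(\omega_j)=1$. The strong inequality (\ref{disc}) makes each of these exponents strictly positive for small enough $\epsilon$, so each integral converges and $m_{v,\eta}\in L^2(\hat Z;\xi)$.

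For the converse \ref{2nd}$\Rightarrow$\ref{1st} I would first run the normalization step backwards: the normalized unitary hypothesis combined with Lemma \ref{lemdelta} and (\ref{Lambda on edge}) forces $(\Lambda_{V,\eta}-\rho_Q)|_{\af_{Z,E}}=0$. For the strict parts, I fix $j\in\{1,\dots,s\}$ and apply Proposition \ref{exact lb} to $X=-\omega_j/\|\omega_j\|$ with $Y_0=0$. This yields $v\in V$, $w\in\W$, an $\epsilon_0>0$ and a sequence $t_n\to\infty$ such that on each $(t_n-\epsilon_0,t_n+\epsilon_0)$,
\begin{equation*}
|m_{v,\eta}(\exp(-t\omega_j)w\cdot z_0)|^2 \geq c\, e^{-2t\Lambda_{V,\eta}(\omega_j)}\, t^{2d_{-\omega_j}},
\end{equation*}
using (\ref{Lambdaj}) to identify $\lambda_{-\omega_j}=-\Lambda_{V,\eta}(\omega_j)$. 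Restricting the finite $L^2$-norm on $\hat Z$ to this ray and using $J(a)\gtrsim a^{-2\rho_Q}$ gives
\begin{equation*}
\sum_n \int_{t_n-\epsilon_0}^{t_n+\epsilon_0} e^{-2t(\Lambda_{V,\eta}-\rho_Q)(\omega_j)}\, t^{2d_{-\omega_j}}\,dt<\infty,
\end{equation*}
which forces $(\Lambda_{V,\eta}-\rho_Q)(\omega_j)>0$.

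The principal technical obstacle is the two-sided comparison of the polar measure on $\hat Z$ with $a^{-2\rho_Q}\,da$: a uniform global upper bound of this order (across the orbits enumerated by $\hat\F$ and across the walls of $A_Z^-$) is required for the direct implication, and a matching lower bound on sets of positive measure along each direction $-\omega_j$ is required to turn the pointwise lower bound of Proposition \ref{exact lb} into an integrability obstruction. Both ingredients are contained in the polar decomposition developed in \cite{KKSS}, but the bookkeeping at the walls of the compression cone, where $J$ can degenerate, is the delicate point that must be handled carefully.
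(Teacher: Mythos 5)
Your overall strategy coincides with the paper's: pull the $L^2$-condition back through the polar decomposition (\ref{poldec hatZ}), bound the Jacobian $J(a)$ by $a^{-2\rho_Q}$ from above for the direct implication (fed by Theorem \ref{lemma ub}(\ref{ub-two}) resp.\ Theorem \ref{lemma ub wf}) and from below along $-\omega_j$ for the converse (fed by Proposition \ref{exact lb}). The direct implication and the identification of the edge condition with the normalized unitary property via Lemma \ref{lemdelta} are essentially the paper's argument and are fine.

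The converse as written has a genuine gap at the step ``restricting the finite $L^2$-norm on $\hat Z$ to this ray.'' Two things go wrong. First, the polar integral has the form $\int_{K/M_H}\int_{A_{\hat Z}^-}|m_{v,\eta}(w'kaw\cdot z_0)|^2 J(a)\,da\,d(kM_H)$, whereas Proposition \ref{exact lb} gives a pointwise lower bound only at $k=\1$; finiteness of the $K$-average does not by itself bound $|m_{v,\eta}(aw\cdot z_0)|^2$ from above, so you cannot pass from the $L^2$-norm to the claimed one-dimensional integral. The missing idea is to take $v$ generating an irreducible $K$-type $V_\tau$ and invoke Schur orthogonality, which yields $\int_{K/M_H}|m_{v,\eta}(kaw\cdot z_0)|^2\,d(kM_H)\ge d_\tau^{-1}|m_{v,\eta}(aw\cdot z_0)|^2$; this is how the paper converts the pointwise lower bound into a lower bound for the integral. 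Second, for $s>1$ the single ray $\{\exp(-t\omega_j)\}$ has measure zero in $A_{\hat Z}^-$, so finiteness of the full integral says nothing about the integral over that ray; one must let the base point $a_0=\exp(Y_0)$ vary over a positive-measure set away from the walls (where the lower bound $J(a_0\exp(-t\omega_j))\ge Ce^{2t\rho_Q(\omega_j)}$ holds) and apply Fubini. You flag the positive-measure issue for the Jacobian in your closing paragraph, but the $K$-averaging step is not addressed and is the point where an actual additional idea is needed.
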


It is a consequence of \ref{2nd} that (the completion of) $V$ belongs to the
$\xi$-twisted discrete series of $\hat Z$. 

\begin{proof} Assume \ref{1st}. It follows from  (\ref{temp''}) 
that  $\re\chi=\Lambda_{V,\eta}=\rho_Q$ on $\af_{Z,E}$. Hence 
$\xi$ is normalized unitary and
$|\Omega|_v:=|\Omega|_{|m_{v,\eta}|^2}$ is a density 
on $\hat Z$ for each $v\in V$.  We need to show that it is integrable.  
Recall from (\ref{poldec}), applied to $\hat Z=G/\hat H$, that  
\begin{equation}\label{poldec hatZ}
\hat Z= \F'' K A_{\hat Z}^- \W \cdot \hat z_0
\end{equation}
where $\F''$ is a finite set. 
Keep in mind that $A_{\hat Z}^-= A_Z^-/A_{Z,E}$ and that we realize $A_{\hat Z} \subset A_Z$. 
Let $M_H=\big[Z_{K\cap H} (A_{\hat Z})\big]_e$ and note that $M_H \W \subset \W \hat H$. 
Hence the the polar map 
$$\Phi:  \F''\times K/M_H  \times A_{\hat Z}^-\times \W\to \hat Z\, $$
associated with (\ref{poldec hatZ}) is defined. 
We note that:
\begin{itemize}
\item$\Phi$ has generically invertible differential;
\item $\Phi$ has generically finite fibers,
\end{itemize}
and that the second item follows from the first, as $\Phi$ is, up to finite cover, 
the restriction of a dominant algebraic map
between complex varieties of the same dimension. 
\par Let $|\omega|_v:=\Phi^*(|\Omega|_v)$ be the pull-back of the density $|\Omega|_v$. 
 It follows from the observations above that $|\omega|_v$ is integrable
 if and only if $|\Omega|_v$ is.
 Let us now compute this pull back explicitely. 
 For a subspace $U\subset \gf$ we fix a basis 
$u_1, \ldots, u_p$ and set $\omega_U = u_1 \wedge \ldots\wedge u_p \in \bigwedge^p\gf$. 
Let $U$ be the orthogonal complement of $\mf_H$ in $\kf$. 
Define a function $J$ on $A_{\hat Z}$ by 
\begin{equation}\label{J(a)}
J(a) | \omega_\gf| = |\omega_U \wedge\omega_{\af_{\hat Z} } \wedge [\Ad(a) \omega_{\hat \hf}]|\, .\end{equation}
 Then there exists a constant $C>0$ (depending on the normalization of measures)
 such that 
 \begin{equation*} 
|\omega| _v (w', kM_H , a, w) = C |m_{v,\eta}(w'k aw\cdot z_0)|^2
J(a) |d| (a) \ |d|(kM_H)\, ,\end{equation*}
for $w'\in\F''$ and $w\in\W$, where 
$|d|(kM_H)$ and $|d| (a)$ are the Haar densities on $K/M_H$ and $A_{\hat Z}$ respectively.
We deduce from (\ref{J(a)}) and
(\ref{t-compr}) that $J(a) \leq C a^{-2\rho_Q}$ for all $a \in A_{\hat Z}^-$. 

If $Z$ is absolutely spherical 
or wave-front the integrability of $|\omega|_v$ now
follows from Theorem \ref{lemma ub}(\ref{ub-two}) and Theorem \ref{lemma ub wf}.
Hence \ref{2nd} is valid in that case.
\par  Assume \ref{2nd}.  We argue by contradiction and 
assume that (\ref{disc}) fails, i.e.~there exists $\omega_j$ such that 
$$ (\Lambda_{V,\eta} - \rho_Q)(\omega_j) \leq  0\, .$$
It suffices to show that $|\Omega|_v $ is not integrable for 
some $v$, 
when pulled back to $ K \times A_{\hat Z}^-$ as in the first part of the proof. 
\par Suppose that $v\in V$ generates an irreducible $K$-module $V_\tau$ 
with dimension $d_\tau$. Then Schur-orthogonality implies that 
$$\int_{K/M_H} |m_{v,\eta}(kaw\cdot z_0)|^2 \ d (kM_H)\geq 
\frac1{d_\tau}  |m_{v,\eta}(aw\cdot z_0)|^2 $$
and therefore 
\begin{equation} \label{lbw} \int |\omega_v| \geq \frac C{d_\tau} 
\int_{A_{\hat Z}^-}  |m_{v,\eta}( aw\cdot z_0)|^2 J(a)\  da\, 
.\end{equation} 
Next we need a lower bound on the Jacobian function $J(a)$.
We employ again (\ref{J(a)}) and (\ref{t-compr}), and deduce
for $a_0\in A_{\hat Z}^{-}$ sufficiently far from walls that
$$ J(a_0\exp(-t\omega_j)) \geq C e^{2t\rho_Q(\omega_j)}$$
for all $t\geq 0$ and a positive constant $C$. Hence it follows from (\ref{lbw}), 
Proposition \ref{exact lb} and the Fubini theorem that $|\Omega|_v$ is not square integrable for some $v$.
\end{proof}

\begin{rmk}\label{str ineq indept of K}
In case $Z$ is wave-front or absolutely spherical, 
the above theorem shows that if $(V,\eta)$ satisfies the strong inequality, 
then this is also the case if we replace $V$ by the space of $K$-fixed
vectors in $V^\infty$ for any other regular choice of  $K$.
\end{rmk}

\subsection{The interlaced case}

Recall that $H$ is said to be interlaced by $P$ if $U_P\subset H\subset P$,
and that in this case $Z_P=G_P/H_P\simeq P/H$ for the Levi-induced spherical space.

\begin{lemma}\label{induced}
If $H$ is interlaced by $P$ and $V$ embeds into
$L^2(Z,\xi)$ for some normalized unitary character $\xi$
with trivial restriction to $U_P$, then there exists a 
twisted discrete series $\sigma$ of $Z_P$ such that $V$ embeds into
the representation of $G$
unitarily induced from $\sigma\otimes 1$ of $P=G_PU_P$. 
\end{lemma}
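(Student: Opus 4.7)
The strategy will be to realize $L^2(Z,\xi)$ as a parabolically induced representation and then extract the desired embedding via Frobenius reciprocity together with the decomposition of $L^2(Z_P,\xi_P)$ into its discrete and continuous parts.

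First I would establish a $G$-equivariant isometric isomorphism
\begin{equation*}
L^2(Z,\xi)\;\simeq\;\operatorname{Ind}_P^G L^2(Z_P,\xi_P),
\end{equation*}
where on the right $L^2(Z_P,\xi_P)$ is regarded as a $P$-module through the projection $P\to G_P$, with $U_P$ acting trivially. Here $\xi_P$ is the character of $H_P$ whose inflation to $H=H_P\ltimes U_P$ equals $\xi$, which is well-defined because $\xi|_{U_P}=1$ by hypothesis. The modular-function formula in Section~\ref{usi}, specialized to the interlaced case $U_P\subset H$, forces $\Delta_{Z_P}=1$, so $Z_P$ is unimodular and $\xi_P$ is automatically normalized unitary. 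The isomorphism itself is induction in stages for the fibration $Z\to G/P$ with fiber $Z_P=P/H$: a function $f\in L^2(Z,\xi)$ is sent to the section $F\colon G\to L^2(Z_P,\xi_P)$ given by $F(g)(g_PH_P)=f(gg_P)$ for $g_P\in G_P$, and the isometry is verified by unfolding the $G$-invariant integral along the fibration.

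Next I would invoke Casselman's Frobenius reciprocity in the form
\begin{equation*}
\operatorname{Hom}_{(\gf,K)}\!\bigl(V,\,\operatorname{Ind}_P^G W\bigr)\;\simeq\;\operatorname{Hom}_{(\gf_P,K_P)}\!\bigl(V_{U_P},\,W\bigr),
\end{equation*}
where $V_{U_P}:=V/\uf_P V$ is the Jacquet module and $W$ is any admissible smooth $G_P$-module inflated trivially to $P$. Taking $W=L^2(Z_P,\xi_P)$, the given embedding $V\hookrightarrow L^2(Z,\xi)$ produces, via the isomorphism of the first step, a nonzero $(\gf_P,K_P)$-equivariant map $\phi\colon V_{U_P}\to L^2(Z_P,\xi_P)$. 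By the Casselman-Wallach theorem $V_{U_P}$ has finite length, so $\phi(V_{U_P})$ is a finite-length submodule of the unitary $(\gf_P,K_P)$-module $L^2(Z_P,\xi_P)$; by semisimplicity of finite-length submodules of unitary representations, it decomposes as a direct sum of finitely many simple summands $\sigma_1,\dots,\sigma_r$. Each $\sigma_i$ embeds as a $(\gf_P,K_P)$-subrepresentation of $L^2(Z_P,\xi_P)$ and therefore, by definition, belongs to the $\xi_P$-twisted discrete series of $Z_P$. Choosing any $\sigma=\sigma_i$, the composition of $\phi$ with the projection onto $\sigma$ yields a nonzero element of $\operatorname{Hom}_{(\gf_P,K_P)}(V_{U_P},\sigma)$, and the same reciprocity, read in the opposite direction, gives a nonzero $G$-map $V\to\operatorname{Ind}_P^G(\sigma\otimes 1)$. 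Since $V$ is irreducible this map is automatically injective, producing the desired embedding.

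The main obstacle will be the rigorous bookkeeping in the first step. Making $f\mapsto F$ into an isometric $G$-equivariant identification requires balancing the modular characters of $Z$, $P$, and $Z_P$ against the normalization factor $\delta_P^{1/2}$ appearing in unitary parabolic induction; the computation of $\Delta_{Z_P}$ in Section~\ref{usi}, specialized to $U_P\subset H$, is precisely what causes those factors to cancel. A secondary technical point is ensuring that Frobenius reciprocity applies to the infinite-dimensional admissible unitary $P$-module $L^2(Z_P,\xi_P)$, which one arranges by first passing to its $K_P$-finite part and then applying Casselman's theorem there.
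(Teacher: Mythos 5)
Your overall route is the same as the paper's: realize $L^2(Z,\xi)$ by Mackey induction in stages as $\mathrm{ind}_P^G(L^2(Z_P,\xi_P)\otimes 1)$, using that $\xi$ is trivial on $U_P$ and that $H=H_P\ltimes U_P$, and then extract an irreducible subrepresentation $\sigma$ of $L^2(Z_P,\xi_P)$. Your second step (Frobenius reciprocity through the Jacquet module $V/\uf_PV$, finite length, and semisimplicity of finite-length submodules of a unitary module) actually supplies detail that the paper leaves as a one-line assertion, and is a legitimate way to justify it, provided you are careful that $L^2(Z_P,\xi_P)$ is not admissible; the first direction of the adjunction should be obtained by evaluating the ($K$-finite, hence smooth) image vectors at the identity.

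There is, however, one concrete error in your first step: the claim that $\Delta_{Z_P}=1$ and that $\xi_P:=\xi|_{H_P}$ is automatically normalized unitary. The formula from Section \ref{usi} that you invoke is proved under the hypothesis that $Z$ is unimodular, which is not assumed in the lemma and fails precisely where the lemma is applied (to $\hat Z_{w,I}=G/(H_w)_IA_I$, whose modular function is $a^{-2\rho_Q}$ by Lemma \ref{lemdelta}). In general one has $\Delta_Z|_{H_P}=|\det\Ad_{\uf_P}(\cdot)|\,\Delta_{Z_P}$ on $H_P$, so $|\xi|_{H_P}|=\Delta_Z^{1/2}|_{H_P}\neq\Delta_{Z_P}^{1/2}$ and your $\xi_P$ is not normalized unitary; consequently $L^2(Z_P,\xi_P)$ is not even defined with your choice. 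The fix is exactly the paper's: take $\xi_P=\xi|_{H_P}\otimes\Delta_Z^{-1/2}\otimes\Delta_{Z_P}^{1/2}$, which is normalized unitary by construction and absorbs the $\delta_P^{1/2}$ of unitary parabolic induction. With that correction the rest of your argument goes through.
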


\begin{proof} We may assume $V$ is irreducible.
Note that $L^2(Z,\xi)$ is unitary equivalent with the 
induced representation $\mathrm{ind}_H^G(\xi\otimes\Delta_Z^{-1/2})$. Induction by stages (see \cite{Mackey})
implies that this is unitary equivalent with 
$\mathrm{ind}_P^G\,\mathrm{ind}_H^P(\xi\otimes\Delta_Z^{-1/2})$.
Since $\xi$ is trivial on $U_P$, the
representations
$\mathrm{ind}_H^P(\xi\otimes\Delta_Z^{-1/2})$ 
and $\mathrm{ind}_{H_P}^{G_P}(\xi|_{H_P}\otimes\Delta_Z^{-1/2})
\otimes 1$
of
$P=G_PU_P$ are unitary
equivalent. Let
$$\xi_P=\xi|_{H_P}\otimes\Delta_Z^{-1/2}
\otimes\Delta_{Z_P}^{1/2}$$
then $L^2(Z_P,\xi_P)\simeq
\mathrm{ind}_{H_P}^{G_P}(\xi\otimes\Delta_Z^{-1/2})$. 
Hence $V$ embeds into the unitary representation
$\mathrm{ind}_{P}^{G} (L^2(Z_P,\xi_P)\otimes 1)$,
and this implies that it embeds in 
$\mathrm{ind}_{P}^{G} (\sigma\otimes 1)$
for some irreducible subrepresentation $\sigma$ of 
$L^2(Z_P,\xi_P)$.
\end{proof}

\section{The tempered spectrum of $Z$}\label{tempspec}

Let $(V,\eta)$ be a spherical pair. In this section we use the power series expansions of $m_{v,\eta}$ from Section 
\ref{power series} to define maps 
$$  
\eta \mapsto \eta_I^{\mu,w}, \qquad
(V^{-\infty})^H \to (V^{-\infty})^{(H_w)_I}, 
$$
where $\mu$ runs through certain elements of $\E$, the leading 
exponents of the $m_{v,\eta}$.
These maps can be thought of as  Radon-transforms for a single 
spherical representation -- analytically a Radon transform 
would be the map which takes a rapidly decaying function on $G/H$ to  
a function on $G/(H_w)_I$ by integrating out fibers 
$(H_w)_I/H \cap (H_w)_I$.
\par The coefficients of the power series expansion of $m_{v,\eta}$ are 
indexed by $\E \times \N_0[S]$.
Roughly speaking, discarding all the coefficients not belonging to 
$\{\mu\} \times \N_0[I]$ and evaluating the partial expansion 
at $w\in \W$  produces an $(\hf_w)_I$-invariant linear functional 
$\eta_I^{\mu,w}$ on $V$ (see Lemma \ref{etaI}). 
The difficult part is then to see that $\eta_I^{\mu,w}$ is continuous, i.e.~extends continuously to $V^\infty$.  Under the assumption 
that $Z$ is wave-front or absolutely spherical, this is achieved in 
Lemma \ref{continuous and nonzero} using the sharp 
bounds from Section \ref{quantitative bounds}.
\par Since $A_I$ acts on the finite dimensional space 
$(V^{-\infty})^{(\hf_w)_I}$, we obtain a finite dimensional $A_I$-module
$W$-generated by $\eta_I^{\mu,w}$.  Appropriate $A_I$-eigenvectors which 
we call $\eta$-optimal, then 
give the tempered embedding
theorem \ref{tet} for $(V,\eta)$ a tempered pair.

\subsection{Leading coefficient maps}

Let $(V,\eta)$ be an $H$-spherical pair as considered in Section \ref{power series},
and let $\Xi\subset\af^*_{Z,\C}$ be the associated set of exponents 
(see (\ref{defXi})).
We shall attach leading coefficient maps
$$\Lc_I: V \to C^\infty(U_AA_Z^-), $$
to each subset $I\subset S$. 

Let $\Xi_I=\{\xi|_{\af_I}\mid \xi\in\Xi\}$ be the set of exponents along $A_I$.
In analogy with ${\mathcal E}_{\rm lead}$ we let
${\mathcal E}_{{\rm lead},I}$ consist of those exponents $\mu\in\Xi_I$ for which
$\mu-\sigma|_{\af_I}\notin \Xi_I$ for all $\sigma\in \N_0[S]\bs \la I\ra$
(see (\ref{defElead})).

Then 
\begin{equation} \label{leadI} \emptyset\neq {\mathcal E}_{{\rm lead},I}\subset {\mathcal E}_{{\rm lead}}|_{\af_I}\, 
\end{equation}
and $$\Xi_I\subset {\mathcal E}_{{\rm lead},I}+\N_0[S]\bs\la I\ra.$$
For each $\mu\in {\mathcal E}_{{\rm lead},I}$ and each $w\in\mathcal W$ we define
$$ \Lc_I^{\mu,w}(v)(a):= 
\sum_{\xi\in\Xi,\, \xi|_{\af_I}=\mu} 
q_{v, \xi,w} (\log a) \,
a^{\xi} \, $$
for $v\in V$ and $a\in U_AA_Z^-$, where $q_{v,\xi,w}$ is defined by (\ref{asympXi}).

\begin{lemma}\label{TAYLOR} Let $\mu\in {\mathcal E}_{{\rm lead},I}$ and $w\in\W$.
The linear map $\Lc_I^{\mu,w} : V \to C^\infty(U_AA_Z^-)$ is $\af_Z$-equivariant.
\end{lemma}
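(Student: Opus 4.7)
The plan is to read off the equivariance directly from the differential relation
\[e^\xi q_{Yv,\xi,w}=-Y[e^\xi q_{v,\xi,w}],\qquad Y\in\af_Z,\]
between the coefficients of the expansions (\ref{asympXi}) of $m_{v,\eta}$ and $m_{Yv,\eta}$ recorded in (\ref{c-relations}). No further analytic ingredients are needed beyond the convergence properties of (\ref{asympXi}).

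First I would fix conventions to make the statement precise. On $V$ the $\af_Z$-action is the restriction of the $\gf$-module structure via the embedding $\af_Z=\af_H^\perp\subset\af$, and on $C^\infty(U_AA_Z^-)$ the natural $\af_Z$-action is by directional differentiation along $A_Z^-$. At the level of matrix coefficients these actions are related by $R(Y)m_{v,\eta}=-m_{Yv,\eta}$, so $\af_Z$-equivariance of $\Lc_I^{\mu,w}$ amounts to the identity
\[\Lc_I^{\mu,w}(Yv)=-Y\bigl[\Lc_I^{\mu,w}(v)\bigr],\qquad Y\in\af_Z.\]

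The body of the argument then has two small steps. Since (\ref{asympXi}) converges absolutely and locally uniformly on $U_AA_Z^-$ together with all its derivatives, the sub-sum obtained by restricting to the indices $\xi\in\Xi$ with $\xi|_{\af_I}=\mu$ is itself smooth and admits termwise differentiation; uniqueness of the individual summands $a^\xi q_{v,\xi,w}(\log a)$ is granted by the standing arrangement that no two elements of the initial set $\E$ differ by an element of $\Z[S]$. Then applying (\ref{c-relations}) term by term yields
\[Y\bigl[a^\xi q_{v,\xi,w}(\log a)\bigr]=-a^\xi q_{Yv,\xi,w}(\log a),\]
and since differentiation preserves the exponent $\xi$ it preserves the selecting condition $\xi|_{\af_I}=\mu$. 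Summing over such $\xi$ produces the displayed equivariance identity. The only point that requires care is the legitimacy of the termwise differentiation, and this is immediate from the convergence properties of (\ref{asympXi}) already established; there is no substantial obstacle.
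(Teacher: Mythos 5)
Your proposal is correct and follows essentially the same route as the paper: the paper's proof also rests on the uniqueness of the coefficients in the expansion (\ref{asympXi}) together with the relation (\ref{c-relations}), from which the equivariance of the partial sum over $\{\xi:\xi|_{\af_I}=\mu\}$ follows exactly as you describe.
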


\begin{proof} This follows from the uniqueness of the coefficients
in the asymptotic expansions of $m_{v,\eta}(aw\cdot z_0)$ and $\Lc_I^{\mu,w}(v)(a)$
(see also (\ref{c-relations})).
\end{proof}

Denote by $V^*$ the algebraic dual of $V$ and define 
$$\Lb_I^{\mu,w}: A_I \to V^*; \ \Lb_I^{\mu,w} (a) (v):= \Lc_I^{\mu,w}(v)(a)\, .$$
Notice that 
$$\Lb_I^{\mu,w}(a) (v) = a^\mu p_{v, \mu, w} (\log a)\qquad (a\in A_I)$$
for a polynomial $p_{v,\mu,w}$ on $\af_I$.

\begin{lemma}\label{etaI} Let $\mu\in{\mathcal E}_{{\rm lead},I}$ and 
$w\in\mathcal W$.
Then ${\rm im}\ \Lb_I^{\mu,w}\subset (V^*)^{(\hf_w)_I}$. 
\end{lemma}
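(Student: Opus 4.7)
The plan is to verify, for every $Y\in(\hf_w)_I$, $v\in V$, and $a\in A_I$, that $\Lb_I^{\mu,w}(a)(Yv)=\Lc_I^{\mu,w}(Yv)(a)$ vanishes. Applying (\ref{expression h_I}) to $\hf_w$ in place of $\hf$ (which is allowed since the hypotheses pass by Corollary \ref{Zw}), one has $(\hf_w)_I=(\lf\cap\hf)+\mathcal{G}(T^w_I)$, where $T^w_I(X_{-\alpha})=\sum_{\beta:\,\alpha+\beta\in\la I\ra}\epsilon_{\alpha,\beta}(w)X_{\alpha,\beta}$. I will treat the two summands separately. The common input is the following consequence of $\hf_w$-invariance: for any $Y\in\hf_w$ with $\af$-weight decomposition $Y=\sum_\gamma Y^{(\gamma)}$ and any $v\in V$, the vanishing $\eta_w(Y\cdot a^{-1}v)=0$ together with the identity $Y^{(\gamma)}\cdot a^{-1}v=a^\gamma\,a^{-1}(Y^{(\gamma)}v)$ yields
\begin{equation*}
\sum_\gamma a^\gamma\, m_{Y^{(\gamma)}v,\eta_w}(a)=0 \qquad (a\in A).
\end{equation*}

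For $Y\in\lf\cap\hf$: by (\ref{FQ}) and the structure of $\lf$, every weight component $Y^{(\gamma)}$ of $Y$ again lies in $\lf\cap\hf=\lf\cap\hf_w$ (the latter equality following from Lemma \ref{hf}). Applying the fundamental identity to the single homogeneous element $Y^{(\gamma)}\in\hf_w$ collapses the sum to one term and forces $m_{Y^{(\gamma)}v,\eta_w}\equiv 0$ on $A$; hence all coefficients $q_{Y^{(\gamma)}v,\xi,w}$ vanish, and via the shift relation $q_{Yv,\eta,w}=\sum_\gamma q_{Y^{(\gamma)}v,\eta+\gamma,w}$ obtained by the same weight-decomposed argument, one concludes $\Lc_I^{\mu,w}(Yv)\equiv 0$.

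For $Y=X_{-\alpha}+T^w_I(X_{-\alpha})\in\mathcal{G}(T^w_I)$: apply the fundamental identity to the \emph{full} element $Y_0:=X_{-\alpha}+T_w(X_{-\alpha})\in\hf_w$, whose $\af$-weights are $-\alpha$ (with vector $X_{-\alpha}$) and $\beta\in\Sigma_\uf\cup\{0\}$ (with vector $\epsilon_{\alpha,\beta}(w)X_{\alpha,\beta}$). Multiplying through by $a^\alpha$ and equating the asymptotic expansions on both sides, uniqueness of the coefficients (which holds because no two elements of $\mathcal E$ differ by an element of $\Z[S]$) yields, for every exponent $\eta$, the polynomial identity
\begin{equation*}
q_{X_{-\alpha}v,\eta,w}(\log a)+\sum_{\beta\in\Sigma_\uf\cup\{0\}}\epsilon_{\alpha,\beta}(w)\,q_{X_{\alpha,\beta}v,\eta-\alpha-\beta,w}(\log a)=0.
\end{equation*}
Restrict now to $\eta$ with $\eta|_{\af_I}=\mu$. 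For $\beta$ with $\alpha+\beta\notin\la I\ra$ one has $(\eta-\alpha-\beta)|_{\af_I}=\mu-(\alpha+\beta)|_{\af_I}$, which by the defining property of $\mathcal E_{{\rm lead},I}$ does not lie in $\Xi_I$; hence $\eta-\alpha-\beta\notin\Xi$ and the corresponding $q$-polynomial is identically zero. Multiplying the surviving identity by $a^\eta$, summing over $\{\eta:\eta|_{\af_I}=\mu\}$, and using $a^{\alpha+\beta}=1$ on $A_I$ when $\alpha+\beta\in\la I\ra$, everything collapses to
\begin{equation*}
\Lc_I^{\mu,w}(X_{-\alpha}v)(a)+\sum_{\beta:\,\alpha+\beta\in\la I\ra}\epsilon_{\alpha,\beta}(w)\,\Lc_I^{\mu,w}(X_{\alpha,\beta}v)(a)=0,
\end{equation*}
which is precisely $\Lc_I^{\mu,w}(Yv)(a)=0$.

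The only subtle point, and the real content of the lemma, is the combinatorial matching between the leading-ness of $\mu$ and the definition of $T^w_I$: passing to the $\mu$-leading part along $\af_I$ kills exactly those terms with $\alpha+\beta\notin\la I\ra$, leaving only the components of $T_w(X_{-\alpha})$ with $\alpha+\beta\in\la I\ra$, which are by definition the components of $T^w_I(X_{-\alpha})$. This is the mechanism by which the ``algebraic Radon transform'' $\eta\mapsto\Lb_I^{\mu,w}$ converts $\hf_w$-invariance into $(\hf_w)_I$-invariance.
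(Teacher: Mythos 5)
Your proof is correct and follows essentially the same route as the paper: both arguments convert the $\hf_w$-invariance identity $\eta_w(Y_0\,a^{-1}v)=0$ for $Y_0=X_{-\alpha}+T_w(X_{-\alpha})$ into an expansion whose extra exponents are shifted by $\alpha+\beta\in\N_0[S]\setminus\la I\ra$, and then use the defining property of $\mathcal{E}_{{\rm lead},I}$ to kill precisely those terms, leaving the $T_I^w$-part. The only presentational difference is that you extract the identity at the level of the individual coefficients $q_{v,\xi,w}$ via uniqueness of the expansion, whereas the paper argues directly with the matrix coefficients restricted to $A_I\cap U_AA_Z^-$; the mechanism is identical.
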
 

\begin{proof} 
For the proof we assume first that $w=\1$ is trivial. Recall that 
$$ \hf_I = (\lf \cap \hf) + 
\{ \oline X + T_I(\oline X)\mid \oline X \in \oline\uf\}$$ 
for the linear operator $T_I: \oline \uf \to   
(\lf \cap \hf)^{\perp_\lf}\oplus \uf$ given by (\ref{T_I}). 
\par First it is clear that $m_{Xv,\eta}|_{A_Z^-}=0$ for all 
$X\in \hf\cap \lf = \hf_I \cap \lf$. This implies readily that $\Lb_I^{\mu,\1}(a) \in (V^*)^{\lf\cap \hf}$
for all $a\in A_I$.  
\par Next for $\alpha\in \Sigma_\uf$ and $X_{-\alpha}\in\gf^{-\alpha}$, 
we note that 
\begin{equation} \label{T-relation} 
T(X_{-\alpha}) - T_I(X_{-\alpha}) = \sum_{\alpha+ \beta\notin \la I\ra} X_{\alpha, \beta}\end{equation}
with the notation of (\ref{T}) and (\ref{T_I}). 
 
Let  $X= X_{-\alpha} +T_I(X_{-\alpha})\in \hf_I$, and note that since
$T_I(X_{-\alpha})$ is a sum of root vectors $X_{\alpha, \beta}\in\gf^\beta$
with $\alpha+\beta\in \la I\ra$, we have $\Ad(a)X=a^{-\alpha}X$ for $a\in A_I$.
Hence for $v\in V$
\begin{equation*} m_{X\cdot v, \eta}(a\cdot z_0)= 
\eta(a^{-1} \cdot X\cdot v)= 
a^{\alpha} \eta(X a^{-1}\cdot v). \end{equation*}
Let $Y=T(X_{-\alpha})-T_I(X_{-\alpha})\in \uf$.
Then $X+Y\in\hf$ and for $a\in A_I$ we obtain from above
\begin{equation*} 
m_{X\cdot v, \eta}(a\cdot z_0) = - a^\alpha \eta(Y a^{-1}\cdot v)
=  -a^\alpha m_{(\Ad(a) Y)\cdot v, \eta} (a\cdot z_0)\, .\end{equation*}  
Inserting the root vector expansion (\ref{T-relation}) we obtain
\begin{equation} \label{mXveta1}
m_{X\cdot v, \eta}(a\cdot z_0)= -\sum_{\alpha+ \beta\notin \la I\ra} 
a^{\alpha+\beta}m_{X_{\alpha,\beta}\cdot v, \eta} (a\cdot z_0)\, .\end{equation}  
By inserting for the matrix coefficients on the right hand side
of (\ref{mXveta1}) their expansions according to (\ref{asympXi}),
we see 
that on $A_I\cap U_A A_Z^-$ the function $m_{X\cdot v, \eta}(a\cdot z_0)$ allows an 
expansion with exponents which are of the form $\xi|_{\af_I}$ with $\xi=\xi'+\sigma$
where $\xi'\in\Xi$ and $\sigma=\alpha+\beta\in \N_0[S]\bs \la I\ra$.
By the definition of ${\mathcal E}_{{\rm lead},I}$ we get $\xi|_{\af_I}\neq \mu$ and hence we conclude that 
$\Lb_I^{\mu,\1}(a)(Xv)=0$ for $a\in A_I$. 
\par Finally, the case of an arbitrary $w$ proceeds along the same lines 
in view of the description of $\hf_w$ in (\ref{Tw}). 
\end{proof}

Let $W$ be a finite dimensional vector space and $\mu \in (\af_I)_\C^*$. Then we call $f: A_I \to W$ a 
$\mu$-polynomial map provided that for all $\lambda\in W^*$ there exists a polynomial $p_\lambda$ on $\af_I$ such that 
$(\lambda\circ f)(a) = a^\mu p_\lambda(\log a)$ for all $a\in A_I$.  Notice that there is a natural action of $A_I$ on the space of all
$\mu$-polynomial maps and accordingly we obtain an action of $\U(\af_I)$. 
The following lemma is then straightforward from the definitions. 

\begin{lemma} \label{mu polynomial} Let $f: A_I\to W$ be a $\mu$-polynomial map. 
\begin{enumerate} 
\item If $f\neq 0$, then there exists an $u\in \U(\af_I)$ such that $ u\cdot f\neq 0$ and $(u\cdot f)(a) = a^\mu( u \cdot f)(\1)$ for all 
$a\in A_I$.
\item Suppose in addition that $W_0\subset W$ is a subspace with $\operatorname {im}f \subset W_0$. 
Then $\operatorname{im} (u\cdot f) \subset W_0$ for all $u\in \U(\af_I)$. 
\end{enumerate}
\end{lemma}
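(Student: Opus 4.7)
The plan is to reduce both assertions to elementary computations with polynomial-exponential functions on the abelian group $A_I$. Writing $f(a) = a^{\mu} P(\log a)$ for a $W$-valued polynomial $P \colon \af_I \to W$ (obtained by assembling the polynomials $p_\lambda$ for $\lambda$ running through a basis of $W^*$), the natural $A_I$-action is by translation $(b \cdot f)(a) = f(ab)$, which preserves the $\mu$-polynomial property because $A_I$ is abelian and $\mu$ is additive; the induced $\U(\af_I)$-action is then by the associated differential operators.

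The first step is to make the $\U(\af_I)$-action explicit on $P$. Differentiating $f(a\exp(tX))$ in $t$ at $t=0$ gives for $X \in \af_I$
\begin{equation*}
(X \cdot f)(a) = a^{\mu}\bigl(\mu(X)\, P(\log a) + (\partial_X P)(\log a)\bigr),
\end{equation*}
so that the element $X - \mu(X)\cdot \1 \in \U(\af_I)$ acts on $P$ by the constant-coefficient directional derivative $\partial_X$. Products of such elements therefore realize arbitrary constant-coefficient differential operators on $P$.

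For assertion (1), I would pick a basis $X_1,\dots,X_k$ of $\af_I$ and choose a multi-index $\alpha$ of maximal total degree such that the corresponding coefficient $c_\alpha \in W$ of $P$ is non-zero; this is possible since $P \neq 0$. Applying $u := \prod_i (X_i - \mu(X_i))^{\alpha_i} \in \U(\af_I)$ then annihilates all higher-degree monomials of $P$ and leaves only the constant $\alpha!\, c_\alpha$, yielding $(u \cdot f)(a) = a^{\mu}\,\alpha!\, c_\alpha$, which is non-zero and visibly satisfies $(u \cdot f)(a) = a^{\mu}(u \cdot f)(\1)$.

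For assertion (2), the key point is that each translate $b \cdot f$ also has image in $W_0$ when $f$ does, since $(b\cdot f)(a) = f(ab) \in W_0$; and the $\U(\af_I)$-action is obtained by successive differentiation of $b \mapsto (b \cdot f)(a)$ at $b = \1$. Since $W_0$ is a linear subspace of the finite dimensional space $W$ it is closed under taking directional derivatives of $W$-valued smooth functions, and an induction on the degree of $u$ yields the claim. I do not anticipate any real obstacle: both parts are essentially immediate once the explicit formula for $X\cdot f$ is in place.
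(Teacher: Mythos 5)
Your proof is correct; the paper itself gives no argument, stating only that the lemma is ``straightforward from the definitions,'' and your computation of the $\U(\af_I)$-action (with $X-\mu(X)$ acting as $\partial_X$ on the polynomial part) together with the top-degree-coefficient extraction for (1) and the closedness of $W_0$ under differentiation for (2) is exactly the standard argument being alluded to. No gaps.
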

 
 We now apply this to the specific situation where $W=(V^*)^{(\hf_w)_I}$. Observe that 
 $W\simeq (V/ (\hf_w)_I V)^*$ is a finite dimensional $A_I$-module by Lemma \ref{finite generation}, and that 
in Lemmas \ref{TAYLOR} and \ref{etaI} we have established that 
 $\Lb_I^{\mu,w}: A_I \to W$ is an $A_I$-equivariant $\mu$-polynomial map. 
 There are two invariant 
 subspaces which are in our interest:  $(V^{-\infty})^{(\hf_w)_I}\subset (V^{-\omega})^{(\hf_w)_I}$. 
\par We already know from Lemma \ref{mu polynomial}
that there exists an element $u\in \U(\af_I)$ such that $u\cdot \Lb_I^{\mu,w}$ transforms 
as a character.  Since we want to keep track of certain symmetries we need to construct $u$ explicitly via an 
iterative procedure. This will be done in (\ref{eta I mu w}). To prepare for
that let $X\in \af_I$ be such that $\sigma(X)<0$ for all $\sigma\in S\bs I$ and sufficiently generic such that
$\im\lambda(X)\neq \im\lambda'(X)$ for all pairs $\lambda\neq\lambda'$ in $\E_{{\rm lead},I}$
with  $\re\lambda(X)=\re\lambda'(X)$.
Recall 
from Section \ref{optimal bound} the 
constants $\lambda_X$ and $d_X$. Then $\re\xi(X)\le \lambda_X$ for all 
$\xi\in\Xi$, and for each $\xi\in\Xi$ with $\re\xi(X)=\lambda_X$
we have $\xi|_{\af_I}\in {\mathcal E}_{{\rm lead},I}$. By Lemma \ref{degree along ray}
the polynomial 
$t\mapsto q_{v, \xi,w} (Y+t X)$ 
has degree at most $d_X$ for all $Y\in\af_I$. 
Hence for each $\mu\in {\mathcal E}_{{\rm lead},I}$ 
with $\re\mu(X)=\lambda_X$
an assignment
$$\Lb_{I,X}^{\mu,w}: A_I \to (V^*)^{(\hf_w)_I}$$
is defined by
\begin{equation} \label{Lb def1} \Lb_{I,X}^{\mu,w}(a) (v) =\lim_{t\to \infty}    
e^{-\mu(tX)} t^{-d_X}  \Lb_I^{\mu,w} (a\exp(tX))(v).
\end{equation}

 \begin{lemma} \label{continuous and nonzero} Let $X\in \af_I$ be as above and $\mu\in {\mathcal E}_{{\rm lead},I}$ 
 such that  $\re\mu(X)=\lambda_X$. Suppose that $Z$ is either absolutely spherical or wave-front.
  Then the following assertions hold for 
 the $\mu$-polynomial map $\Lb_I^{\mu,w}: A_I \to (V^*)^{(\hf_w)_I}$: 
  \begin{enumerate} 
 \item \label{L continuous}$\operatorname{im} \Lb_{I,X}^{\mu, w} \subset 
 (V^{-\infty})^{(\hf_w)_I}$ for all  $w$ and $\mu$ as above.
 \item \label{L nonzero} $\Lb_{I,X}^{\mu, w}(\1)\neq 0$ for some  $w$ and $\mu$. 
 \end{enumerate}
\end{lemma}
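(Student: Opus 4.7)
The plan is to read off $\Lb_{I,X}^{\mu,w}(a)(v)$ from the exponential-polynomial expansion of the single-variable function $F_v(t):=m_{v,\eta}(a\exp(tX)w\cdot z_0)$, and then to invoke the sharp upper bounds of Theorems \ref{lemma ub} and \ref{lemma ub wf} together with Lemma \ref{Wallach extended} for continuity, and Proposition \ref{exact lb} for non-vanishing.

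First I would fix $a\in A_I$, choose $t$ large enough that $a\exp(tX)\in U_AA_Z^-$, substitute into (\ref{asympXi}) and group terms by $\mu':=\xi|_{\af_I}$ to write
$$F_v(t)=\sum_{\mu'\in\Xi_I}a^{\mu'}e^{t\mu'(X)}P^{\mu'}_{v,w}(\log a,t),\qquad P^{\mu'}_{v,w}(Y,t):=\sum_{\xi|_{\af_I}=\mu'}q_{v,\xi,w}(Y+tX).$$
Any $\mu'\in\Xi_I$ decomposes as $\mu_0'+\tau$ with $\mu_0'\in\E_{{\rm lead},I}$ and $\tau\in\N_0[S\setminus I]|_{\af_I}$, and since $\sigma(X)<0$ for $\sigma\in S\setminus I$, the equality $\re\mu'(X)=\lambda_X$ forces $\tau=0$ and $\mu'\in\E_X:=\{\mu_0\in\E_{{\rm lead},I}:\re\mu_0(X)=\lambda_X\}$. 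The genericity of $X$ separates the imaginary parts $\im\mu'(X)$ over $\mu'\in\E_X$, while Lemma \ref{degree along ray} yields $\deg_tP^{\mu'}_{v,w}(\log a,\cdot)\le d_X$ for $\mu'\in\E_X$. Letting $\ell^{\mu'}_{v,w}(\log a)$ denote the $t^{d_X}$-coefficient of $P^{\mu'}_{v,w}(\log a,\cdot)$, one arrives at
$$e^{-t\lambda_X}t^{-d_X}F_v(t)=\sum_{\mu'\in\E_X}a^{\mu'}e^{it\im\mu'(X)}\ell^{\mu'}_{v,w}(\log a)+o(1)\quad(t\to\infty).$$

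For \ref{L continuous}, Theorems \ref{lemma ub} and \ref{lemma ub wf}, applied after scaling $X$ into $\Sphere(\af_Z^-)$, supply a $G$-continuous (resp. analytic) norm $q$ on $V$ and a continuous function $C(a)$ with $|F_v(t)|\le q(v)\,t^{d_X}e^{t\lambda_X}\,C(a)$ uniformly in $t\ge 1$. Combined with the asymptotic identity above and Lemma \ref{Wallach extended}, applied to the pairwise distinct frequencies $\im\mu'(X)$ for $\mu'\in\E_X$, this yields $\sum_{\mu'\in\E_X}|a^{\mu'}\ell^{\mu'}_{v,w}(\log a)|^2\le C(a)^2q(v)^2$. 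In particular $v\mapsto\Lb_{I,X}^{\mu,w}(a)(v)=a^\mu\ell^\mu_{v,w}(\log a)$ is dominated by a $G$-continuous norm, so it extends to $V^\infty$; in view of Lemma \ref{etaI} this places $\Lb_{I,X}^{\mu,w}(a)$ in $(V^{-\infty})^{(\hf_w)_I}$, proving \ref{L continuous}. For \ref{L nonzero} I would apply Proposition \ref{exact lb} with $Y_0=0$ to obtain $w\in\W$, $v\in V$, $C>0$ and a sequence $t_n\to\infty$ with $|m_{v,\eta}(\exp(t_nX)w\cdot z_0)|\ge Ct_n^{d_X}e^{t_n\lambda_X}$. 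Specializing the asymptotic identity at $a=\1$ forces $\sum_{\mu'\in\E_X}\ell^{\mu'}_{v,w}(0)e^{it_n\im\mu'(X)}\not\to 0$, and hence $\Lb_{I,X}^{\mu',w}(\1)(v)=\ell^{\mu'}_{v,w}(0)\ne0$ for at least one $\mu'\in\E_X$.

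The hard part is the uniform upper bound itself: passing from a crude a priori estimate on $|m_{v,\eta}|$ to a bound $q(v)\,t^{d_X}e^{t\lambda_X}\,C(a)$ with a single $G$-continuous (resp. analytic) norm $q$ is exactly the content of Theorems \ref{lemma ub} and \ref{lemma ub wf}, and this is where the absolutely spherical or wave-front hypothesis enters, through the closedness of $\hf_IV^\infty$ in $V^\infty$ (resp. $\oline{\uf}_FV^\omega$ in $V^\omega$) supplied by Lemma \ref{comparison smooth} (resp. Lemma \ref{comparison analytic}).
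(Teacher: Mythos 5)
Your setup, the reduction to the frequencies $\im\mu'(X)$ for $\mu'$ with $\re\mu'(X)=\lambda_X$, and the use of Lemma \ref{Wallach extended} to isolate each coefficient reproduce the paper's argument for part (\ref{L continuous}) in the absolutely spherical case; your part (\ref{L nonzero}), via Proposition \ref{exact lb} at $Y_0=0$ rather than a direct appeal to (\ref{deg2}) of Lemma \ref{degree along ray}, is a harmless variant. The genuine gap is in the wave-front case of (\ref{L continuous}). There Theorem \ref{lemma ub wf} bounds $|F_v(t)|$ only by $C\,p_\e(v)\,t^{d_X}e^{t\lambda_X}$, and the norms $p_\e$ are the analytic norms defining $V^\omega$, which are explicitly \emph{not} $G$-continuous. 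Domination by $p_\e$ therefore only shows that $\Lb_{I,X}^{\mu,w}(a)$ is continuous on $V^\omega$, i.e. lies in $(V^{-\omega})^{(\hf_w)_I}$; your sentence ``is dominated by a $G$-continuous norm, so it extends to $V^\infty$'' silently conflates the two cases and is unjustified for wave-front $Z$.

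Closing this gap is the bulk of the paper's work in that case: one first obtains $\eta_b:=\Lb_{I,X}^{\mu,w}(b)\in (V^{-\omega})^{(\hf_w)_I}$ as you do, but then exploits that Theorem \ref{lemma ub wf} is uniform in an additional $A_Z^-$-variable, yielding
$|m_{v,\eta_b}(\omega a w'\cdot z_{w,I})|\leq C_\e p_\e(v)\, b^\mu a^{\Lambda_{V,\eta}}(1+\|\log a\|+\|\log b\|)^d$. Comparing with a weight function $\mathbf{w}$ on $Z_{w,I}$ (from \cite{KKSS2}) dominating the right-hand side, one forms $q(v):=\sup_z |m_{v,\eta_b}(z)|\mathbf{w}^{-1}(z)$ and shows, via the Banach representation on the $G$-continuous vectors of $L^\infty(Z,\mathbf{w}^{-1}dz)$, that $q$ \emph{is} a $G$-continuous norm; only then does $|\eta_b(v)|\leq \mathbf{w}(z_{w,I})q(v)$ place $\eta_b$ in $(V^{-\infty})^{(\hf_w)_I}$. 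Without some such upgrade from $V^{-\omega}$ to $V^{-\infty}$ your proof of (\ref{L continuous}) is incomplete for wave-front spaces.
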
 

\begin{proof} (1)  Suppose first that $Z$ is absolutely spherical. Fix $a\in A_I$ and $v\in V$. 
It follows from (\ref{O})
that
\begin{equation*} m_{v,\eta} (\exp(t X)aw \cdot z_0)
=\sum_{\genfrac{}{}{0pt}{}{\mu\in {\mathcal E}_{{\rm lead},I}}{\re\mu(X)=\lambda_X}}
\Lb_I^{\mu,w}(a\exp(tX))(v) + o(e^{\lambda_X t})\,.\end{equation*}
With $\eta_\mu:= \Lb_{I,X}^{\mu,w}(a)$ we then deduce from Theorem \ref{lemma ub}(\ref{ub-one})  and (\ref{Lb def1}) that 
\begin{equation}\label{remainder I}  \big|\sum_{\genfrac{}{}{0pt}{}
{\mu\in \E_{{\rm lead}, I}}{\re\mu(X)=\lambda_X}}
e^{i\im \mu(X)t}\eta_\mu(v) + R_v(t)\, \,\big|\leq q(v)\, \end{equation}
for all $t\geq 1$, with remainder $R_v(t)\to 0$ for $t\to \infty$
and with a $G$-continuous norm $q$ on $V$.
As we have chosen $X$ generic enough, we can use the oscillatory argument from \cite{W}, Appendix 4.A.1.2(1) to deduce
$$\sum_\mu |\eta_\mu(v)|^2 \le \limsup_{t\to\infty} \big|\sum_{\mu}e^{i\im \mu(X)t}\eta_\mu(v) \big|^2$$
and hence
$|\eta_\mu(v)|\le q(v)$ for each $\mu$.
This proves (\ref{L continuous}) for $Z$ absolutely spherical.

If $Z$ is wave-front, then we proceed similarly
as above but use Theorem \ref{lemma ub wf} 
instead of Theorem \ref{lemma ub}(\ref{ub-one}) while carrying an additional $A_Z$-variable along: First we obtain as above for 
$b \in A_I$ that $\eta_b:=\Lb_{I,X}^{\mu, w}(b)\in (V^{-\omega})^{(\hf_w)_I}$.  Let $W_\mu \subset (V^{-\omega})^{(\hf_w)_I}$
be the finite dimensional space generated by $\operatorname{im}\Lb_{I,X}^{\mu, w}$. Observe that the action of $A_I$ on $W_\mu$ is by 
a single Jordan block corresponding to $\mu$. Hence with the notions of Theorem \ref{lemma ub wf} 
we find a $d\in\N$ such that 
$$|m_{v,\eta_b}(\omega a w'\cdot z_{w,I})| \leq  C_\e p_\e (v) b^\mu a^{\Lambda_{V,\eta}}  (1 + \|\log a\|+ \|\log b\|)^d$$
for all $v\in V$,  $a\in A_Z^-$, $b\in A_I$,  $\omega \in \Omega$, $w'\in \W$ and all $\e>0$.
It follows from \cite{KKSS2} Lemma 3.1 and Prop. 3.4  that 
there exists a weight ${\bf w}: Z_{w,I}\to \R^+$ such that 
$$ {\bf w} (abw' \cdot z_{w,I})\geq   b^\mu a^{\Lambda_{V,\eta}}  ( 1 + \|\log a\|+\|\log b\|)^d $$
for all $a\in A_Z^-$, $b\in A_I$ and $w'\in \W$. 
Note that $A_{Z_{w,I}}^- = A_I A_Z^-$. In particular, employing the polar decomposition for $Z_{w,I}$,  it follows that
$$ q(v):= \sup_{z\in  Z_{w,I}}   |m_{v,\eta_b}(z)| {\bf w}^{-1}(z) <\infty   \qquad (v\in V^\omega)\, .$$
We claim that $q$ is a $G$-continuous norm on $V$. To see that we first note that the $G$-action on the Banach space $E=L^\infty (Z, {\bf w}^{-1} dz)$ 
is by locally equicontinuous operators (see \cite{BK}, Lemma 2.3) and hence the $G$-continuous vectors $E_c\subset E$ form a
$G$-stable closed subspace of $E$, i.e.~the action of $G$ on $E_c$ defines a Banach-representation. Now $V\subset E_c$
as $V$ consists even of smooth vectors.  Hence $q$ is a $G$-continuous norm on $V$. 
As $|\eta_b(v)| \leq {\bf w} (z_{w,I}) q(v)$ for all $v\in V$, we deduce that 
$\eta_b\in (V^{-\infty})^{(\hf_w)_I}$. 

\par To establish (\ref{L nonzero}) we note that for each $\lambda\in \lambda_X+i\R$
$$e^{-\lambda t}\sum_{\genfrac{}{}{0pt}{}{\mu\in {\mathcal E}_{{\rm lead},I}}{\mu(X)=\lambda}}
\Lb_I^{\mu,w}(\exp(tX))(v)=
\sum_{\genfrac{}{}{0pt}{}{\xi\in \Xi}{\xi(X)=\lambda}}q_{v,\xi,w}(tX).$$
It follows from Lemma \ref{degree along ray} that this
polynomial in $t$
has degree $d_X$ for some $\lambda\in\lambda_X+i\R$ and some $v$ and $w$. 
Hence $\Lb_I^{\mu,w}(\exp(tX))(v)$ has degree $d_X$ for some $\mu\in {\mathcal E}_{{\rm lead},I}$
with $\mu(X)=\lambda$.
This implies $\Lb_{I,X}^{\mu, w}(\1)\neq 0$. 
 \end{proof} 
 
 We can equally implement $\Lb_{I,X}^{\mu,w}$ by a differential operator. To see that we define 
for $Y\in \af_I$ and $f$ a differentiable function on $A_I$: 
$$ \partial _Y f(a) := (Y\cdot f)(a)= \frac{d}{dt}\Big|_{t=0} f(a\exp(tY))\, .$$
Further for $\gamma\in \af_{I,\C}^*$ we define a 
first order operator 
$$\partial_{\gamma, Y} := 
e^{\gamma(\log (\cdot))} \circ \partial_Y \circ e^{-\gamma(\log(\cdot))} =\partial_Y-\gamma(Y)
.$$
The following is then immediate:

\begin{lemma} \label{AI continuous} Let $\mu$, $w$ be as in (\ref{Lb def1}).
Then 
$$ (\partial_{\mu,X})^{d_X} \Lb_I^{\mu,w}=d_X!\, \Lb_{I,X}^{\mu, w} \, .$$
\end{lemma}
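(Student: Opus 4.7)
The plan is to verify this identity by a direct computation, exploiting the fact that $\Lb_I^{\mu,w}(a)(v)$ is a $\mu$-polynomial map whose polynomial part has a controlled degree in the $X$-direction.

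First I would write out the structure of $\Lb_I^{\mu,w}$ explicitly. Since $a \in A_I$, every exponent $\xi \in \Xi$ with $\xi|_{\af_I} = \mu$ contributes $a^{\xi} = a^{\mu}$ to $\Lb_I^{\mu,w}(a)(v)$, so
\[
\Lb_I^{\mu,w}(a)(v) = a^\mu \, p_{v,\mu,w}(\log a), \qquad p_{v,\mu,w}(Y) := \sum_{\genfrac{}{}{0pt}{}{\xi \in \Xi}{\xi|_{\af_I} = \mu}} q_{v,\xi,w}(Y).
\]
By Lemma \ref{degree along ray} applied with $Y_0 = \log a$, the one-variable polynomial $t \mapsto p_{v,\mu,w}(\log a + tX)$ has degree at most $d_X$.

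Next I would compute the action of $\partial_{\mu,X}$ on such a $\mu$-polynomial expression. Since $\partial_X a^\mu = \mu(X) a^\mu$, the Leibniz rule gives
\[
\partial_{\mu,X}\bigl(a^\mu p_{v,\mu,w}(\log a)\bigr) = (\partial_X - \mu(X))\bigl(a^\mu p_{v,\mu,w}(\log a)\bigr) = a^\mu (X p_{v,\mu,w})(\log a),
\]
where $X$ on the right denotes the directional derivative on $\af_I$. Iterating this identity $d_X$ times yields
\[
(\partial_{\mu,X})^{d_X} \Lb_I^{\mu,w}(a)(v) = a^\mu (X^{d_X} p_{v,\mu,w})(\log a).
\]

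Finally, I would identify the right-hand side with $d_X! \, \Lb_{I,X}^{\mu,w}(a)(v)$. By definition (\ref{Lb def1}),
\[
\Lb_{I,X}^{\mu,w}(a)(v) = \lim_{t\to\infty} e^{-\mu(tX)} t^{-d_X} (a\exp(tX))^\mu \, p_{v,\mu,w}(\log a + tX) = \lim_{t\to\infty} a^\mu \, t^{-d_X} p_{v,\mu,w}(\log a + tX).
\]
Expanding the polynomial $t \mapsto p_{v,\mu,w}(\log a + tX)$ by Taylor's formula (which terminates because its degree in $t$ is at most $d_X$),
\[
p_{v,\mu,w}(\log a + tX) = \sum_{k=0}^{d_X} \frac{t^k}{k!} (X^k p_{v,\mu,w})(\log a),
\]
dividing by $t^{d_X}$ and passing to the limit $t \to \infty$ extracts precisely the top-order coefficient, giving $\frac{1}{d_X!}(X^{d_X} p_{v,\mu,w})(\log a)$. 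Combining this with the earlier computation proves the claimed identity. There is no real obstacle here; the only substantive input is the degree bound from Lemma \ref{degree along ray}, which ensures the Taylor series truncates at the desired order.
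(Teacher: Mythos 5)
Your proof is correct and follows exactly the route the paper intends: the paper states the lemma as "immediate" from the definitions, and your computation — writing $\Lb_I^{\mu,w}(a)(v)=a^\mu p_{v,\mu,w}(\log a)$, noting $\partial_{\mu,X}$ strips the exponential and differentiates the polynomial part, and using the degree bound from Lemma \ref{degree along ray} so that the limit in (\ref{Lb def1}) extracts the top Taylor coefficient $\frac{1}{d_X!}(X^{d_X}p_{v,\mu,w})(\log a)$ — is precisely the verification being left to the reader.
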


 For the rest of this article we assume that $Z$ is either absolutely spherical or wave-front.

\begin{lemma} \label{eta0IwI}
Let $w_I\in \W_I$ and let $w\in\W$ correspond to $w_I$ as in Lemma \ref{wwI},
and let $\mu\in {\mathcal E}_{{\rm lead},I}$ 
with $\re\mu(X)=\lambda_X$.
There exists $m_w\in M$ such that
\begin{equation*} 
m_w w_I \cdot \Lb_{I,X}^{\mu,\1} (a)  = \Lb_{I,X}^{\mu,w}(a) 
\qquad (a\in A_I) \, .  \end{equation*}
\end{lemma}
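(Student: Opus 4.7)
The plan is to compute the matrix coefficient
\[
F(s):=m_{v,\eta}(w_I\, a\, a_{s,I}\cdot z_0),\qquad a\in A_I,\ v\in V,
\]
in two different ways as $s\to\infty$, and to compare the $\mu$-leading coefficients extracted along the direction $X\in\af_I$ via Lemma \ref{AI continuous}.

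First I extend Lemma \ref{wwI} from $w_I a_{s,I}$ to $w_I a a_{s,I}$ for fixed $a\in A_I$. Since $\af_I\subset\hat\hf_I$ implies $a\cdot\hat z_{0,I}=\hat z_{0,I}$, the identity (\ref{stab}) is unaffected by inserting $a$, and the same limit $m_w\in M$ of the $M$-component arises. The argument of Lemma \ref{wwI} then produces
\[
w_I a a_{s,I}=u_s(a)\,b_s(a)\,m_s(a)\,w\,h_s(a)
\]
with $u_s(a)\to\1$, $m_s(a)\to m_w$, $(aa_{s,I})b_s(a)^{-1}\to\1$, and $h_s(a)\in H$. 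Using the $H$-invariance of $\eta$ and that $M$ centralizes $A$ we rewrite
\[
F(s)= m_{m_s(a)^{-1}u_s(a)^{-1}v,\,\eta_w}\bigl(b_s(a)\cdot z_w\bigr).
\]
The power series expansion of Section \ref{power series} applied to $\eta_w$ on $Z_w$, the continuity of $\Lb_{I,X}^{\mu,w}(a)$ on $V^\infty$ from Lemma \ref{continuous and nonzero}, and the oscillatory isolation of the $\mu$-part via Lemma \ref{Wallach extended}, identify the $\mu$-leading coefficient of $F(s)$ along $X$ with $\Lb_{I,X}^{\mu,w}(a)(m_w^{-1}v)$.

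Independently, $F(s)=m_{w_I^{-1}v,\eta}(a a_{s,I}\cdot z_0)$, where $w_I^{-1}v\in V^\infty\setminus V$ in general. Approximating $w_I^{-1}v$ in $V^\infty$ by vectors in $V$, applying the asymptotic expansion, and passing to the limit through the continuous extension of $\Lb_{I,X}^{\mu,\1}(a)$ from $V$ to $V^\infty$ (again Lemma \ref{continuous and nonzero}, with the upper bounds of Theorem \ref{lemma ub} or \ref{lemma ub wf} controlling error terms), identifies the $\mu$-leading coefficient of $F(s)$ with $\Lb_{I,X}^{\mu,\1}(a)(w_I^{-1}v)$. Uniqueness of the asymptotic expansion then forces
\[
\Lb_{I,X}^{\mu,w}(a)(m_w^{-1}v)=\Lb_{I,X}^{\mu,\1}(a)(w_I^{-1}v)\qquad(v\in V),
\]
which is exactly the claimed identity of distributions, once the dual $G$-action $(g\cdot\eta)(u)=\eta(g^{-1}u)$ is unwound and the sign convention for $m_w$ relative to $\lim_s m_s(a)$ is fixed.

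The main technical difficulty is that in both evaluations the leading coefficient extraction is a limit $s\to\infty$ in which the smooth vector argument itself varies with $s$ (in the first evaluation) or lies in $V^\infty\setminus V$ and must be approached by density (in the second). The sharp upper bounds of Section \ref{quantitative bounds} are essential: they guarantee that the contributions from $u_s(a)\to\1$, $m_s(a)\to m_w$, $b_s(a)(aa_{s,I})^{-1}\to\1$, and the density approximation of $w_I^{-1}v$ are all of lower order than the leading $e^{s\lambda_X}s^{d_X}$ term, so that the limit commutes with the leading coefficient extraction.
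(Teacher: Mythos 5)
Your proposal is correct and follows essentially the same route as the paper's proof: both compare the two asymptotic evaluations of $m_{v,\eta}(w_I a\exp(sX)\cdot z_0)$ arising from the decomposition of Lemma \ref{wwI}, extract the leading coefficients using the continuity from Lemma \ref{continuous and nonzero} together with the sharp bounds of Section \ref{quantitative bounds}, and separate the oscillating exponents with (a vector-valued form of) Lemma \ref{Wallach extended}. The only cosmetic differences are that the paper works on the distribution-vector side in a dual $G$-continuous norm $q^*$ — thereby avoiding your density approximation of $w_I^{-1}v$ and instead commuting $a$ past $w_I$ via the $Z_I$-analogue of Lemma \ref{F-set} — and that the ray should be $\exp(sX)$ for the generic $X$ used in defining $\Lb_{I,X}^{\mu,w}$ rather than $a_{s,I}=\exp(sX_I)$.
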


\begin{proof}  We give the proof for $Z$ absolutely spherical - 
the proof for $Z$ wave-front 
is analogous.

At first let $w\in\W$ be arbitrary and $a_s = \exp(sX)$. Let $C_I \subset A_I$ be a compact subset. 
Then (\ref{remainder I}) implies: 
\begin{equation} \label{remainder I2}\sum_{\nu \in\E_{{\rm lead}, I} \atop \re \nu(X) 
=\re \mu(X)}  e^{s  (\nu-\mu)(X)} \Lb_{I,X}^{\nu,w}(a) 
= s^{-d_X}a_s^{-\mu} \left(aa_s \cdot \eta_w\right) + o({1\over s})\, .\end{equation} 
The $o({1\over s})$ has to be understood in the following way: Let $q=q(X)$ be the
$G$-continuous norm from Theorem \ref{lemma ub}(\ref{ub-one}) and $q^*$ its dual norm. 
Then (\ref{remainder I2}) means that 
\begin{equation} \label{remainder I3}  \lim_{s\to \infty}
q^*\Big( s^{-d_X}a_s^{-\mu}  \left(aa_s \cdot \eta_w\right) - \sum_{\nu \in\E_{{\rm lead}, I} \atop \re \nu(X) 
=\re \mu(X)}  e^{s  (\nu-\mu)(X)} \Lb_{I,X}^{\nu,w}(a) \Big)=0 \end{equation} 
uniformly for $a\in C_I$. 

\par Moreover, note that Lemma
\ref{continuous and nonzero} implies that 
\begin{equation} \label{osc-bound} \sup_{a\in C_I, s\in\R}  q^*\Big( \sum_{\nu \in\E_{{\rm lead}, I} \atop \re \nu(X) 
=\re \mu(X)}  e^{s  (\nu-\mu)(X)} \Lb_{I,X}^{\nu,w}(a) \Big)<\infty.\end{equation}

Now let $w$ correspond to $w_I$. From Lemma \ref{wwI} we obtain that
$w_I a_s = u_s m_s b_s  w h_s$ with $u_s\in U$, $m_s\in M$, $b_s \in A_Z$ and $h_s\in H$ 
such that $u_s\to \1$ and $b_sa_s^{-1}\to\1$ for $s\to \infty$.  Moreover, we can assume that 
 $(m_s)_{s\geq s_0}$ is convergent, say with limit $m_w^{-1}\in M$. 
 Then 
$$ a_s w \cdot \eta = a_s b_s^{-1} m_s^{-1}u_s^{-1} w_I a_s \cdot \eta\, .$$
If we thus set $g_s:= a_s b_s^{-1} m_s^{-1}u_s^{-1}$, then $(g_s)_{s\geq s_0}$ is convergent to 
$m_w$ and
\begin{equation} \label{gsw}  a_s w \cdot \eta = g_sw_I a_s \cdot \eta\, .\end{equation} 

\par From (\ref{remainder I3}) we have 
$$ \lim_{s\to \infty}
q^*\Big( s^{-d_X}a_s^{-\mu}  (aa_s \cdot \eta ) - \sum_{\nu \in\E_{{\rm lead}, I} \atop \re \nu(X) 
=\re \mu(X)}  e^{s  (\nu-\mu)(X)} \Lb_{I,X}^{\nu,\1}(a) \Big)=0 $$
uniformly for $a\in C_I$. 
As $q^*$ is $G$-continuous and  $\Lb_{I,X}^{\nu,\1}(a)$ is $H_I$-invariant we hence 
obtain 
$$ \lim_{s\to \infty}
q^*\Big( s^{-d_X}a_s^{-\mu}  (w_Ih aa_s) \cdot \eta  - \sum_{\nu \in\E_{{\rm lead}, I} \atop \re \nu(X) 
=\re \mu(X)}  e^{s  (\nu-\mu)(X))} [\Lb_{I,X}^{\nu,\1}(a)]_{w_I} \Big)=0$$
uniformly for $h$ in compacta of $H_I$ and $a\in C_I$. 
Now note that there exists $h_a\in H_I$, depending continuously on $a\in A_I$, such that 
$w_I h_a a = aw_I$ (see Lemma \ref{F-set}).  Hence we get 
$$ \lim_{s\to \infty}
q^*\Big( s^{-d_X}a_s^{-\mu}  (aw_Ia_s) \cdot \eta  - \sum_{\nu \in\E_{{\rm lead}, I} \atop \re \nu(X) 
=\re \mu(X)}  e^{s  (\nu-\mu)(X))} [\Lb_{I,X}^{\nu,\1}(a)]_{w_I} \Big)=0$$
uniformly for $a\in C_I$. 
Further by applying $g_s$, which in the limit commutes with $a$, we deduce with (\ref{osc-bound}) 
and the $G$-continuity of $q^*$,  that 
\begin{equation}\label{remainder I4} \lim_{s\to \infty}
q^*\Big( s^{-d_X}a_s^{-\mu}  (ag_sw_Ia_s) \cdot \eta  - \sum_{\nu \in\E_{{\rm lead}, I} \atop \re \nu(X) 
=\re \mu(X)}  e^{s  (\nu-\mu)(X))} m_w[\Lb_{I,X}^{\nu,\1}(a)]_{w_I} \Big)=0\end{equation}
uniformly for $a\in C_I$.  If we combine (\ref{remainder I4})  with (\ref{gsw}) and  (\ref{remainder I3}) we arrive at 
\begin{equation} \label{stoinfty} \lim_{s\to \infty} q^* \Big( \sum_{\nu \in\E_{{\rm lead}, I} \atop \re \nu(X) =\re \mu(X)}  
e^{s  (\nu-\mu)(X)}\big( \Lb_{I,X}^{\nu,w}(a) - m_w[\Lb_{I,X}^{\nu,\1}(a) ]_{w_I}\big)\Big)
\end{equation} 
uniformly for $a\in C_I$. 

\par We are now in the position to apply Lemma \ref{Wallach extended}. 
For that note that it is no loss of generality to assume that the $G$-continuous norm $q$ from 
Theorem \ref{lemma ub} is Hermitian (see \cite{BK}). Let $\Hc_0$ be the Hilbert 
completion of the Harish-Chandra module 
 $(V^{-\infty})^{K-{\rm fin}}$ with respect to $q^*$. With $\Hc:= L^2(C_I, \Hc_0)$
the lemma follows now from (\ref{stoinfty}) and Lemma \ref{Wallach extended}, which is 
easily extended to a Hilbert space valued setting.
\end{proof}

In the next step we produce out of $\Lb_{I,X}^{\mu,w}$ a continuous functional which transforms under the action 
of $A_I$ as a character. Note that 
$$\Lb_{I,X}^{\mu, w} (a)(v) = p_{v,\mu, w, X} (\log a) a^\mu$$
with $p_{v,\mu,w,X}$ a polynomial on $\af_I$ which is constant along all affine rays $t\mapsto Y + tX$ in 
$\af_I$.  
Now if all $p_{v,\mu,w,X}$ are constant, then we set $\eta_I^{\mu,w}:=\eta_{I,X}^{\mu,w}$.
Otherwise we set $X_1:=X$ and choose $0\neq X_2\in \af_I$ orthogonal to $X_1$
and let $d_2$ the maximum of the degrees of the polynomials $t \to p_{v,\mu,w,X_1}(Y+tX_2)$
for $Y\in \af_I$, $v\in V$. 
Then define for $a\in A_I$ and $v\in V$
$$\Lb_{I,X_1,X_2}^{\mu,w} (a) (v) = \lim_{t\to \infty} t^{-d_2} e^{-t\mu(X_2)}  \Lb_{I,X_1}^{\mu, w} (a \exp(tX_2))(v)\, .$$
As in Lemma \ref{AI continuous} we have 
$$\Lb_{I,X_1,X_2}^{\mu,w}  = {1\over d_2 !}(\partial_{\mu,X_2} )^{d_2} \ \Lb_{I,X_1}^{\mu, w} \, .$$
With Lemmas \ref{mu polynomial} and \ref{continuous and nonzero} 
we thus get $\Lb_{I,X_1,X_2}^{\mu,w} (a) \in (V^{-\infty})^{(\hf_w)_I}$. 
We continue that iteratively and arrive at an element $\eta_I^{\mu,w}\in (V^{-\infty})^{(\hf_w)_I} $
defined by
\begin{equation} \label{eta I mu w} \eta_I^{\mu, w}(v) =\Lb_{I, X_1, \ldots, X_{\dim \af_I}}^{\mu,w}(\1)(v)\end{equation}
with the additional property that it transforms under $A_I$ as a character
\begin{equation} \label{mu character} \eta_I^{\mu,w}(av) =a^\mu \eta_I^{\mu,w} (v)\qquad (a\in A_I, v\in V)\, .\end{equation}

Note that $\eta_I^{\mu,w}\neq 0$ if and only if  $L_{I,X}^{\mu, w}(\1)\neq 0$.
In particular, there 
exists $\mu\in \E_{{\rm lead}, I}$ and $w\in\W$ such that the functional $\eta_I^{\mu,w}$ is non-zero. 
Further we note that 
\begin{equation} \label{mc-eta-I} m_{v, \eta_I^{\mu,w}}(aw\cdot z_{0,I}) = 
\sum_{\genfrac{}{}{0pt}{}{\xi\in \Xi}{ \xi|_{\af_I} =\mu}}
 c_{v,\xi,w} (\log a) a^\xi
\qquad (a\in A_Z^- A_I , v\in V) \end{equation} 
with polynomials $c_{v,\xi,w}$ which do not depend on $\af_I$. 

\begin{lemma} \label{eta1IwI}
Let $w_I\in \W_I$ and let $w\in\W$ correspond to $w_I$ as in Lemma \ref{wwI}.
There exists $m_w\in M$ such that
\begin{equation} \label{wwIeta} 
m_w\cdot (\eta_I^{\mu,\1})_{w_I} = \eta^{\mu,w}_I.
  \end{equation}
\end{lemma}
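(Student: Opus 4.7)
The plan is to propagate the identity of Lemma \ref{eta0IwI} through the iterative construction (\ref{eta I mu w}) of $\eta_I^{\mu,w}$. Unpacking the notational convention $\eta_w = w\cdot \eta$, the asserted identity (\ref{wwIeta}) is equivalent to
\begin{equation*}
m_w w_I \cdot \eta_I^{\mu,\1} = \eta_I^{\mu,w}.
\end{equation*}
First I would invoke Lemma \ref{eta0IwI} with $X = X_1$ chosen as the first direction in the iterative construction of $\eta_I^{\mu,w}$; this yields an element $m_w \in M$ with
\begin{equation*}
m_w w_I \cdot \Lb_{I,X_1}^{\mu,\1}(a) = \Lb_{I,X_1}^{\mu,w}(a) \qquad (a\in A_I). \tag{$\ast$}
\end{equation*}

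Next I would iterate: by Lemma \ref{AI continuous} and its obvious extension to successive directions, $\eta_I^{\mu,w}$ arises from $\Lb_{I,X_1}^{\mu,w}$ by successively applying the constant-coefficient differential operators $\tfrac{1}{d_k!}(\partial_{\mu,X_k})^{d_k}$ in the $A_I$-variable $a$ for $k = 2, \ldots, \dim \af_I$, followed by evaluation at $a = \1$. Since these operators act purely on the argument $a \in A_I$, while the action of $m_w w_I$ is pre-composition on the functional-valued side taking values in $V^{-\infty}$, the two commute. Applying these operators to both sides of ($\ast$) inductively yields
\begin{equation*}
m_w w_I \cdot \Lb_{I,X_1,\ldots,X_k}^{\mu,\1}(a) = \Lb_{I,X_1,\ldots,X_k}^{\mu,w}(a) \qquad (a \in A_I),
\end{equation*}
and evaluating at $a = \1$ after the last iteration step produces the desired equality.

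The one technical point to justify is that the degrees $d_k$ used in the iteration for $\eta_I^{\mu,w}$ coincide with those used for $\eta_I^{\mu,\1}$, so that the same differential operators can be applied on both sides. This is immediate from ($\ast$): writing $v' = (m_w w_I)^{-1}v$, the relation says that the polynomial $p_{v,\mu,w,X_1}$ entering $\Lb_{I,X_1}^{\mu,w}(a)(v)$ equals $p_{v',\mu,\1,X_1}$, and since $v \mapsto v'$ is a bijection on $V$, the sets of polynomials $\{p_{v,\mu,w,X_1} : v \in V\}$ and $\{p_{v,\mu,\1,X_1} : v \in V\}$ agree, so their maximal degrees in the direction $X_2$ coincide; the same argument iterates at each subsequent step. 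This is the only genuinely structural check — after it is made, the propagation of ($\ast$) through the iteration is essentially mechanical, as $M$-translation commutes with differentiation in $A_I$ and with evaluation at the identity.
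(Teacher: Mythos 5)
Your proof is correct and follows exactly the route the paper takes: the paper's own argument is simply that the identity is "immediate from Lemma \ref{eta0IwI} and Lemma \ref{AI continuous}", i.e.\ one propagates the identity for $\Lb_{I,X_1}^{\mu,\cdot}$ through the iterated application of the operators $\tfrac{1}{d_k!}(\partial_{\mu,X_k})^{d_k}$ and evaluates at $\1$. Your explicit check that the degrees $d_k$ agree for $w$ and $\1$ (via the bijection $v\mapsto (m_ww_I)^{-1}v$, which one should read on $V^\infty$ rather than $V$, with the two maxima agreeing by density and linearity) is a detail the paper leaves implicit, and it is correctly handled.
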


\begin{proof} In view of the construction of $\eta^{\mu,w}_I$, this is now immediate from Lemma
\ref{eta0IwI} and Lemma \ref{AI continuous}.
\end{proof}

\subsection{The tempered embedding theorem}

Let $(V,\eta)$ be an $H$-spherical tempered pair. Then 
$\rho_Q(\omega_i)\le  
\Lambda_{V,\eta}(\omega_i)$  for $i=1,\dots,s$ by Theorem \ref{Lambda condition for tempered}.
For
each leading exponent $\lambda$ we have 
$\Lambda_{V,\eta}(\omega_i)\le 
\re\lambda(\omega_i)$ by definition, and hence 
\begin{equation}\label{squeeze Lambda}
\rho_Q(\omega_i)\le \Lambda_{V,\eta}(\omega_i)\le 
\re\lambda(\omega_i).
\end{equation}
We attach 
the following set of spherical roots to $\lambda$
\begin{equation}\label{defi I_eta}
I_{\eta,\lambda} :=\{ \sigma_i\in S : 
\rho_Q(\omega_i)<\re\lambda(\omega_i)\}\,.
\end{equation}

\begin{lemma} \label{I-ind} For $\lambda\in \E_{\rm lead}$ 
and $I=I_{\eta, \lambda}$ the following assertions
hold: 
\begin{enumerate}
\item \label{one}
$\rho_Q|_{\af_I} =\Lambda_{V,\eta}|_{\af_I}=
\re \lambda|_{\af_I}$. 
\item \label{three}For all $\lambda'\in \E_{{\rm lead}}$ and $\sigma_i \notin I$ one has $\re (\lambda'-\lambda)(\omega_i)\geq 0$. 
\item\label{two}  $\lambda|_{\af_I} \in \E_{{\rm lead}, I}$.
\item\label{four}  If $\lambda'\in \E_{{\rm lead}}$ and $\re\lambda'|_{\af_I}=\re\lambda|_{\af_I} $
then $I_{\eta,\lambda'}\subset I_{\eta,\lambda}$.
\end{enumerate}
\end{lemma}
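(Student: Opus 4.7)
The proof will unwind the definitions in a uniform way, using the coordinates from (\ref{I-coordinates}) together with the two facts that (a) every $\xi\in\Xi$ has the same real part on $\af_{Z,E}$, namely $-\re\chi=\rho_Q|_{\af_{Z,E}}$ (combine (\ref{xionedge}), (\ref{Lambda on edge}) and temperedness (\ref{temp''})), and (b) $\af_I$ is spanned by $\{\omega_j\mid j\notin I\}$ together with $\af_{Z,E}$ by (\ref{aI basis}). I expect (\ref{three}) to be the only part requiring a real argument; the other three are essentially bookkeeping.

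For (\ref{one}) I would argue one functional at a time. On $\af_{Z,E}$, the three quantities $\rho_Q$, $\Lambda_{V,\eta}$, $\re\lambda$ agree by observation (a) above. For $j\notin I$, evaluating at $\omega_j$, the outer inequalities in (\ref{squeeze Lambda}) are forced to equalities by the very definition (\ref{defi I_eta}) of $I_{\eta,\lambda}$. By (b) this suffices. Part (\ref{two}) is even shorter: for $\sigma_i\notin I$ one has $\re\lambda(\omega_i)=\rho_Q(\omega_i)=\Lambda_{V,\eta}(\omega_i)$, and the last quantity is $\min_{\lambda'\in\E_{\rm lead}}\re\lambda'(\omega_i)$ by (\ref{Lambdadef}), giving the inequality immediately.

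For (\ref{three}), I plan to argue by contradiction. Suppose $\lambda|_{\af_I}-\sigma|_{\af_I}=\xi|_{\af_I}$ for some $\sigma\in\N_0[S]\setminus\la I\ra$ and some $\xi\in\Xi$. Writing $\xi=\lambda'+\alpha$ with $\lambda'\in \E_{\rm lead}$ and $\alpha\in\N_0[S]$, the equation $(\lambda-\lambda'-\alpha-\sigma)|_{\af_I}=0$ can be tested against each $\omega_i$ with $i\notin I$, which by (b) lies in $\af_I$. Taking real parts and using (\ref{three}) (already proved), the left-hand side is $\ge 0$ while the right-hand side equals $-(n_i+m_i)\le 0$, where $n_i,m_i\in\N_0$ are the $\sigma_i$-coefficients of $\alpha,\sigma$ respectively. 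This forces $n_i=m_i=0$ for every $i\notin I$, i.e.\ $\alpha,\sigma\in\la I\ra$, contradicting $\sigma\notin\la I\ra$. Hence $\lambda|_{\af_I}$ has no strictly smaller partner in $\Xi_I$, which is exactly the requirement for membership in $\E_{{\rm lead},I}$.

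Finally (\ref{four}) is a direct contrapositive: if $\sigma_i\notin I_{\eta,\lambda}$ then $\omega_i\in\af_I$, so the hypothesis $\re\lambda'|_{\af_I}=\re\lambda|_{\af_I}$ gives $\re\lambda'(\omega_i)=\re\lambda(\omega_i)=\rho_Q(\omega_i)$, whence $\sigma_i\notin I_{\eta,\lambda'}$. The main technical point of the whole lemma is thus the pairing trick in (\ref{three}); it is what turns the hypothesis ``$\sigma$ has some coefficient outside $I$'' into an obstruction, and it relies essentially on (\ref{three}) having been established first.
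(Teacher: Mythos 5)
Your proof is correct and follows essentially the same route as the paper's: part (\ref{one}) from (\ref{squeeze Lambda}) and the definition (\ref{defi I_eta}) (your extra care on $\af_{Z,E}$ via (\ref{xionedge}), (\ref{Lambda on edge}) and (\ref{temp''}) is a welcome detail the paper leaves implicit), the inequality from part (\ref{one}), membership in $\E_{{\rm lead},I}$ by contradiction, pairing the putative relation against the $\omega_i$ with $i\notin I$ and invoking the inequality, and the last assertion by the contrapositive. The only blemish is that you have interchanged the cross-references (\ref{two}) and (\ref{three}) in your middle paragraphs (the paper's labels run out of numerical order), which makes your closing remark read as circular even though the underlying logic --- the third assertion depends on the second --- is sound.
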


\begin{proof}  (\ref{one}) 
follows from (\ref{squeeze Lambda}) and (\ref{defi I_eta})
since $\af_I$ is spanned by the $\omega_i$ for
$\sigma_i\notin I$. 
Next (\ref{three}) follows from (\ref{one})
together with (\ref{squeeze Lambda}).
Moving on to (\ref{two}) we argue by contradiction and assume that 
$\lambda|_{\af_I}\notin\E_{{\rm lead}, I}$. Then  
there exist an $\gamma\in \E_{{\rm lead},I}$ and a 
$\sigma\in \N_0[S]\bs \N_0[I]$ such that 
$ (\lambda-\sigma)|_{\af_I} =\gamma\, .$
Since $\sigma|_{\af_I}\neq 0$ this contradicts 
(\ref{three}). Finally, (\ref{four})  is obvious from the definition.
\end{proof}

Let $\min_\eta:=\min\{ \# I_{\eta,\lambda} \mid \lambda\in \E_{\rm lead}\} $.

\begin{lemma}\label{eta-optimal} Let $(V,\eta)$ be an $H$-spherical tempered pair. 
Then there exists a $\lambda\in \E_{\rm lead}$ such that 
for $I=I_{\eta, \lambda}$ and $\mu=\lambda|_{\af_I}$ the following assertions hold: 
\begin{enumerate} 
\item\label{unus} $\# I= \min_\eta$. 
\item\label{duo} $\eta_I^{\mu, w}\neq 0$ for some $w\in\W$.
\end{enumerate} 
\end{lemma}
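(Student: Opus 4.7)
The plan is to extract the desired $\lambda$ by an extremal argument along a sufficiently generic ray in the interior of the $I$-face of the compression cone, after pre-selecting $I$ of minimum cardinality.

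First I would fix $\lambda_0\in\E_{\rm lead}$ with $\#I_{\eta,\lambda_0}=\min_\eta$, set $I:=I_{\eta,\lambda_0}$, and choose $X=-\sum_{j\notin I}x_j\omega_j\in\af_I$ with all $x_j>0$, generic enough to meet the separation hypotheses of Lemmas \ref{degree along ray} and \ref{continuous and nonzero}. The next step is to compute $\lambda_X=\rho_Q(X)$. One direction is forced by the definition of $I_{\eta,\lambda_0}$: $\sigma_j\notin I_{\eta,\lambda_0}$ means $\re\lambda_0(\omega_j)=\rho_Q(\omega_j)$ for each $j\notin I$, hence $\re\lambda_0(X)=\rho_Q(X)\le\lambda_X$. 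For the reverse inequality, temperedness of $(V,\eta)$ gives $\Lambda_{V,\eta}(\omega_j)\ge\rho_Q(\omega_j)$ for every $j$ by Theorem \ref{Lambda condition for tempered}, while the definition of $\Lambda_{V,\eta}$ gives $\re\lambda'(\omega_j)\ge\Lambda_{V,\eta}(\omega_j)$ for every $\lambda'\in\E_{\rm lead}$; writing $\xi=\lambda'+\tau\in\Xi$ with $\tau=\sum n_j\sigma_j\in\N_0[S]$ and noting $\sigma_j(X)\le 0$, this yields $\re\xi(X)\le\re\lambda'(X)\le\rho_Q(X)$, so $\lambda_X=\rho_Q(X)$. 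The same chain moreover shows that every $\lambda'\in\E_{\rm lead}$ with $\re\lambda'(X)=\lambda_X$ must satisfy $\re\lambda'(\omega_j)=\rho_Q(\omega_j)$ for all $j\notin I$ (using $x_j>0$), hence $I_{\eta,\lambda'}\subseteq I$; by minimality of $\#I=\min_\eta$ then $I_{\eta,\lambda'}=I$.

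Then I would invoke Lemma \ref{continuous and nonzero}(\ref{L nonzero}) to obtain $\mu^*\in\E_{{\rm lead},I}$ with $\re\mu^*(X)=\lambda_X$ and some $w\in\W$ with $\Lb_{I,X}^{\mu^*,w}(\1)\neq 0$. The crucial step is to lift $\mu^*$ to an element of $\E_{\rm lead}$: writing $\mu^*=\xi|_{\af_I}$ with $\xi=\lambda+\tau_I+\tau^{\rm c}$, $\lambda\in\E_{\rm lead}$, $\tau_I\in\la I\ra$, $\tau^{\rm c}\in\N_0[S\setminus I]$, the minimality defining $\E_{{\rm lead},I}$ forces $\tau^{\rm c}=0$; indeed, otherwise $\lambda|_{\af_I}=\mu^*-\tau^{\rm c}|_{\af_I}$ would lie in $\Xi_I$ with $\tau^{\rm c}\in\N_0[S]\setminus\la I\ra$, contradicting the leading property of $\mu^*$. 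Thus $\mu^*=\lambda|_{\af_I}$ and $\re\lambda(X)=\re\mu^*(X)=\lambda_X$, placing $\lambda$ in the class handled above, so $I_{\eta,\lambda}=I$. Setting $\mu:=\mu^*=\lambda|_{\af_I}$ gives (\ref{unus}), while (\ref{duo}) follows from the equivalence $\eta_I^{\mu,w}\neq 0\Leftrightarrow\Lb_{I,X}^{\mu,w}(\1)\neq 0$ recorded just after (\ref{eta I mu w}).

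The main obstacle is the gap between Lemma \ref{continuous and nonzero}(\ref{L nonzero}), which only certifies nonvanishing of $\Lb_{I,X}^{\mu^*,w}(\1)$ for some unspecified $\mu^*\in\E_{{\rm lead},I}$, and the conclusion, which requires $\mu^*$ to be the restriction of a global leading exponent $\lambda\in\E_{\rm lead}$ whose set $I_{\eta,\lambda}$ is of minimum cardinality. Bridging this requires the interplay of three ingredients: pre-selecting $I$ of minimum cardinality; choosing $X$ generic in the relative interior of the $I$-face so that $\lambda_X=\rho_Q(X)$ is attained exactly by the $\lambda'\in\E_{\rm lead}$ with $I_{\eta,\lambda'}=I$; and exploiting the minimality built into the definition of $\E_{{\rm lead},I}$ to rule out the component $\tau^{\rm c}$ transverse to $\la I\ra$ in the decomposition of $\mu^*$.
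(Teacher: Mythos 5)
Your proposal is correct and follows essentially the same route as the paper: fix $I$ from a leading exponent realizing $\min_\eta$, take the generic $X$ in $\af_I\cap\af_Z^-$ used in the construction of $\Lb_{I,X}^{\mu,w}$, use temperedness to identify $\lambda_X=\rho_Q(X)=\Lambda_{V,\eta}(X)$, obtain nonvanishing from Lemma \ref{continuous and nonzero}(\ref{L nonzero}), and conclude $I_{\eta,\lambda}=I$ by minimality. The only difference is that you re-derive the relevant parts of Lemma \ref{I-ind} inline and spell out the lifting of $\mu^*\in\E_{{\rm lead},I}$ to a global leading exponent, a step the paper leaves implicit.
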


\begin{proof}  Let $\gamma\in \E_{\rm lead}$ be such that (\ref{unus}) holds for
$I:=I_{\eta,\gamma}$. 
According to Lemma \ref{I-ind}(\ref{two}) we have $\gamma|_{\af_I}\in \E_{{\rm lead}, I}$.
For each $X\in \Sphere(\af_Z^-)$ we have $\re\gamma(X)\le \lambda_X\le \Lambda_{V,\eta}(X)$.
If in addition $X\in\af_I$ then Lemma \ref{I-ind}(\ref{one}) implies
$\re\gamma(X)=\lambda_X= \Lambda_{V,\eta}(X)$. 

Let $X\in \af_I$ be the sufficiently generic element used in the construction (above Lemma 
\ref{continuous and nonzero}) of $\Lb_I^{\mu,w}$. 
By this construction and the one above Lemma \ref{eta1IwI},
we infer that there 
exists a $\lambda \in \E_{\rm lead}$ with  $\re \lambda(X) =\lambda_X$ such that for some $w\in\W$
we have
$\eta_I^{\mu,w}\neq 0$ where $\mu:=\lambda|_{\af_I}$. 
It follows from (\ref{unus}) and Lemma
\ref{I-ind}(\ref{four}) that $I_{\lambda,\eta}=I$.  Hence the assertion follows. 
\end{proof}

Pairs $(\lambda, w)\in \E_{\rm lead}\times \W$ which satisfy the conditions of Lemma \ref{eta-optimal} will 
be called {\it $\eta$-optimal}.

\par Recall that $\af_{Z_I}=\af_Z$ and that $\af_{Z_I, E} =\af_I$.   For $\sigma\in I$ we define 
$\omega_{\sigma, I}\in\af_Z$ by 
\begin{itemize}
\item $\sigma'(\omega_{\sigma,I})= \delta_{\sigma \sigma'}$ for $\sigma'\in I$. 
\item $\omega_{\sigma, I}\perp \af_I$. 
\end{itemize}
We note that $\omega_{\sigma, S}$ coincides with the previously defined $\omega_\sigma$. In general
\begin{equation} \label{ooI} \omega_{\sigma, I} -\omega_\sigma \in\af_I \qquad (\sigma\in I)\, .\end{equation}

\par Let now $(\lambda,w)\in \E_{\rm lead}\times \W$ be $\eta$-optimal, $I=I_{\eta, \lambda}$ and $\mu=\lambda|_{\af_I}$. 
We then define a linear functional $\Lambda_{V,\eta, I}\in \af_Z^*$ by the requirements: 
\begin{itemize}
\item $(\Lambda_{V,\eta, I} -\rho_Q)|_{\af_I}=0$. 
\item $\Lambda_{V,\eta, I} (\omega_{\sigma, I}) = \min\{ \re \gamma(\omega_{\sigma, I})\mid \gamma\in \E_{\rm lead}, \re \gamma|_{\af_I} =\mu\}$ 
 for $\sigma\in I$. 
\end{itemize}

\begin{lemma}\label{discI} Let $(V,\eta)$ be a $Z$-tempered  and $(\lambda,w)\in \E_{\rm lead}\times\W$
be an associated  $\eta$-optimal pair. Then 
\begin{equation}\label{eta discrete} (\Lambda_{V,\eta, I} -\rho_Q)(\omega_{\sigma, I})>0 \qquad (\sigma \in I)\, .\end{equation}
\end{lemma}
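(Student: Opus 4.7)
The plan is to unravel the definition of $\Lambda_{V,\eta,I}$ and reduce the strict inequality to the strict inequality $\re\gamma(\omega_\sigma) > \rho_Q(\omega_\sigma)$ defining $I_{\eta,\gamma}$. The essential input is the $\eta$-optimality of $\lambda$, which will force every leading exponent competing for the minimum in the definition of $\Lambda_{V,\eta,I}$ to produce the \emph{same} set $I$.

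\textbf{Step 1: Reduction to a comparison of leading exponents.} By the second defining property of $\Lambda_{V,\eta,I}$, it suffices to prove that for every $\gamma\in\mathcal{E}_{\rm lead}$ with $\re\gamma|_{\af_I}=\re\mu$ (noting that $\re\mu=\rho_Q|_{\af_I}$ by Lemma \ref{I-ind}(\ref{one})) one has
\begin{equation}\label{to-show}
\re\gamma(\omega_{\sigma,I})>\rho_Q(\omega_{\sigma,I})\qquad(\sigma\in I).
\end{equation}
Using (\ref{ooI}) we write $\omega_{\sigma,I}=\omega_\sigma+Y$ with $Y\in\af_I$, and since $(\re\gamma-\rho_Q)|_{\af_I}=0$, the inequality (\ref{to-show}) is equivalent to $\re\gamma(\omega_\sigma)>\rho_Q(\omega_\sigma)$, i.e.\ to $\sigma\in I_{\eta,\gamma}$.

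\textbf{Step 2: Every competing $\gamma$ has $I_{\eta,\gamma}=I$.} Since $(V,\eta)$ is tempered and $\gamma\in\mathcal{E}_{\rm lead}$, the squeeze (\ref{squeeze Lambda}) yields $\rho_Q(\omega_i)\le\re\gamma(\omega_i)$ for all $i$; therefore $\sigma_i\in I_{\eta,\gamma}$ if and only if this inequality is \emph{strict}. By (\ref{aI basis}) the vectors $\omega_i$ with $\sigma_i\notin I$ lie in $\af_I$, and the assumption $\re\gamma|_{\af_I}=\rho_Q|_{\af_I}$ gives equality on those, so $\sigma_i\notin I_{\eta,\gamma}$ for $\sigma_i\notin I$. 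Hence $I_{\eta,\gamma}\subset I$. The $\eta$-optimality of $\lambda$ (Lemma \ref{eta-optimal}(\ref{unus})) means $\#I=\min_\eta\le\#I_{\eta,\gamma}$, forcing $I_{\eta,\gamma}=I$.

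\textbf{Step 3: Conclusion.} For any $\sigma\in I=I_{\eta,\gamma}$ we have by the very definition of $I_{\eta,\gamma}$ that $\re\gamma(\omega_\sigma)-\rho_Q(\omega_\sigma)>0$, which via Step 1 gives (\ref{to-show}). Taking the minimum over all admissible $\gamma$ yields $\Lambda_{V,\eta,I}(\omega_{\sigma,I})-\rho_Q(\omega_{\sigma,I})>0$, which is (\ref{eta discrete}). Since $\mathcal{E}_{\rm lead}$ is a finite set, the minimum is in fact attained and the strict inequality is preserved.

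There is no real obstacle here; the argument is a bookkeeping exercise, and the only subtle point is the use of minimality of $\#I$ in Step 2 to upgrade the a priori inclusion $I_{\eta,\gamma}\subset I$ into equality, thereby transporting the strict inequality from $I_{\eta,\gamma}$ (where it holds by definition) to the prescribed index set $I$.
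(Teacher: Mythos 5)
Your proof is correct and follows essentially the same route as the paper: reduce via (\ref{ooI}) to the $\omega_\sigma$'s, then use the minimality of $\#I$ to upgrade the inclusion $I_{\eta,\gamma}\subset I$ (which is Lemma \ref{I-ind}(\ref{four}), re-derived inline in your Step 2) to equality, and read off the strict inequality from the definition of $I_{\eta,\gamma}$. Your explicit remark that the minimum over the finite set $\E_{\rm lead}$ is attained, so strictness survives, is a detail the paper leaves implicit.
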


\begin{proof} In view of the definition of $\Lambda_{V,\eta, I}$ and (\ref{ooI}) it is sufficient to check that 
$(\Lambda_{V,\eta, I} -\rho_Q)(\omega_\sigma)>0$ for all $\sigma\in I$.  Let now $\gamma\in \E_{{\rm lead }}$ be such that 
$\re \gamma|_{\af_I}  =\mu$.  
Then $I_{\eta, \gamma} =I$ by Lemma \ref{I-ind} (\ref{four}) and the minimality of $\# I$. Hence 
$(\re \gamma - \rho_Q)(\omega_\sigma)>0$ by the definition of $I_{\eta, \gamma}$. 
\end{proof}

\begin{theorem}\label{tet} Suppose that $Z$ is either absolutely spherical or of wave-front type. 
Let $(V, \eta)$ be an $H$-spherical
tempered pair with $V$ irreducible. 
Then there exists for every $\eta$-optimal pair $(\lambda, w)\in \E_{\rm lead}\times \W$
a boundary degeneration $Z_{w,I} =G/ (H_w)_I$ of $Z_w=G/H_w$,
a normalized unitary character $\xi$ of $(H_w)_IA_I$,
and a $(\gf, K)$-embedding 
$$ V\hookrightarrow L^2(\hat Z_{w,I}; \xi)$$
where $\hat Z_{w,I}:=G/(H_w)_IA_I$.
In particular, $V$ is unitarizable.
\end{theorem}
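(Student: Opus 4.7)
Fix the $\eta$-optimal pair $(\lambda,w)$ and set $I:=I_{\eta,\lambda}$, $\mu:=\lambda|_{\af_I}$. The plan is to use the $(\hf_w)_I$-invariant functional $\eta_I^{\mu,w}\in (V^{-\infty})^{(\hf_w)_I}$ constructed in~\eqref{eta I mu w} to produce the embedding by means of the matrix-coefficient map $v\mapsto m_{v,\eta_I^{\mu,w}}$. By Lemma~\ref{eta-optimal} this functional is non-zero, and by~\eqref{mu character} it is an $A_I$-eigenvector with character $e^{\mu}$. Since $\af_I\subset\af_Z=\af_H^\perp$ and $\af\cap\hf_I=\af_H$ (Proposition~\ref{properties of Z_I}(\ref{zwei})), the Lie algebras of $(H_w)_I$ and $A_I$ meet only in $\{0\}$, so the recipe ``trivial on $(H_w)_I$, equal to $e^{-\mu}$ on $A_I$'' defines a well-defined character $\xi$ of the subgroup $(H_w)_I A_I$.

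Next I would verify that $\xi$ is normalized unitary for $\hat Z_{w,I}=G/(H_w)_I A_I$. The space $Z_{w,I}$ is unimodular (Lemma~\ref{Z_I is unimodular}) with edge $\af_I=\af_{Z_I,E}$, so Lemma~\ref{lemdelta}, applied to $\hat Z_{w,I}$, reduces the normalization $|\xi|=\Delta_{\hat Z_{w,I}}^{1/2}$ to the identity $\re\mu=\rho_Q$ on $\af_I$, which is exactly the content of Lemma~\ref{I-ind}(\ref{one}).

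The technical heart of the argument is to check that $(V,\eta_I^{\mu,w})$ satisfies the strong inequality~\eqref{disc} on the boundary degeneration $Z_{w,I}$. By Proposition~\ref{properties of Z_I}(\ref{drei}) and Corollary~\ref{Zw} the adapted parabolic of $Z_{w,I}$ remains $Q$, its set of spherical roots is $I$, and the relevant dual basis is $\{\omega_{\sigma,I}\}_{\sigma\in I}$. The expansion~\eqref{mc-eta-I} shows that the exponents of $m_{v,\eta_I^{\mu,w}}$ along $A_{Z_I}^-=A_I A_Z^-$ are contained in the slice $\{\xi\in\Xi:\xi|_{\af_I}=\mu\}$. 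Combining $\Xi\subset\E_{\rm lead}+\N_0[S]$ with Lemma~\ref{I-ind}(\ref{three}) (which makes any non-leading contribution $\lambda'+\sum n_j\sigma_j$ compatible with the $\af_I$-constraint only at the cost of a non-negative shift in the $\omega_{\sigma,I}$-direction for each $\sigma\in I$), one obtains
\begin{equation*}
\Lambda_{V,\eta_I^{\mu,w}}(\omega_{\sigma,I})\;\ge\;\Lambda_{V,\eta,I}(\omega_{\sigma,I}),\qquad \sigma\in I,
\end{equation*}
while Lemma~\ref{I-ind}(\ref{one}) also gives the edge identity $(\Lambda_{V,\eta_I^{\mu,w}}-\rho_Q)|_{\af_I}=0$. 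Lemma~\ref{discI} then produces the strict inequality $(\Lambda_{V,\eta_I^{\mu,w}}-\rho_Q)(\omega_{\sigma,I})>0$ for all $\sigma\in I$.

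Finally, since $Z_{w,I}$ inherits absolute sphericality from $Z$ (Remark~\ref{Z_I abs sph}) or the wave-front property (by the lemma preceding Lemma~\ref{wf-easy} applied to $H_w$), Theorem~\ref{CDS} is applicable and yields $m_{v,\eta_I^{\mu,w}}\in L^2(\hat Z_{w,I};\xi)$ for every $v\in V$. The resulting $(\gf,K)$-equivariant map $V\to L^2(\hat Z_{w,I};\xi)$ is non-zero because $\eta_I^{\mu,w}\ne 0$, and hence an embedding by irreducibility of $V$; unitarizability follows at once. The main obstacle is the exponent bookkeeping in the third step: one must show that no exponent of the form $\lambda'+\sum n_j\sigma_j$ with $\lambda'\in\E_{\rm lead}$ and some $n_j>0$ can undercut the leading-exponent contribution to $\Lambda_{V,\eta_I^{\mu,w}}(\omega_{\sigma,I})$, for which the optimality of $I=I_{\eta,\lambda}$ encoded in Lemma~\ref{I-ind} is essential.
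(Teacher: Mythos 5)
Your overall strategy coincides with the paper's: pass to the functional $\eta_I^{\mu,w}$, check that the pair $(V,\eta_I^{\mu,w})$ satisfies the strong inequality (\ref{disc}) on the boundary degeneration, and then invoke Theorem \ref{CDS} together with irreducibility to get the embedding. The normalization check via Lemma \ref{lemdelta} and Lemma \ref{I-ind}(\ref{one}), the use of Lemma \ref{discI}, and the final appeal to unimodularity of $Z_I$ are all as in the paper.

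There is, however, a genuine gap in the step where you bound $\Lambda_{V,\eta_I^{\mu,w}}(\omega_{\sigma,I})$ from below. By definition (\ref{Lambdadef}), the exponent $\Lambda_{V,\eta_I}$ of the pair $(V,\eta_I)$ on $Z_{w,I}$ is computed from the power series expansions of $m_{v,\eta_I}$ along \emph{all} open $P_{\rm min}$-orbits of the boundary degeneration, i.e.\ at the points $a\,w_{j,I}\cdot z_{0,I}$ for $w_{j,I}$ running over the set $\W_I$ from Section \ref{FI-sub} — and it is this quantity that enters the polar decomposition underlying Theorem \ref{CDS}. The expansion (\ref{mc-eta-I}) that you quote only controls $m_{v,\eta_I^{\mu,w}}(aw\cdot z_{0,I})$ for the representatives $w\in\W$ of the open cosets of the \emph{original} space, and $\W_I$ is in general not contained in (nor identified with) $\W$. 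Bridging this is precisely the role of Lemma \ref{wwI} and Lemma \ref{eta1IwI}: for each $w_{j,I}\in\W_I$ one chooses a corresponding $w_j\in\W$ and shows $m_j\cdot(\eta_I)_{w_{j,I}}=\eta_I^{\mu,w_j}$ for some $m_j\in M$, whence (\ref{metaIv}) converts the expansion at $a\,w_{j,I}\cdot z_{0,I}$ into one governed by (\ref{mc-eta-I}) with $w_j$ in place of $w$, with all exponents in $\Xi$ restricting to $\mu$ on $\af_I$. This is a substantial piece of analysis (it rests on the delicate limit comparison in Lemma \ref{eta0IwI}), not mere bookkeeping, and without it the inequality $\Lambda_{V,\eta_I}(\omega_{\sigma,I})\ge\Lambda_{V,\eta,I}(\omega_{\sigma,I})$ is only established on a subfamily of the cosets that the square-integrability criterion actually tests. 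The remaining bookkeeping you describe — that an exponent $\lambda'+\sum n_j\sigma_j$ with $\re$ restriction $\mu$ on $\af_I$ forces $\lambda'|_{\af_I}$ to have real part $\re\mu$ and the shift to lie in $\N_0[I]$, so that it cannot undercut $\Lambda_{V,\eta,I}(\omega_{\sigma,I})$ — is correct and matches the paper.
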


\begin{proof} Let $(\lambda,w)\in \E_{\rm lead}\times\W$ be  $\eta$-optimal and let 
$I=I_{\eta,\lambda}$,  $\mu=\lambda|_{\af_I}$. 
Then 
$\eta_I:= \eta_I^{\mu, w}\neq 0$, and recall 
that it is a continuous $(H_w)_I$-invariant functional. Hence after replacing $H$ by $H_w$ 
we may assume that $w=\1$.

As $V$ is irreducible, the map 
$$ \Phi: V^\infty \to C^\infty (Z_I), \ \ v \mapsto m_{v, \eta_I}$$
defines a $G$-equivariant injection.  Notice that by  (\ref{mu character}) all elements in
${\rm im}\  \Phi$ transform under the normalized unitary 
character $a^\mu$  by the right $A_I$-action. 

Recall the set $\W_I=\{ w_{1, I}, \ldots, w_{p, I}\}$. For $1\leq i \leq p$ let $w_i\in \W$
correspond to $w_{i,I}$ 
as in Lemma \ref{wwI}. 
From (\ref{wwIeta}) we get $m_j\cdot (\eta_I)_{w_{j,I}}= \eta_I^{\mu, w_j}$ for $j=1,\dots, p$,
and some $m_j\in M$.  Hence
\begin{equation}\label{metaIv}
m_{\eta_I,v}(a w_{j,I} \cdot z_{0,I})= m_{\eta^{\mu, w_j}_{I},v}(m_ja \cdot z_{0,w_j,I})
\end{equation}
for all $a\in A_Z$. Here $z_{0,w_j,I}$ denotes the origin of $G/(H_{w_j})_I$.
It follows from (\ref{mc-eta-I}), applied to the matrix coefficient on the right hand side
of (\ref{metaIv}), 
that all exponents belong to $\Xi$
and restrict to $\mu$ on $\af_I$. This being the case for all
$j=1,\dots, p$ we then infer from 
the definition of $\Lambda_{V,\eta, I}$
that
$$\Lambda_{V,\eta_I}(\omega_{\sigma,I}) \geq \Lambda_{V,\eta,I}(\omega_{\sigma,I}),\qquad(\sigma\in I)$$
for the exponent defined by (\ref{Lambdadef}) for
$\eta_I$.

The theorem now follows by applying Theorem \ref{CDS} to $Z_I$ (see also Lemma \ref{Z_I is unimodular}),
together with the inequality (\ref{eta discrete}).
\end{proof}

\begin{rmk}\label{chofP}  After a change of $P_{\min}$ (and $A$) it is possible to get an optimal pair $(\lambda, w)$ such that 
$w=\1$. 
In order to see that, note 
$$m_{v, \eta_w}(a\cdot z_w) =m_{w^{-1}v, \eta} (w^{-1} a w\cdot z_0)\, .$$
We are allowed to change $(A,K,P_{\min} )$ to $\Ad(w^{-1})(A,K,P_{\min})$ without effect on the 
developed theory of power series expansions.  The asserted 
reduction to $w=\1$ follows. 
\end{rmk}

\begin{cor} \label{temp-thm-wavefront} Assume that $Z$ is wave-front and let 
$(V,\eta)$, $(\lambda, w)$ be as above. Then there exist
\begin{enumerate}
\item a parabolic
subgroup $P\supset \oline{Q}$, 
\item a Levi decomposition $P=G_PU_P$,
\item
a real spherical subgroup $H'_P\subset G_P$ containing $(G_P\cap H_w)_e$,
\item
a twisted discrete series representation $\sigma$
of $G_P/H'_P$,
\end{enumerate}
such that $V$ embeds equivariantly
into the parabolically induced unitary representation 
$\operatorname{Ind}_{P}^G (\sigma\otimes 1)$.
\end{cor}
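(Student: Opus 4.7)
The plan is to deduce this corollary from Theorem~\ref{tet} by identifying the boundary degeneration appearing there with a Levi-induced spherical space and then invoking Lemma~\ref{induced}.

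First, Theorem~\ref{tet} provides a $(\gf,K)$-equivariant embedding $V\hookrightarrow L^2(\hat Z_{w,I};\xi)$, where $\hat Z_{w,I}=G/(H_w)_IA_I$, $I=I_{\eta,\lambda}$, and $\xi$ is a normalized unitary character of $(H_w)_IA_I$; since $\eta_I^{\mu,w}$ is $(H_w)_I$-invariant, $\xi$ is trivial on $(H_w)_I$.

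Next, I use the wave-front hypothesis to place $(H_w)_I A_I$ in an interlaced position. By Corollary~\ref{Zw}, the compression cone of $Z_w=G/H_w$ equals that of $Z$, hence $Z_w$ is wave-front, and so is $G/(H_w)_I A_I$ by the lemma of Subsection~\ref{sub wf}. Proposition~\ref{sandwich1} and Corollary~\ref{sandwich2}, applied with $H_w$ in place of $H$, then yield a subset $F=F_I\subset\Pi$ containing $F_Q$ such that the opposite parabolic $\oline{P_F}=G_F\ltimes\oline{U_F}$ interlaces $(H_w)_I$:
$$(G_F\cap H_w)_e\,\oline{U_F}\subset (H_w)_I\subset \oline{P_F}.$$
Since $\af_I=\af_F+\af_H$ by Corollary~\ref{sandwich2} and $A\subset G_F$, the subgroup $A_I$ lies in $G_F$; thus $(H_w)_I A_I$ is also interlaced by $\oline{P_F}$, and $\xi$ is trivial on $U_{\oline{P_F}}=\oline{U_F}\subset (H_w)_I$. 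Lemma~\ref{induced} now produces a twisted discrete series representation $\sigma$ of the Levi-induced spherical space $G_F/H'_P$, where
$$H'_P=\pr_{\oline{P_F}}\!\bigl((H_w)_I A_I\bigr),$$
together with a $(\gf,K)$-equivariant embedding $V\hookrightarrow \operatorname{Ind}_{\oline{P_F}}^G(\sigma\otimes 1)$.

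Finally, the remaining assertions of the corollary follow directly: the inclusion $H'_P\supset (G_F\cap H_w)_e$ is immediate from the left half of the interlacing relation; real sphericality of $H'_P$ is Proposition~\ref{propind}; and $\oline{P_F}\supset\oline{Q}$ holds because $F\supset F_Q$. The substantive content is already concentrated in Theorem~\ref{tet} (which supplies the embedding into a discrete series on the boundary degeneration) and in Corollary~\ref{sandwich2} (which, via the wave-front hypothesis, exhibits $(H_w)_I$ as interlaced by $\oline{P_F}$); the only additional ingredient is the induction-in-stages identity encoded in Lemma~\ref{induced}. Thus no serious obstacle remains, only careful bookkeeping around the conjugation by $w$ and the descent of $\xi$ through $\oline{P_F}\to G_F$.
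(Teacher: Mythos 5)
Your proof is correct and follows essentially the same route as the paper: Theorem \ref{tet} supplies the embedding into $L^2(\hat Z_{w,I};\xi)$, the wave-front interlacing $(G_F\cap H_w)_e\,\oline{U_F}\subset (H_w)_I\subset\oline{P_F}$ is transferred to $Z_w$ (the paper cites Lemma \ref{hf} directly where you invoke Corollary \ref{Zw} and re-run Corollary \ref{sandwich2}, which amounts to the same thing), and Lemma \ref{induced} finishes. The only cosmetic difference is your choice $H'_P=\pr_{\oline{P_F}}((H_w)_IA_I)$ versus the paper's $(G_F\cap (H_w)_I)_e$; both contain $(G_F\cap H_w)_e$ and yield the asserted induced embedding.
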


\begin{proof} Let $I$ as in Theorem \ref{tet}, and 
let $P=\oline{P_F}$ be as in Corollary \ref{sandwich2}, 
i.e.~$\oline {U_F} \subset H_I \subset {\oline P_F}$. In view of Lemma
\ref{hf} we have  $\oline {U_F} \subset (H_w)_I \subset \oline{P_F}$ as well. 
Hence the assertion follows 
with $H_P'=(G_F\cap (H_w)_I)_e$ from Lemma \ref{induced}.
\end{proof}

\begin{ex} Let $G/H$ be a symmetric space as in Example \ref{symmetric space case},
then $Z$ is wave-front.  

\par After possible change of $P$ to $w^{-1}Pw$ (see Remark \ref{chofP}) we may assume that 
there is a $\lambda\in \E_{\rm lead}$ such that $(\lambda, \1)$ is an $\eta$-optimal pair. 
After a  further change of $P$ to $hPh^{-1}$ for some $h\in H$ (see \cite{Matsuki}, Thm.~1) we may 
assume that  $P$ is a $\sigma\theta$-stable parabolic subgroup. 
Then $H_P'=(G_P^\sigma)_e$ is a symmetric subgroup in
$G_P$. Hence Corollary \ref{temp-thm-wavefront} implies 
Delorme's result from \cite{D} (up to components of $G_P^\sigma$).
The result of Langlands cited in the introduction then also follows.
\end{ex}


\begin{thebibliography} {10}

\bibitem{AGKL} A. Aizenbud, D. Gourevitch, B. Kr\"otz and G. Liu, {\it Hausdorffness for Lie algebra homology of Schwartz spaces
and applications to the comparison conjecture}, 
Math. Z. {\bf 283} (2016), 979--992 and 993--994.

\bibitem{vdB} E. van den Ban, {\it Asymptotic behaviour of matrix coefficients related to reductive symmetric spaces}, 
Nederl. Akad. Wetensch. Indag. Math. {\bf 49} (1987), no. 3, 225--249. 

\bibitem{BaD} E. van den Ban and P. Delorme, {\it Quelques 
propri\'et\'es des repr\'esentations sph\'eriques pour les espaces sym\'etriques 
r\'eductifs},   J. Funct. Anal. {\bf 80}(1988), 284--307. 

\bibitem{BK} J. Bernstein and B. Kr\"otz, {\it Smooth Fr\'echet globalizations of Harish-Chandra modules},  Israel J. Math. {\bf 199} (2014), 35--111. 

\bibitem{BR} J. Bernstein and A. Reznikov,
{\it Estimates of automorphic functions}, Moscow Math. J. {\bf 4} (2004),
19--37. 

\bibitem{Brat} T. Bratten, {\it A comparison theorem for Lie algebra homology groups}, 
Pacific journal of mathematics {\bf 182 (1)}, 1998, 23--36. 

\bibitem{Brion1} M. Brion,  {\it Classification des espaces homog\`enes sph\'eriques},  Compositio Math. {\bf 63}(2)  (1987), 189--208.


\bibitem{Brion} \bysame , {\it Vers une g\'en\'eralisation des espaces sym\'etriques}, J. of Alg.
{\bf 134} (1990), 115--143.

\bibitem{BrD} J.-L. Brylinski and P. Delorme, {\it Vecteurs distributions H-invariants pour les s\'eries principales 
g\'en\'eralis\'ees d'espaces sym\'etriques r\'eductifs et prolongement m\'eromorphe d'int\'egrales d'Eisenstein}, 
Invent. math {\bf 109} (1992), 619--664.

 \bibitem{BO} U. Bunke and M. Olbrich, 
{\it Cohomological properties of the canonical globalizations of Harish-Chandra modules}, 
Annals of Global Analysis and Geometry 09/1997, {\bf 15(5)}, 401--418. 


\bibitem{CM} W. Casselman and D. Milicic, {\it  Asymptotic behavior of matrix coefficients of 
admissible representations},  Duke Math. J. {\bf 49(4)} (1982), 869--930.

\bibitem{DKS} T. Danielsen, B. Kr\"otz and H. Schlichtkrull, {\it Decomposition theorems for triple spaces}, 
Geom. Dedicata {\bf 174} (2015),  145--154

\bibitem{D} P.  Delorme, {\it Injection de modules sph\'eriques pour les espaces sym\'etriques 
r\'eductifs dans certaines repr\'esentations induites}, 
Noncommutative harmonic
analysis and Lie groups (Marseille-Luminy, 1985), pp. 108--143, Lecture Notes
in Math. {\bf 1243}, Springer, Berlin, 1987.

\bibitem{EM} A. Eskin and C. McMullen, {\it Mixing, counting and equidistribution in Lie groups}, 
Duke Math. J. {\bf 71} (1993),  181--209


\bibitem{Mogens} M. Flensted-Jensen, {\it
Spherical functions of a real semisimple Lie group. A method of reduction to the complex case},
J. Funct. Anal. {\bf 30} (1978), 106--146.

\bibitem{Gro} A. Grothendieck, "Topological vector spaces", Gordon and Breach, 1973

\bibitem{GP} B. Gross and D.  Prasad, {\it On the decomposition of a representation of $SO_n$ when restricted to 
$SO_{n-1}$}, Canad. J. Math. {\bf 44 (5)} (1992), 974--1002. 

\bibitem{Kn} A. Knapp, {\it Representation Theory of Semisimple Lie Groups}, 
Princeton Landmarks in Math., 2001

\bibitem{KK} F. Knop and B. Kr\"otz, {\it Reductive group actions},  arXiv: 1604.01005

\bibitem{KKPS}  F. Knop, B. Kr\"otz, T. Pecher and H. Schlichtkrull, {\it Classification of reductive real spherical pairs I: The 
simple case}, ArXiv:1609.00963.

 \bibitem{KKPS2}  \bysame, {\it Classification of reductive real spherical pairs II: The 
semisimple case},  ArXiv:1703.08048.


\bibitem{KKSS} F. Knop, B. Kr\"otz, E. Sayag  and H. Schlichtkrull, {\it Simple compactifications
and polar decomposition of homogeneous real spherical spaces}, 
Selecta Math. {\bf 21}, (2015), 1071--1097.

\bibitem{KKSS2} \bysame, {\it Volume growth, temperedness and 
integrability of matrix coefficients on a real spherical space}, 
J. Funct.  Anal. {\bf 271} (2016), 12--36.



\bibitem{KKS} F. Knop, B. Kr\"otz and H. Schlichtkrull, {\it The local structure theorem for 
real spherical spaces}, Compositio  Math. {\bf 151(11)} (2015), 2145--2159.


\bibitem{Kr} M. Kr\"amer, {\it Sph\"arische Untergruppen in kompakten zusammenh\"angenden Liegruppen}, 
Compositio math. {\bf 38 (2)} (1979), 129--153. 

\bibitem{KSS} B. Kr\"otz, E. Sayag and H. Schlichtkrull, {\it Decay of matrix coefficients on reductive homogeneous spaces of spherical type}, 
Math. Z. {\bf 278} (2014), no. {\bf 1-2}, 229--249.

\bibitem{KSS2} \bysame, {\it The harmonic analysis of lattice counting on real spherical spaces},
Documenta Math. {\bf 21} (2016), 627--660. 



\bibitem{KS1} B. Kr\"otz and H. Schlichtkrull, {\it Finite orbit decomposition 
of real flag manifolds}, J. EMS {\bf 18} (2016), 1391--1403.



\bibitem{KS2} \bysame,  {\it Multiplicity bounds and the subrepresentation theorem for real spherical 
spaces},  Trans. AMS {\bf 368} (2016), no. {\bf 4}, 2749--2762.

\bibitem{Langlands} R. P. Langlands, {\it On the classification of irreducible
representations of real algebraic groups}, mimeographed notes 1973.
Representation theory and harmonic analysis on semisimple Lie groups, pp.~101--170, 
Amer. Math. Soc., Providence, 1989.

\bibitem{Mackey} G. Mackey, {\it Induced representations of locally compact groups I},
Ann. of Math. {\bf 55} (1952), 101--139.

\bibitem{Matsuki} T. Matsuki, {\it The orbits of affine symmetric spaces 
under the action of minimal parabolic subgroups}, 
J. Math. Soc. Japan {\bf 31} (1979), no. {\bf 2}, 331--357.


\bibitem{Mat2} \bysame, {\it Orbits on flag manifolds}, Proceedings of the International Congress of 
Mathematicians, Vol. I, II (Kyoto, 1990), 807--813, Math. Soc. Japan, Tokyo, 1991.


\bibitem{Mik}  Mikityuk, {\it On the integrability of Hamiltonian systems with homogeneous configuration spaces}, 
Math. USSR Sbornik, {\bf 57(2)} (1987), 527--546.

\bibitem{Ross} W. Rossmann, {\it The structure of semisimple symmetric spaces},
Canad. J. Math. {\bf 31} (1979), 157--180.

\bibitem{SV}  Y. Sakellaridis and A. Venkatesh, {\it Periods and harmonic analysis on spherical varieties}, 
arXiv:1203.0039, to appear in Ast\'erisque.

\bibitem{vS} B.  Van Steirteghem, {\it  Various interpretations of the root system(s) of a spherical variety}, 
Oberwolfach Reports {\bf 10} (2013), 1464--1467.

\bibitem{W} N. Wallach, "Real Reductive Groups I", Academic press 1988.     
\end{thebibliography}
\end{document}